\newtheorem{theorem}{Theorem} 
\newtheorem{problem}{Problem} 
\newtheorem{lemma}{Lemma} 
\newtheorem{proposition}{Proposition} 
\newtheorem{claim}{Claim} 
\theoremstyle{remark} 
\newtheorem{remark}{Remark} 
\theoremstyle{definition} 
\newtheorem{defn}{Definition} 
\newcommand{\on}[1]{\mathop{\mathrm{#1}}\nolimits} 
\newcommand{\inv}{\on{inv}} 
\newcommand{\LPO}[1]{\thicklines \put( 45, 45){\line(1, 1){ 10}} 
\put( 45, 55){\line(1,-1){ 10}}\put( 57, 38){#1}\thinlines} 
\newcommand{\CPO}[1]{\thicklines \put( 95, 45){\line(1, 1){ 10}} 
\put( 95, 55){\line(1,-1){ 10}}\put(107, 38){#1}\thinlines} 
\newcommand{\RPO}[1]{\thicklines \put(145, 45){\line(1, 1){ 10}} 
\put(145, 55){\line(1,-1){10}}\put(157, 38){#1}\thinlines} 
\newcommand{\VLL}[2]{\put( 35, 90){#1}\put( 55, 90){#2} 
\put( 50, 10){\line(0, 1){ 80}}} 
\newcommand{\VCL}[2]{\put( 85, 90){#1}\put(105, 90){#2} 
\put(100, 10){\line(0, 1){ 80}}} 
\newcommand{\VRL}[2]{\put(135, 90){#1}\put(155, 90){#2} 
\put(150, 10){\line(0, 1){ 80}}} 
\newcommand{\RDL}[2]{\put(135, 90){#1}\put(155, 90){#2} 
\put(150, 10){\line(0, 1){ 8}}\put(150, 22){\line(0, 1){ 6}} 
\put(150, 32){\line(0, 1){ 6}}\put(150, 42){\line(0, 1){ 6}} 
\put(150, 52){\line(0, 1){ 6}}\put(150, 62){\line(0, 1){ 6}} 
\put(150, 72){\line(0, 1){ 6}}\put(150, 82){\line(0, 1){ 8}}} 
\newcommand{\HOL}[2]{\put(195, 55){#1}\put(190, 40){#2} 
\put(  0, 50){\line( 1,0){ 200}}} 
\newcommand{\HDL}[2]{\put(195, 55){#1}\put(190, 40){#2} 
\put(  0, 50){\line( 1,0){ 8}}\put( 12, 50){\line( 1,0){ 6}} 
\put( 22, 50){\line( 1,0){ 6}}\put( 32, 50){\line( 1,0){ 6}} 
\put( 42, 50){\line( 1,0){ 6}}\put( 52, 50){\line( 1,0){ 6}} 
\put( 62, 50){\line( 1,0){ 6}}\put( 72, 50){\line( 1,0){ 6}} 
\put( 82, 50){\line( 1,0){ 6}}\put( 92, 50){\line( 1,0){ 6}} 
\put(102, 50){\line( 1,0){ 6}}\put(112, 50){\line( 1,0){ 6}} 
\put(122, 50){\line( 1,0){ 6}}\put(132, 50){\line( 1,0){ 6}} 
\put(142, 50){\line( 1,0){ 6}}\put(152, 50){\line( 1,0){ 6}} 
\put(162, 50){\line( 1,0){ 6}}\put(172, 50){\line( 1,0){ 6}} 
\put(182, 50){\line( 1,0){ 6}}\put(192, 50){\line( 1,0){ 8}} 
} 
\title[Resolution of idealistic filtration in dimension 3]{ 
Resolution of singularities of an idealistic filtration in dimension 3 after Benito-Villamayor 
} 
\author[H. Kawanoue and K. Matsuki]{Hiraku Kawanoue and Kenji Matsuki} 
\dedicatory{\footnotesize Dedicated to Prof.~Shigefumi Mori 
%\Linebreak 
on the occasion of his 60-th birthday}
\address{{\rm Hiraku Kawanoue} \\ 
Research Institute for Mathematical Sciences \\ 
Kyoto University, Kyoto 606-8502} 
\email{kawanoue@kurims.kyoto-u.ac.jp} 
\address{{\rm Kenji Matsuki} \\ 
Department of Mathematics, Purdue University \\ 
150 N. University Street, West Lafayette, IN 47907-2067 
} 
\email{kmatsuki@math.purdue.edu} 
\subjclass[2010]{14E15} 
\keywords{resolution of singularities, IFP} 
\begin{document} 
\begin{abstract} 
We establish an algorithm for resolution of singularities of an idealistic filtration in dimension 3 (a local version) in positive characteristic, incorporating the method recently developed by Benito-Villamayor into our framework. 
Although (a global version of) our algorithm only implies embedded resolution of surfaces in a smooth ambient space of dimension 3, a classical result known before, we introduce some new invariant which effectively measures how much the singularities are improved in the process of our algorithm and which strictly decreases after each blow up. 
This is in contrast to the well-known Abhyankar-Moh pathology of the increase of the residual order under blow up and the phenomenon of the ``Kangaroo'' points observed by Hauser. 
\end{abstract} 
 
\maketitle 
 
\setcounter{tocdepth}{2} 
\tableofcontents 
 
\begin{section}{Outline of the paper} 
The goals of this paper are two-fold. 
The first goal is to present the general mechanism of resolution of singularities (a local version) in the framework of the Idealistic Filtration Program in positive characteristic. 
The classical algorithm in characteristic zero works by induction on dimension based upon the notion of a hypersurface of maximal contact. 
Our algorithm in positive characteristic works by induction on the invariant ``$\sigma$'' 
based upon the notion of a leading generator system (cf. \cite{K}). 
Roughly speaking, the general mechanism splits into two parts; the first part is to reduce the problem in the general case to the one in the so-called ``monomial case'', and the second part is to solve the problem in the monomial case. 
 The first part of the general mechanism works in arbitrary dimension. The second part is quite subtle and difficult in positive characteristic (while it is easy in the classical setting in characteristic zero). 
The second goal is to establish the algorithm in dimension 3, by actually solving the problem of resolution of singularities in the monomial case. 
We incorporate the method recently developed by Benito-Villamayor \cite{BV3} into our framework. 
We introduce some new invariant, which effectively measures how much the singularities are improved in the process of our algorithm and which 
strictly decreases after each blow up. 
This is in clear contrast to the well-known Abhyankar-Moh pathology 
of the increase of the residual order (cf. \cite{MR935710}) and the phenomenon of 
the ``Kangaroo'' points observed by Hauser (cf. \cite{Hau10}) and 
others. 
We note that the algorithm by Benito-Villamayor 
(cf. \cite{BV1}\cite{BV2}\cite{BV3}), which works by induction 
on dimension based upon the notion of a generic projection, is different from our algorithm, and that even the setting of the monomial case is different from ours by definition. 
It is something of a 
surprise that we can share the ``same'' method when our and their approaches are different. 
Establishing the algorithm in dimension 4 or above remains as an open problem. 
 
We remark that (a global version, which will be published elsewhere, of) 
our algorithm in dimension 3 only yields embedded resolution 
of surfaces in a nonsingular ambient 3-fold over over an algebraically closed field of any characteristic. 
This is a classical result known for more than 50 years since the time of Abhyankar, Hironaka and others (cf. \cite{Abh66}\cite{Cutkosky}\cite{Hau97}\cite{Hir67}). 
The front line of research, thanks to the works of Abhyankar, Cossart-Piltant, and Cutkosky (cf. \cite{Abh66}\cite{CP08}\cite{CP09}\cite{CJS}\cite{Cutkosky}), goes way beyond, establishing resolution of singularities of a 3-fold over an algebraically closed field of positive characteristic. 
We believe, however, that the method of our paper should provide a first step toward the open problem of embedded resolution of singularities of 3-folds in a nonsingular ambient 4-fold in positive characteristic, and also in higher dimensional cases. 
 
The outline of the paper goes as follows. 
 
After \S 1, which describes the outline of the paper, we give a brief overview of the contents of the paper in \S 2. 
In \S 3, we present a quick review on the algorithm in characteristic zero. 
Our algorithm in positive characteristic is modeled upon the classical algorithm in characteristic zero. 
The review is given in such a way that the reader can see the similarities and differences between our algorithm and the classical algorithm through an easy and accessible comparison. 
In \S 4, we present the general mechanism of our algorithm for resolution of singularities in positive characteristic. 
In \S 5, we present a solution to the problem of resolution of singularities 
in the monomial case in dimension 3, thus completing 
our algorithm as a whole in dimension 3. 
 
We assume, throughout the entire paper, that the base field is an algebraically closed field $k = \overline{k}$ of characteristic zero $\mathrm{char}(k) = 0$ or  positive characteristic $\mathrm{char}(k) = p > 0$. 
Therefore, there is no danger in \emph{not} distinguishing 
the two notions ``smooth over $k$'' and ``regular'' (meaning that every local ring of a variety is a regular local ring), and in using the word ``nonsingular'' as a synonym. 
In the case where the base field $k$ is perfect, our algorithm over its algebraic closure $\overline{k}$ is invariant under the action of the Galois group $\mathrm{Gal}(\overline{k}/k)$ and hence descends to the algorithm over the original base field $k$. 
The case where the base field is not perfect will be investigated elsewhere. 
 
\smallskip 
 
\textbf{Acknowledgments}: We would like to express our hearty thanks 
to Profs. A. Benito and O. Villamayor for explaining their results to us with great patience. 
We owe \S 5 of this paper to their ideas, which inspired us to come up with further improvements. We would also like to thank Profs. R. Blanco, A. Bravo, S. Encinas, and Dr. M.L. Garc\'ia for their warm hospitality during our stay in Madrid, Spain, in May of 2011. 
The first author would like to thank Prof. Hauser in Vienna, Austria, and Dr. Bernd Schober in Regensburg, Germany. 
The conversation with them led to a crucial development of the paper. 
The second author would like to thank Prof. Y. Lee for the hospitality during his stay in Seoul, Korea, and Prof. Y. Kawamata at Tokyo University, Japan, for the consistent and warm encouragement. 
He would also like to thank Profs. D. Arapura, W. Heinzer, J. Lipman, T.T. Moh, B. Ulrich, J. W{\l}odarczyk at Purdue University in Indiana, U.S.A., for their advice and many invaluable comments. 
Our indebtedness to the people at RIMS in Kyoto, Japan, is more than a word could express. 
The deep appreciation goes to Profs. S. Mori and S. Mukai, who have been supporting us both mathematically and personally throughout our entire project toward resolution of singularities in positive characteristic. 
Prof. N. Nakayama's help was essential for the completion of our paper. 
We would like to thank the referee for giving us many helpful comments to improve the paper. 
The first author is partially supported by the Grant-in-Aid for 
Young Scientists (B) and by the Sumitomo Foundation. 
The second author is partially supported by the NSA grant H98230-10-1-0172. 
\end{section} 
\begin{section}{Overview} 
The problem of resolution of singularities in its simplest form is stated as follows: 
\begin{problem}[Resolution of singularities] Given an algebraic variety 
$X$ over $k$, find a proper birational map 
$X \overset{\pi}\leftarrow \widetilde{X}$ from 
a nonsingular variety $\widetilde{X}$. 
\end{problem} 
The above problem is reduced to the following problem of embedded resolution of singularities, if our solution to 
the latter is \emph{functorial} in the sense that 
it is stable under the pull-back by smooth morphisms. 
\begin{problem}[Embedded resolution of singularities] Given an algebraic variety $X$, embedded as a closed subvariety in a smooth 
ambient variety $W$ over $k$, i.e., $X \subset W$, find a sequence 
starting from $(X_0\subset W_0)=(X\subset W)$ 
$$ 
(X_0 \subset W_0) 
\leftarrow 
\!\cdots 
\leftarrow (X_i \subset W_i) 
\overset{\pi_{i+1}}\leftarrow (X_{i+1} \subset W_{i+1}) 
\leftarrow 
\!\cdots 
\leftarrow (X_l \subset W_l), 
$$ 
where 
%$X_0=X$, $W_0 = W$, 
$W_i \leftarrow W_{i+1}$ is a blow up with smooth center $C_i \subset W_i$ 
which does not contain the strict transform $X_i$ of $X_0$ in year $i$ , i.e., 
$C_i \not\supset X_i$, and which is transversal to the exceptional 
divisor $D_i$, i.e., $C_i \pitchfork D_i$ (maybe contained in $D_i$), 
such that the last strict transform $X_l$ is nonsingular and transversal 
to $D_l$. 
\end{problem} 
 
Note that in the above formulation we do \emph{not} require that 
the center is contained in the singular locus of the strict transform $C_i \subset \mathrm{Sing}(X_i)$ or even that the center is contained in the strict transform $C_i \subset X_i$. 
 
Note also that we say $C_i$ is transversal to $D_i$, denoted by 
$C_i \pitchfork D_i$, if at any closed point $P \in W_i$ there 
exist a regular system of parameters $(x_1, \ldots, x_d)$ at $P$, 
taken from ${\mathfrak m}_P \subset {\mathcal O}_{W,P}$, and subsets 
$A, B \subset \{1, \ldots, d = \dim W_i\}$ such that 
$C_i = \bigcap_{\alpha \in A}\{x_{\alpha} = 0\}$ and 
$D_i = \bigcup_{\beta \in B}\{x_{\beta} = 0\}$ in a neighborhood of $P$. 
 
The implication 
``Solution to Problem 2 (in a functorial way) 
$\Rightarrow$ Solution to Problem 1'' 
can be seen as follows: Given an algebraic variety $X$, 
decompose it into the union of affine open subvarieties 
$X = \bigcup_{\lambda \in \Lambda}X_{\lambda}$ with embeddings 
$X_{\lambda} \subset W_{\lambda} = {\mathbb A}^{n_{\lambda}}$. 
Take embedded resolutions of singularities  $(X_{\lambda} \subset W_{\lambda}) \leftarrow (\widetilde{X_{\lambda}} \subset \widetilde{W_{\lambda}})$. 
We have only to see that the $\widetilde{X_{\lambda}}$'s 
patch together by the \emph{functoriality} to obtain 
resolution of singularities $X = \bigcup_{\lambda \in \Lambda}X_{\lambda} \leftarrow \widetilde{X} = \bigcup_{\lambda \in \Lambda}\widetilde{X_{\lambda}}$. 
For the detail of the proof, we refer the reader to \cite{W}. 
 
How can we solve Problem 2 (in a functorial way so that we can solve Problem 1 also) ? We would like to use ``induction'' on dimension. 
However, the inductive scheme to approach Problem 2 is not clear as stated, at least not obvious. 
We will present in \S 3 the reformulation by Hironaka where in characteristic zero the inductive scheme on dimension is more transparent and used classically, and in \S 4 another reformulation in our framework where in positive characteristic the inductive scheme on the invariant ``$\sigma$'' emerges. 
 
We give an overview of the contents of our paper in the following. 
 
\S 3 is devoted to a quick review on the algorithm 
in characteristic zero. 
 
In \S 3.1, we give the precise statement of the reformulation by 
Hironaka (following the language used by Villamayor). 
In short, 
the reformulation turns the problem of embedded resolution of 
singularities into a game of reducing the order of an ideal on 
a nonsingular ambient variety. 
We start from a triplet of data 
$(W, ({\mathcal I},a), E)$, where $W$ is a nonsingular variety 
over $k$, $({\mathcal I},a)$ is 
a pair consisting of a \emph{nonzero} coherent ideal sheaf 
${\mathcal I}$ on $W$ and a fixed positive 
integer $a \in {\mathbb Z}_{> 0}$, and where $E$ is a simple normal 
crossing divisor on $W$, called a boundary (divisor). 
We define 
its singular locus to be 
$\mathrm{Sing}({\mathcal I},a) := \{P \in W \mid 
\mathrm{ord}_P({\mathcal I}) \geq a\}$. 
Then we are required 
to find a sequence of transformations (see \S 3.1 for the precise 
definition of a transformation) 
starting from 
%with 
$(W_0,({\mathcal I}_0,a),E_0)=(W,({\mathcal I},a),E)$ 
 
\begin{align*} 
(W_0,({\mathcal I}_0,a),E_0) & \leftarrow \!\cdots\! \leftarrow 
(W_i,({\mathcal I}_i,a),E_i)  \overset{\pi_{i+1}}\leftarrow 
(W_{i+1},({\mathcal I}_{i+1},a),E_{i+1}) \\ 
& \leftarrow 
\!\cdots\! \leftarrow (W_l,({\mathcal I}_l,a),E_l) 
\end{align*} 
such that $\mathrm{Sing}({\mathcal I}_l,a) = \emptyset$. 
That is to say, the order of the last ideal ${\mathcal I}_l$ is 
everywhere below $a$. 
We call such a sequence ``resolution of 
singularities for $(W, ({\mathcal I},a), E)$''. 
 
In this reformulation, the inductive scheme of the problem can 
be stated, though naive, simply as follows: Given 
a triplet of data 
$(W, ({\mathcal I},a), E)$, find another triplet of data 
$(H,({\mathcal J},b), F)$ with 
$H \underset{\text{closed}}\subset W$ and 
$\dim H = \dim W - 1$ such that constructing resolution of 
singularities for $(W, ({\mathcal I},a), E)$ is equivalent 
to constructing one for $(H,({\mathcal J},b),F)$, a property 
symbolically denoted by 
$$(W, ({\mathcal I},a), E) \underset{\mathrm{equivalent}} 
\sim (H,({\mathcal J},b),F).$$ 
 
In \S 3.2, we discuss the inductive scheme on dimension of 
the algorithm in characteristic zero. 
It starts with the 
following key inductive lemma: Given $(W, ({\mathcal I},a), E)$ 
assumed to be under a certain condition $(\star)$, the lemma 
constructs $(H,({\mathcal J},b),F)$ with 
$H \underset{\text{closed}}\subset W$ and $\dim H = \dim W - 1$ 
(locally around a fixed point $P \in W$) which satisfies 
one of the following two. 
 
\smallskip 
 
\indent{\rm (i)}\quad The ideal ${\mathcal J}$ is a zero sheaf, 
i.e., ${\mathcal J} \equiv 0$:  In this case, we take 
the transformation with center $H =  \mathrm{Sing}({\mathcal I},a)$ 
$$(W, ({\mathcal I},a), E) \leftarrow (\widetilde{W}, 
(\widetilde{\mathcal I},a), \widetilde{E}).$$ 
After the transformation, we have 
$\mathrm{Sing}(\widetilde{\mathcal I},a) = \emptyset$ 
and hence resolution of singularities is achieved. 
 
\indent{\rm (ii)}\quad The ideal ${\mathcal J}$ is \emph{not} a zero sheaf, 
i.e., ${\mathcal J} \not\equiv 0$:  In this case, we have 
$$(W, ({\mathcal I},a), E) \underset{\mathrm{equivalent}}\sim 
(H,({\mathcal J},b),F).$$ 
Therefore, by constructing resolution of singularities for 
$(H$, 
$({\mathcal J},b), F)$ by induction on dimension, we achieve 
resolution of singularities for $(W, ({\mathcal I},a), E)$. 
 
\smallskip 
 
The hypersurface ``$H$'' in the key inductive lemma 
is called \emph{a hypersurface of maximal contact}. 
 
There are three shortcomings of the above key inductive lemma when we look at the goal of establishing the algorithm in characteristic zero: 
 
\medskip 
 
\indent{\rm \textcircled{\footnotesize 1}}\quad 
we have to impose condition 
$(\star)$ on $(W,({\mathcal I},a),E)$, 
 
\indent{\rm \textcircled{\footnotesize 2}}\quad the construction of 
$(H,({\mathcal J},b),F)$ is only local, and 
 
\indent{\rm \textcircled{\footnotesize 3}}\quad the invariant 
``$\mathrm{ord}$'' may increase after a transformation, even 
though our ultimate goal is to reduce its value to be below 
the fixed level $a$. 
 
\medskip 
 
In order to overcome these shortcomings, we adopt the following mechanism. 
 
\medskip 
 
\indent{\rm (1)}\quad We introduce a pair of invariants 
$(\mathrm{w\text{-}ord},s)$ and its associated triplet of data $(W,({\mathcal K},\kappa),G)$, called the modification of the original triplet $(W,({\mathcal I},a),E)$, having the following properties. 
(We remark that, when we want to emphasize the dimension of the ambient variety, we add ``$\dim$'' to the pair of invariants as the first factor, making the pair into a triplet $(\dim, \mathrm{w\text{-}ord},s)$.) 
 
\smallskip 
 
\indent$\bullet$\quad The maximum locus of the pair of invariants coincides 
with the singular locus of the modification, i.e., 
$$\mathrm{MaxLocus}(\mathrm{w\text{-}ord},s) = \mathrm{Sing}({\mathcal K},\kappa).$$ 
Moreover, after each transformation, the value of the pair 
$(\mathrm{w\text{-}ord},s)$ never increases, 
and the locus where the value of the pair takes the same maximum value as the original one coincides with the singular locus of the transformation of the modification. 
(Note that the transformations of $(W,({\mathcal K},\kappa),G)$ induce those of $(W,({\mathcal I},a),E)$.)  This means that resolution of singularities for $(W,({\mathcal K},\kappa),G)$ implies the strict decrease of the (maximum) value of the pair $(\mathrm{w\text{-}ord},s)$. 
 
\indent$\bullet$\quad Even though the original triplet 
$(W,({\mathcal I},a),E)$ may not satisfy condition $(\star)$, the modification $(W,({\mathcal K},\kappa),G)$ does. 
 
\smallskip 
 
\indent{\rm (2)}\quad We apply the key inductive lemma to 
$(W,({\mathcal K},\kappa),G)$. 
We find a triplet $(H,({\mathcal J},b),F)$ with $H \underset{\text{closed}}\subset W$ and $\dim H = \dim W - 1$ such that we are either in Case (i), or in Case (ii) where resolution of singularities for $(H,({\mathcal J},b),F)$ implies the one for $(W,({\mathcal K},\kappa),G)$. 
 
\indent{\rm (3)}\quad In Case (i), by a single blow up with center $H$, 
we achieve resolution of singularities for $(W,({\mathcal K},\kappa),G)$. 
In Case (ii), by induction on dimension, we achieve resolution of singularities for $(H,({\mathcal J},b),F)$, hence for $(W,({\mathcal K},\kappa),G)$. 
In both cases, we have the strict 
decrease of the (maximum) value of the pair $(\mathrm{w\text{-}ord},s)$.

\indent{\rm (4)}\quad Repeatedly decreasing the value of the pair 
$(\mathrm{w\text{-}ord},s)$ this way, we reach the case where $\mathrm{w\text{-}ord} = 0$. 
The condition $\mathrm{w\text{-}ord} = 0$ is equivalent to saying 
that the ideal is generated by some monomial of the defining equations of the components in the boundary divisor $E$. 
Thus we call the case where 
$\mathrm{w\text{-}ord} = 0$ \emph{the monomial case}. 
 
\indent{\rm (5)}\quad Finally, we have only to construct 
resolution of singularities in the monomial case, which 
can be done easily in characteristic zero. 
 
\medskip 
 
The description of the inductive scheme above is only local, and hence 
we have overcome shortcomings \textcircled{\footnotesize 1} 
and \textcircled{\footnotesize 3} only so far. 
The way we overcome shortcoming \textcircled{\footnotesize 2} 
is discussed in \S 3.5 via the strand of invariants woven in \S 3.3. 
 
In \S 3.3, we present the inductive scheme explained in \S 3.2 in terms of weaving the strand of invariants ``$\inv_{\mathrm{classic}}$''. 
The strand ``$\inv_{\mathrm{classic}}$'' consists of the units of the form 
$(\dim H^j, \mathrm{w\text{-}ord}^j,s^j)$, the dimension of the hypersurface of maximal contact $H_j$ followed by the pair as described in \S 3.1 computed from the triplet $(H^j,({\mathcal J}^j,b^j),F^j)$ at the $j$-th stage, and ends either with $(\dim H^m,\infty)$ or with $(\dim H^m,0,\Gamma)$ depending on whether the last triplet $(H^m,({\mathcal J}^m,b^m),F^m)$ is in Case (i) of the key inductive lemma or it is in the monomial case. 
That is to say, ``$\inv_{\mathrm{classic}}$'' takes the following form 
\begin{align*} 
\text{``$\inv_{\mathrm{classic}}$''} 
=\ & 
(\dim H^0 = \dim W,\mathrm{w\text{-}ord}^0,s^0) \cdots 
(\dim H^j, \mathrm{w\text{-}ord}^j,s^j)  \\ 
&\cdots (\dim H^{m-1}, \mathrm{w\text{-}ord}^{m-1},s^{m-1}) 
\begin{cases} 
(\dim H^m, \infty), 
\ \text{or} 
\\ 
(\dim H^m,0,\Gamma). 
\end{cases} 
\end{align*} 
Note that we do not include the invariant $s$ in the last unit. 
 
We choose the center of blow up to be 
$H^m$ when $\mathrm{w\text{-}ord}^m = \infty$ 
according to Case (i) of the key inductive lemma, or 
the maximum locus of the invariant $\Gamma$ on $H^m$ 
when $\mathrm{w\text{-}ord}^m = 0$ according to 
the procedure in the monomial case (which is discussed in \S 3.4). 
This is equivalent to choosing the center of blow up to be 
the maximum locus of ``$\inv_{\mathrm{classic}}$''. 
This leads to the decrease of the (maximum) value of 
``${\inv_{\mathrm{classic}}}$''.  Showing that 
the (maximum) value of ``$\inv_{\mathrm{classic}}$'' 
can not decrease infinitely many times, we achieve 
resolution of singularities for $(W, ({\mathcal I},a), E)$. 
 
In \S 3.4, we discuss the procedure of constructing resolution of singularities in the monomial case, the only remaining task to complete the classical algorithm.  In characteristic zero, this can be done easily and purely from the combinatorial data obtained by looking at the monomial in consideration, manifested as the invariant $\Gamma$. 
 
The strand  ``$\inv_{\mathrm{classic}}$'' a priori depends on the 
choice of the hypersurfaces of maximal contact we take in the process 
of weaving, and it is a priori only locally defined.  However, 
the strand ``$\inv_{\mathrm{classic}}$''  is actually independent 
of the choice, and hence it is globally well-defined.  This can be 
shown classically by the so-called Hironaka's trick, or more recently 
by inserting W{\l}odarczyk's ``homogenization'' or 
the first author's ``differential saturation'' into 
the construction of the modification.  Therefore, the process 
of resolution of singularities, where we take the center of blow up 
to be the maximum locus of  ``$\inv_{\mathrm{classic}}$'', is also 
globally well-defined.  This is how we overcome shortcoming 
\textcircled{\footnotesize 2} of the key inductive lemma, 
accomplishing the globalization of the algorithm in \S 3.5. 
 
This completes the overview of \S 3. 
 
\medskip 
 
\S 4 is devoted to describing the general mechanism of our algorithm 
in positive characteristic, which is closely modeled upon the algorithm 
in characteristic zero explained in \S 3.  (We remark that our algorithm 
is also valid in characteristic zero.) 
 
In \S 4.1, we give the statement of a further reformulation 
(of Problem 2), which allows us to present the inductive structure 
on the invariant ``$\sigma$''.  We start from a triplet of data 
$(W, {\mathcal R}, E)$, where we replace the pair $({\mathcal I},a)$ 
in the classical triplet $(W,({\mathcal I},a),E)$ with ${\mathcal R}$.  
Here 
${\mathcal R} = \bigoplus_{a \in {\mathbb Z}_{\geq 0}}({\mathcal I}_a,a)$ 
represents an idealistic filtration of i.f.g.\! type (short for 
``integrally and finitely generated type''), i.e., a finitely 
generated and graded (by the nonnegative integer 
$a \in {\mathbb Z}_{\geq 0}$ called the level of the ideal 
${\mathcal I}_a$ in the first factor) ${\mathcal O}_W$-algebra 
satisfying the condition 
${\mathcal O}_W = {\mathcal I}_0 \supset {\mathcal I}_1 \supset 
{\mathcal I}_2 \supset \cdots \supset {\mathcal I}_a \supset \cdots$.  
We define its singular 
locus to be $\mathrm{Sing}({\mathcal R}) := \{P \in W \mid 
\mathrm{ord}_P({\mathcal I}_a) \geq a\quad 
\forall a \in {\mathbb Z}_{\geq 0}\}$. 
Then we are required to find a sequence of transformations 
(see \S 4.1 for the precise definition of a transformation) 
starting from 
$(W_0,{\mathcal R}_0,E_0)=(W,{\mathcal R},E)$ 
\begin{align*} 
(W_0,{\mathcal R}_0,E_0) & \leftarrow \cdots  \leftarrow 
(W_i,{\mathcal R}_i,E_i)  \overset{\pi_{i+1}}\leftarrow 
(W_{i+1},{\mathcal R}_{i+1},E_{i+1}) \\ 
& \leftarrow \cdots \leftarrow (W_l,{\mathcal R}_l,E_l) 
\end{align*} 
such that $\mathrm{Sing}({\mathcal R}_l,a) = \emptyset$. 
We call such a sequence ``resolution of singularities for 
$(W, {\mathcal R}, E)$''.  So far it is perfectly parallel 
to the story in characteristic zero, and there is nothing 
unique to the case in positive characteristic.  In fact, 
resolution of singularities for $(W,({\mathcal I},a),E)$ 
is equivalent to resolution of singularities for 
$(W,{\mathcal R},E)$ where the idealistic filtration of i.f.g.\! 
type is given by the formula ${\mathcal R} 
= \bigoplus_{n \in {\mathbb Z}_{\geq 0}}({\mathcal I}^{\lceil {n}/{a}\rceil},n)$.  
However, the reformulation allows us to introduce the invariant 
$\sigma$, which plays the key role in our inductive scheme. 
 
In \S 4.2, we discuss the inductive scheme of our algorithm on 
the invariant $\sigma$ in positive characteristic.  Here, unlike 
in characteristic zero, we have no key inductive lemma.  In fact, 
the statement of the key inductive lemma fails to hold in 
positive characteristic as is demonstrated by an example due to R. Narasimhan. 
That is to say, there is \emph{no} smooth Hypersurface of 
Maximal Contact 
(HMC for short) in general.  However, we can still introduce 
the notion of \emph{a Leading Generator System} (LGS for short), 
which should be considered as a collective substitute in positive 
characteristic for the notion of an HMC in characteristic zero.  
Our basic strategy is to follow the construction of the algorithm 
in characteristic zero, replacing an HMC (leading to the induction 
on dimension) with an LGS (leading to the induction on the invariant 
$\sigma$).  The description of the new inductive scheme goes as follows. 
 
\medskip 
 
\indent{\rm (1)}\quad We introduce a triplet of invariants 
$(\sigma, \widetilde{\mu},s)$. 
 
If $(\sigma,\widetilde{\mu},s) = (\sigma,\infty,0)$ or $(\sigma,0,0)$, 
then we do not construct the modification $(W',{\mathcal R}',E')$. 
 
\smallskip 
 
\indent$\bullet$\quad In case $(\sigma,\widetilde{\mu},s) = (\sigma,\infty,0)$, 
we blow up with center $C = \mathrm{Sing}({\mathcal R})$. 
The nonsingularity of $C$ is guaranteed by the Nonsingularity Principle (cf. \cite{K} \cite{KM}), while the transversality of $C$ to the boundary $E$ is guaranteed by the invariant $s = 0$.  After the blow up, the singular locus becomes empty, and resolution of singularities for $(W,{\mathcal R},E)$ is achieved. 
 
\indent$\bullet$\quad In case $(\sigma,\widetilde{\mu},s) = (\sigma,0,0)$, 
we are in the monomial case by definition, and we go to Step (5). 
 
\smallskip 
 
If $(\sigma, \widetilde{\mu},s) \neq (\sigma,\infty,0)$ or $(\sigma,0,0)$, 
then we construct its associated triplet of data $(W',{\mathcal R}',E')$, called the modification of the original triplet $(W,{\mathcal R},E)$, having the following properties.  (Actually the ambient space for the modification stays the same, i.e., $W' = W$.) 
 
\smallskip 
 
\indent$\bullet$\quad Resolution of singularities for $(W',{\mathcal R}',E')$ 
implies the strict decrease of the (maximum) value of the triplet $(\sigma, \widetilde{\mu},s)$.  Note that the value of the triplet never increases after transformations. 
 
\indent$\bullet$\quad Either the value of $\sigma$ strictly decreases, 
or the value of $\sigma$ stays the same but the number of the components in the boundary strictly drops.  In either case, we have $(\sigma,\# E) > (\sigma',\# E')$. 
 
\smallskip 
 
\indent{\rm (2)}\quad There is \emph{no} key inductive lemma in our new setting. 
 
\indent{\rm (3)}\quad When $(\sigma,\widetilde{\mu},s) \neq (\sigma,\infty,0)$ 
or $(\sigma,0,0)$ in (1), 
we achieve resolution of singularities for $(W',{\mathcal R}',E')$ 
by induction $(\sigma,\# E)$, which implies the strict decrease of 
the (maximum) value of the triplet $(\sigma, \widetilde{\mu},s)$. 
 
\indent{\rm (4)}\quad Repeatedly decreasing the value of the triplet 
$(\sigma,\widetilde{\mu},s)$, we reach the case where 
 $(\sigma,\widetilde{\mu},s) = (\sigma,\infty,0)$ or $(\sigma,0,0)$, 
the monomial case. 
 
\indent{\rm (5)}\quad Finally we have only to construct resolution 
of singularities in the monomial case in order to achieve resolution of singularities of the original triplet $(W, {\mathcal R}, E)$.  However, the problem of resolution of singularities in the monomial case in positive characteristic is quite subtle and very difficult.  We only provide a solution in dimension 3 in \S 5. 
 
\smallskip

In \S 4.3, we present the inductive scheme explained in \S 4.2 in terms of weaving the strand of invariants ``$\inv_{\mathrm{new}}$''.  The strand ``$\inv_{\mathrm{new}}$'' consists of the units of the form $(\sigma^j,\widetilde{\mu}^j,s^j)$ computed from the triplet $(W^j,{\mathcal R}^j,E^j)$ at the $j$-th stage, and ends either with $(\sigma^m,\infty,0)$ or with $(\sigma^m,0,0)$.  That is to say, ``$\inv_{\mathrm{new}}$'' takes the following form 
\begin{align*} 
\text{``$\inv_{\mathrm{new}}$''} 
=\ & 
(\sigma^0,\widetilde{\mu}^0,s^0) \cdots (\sigma^j,\widetilde{\mu}^j,s^j) 
\\ & 
\cdots 
(\sigma^{m-1},\widetilde{\mu}^{m-1},s^{m-1}) 
\begin{cases} 
(\sigma^m,\infty,0), 
\ \text{or}\\ 
(\sigma^m,0,0). 
\end{cases} 
\end{align*} 
Note that we \emph{do} compute and include the invariant $s$ 
in the last unit, in contrast to the weaving of ``$\inv_{\mathrm{classic}}$''. 
 
Resolution of singularities of the last $m$-th triplet $(W^m,{\mathcal R}^m,E^m)$ can be achieved by taking the transformation with center $\mathrm{Sing}({\mathcal R}^m)$ when $(\sigma^m,\widetilde{\mu}^m,s^m) = (\sigma^m,\infty,0)$ or by resolution of singularities in the monomial case when $(\sigma^m,\widetilde{\mu}^m,s^m) = (\sigma^m,0,0)$.  This leads to the decrease of the value of ``$\inv_{\mathrm{new}}$''.  Showing that the value of ``$\inv_{\mathrm{new}}$''  can not decrease infinitely many times, we (should) achieve resolution of singularities for $(W,{\mathcal R},E)$ (assuming that the problem of resolution of singularities in the monomial case is solved). 
 
There is one important remark to make.  In \S 4.2 we do \emph{not} claim 
that the maximum locus of the triplet $(\sigma, \widetilde{\mu},s)$ coincides with the singular locus $\mathrm{Sing}({\mathcal R}')$ of the modified idealistic filtration, and 
in \S 4.3 we do \emph{not} claim that 
the strand ``$\inv_{\mathrm{new}}$'' is a global invariant whose maximum locus gives the globally well-defined center of blow up for the algorithm.  The discussion of our new inductive scheme and the weaving of the new strand of invariants are, therefore, only local.  (See Remark 3 for the precise meaning of ``local''.)  In fact, ``$\inv_{\mathrm{new}}$'' as presented is independent of the choice of the LGS's we take in the process of weaving, just like ``$\inv_{\mathrm{classic}}$'' is independent of the choice of the HMC's we take in the process of weaving in characteristic zero.  Nevertheless, the gap between $\mathrm{MaxLocus}(\sigma, \widetilde{\mu},s)$ and $\mathrm{Sing}({\mathcal R}')$ may occur when $\widetilde{\mu} = 1$, and hence that the maximum locus of ``$\inv_{\mathrm{new}}$'' does not provide the global center of blow up as it is.  This calamity is unique to our setting, and never existed in the classical setting.  We can fix this calamity by making certain adjustments to the strand, the description of which is rather technical and will be published elsewhere.  Therefore, in this paper, we restrict ourselves to the local description, which, we believe, still captures the essence of the inductive scheme on the invariant $\sigma$. 
 
In \S 4.4, we mention briefly the reason why resolution of singularities in the monomial case is difficult in positive characteristic, while it is easy in characteristic zero. 
 
This completes the overview of \S 4. 
 
\bigskip 
 
\S 5 is devoted to the detailed discussion of resolution of singularities in the monomial case in dimension 3, incorporating the method recently developed by Benito-Villamayor into our framework with some improvements.  In fact, we introduce a new invariant, which strictly drops after each blow up and hence effectively shows the termination of the procedure.  This is the most subtle and difficult part of this paper. 
 
The invariant $\tau$ represents the number of the elements in the LGS, which takes the value $0,1,2,3$ in dimension 3 and which never decreases under transformations.  When $\tau = 0,2,3$, resolution of singularities in the monomial case is rather easy.  Therefore, we focus our attention to the case where $\tau = 1$ in \S 5. 
 
We are thus in the situation where (analytically) we have a unique element $(h,p^e)$ in the LGS  and where $h$ is of the following form with respect to a regular system of parameters $(x,y,z)$ at $P \in W$, taken from $\widehat{{\mathfrak m}_P} \subset \widehat{{\mathcal O}_{W,P}}$, 
$$h = z^{p^e} + a_1z^{p^e} + \cdots + a_{p^e} 
\quad\text{with}\quad 
a_i \in k[[x,y]]$$ 
satisfying $\mathrm{ord}_P(a_i) > i$ for $i = 1, \ldots, p^e$. 
We also have a monomial 
$$(x^{\alpha}y^{\beta},a) \in {\mathcal R}$$ 
of the defining equations $x$ and $y$ of the components of the boundary divisor $E$ (actually $E_{\text{young}}$) such that every element $(f,\lambda) \in {\mathcal R}$ is divisible by $(x^{{\alpha}}y^{{\beta}})^{\lambda/a}$ 
modulo $h$, a consequence of the condition $\widetilde{\mu} = 0$. 
 
A naive idea for resolution of singularities in the monomial case may be stated as follows: Carry out the algorithm for resolution of singularities of $(x^{\alpha}y^{\beta},a)$ on the hypersurface $\{z = 0\}$. 
 
A bad news is that the above naive idea does not work for the reason that $(z,1) \not\in {\mathcal R}$ in general, which has the following bad implications: 
 
\medskip 
 
\indent{\rm (1)}\quad Even though we see that the coefficients $a_i$ 
for $i = 1, \ldots, p^e - 1$ are under control (in the sense that $a_i$ is divisible by $(x^{{\alpha}}y^{{\beta}})^{i/a}$), the constant term $a_{p^e}$ is not well controlled.   (The idealistic filtration ${\mathcal R}$ 
in the monomial case is \emph{not} differentially saturated but 
only 
%$\{\frac{\partial^n}{\partial z^n} 
%\mid n \in {\mathbb Z}_{\geq 0}\}$-saturated 
saturated for $\{{\partial^n}/{\partial z^n} \mid n \in {\mathbb Z}_{\geq 0}\}$ 
in general.  However, this is good enough to control 
the coefficients $a_i$ for $i = 1, \ldots, p^e - 1$.) 
This leads to the calamity that a candidate for the center determined by the naive idea may not even be contained in the singular locus. 
 
\indent{\rm (2)}\quad The hypersurface $\{z = 0\}$ may not be 
of maximal contact.  That is to say, after a blow up, its strict transform may no longer ``contact'' (contain) the singular locus at all. 
 
\medskip 
 
In \S 5.1, we introduce the process of  ``cleaning'' in order to eliminate the ``mess'' described in the bad news above.  The idea of ``cleaning'' can be seen already in the work of Abhyankar and Hironaka, in the definition of the ``residual order''.  Here we follow the process refined by Benito-Villamayor.  After cleaning, the invariant 
$$\mathrm{H}(P) := \min\left\{\frac1{p^e}{\mathrm{ord}_P(a_{p^e})}, 
\ \mu(P) = \frac{\alpha + \beta}{a}\right\}$$ 
is independent of the choice of $h$ or a regular system of parameters $(x,y,z)$.  The invariant $\mathrm{H}$ is well-defined not only at a closed point $P \in W$ but also at the generic point $\xi_{\{x = 0\}}$ (resp. $\xi_{\{y = 0\}}$) of a component $\{x = 0\}$ (resp. $\{y = 0\}$) of the boundary divisor $E$. 
 
In \S 5.2, we give the description of the procedure of resolution of singularities, depending on the description of the singular locus.  Since we are in the monomial case, we have $\mathrm{Sing}({\mathcal R}) \subset \{x = 0\} \cup \{y = 0\}$.  The description of the singular locus restricted to $\{x = 0\}$, locally around $P$, is given as follows according to the value of the invariant $\mathrm{H}$: 
$$\mathrm{Sing}({\mathcal R}) \cap \{x = 0\} = 
\begin{cases} 
\{z = x = 0\} &\text{if}\quad \mathrm{H}(\xi_{\{x = 0\}}) \geq 1 \\ 
P &\text{if}\quad \mathrm{H}(\xi_{\{x = 0\}}) < 1. 
\end{cases} 
$$ 
That is to say, by looking at the invariant $\mathrm{H}$, we can tell if the singular locus has dimension 1, where it is a nonsingular curve, or has dimension zero, where it is an isolated point.  We have a similar description of the singular locus restricted to $\{y = 0\}$. 
 
The procedure goes as follows: 
 
\medskip 
 
\indent{\rm Step 1.}\quad Check if $\dim \mathrm{Sing}({\mathcal R}) = 1$. 
If yes, then blow up the 1-dimension\-al components one by one.  Since the invariant $\mathrm{H}$ strictly deceases for the component of the boundary divisor involved in the blow up, this step comes to an end after finitely many times with the dimension of the singular locus dropping to $0$. 
 
\indent{\rm Step 2.}\quad Once $\dim \mathrm{Sing}({\mathcal R}) = 0$, 
blow up the isolated points in the singular locus. 
 
\indent{\rm Step 3.}\quad Go back to Step 1. 
 
\indent Repeat these steps. 
 
 \medskip 
 
\S 5.3 is devoted to showing termination of the procedure described in \S 5.2.  We closely follow the beautiful and delicate argument recently 
developed by Benito-Villamayor, which analyzes the behavior of the 
monomial, the invariant $\mathrm{H}$, and the newton polygon of the 
constant term $a_{p^e}$ under blow ups.  Their argument is 
an extension of the classical ideas of Abhyankar and Hironaka, 
but highly refined taking into consideration the condition 
that we are in the monomial case.  Benito-Villamayor also uses 
``a proof by contradiction'' in some part of their argument for 
termination of the procedure.  That is to say,  they derive 
a contradiction, assuming the existence of an infinite sequence of 
the procedure.  Therefore, their argument is not effective. 
They also use some ``stratification'' of the configuration of 
the boundary divisors.  We introduce a new and explicit invariant, 
which makes the termination argument effective and which 
allows us to eliminate the use of ``stratification'' from our argument. 
 
This completes the overview of \S 5, and hence the overview 
of the entire paper. 
\end{section} 
\begin{section}{A quick review on the algorithm in characteristic zero} 
The goal of this section is to give a quick review on the algorithm 
in characteristic zero, upon which our algorithm in positive characteristic 
is closely modeled.  For the overview of this section, 
we refer the reader to \S 2. 
\begin{subsection}{Reformulation of the problem by Hironaka} 
First, we present the reformulation of the problem by Hironaka.  The form of the presentation we use here is due to Villamayor. 
\begin{problem}[Hironaka's reformulation] Suppose we are given the triplet of data $(W,({\mathcal I},a),E)$, where 
$W$ is a nonsingular variety over $k$, 
$({\mathcal I},a)$ is a pair consisting of a coherent ideal sheaf 
$\mathcal I$ on $W$ and a positive integer $a \in {\mathbb Z}_{> 0}$, 
and $E$  is a simple normal crossing divisor on $W$. 
 
We define its singular locus to be 
$$\mathrm{Sing}({\mathcal I},a) := \{P \in W \mid \mathrm{ord}_P({\mathcal I}) \geq a\}.$$ 
We only consider a \emph{nonzero} ideal sheaf ${\mathcal I}$ 
for the resolution problem. 
 
Then construct a sequence of transformations 
starting from $(W_0,({\mathcal I}_0,a),E_0)=(W,({\mathcal I},a),E)$ 
\begin{align*} 
(W_0,({\mathcal I}_0,a),E_0) & \leftarrow \!\cdots\! \leftarrow 
(W_i,({\mathcal I}_i,a),E_i)  \overset{\pi_{i+1}}\leftarrow 
(W_{i+1},({\mathcal I}_{i+1},a),E_{i+1}) \\ 
& \leftarrow \!\cdots\! \leftarrow (W_l,({\mathcal I}_l,a),E_l) 
\end{align*} 
such that $\mathrm{Sing}({\mathcal I}_l,a) = \emptyset$. 
We call such a sequence ``resolution of singularities for $(W,({\mathcal I},a),E)$''. 
 
We note that the transformation 
$$(W_i,({\mathcal I}_i,a),E_i)  \overset{\pi_{i+1}}\leftarrow (W_{i+1},({\mathcal I}_{i+1},a),E_{i+1})$$ 
is required to satisfy the following conditions: 
\begin{enumerate} 
\item $W_i \overset{\pi_{i+1}}{\leftarrow} W_{i+1}$ is a blow up 
with smooth center $C_i \subset W_i$, 
\item $C_i \subset \mathrm{Sing}({\mathcal I}_i,a)$, and $C_i$ is transversal to $E_i$ 
(maybe contained in $E_i$), i.e., $C_i \pitchfork E_i$, 
\item ${\mathcal I}_{i+1} = {\mathcal I}(\pi_{i+1}^{-1}(C_i))^{- a} \cdot \pi_{i+1}^{-1}({\mathcal I}_i){\mathcal O}_{W_{i+1}}$, 
\item $E_{i+1} = E_i \cup \pi_{i+1}^{-1}(C_i)$. 
\end{enumerate} 
\end{problem} 
\begin{lemma} A solution to Hironaka's reformulation provides a solution to the problem of embedded resolution of singularities. 
\end{lemma}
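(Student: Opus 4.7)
The plan is to derive embedded resolution by iterating Hironaka's reformulation: at each stage we apply the assumed solution to a triplet built from the current strict transform of $X$ and drive down a well-chosen integral invariant until $X$ becomes smooth. For clarity we sketch the argument when $X \subset W$ is a reduced hypersurface; the higher-codimension case follows by the same strategy after replacing the order of $\mathcal{I}_X$ with the Hilbert--Samuel function, or by first reducing to a suitable enveloping hypersurface of maximal order.

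Set $b_0 := \max_{P \in W}\mathrm{ord}_P(\mathcal{I}_X)$, which for a reduced hypersurface equals $\max_P \mathrm{mult}_P(X)$. If $b_0 = 1$ then $X$ is already nonsingular. Otherwise $b_0 \geq 2$, and we apply the assumed solution to Hironaka's reformulation to the triplet $(W,(\mathcal{I}_X,b_0),\emptyset)$, obtaining a sequence of blowups with smooth centers $C_i \subset \mathrm{Sing}(\mathcal{I}_i,b_0)$ terminating with $\mathrm{Sing}(\mathcal{I}_l,b_0) = \emptyset$. Next we verify that this sequence is admissible for embedded resolution. Since the $b_0$-th order locus of a reduced hypersurface has codimension $\geq 2$ in $W_i$ (its generic order on any component of $X_i$ is $1 < b_0$), the center $C_i$ cannot contain $X_i$, so the strict transform $X_{i+1}$ is well-defined. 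Moreover, because $b_0$ is the maximum and $C_i$ lies in $\mathrm{Sing}(\mathcal{I}_i,b_0)$, one has $\mathrm{mult}_{C_i}(X_i) = b_0$, whence a local computation in the blowup gives $\pi_{i+1}^{\ast}\mathcal{I}_i = \mathcal{I}(\pi_{i+1}^{-1}(C_i))^{b_0} \cdot \mathcal{I}_{X_{i+1}}$, so that Hironaka's weak transform $\mathcal{I}_{i+1}$ coincides with the strict transform ideal of $X_{i+1}$ throughout the sequence.

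Consequently $\max\mathrm{mult}(X_l) < b_0$, so the positive integer $\max\mathrm{mult}$ strictly drops after one full application of Hironaka's reformulation. Iterating, after finitely many batches we reach $\max\mathrm{mult} = 1$, i.e., the strict transform is nonsingular, which is precisely embedded resolution. The main obstacle we expect is the initial choice of integral invariant in the non-hypersurface case: the naive $\mathrm{ord}_P(\mathcal{I}_X)$ fails to detect singularities of $X$ once $\mathrm{codim}_W X \geq 2$ (for example the singular curve $V(x^2+y^3,z)$ in $\mathbb{A}^3$ has $\mathrm{ord}(\mathcal{I}_X)=1$ everywhere), and the iteration must instead descend on a genuinely multi-dimensional invariant such as the Hilbert--Samuel function, or exploit a reduction to an enveloping hypersurface of maximal order. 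Both remedies are classical in characteristic zero and constitute the standard folklore reduction that this lemma encapsulates; references such as \cite{W} spell them out in detail.
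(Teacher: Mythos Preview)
Your argument is correct for reduced hypersurfaces, but it takes a genuinely different route from the paper, and the difference is instructive.

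The paper applies Hironaka's reformulation \emph{once}, with $a=1$: set $(W,(\mathcal{I},a),E)=(W,(\mathcal{I}_X,1),\emptyset)$, so that $\mathrm{Sing}(\mathcal{I}_X,1)=X$ itself. Running the assumed resolution sequence, the strict transform $X_i$ remains an irreducible component of $\mathrm{Sing}(\mathcal{I}_i,1)$ as long as it has not been taken as a component of a center; since $\mathrm{Sing}(\mathcal{I}_l,1)=\emptyset$, some $X_m$ must eventually be a component of the smooth center $C_m$, hence $X_m$ is smooth, and truncating the sequence at year $m$ gives embedded resolution. This works uniformly in any codimension, with no case distinction and no iteration.

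The reason this trick is available is that Problem~2, as formulated in the paper, does \emph{not} require $C_i\subset\mathrm{Sing}(X_i)$ or even $C_i\subset X_i$; it only requires $C_i\not\supset X_i$. So centers are allowed to sweep through smooth parts of $X_i$, and one can afford the crude choice $a=1$. Your approach with $a=b_0=\max\mathrm{ord}$ is the natural one if you want centers inside $\mathrm{Sing}(X_i)$ (strong embedded resolution), and it is what most expositions do; but it costs you the iteration and, as you correctly flag, the nontrivial passage to higher codimension via Hilbert--Samuel or an enveloping hypersurface. The paper's $a=1$ device sidesteps all of that in exchange for the weaker conclusion.
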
 
\begin{proof} 
Given $X \subset W$, set $(W, ({\mathcal I},a), E) = (W, ({\mathcal I}_X,1), \emptyset)$.  Then $\mathrm{Sing}({\mathcal I}_0,a) = \mathrm{Sing}({\mathcal I}_X,1) = X = X_0$.  Take resolution of singularities for $(W, ({\mathcal I},a), E)$.  Observe that, if $X_i$, the strict transform of $X$ in year $i$, is an irreducible component of $\mathrm{Sing}({\mathcal I}_i,a)$ and if $X_i$ is not an irreducible component of the center $C_i$ in year $i$ (and $X_j$ has not been an irreducible component of $C_j$ in year $j$ for $0 \leq j < i$), then $X_{i+1}$ is an irreducible component of $\mathrm{Sing}({\mathcal I}_{i+1},a)$ in year $(i+1)$.  Since $\mathrm{Sing}({\mathcal I}_l,a) = \emptyset$, we conclude that $X_m$ must be an irreducible component of $C_m$ for some $m < l$.  Since the center $C_m$ is nonsingular by requirement, so is $X_m$.  Therefore, the truncation of the sequence up to year $m$ provides a sequence for embedded resolution.  (The other requirements for embedded resolution as stated in Problem 2 follow automatically from the construction.) 
\end{proof} 
The inductive scheme for solving Hironaka's reformulation can be simply stated in the following naive form. 
 
\medskip 
 
\textbf{Naive Inductive Scheme}: \emph{ Given a triplet 
$(W,({\mathcal I},a), E)$, find another triplet $(H,({\mathcal J},b),F)$ with $H \underset{\text{closed}}\subset W$ 
and $\dim H = \dim W - 1$ such that the problem of constructing 
resolution of singularities for $(W,({\mathcal I},a), E)$ is equivalent to constructing one for $(H,({\mathcal J},b),F)$, i.e., } 
$$(W, ({\mathcal I},a), E) \underset{\mathrm{equivalent}}\sim (H,({\mathcal J},b),F).$$ 
 
As it is, the above inductive scheme is too naive to hold in general.  In \S 3.2, we first state the key inductive lemma, 
which realizes the naive inductive scheme under a certain extra condition called $(\star)$, 
and then discuss how to turn it into the real inductive scheme, which works in general without the extra condition. 
\end{subsection} 
\begin{subsection}{Inductive scheme on dimension} 
\begin{lemma}[Key Inductive Lemma] Given $(W,({\mathcal I},a),E)$ and a closed point $P \in \mathrm{Sing}({\mathcal I},a)$, suppose that the following condition $(\star)$ is satisfied: 
$$(\star) \quad 
\left\{\begin{array}{ccc} 
\mathrm{ord}_P({\mathcal I}) &=& a \\ 
E &=& \emptyset. 
\end{array}\right.$$ 
Then there exists $(H,({\mathcal J},b),F)$ with 
$H \underset{\text{closed}}\subset W$ and 
$\dim H = \dim W - 1$, in a neighborhood of $P$, 
which satisfies one of the following two. 
 
\medskip 
 
\indent{\rm (i)}\quad 
The ideal ${\mathcal J}$ is a zero sheaf, i.e., ${\mathcal J} \equiv 0$:  In this case, we take the transformation with center $H =  \mathrm{Sing}({\mathcal I},a)$ 
$$(W, ({\mathcal I},a), E) \leftarrow (\widetilde{W}, (\widetilde{\mathcal I},a), \widetilde{E}).$$ 
After the transformation, we have $\mathrm{Sing}(\widetilde{\mathcal I},a) = \emptyset$ and hence we achieve resolution of singularities for $(W, ({\mathcal I},a), E)$ (in a neighborhood of $P$). 
 
\indent{\rm (ii)}\quad 
The ideal ${\mathcal J}$ is \emph{not} a zero sheaf, 
i.e., ${\mathcal J} \not\equiv 0$:  In this case, we have 
$$(W, ({\mathcal I},a), E) \underset{\mathrm{equivalent}}\sim (H,({\mathcal J},b),F).$$ 
Therefore, constructing resolution of singularities for 
$(H,({\mathcal J},$ $b),F)$ by induction on dimension, we achieve 
 resolution of singularities for 
$(W, ({\mathcal I},a), E)$ (in a neighborhood of $P$). 
\end{lemma}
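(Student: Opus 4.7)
The plan is to produce $H$ as a classical hypersurface of maximal contact and to take $(\mathcal{J},b)$ to be a suitable coefficient ideal in the sense of Villamayor, then verify the equivalence through the compatibility of the coefficient construction with permissible blow-ups.

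First, I would manufacture $H$. Since $\mathrm{ord}_P(\mathcal{I}) = a$, pick $f \in \mathcal{I}_P$ with $\mathrm{ord}_P(f) = a$. Differentiating $f$ successively $a-1$ times along carefully chosen directions in a regular system of parameters yields a local section $g$ of $\mathrm{Diff}^{a-1}(\mathcal{I})$ with $\mathrm{ord}_P(g) = 1$; this step uses $\mathrm{char}(k) = 0$ crucially so that no binomial coefficient vanishes in the process of differentiation. The element $g$ is then part of a regular system of parameters at $P$, and I set $H := V(g)$ locally around $P$, with $F := E \cap H = \emptyset$ by hypothesis $(\star)$.

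Next, I define $(\mathcal{J},b)$ as the Villamayor coefficient ideal on $H$, of the form
$$\mathcal{J} \;:=\; \sum_{i=0}^{a-1}\bigl(\mathrm{Diff}^{\,i}(\mathcal{I})\cdot\mathcal{O}_H\bigr)^{a!/(a-i)}, \qquad b := a!,$$
so that a point of $H$ lies in $\mathrm{Sing}(\mathcal{J},b)$ if and only if every differential operator of order $<a$ applied to $\mathcal{I}$ and restricted to $H$ has order $\ge a-i$ at that point. Using $\mathrm{ord}_P(\mathcal{I})=a$, one checks the key inclusion $\mathrm{Sing}(\mathcal{I},a) \subseteq H$ (any point of order $\ge a$ must annihilate $g \in \mathrm{Diff}^{a-1}(\mathcal{I})$), and moreover $\mathrm{Sing}(\mathcal{I},a) = \mathrm{Sing}(\mathcal{J},b)$ set-theoretically in a neighborhood of $P$.

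The equivalence in case (ii) then reduces to verifying that the formation of $(H,(\mathcal{J},b),F)$ commutes with permissible transformations: if $C \subset \mathrm{Sing}(\mathcal{I},a) \subset H$ is a smooth center transversal to $E$, then $C$ is automatically contained in $H$, so the blow-up of $W$ with center $C$ induces the blow-up of $H$ with the same center; the strict transform $\widetilde{H}$ remains smooth and, through the compatibility of differential operators with blow-ups, is again a hypersurface of maximal contact for the transformed triplet, with coefficient ideal coinciding with the transformation of $(\mathcal{J},b)$. This bijection between permissible sequences on $(W,(\mathcal{I},a),E)$ and on $(H,(\mathcal{J},b),F)$ is precisely the claimed equivalence. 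For case (i), if $\mathcal{J} \equiv 0$ then $\mathrm{Sing}(\mathcal{I},a) = H$ locally, so the center $H$ is smooth, the blow-up is permissible, and the transformation formula forces the controlled transform of $\mathcal{I}$ to have order $<a$ everywhere, giving $\mathrm{Sing}(\widetilde{\mathcal{I}},a) = \emptyset$.

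The main obstacle is the commutation of the coefficient ideal with permissible transformations, which is the technical heart of the lemma. The subtlety is twofold: one must show that $\widetilde{H}$ still contains the transformed singular locus (maximal contact is preserved), and that the transform of $\mathcal{J}$ equals the coefficient ideal of the transform of $\mathcal{I}$ on $\widetilde{H}$. Both facts rely in an essential way on $\mathrm{char}(k) = 0$ through the behaviour of higher-order derivatives under blow-ups; this is the point at which the positive-characteristic theory of \S 4 must depart from the classical argument and replace $H$ by a leading generator system.
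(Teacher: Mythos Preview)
Your proposal is correct and follows essentially the same approach as the paper: construct $H$ as the zero locus of an order-one element obtained by applying a differential operator of degree $a-1$ to some $f \in \mathcal{I}_P$ of order $a$ (using characteristic zero), and take $(\mathcal{J},b) = (\mathrm{Coeff}(\mathcal{I})|_H, a!)$ with the same coefficient-ideal formula $\sum_{i=0}^{a-1}\bigl(\mathrm{Diff}^i(\mathcal{I})\bigr)^{a!/(a-i)}$. The paper's proof is in fact terser than yours --- it simply writes down the triple $(H,(\mathcal{J},b),F)$ and omits the verification of the equivalence in case (ii) and the resolution in case (i), which you correctly identify as resting on the commutation of the coefficient construction with permissible blow-ups.
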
 
\begin{proof} Take $f \in {\mathcal I}_P$ such that $\mathrm{ord}_P(f) = a$.  Then there exists a differential operator $\delta$ of $\deg \delta = a - 1$ such that $\mathrm{ord}_P(\delta f) = 1$.  We note that this is exactly the place where we use the ``characteristic zero'' condition. 
 
We have only to set 
$$\left\{\begin{array}{ccl} 
H &=& \{h = 0\} \quad\text{where}\quad h = \delta f, \\ 
({\mathcal J},b) &=& (\mathrm{Coeff}({\mathcal I})|_H,a!), \\ 
F &=& E|_H = \emptyset, 
\end{array}\right.$$ 
where the ``coefficient ideal'', denoted by $\mathrm{Coeff}({\mathcal I})$, is defined by the formula 
$$\mathrm{Coeff}({\mathcal I}) := 
\sum_{j = 0}^{a - 1} \{Di\!f\!f^j({\mathcal I})\}^{{a !}/({a - j})}$$ 
with $Di\!f\!f^j({\mathcal I})$ being the sheaf characterized at the stalk 
for a point $Q \in W$ by 
$$Di\!f\!f^j({\mathcal I})_Q = 
\{\theta(g) \mid g \in {\mathcal I}_Q,\ \theta 
\text{ : a differential operator of }\deg(\theta) \leq j\}.$$ 
We note that the condition $E = \emptyset$ is only used to guarantee 
that $H$ intersects $E$ transversally.  The condition on 
the boundary divisor can be weakened and $E$ can be non-empty, 
as long as we can find 
such $H = \{h = 0\}$, with $h \in Di\!f\!f^{a-1}({\mathcal I})_P$ and 
$\text{ord}_P(h) = 1$, that is transversal to $E$. 
We then call the condition $(\star)_{\text{weakened}}$. 
\end{proof} 
\begin{remark} We remark that the statement of the key inductive lemma 
fails to hold in positive characteristic, as the following 
example by R. Narasimhan shows: 
 
Consider $(W,({\mathcal I},a),E)$ defined by 
$$\left\{\begin{array}{ccl} 
W &=& {\mathbb A}^4 = \mathrm{Spec}\ k[x,y,z,w] \quad\text{with}\quad 
\mathrm{char}(k) = 2, \\ 
({\mathcal I},a) &=& ((f),2) \quad \text{with}\quad 
f = w^2 + x^3y + y^3z + z^7x,\\ 
E &=& \emptyset. 
\end{array}\right. 
$$ 
Then we have a curve $C$, parametrized by $t$, sitting inside of the singular locus 
$$C = \{x = t^{15}, y = t^{19}, z = t^7, w = t^{32}\} \subset \mathrm{Sing}({\mathcal I},a).$$ 
Observe that the curve $C$ has full embedding dimension at the origin ${\mathbb O}$, i.e., 
$$\mathrm{embedding\text{-}}\dim_{\mathbb O}C = 4 = \dim W.$$ 
Therefore, there exists \emph{no} smooth hypersurface $H$ 
which contains the singular locus $\mathrm{Sing}({\mathcal I},a)$, and 
hence there exists \emph{no} smooth hypersurface of maximal contact. 
\end{remark} 
 
We list the shortcomings of the key inductive lemma toward establishing the algorithm for resolution of singularities for $(W,({\mathcal I},a),E)$ in general. 
 
\medskip 
 
\textbf{List of shortcomings of the key inductive lemma} 
 
\medskip 
 
\indent{\rm \textcircled{\footnotesize 1}}\quad We have to impose 
condition $(\star)$ on $(W,({\mathcal I},a),E)$. 
 
\indent{\rm \textcircled{\footnotesize 2}}\quad We construct 
$(H,({\mathcal J},b),F)$ only locally, and hence the resolution process is only locally constructed by induction. 
 
\indent{\rm \textcircled{\footnotesize 3}}\quad The invariant 
``$\mathrm{ord}$'' may strictly increase under transformations, even though our ultimate goal is to reduce the invariant ``$\mathrm{ord}$'' to be below the fixed level $a$. 
 
\medskip 
 
Now we describe the mechanism which overcomes all of the shortcomings above in one stroke, turning the key inductive lemma into the real inductive structure in characteristic zero. 
 
\medskip 
 
\textbf{Mechanism to overcome the shortcomings in the list} 
 
\medskip 
 
\noindent\underline{\rm Outline of the mechanism} 
 
\medskip 
 
\indent{\rm (1)}\quad Given $(W,({\mathcal I},a),E)$ (Precisely speaking, 
the triplet sits in the middle of the sequence, say in year ``$i$'', 
for resolution of singularities.  However, we omit 
the subscript ``$(\ )_i$'' indicating the year for simplicity 
of the notation.), we introduce a pair of invariants 
$(\mathrm{w\text{-}ord},s)$ and its associated triplet of data 
$(W,({\mathcal K},\kappa),G)$, called the modification of 
the original triplet, having the following properties. 
 
\medskip 
 
\indent$\bullet$\quad The maximum locus of the pair 
$(\mathrm{w\text{-}ord},s)$, which is an upper semi-continuous function, coincides with the singular locus $\mathrm{Sing}({\mathcal K},\kappa)$ of the modification, i.e., 
$$\mathrm{MaxLocus}(\mathrm{w\text{-}ord},s) = \mathrm{Sing}({\mathcal K},\kappa).$$ 
Moreover, after each transformation, the value of the pair 
$(\mathrm{w\text{-}ord},s)$ never increases, and the locus where the value of the pair takes the same maximum value as the original one coincides with the singular locus of the transformation of the modification.  (Note that the transformations of $(W,({\mathcal K},\kappa),G)$ induce those of $(W,({\mathcal I},a),E)$.)  This means that resolution of singularities for $(W,({\mathcal K},\kappa),G)$ implies the strict decrease of the (maximum) value of the pair $(\mathrm{w\text{-}ord},s)$. 
 
\indent$\bullet$\quad Even though the original triplet 
$(W,({\mathcal I},a),E)$ may not satisfy condition $(\star)$ (or condition $(\star)_{\mathrm{weakened}}$), the modification $(W,({\mathcal K},\kappa),G)$ satisfies $(\star)_{\mathrm{weakened}}$. 
 
\medskip 
 
\indent{\rm (2)}\quad We apply the key inductive lemma to 
$(W,({\mathcal K},\kappa),G)$.  We find a triplet $(H,({\mathcal J},b),F)$ with $H \underset{\text{closed}}\subset W$ 
and $\dim H = \dim W - 1$ such that we are either in Case (i), 
or in Case (ii) where resolution of singularities for $(H,({\mathcal J},b),F)$ implies the one for $(W,({\mathcal K},\kappa),G)$.  (We note that $G = E_{\mathrm{new}}$ may not be empty.  However, we can still find a hypersurface of maximal contact $H$ which is transversal to $G = E_{\mathrm{new}}$.  Therefore, the triplet  $(H,({\mathcal J},b),F)$ with $F = G|_H$ works.) 
 
\indent{\rm (3)}\quad In Case (i), by a simple blow up with center $H$, we 
accomplish resolution of singularities for $(W,({\mathcal K},\kappa),G)$.  In Case (ii), by induction on dimension, we achieve resolution of singularities for $(H,({\mathcal J},b),F)$, hence for $(W,({\mathcal K},\kappa),G)$.  In both cases, we have the strict decrease of the (maximum) value of the pair $(\mathrm{w\text{-}ord},s)$. 
 
\indent{\rm (4)}\quad By repeating the above procedures (1), (2), (3) and 
decreasing the value of the pair $(\mathrm{w\text{-}ord},s)$, we reach the stage where $\mathrm{w\text{-}ord} = 0$, i.e., $\overline{\mathcal I} = {\mathcal O}_W$.  This means that we are in the monomial case, where the ideal ${\mathcal I}$ is generated by some monomial of the defining equations of the components in $E_{\mathrm{young}} \subset E$. 
 
\indent{\rm (5)}\quad Finally, we have only to construct 
resolution of singularities for the triplet in the monomial case, which can be done easily in characteristic zero. 
 
\medskip 
 
We give the detailed and explicit description of the pair $(\mathrm{w\text{-}ord},s)$, the triplets $(W,({\mathcal K},\kappa),G)$ and $(H,({\mathcal J},b),F)$ in the following: 
 
\medskip 
 
\noindent\underline{\rm Description of the pair ($\mathrm{w\text{-}ord},s)$} 
 
\medskip 
 
\indent$\circ$\quad 
$\mathrm{w\text{-}ord}$: It is the so-called 
(normalized) weak order.  It is the order of $\overline{{\mathcal I}}$ (divided by the level $a$), where $\overline{{\mathcal I}}$ is obtained from ${\mathcal I}$ by dividing it as much as possible by the defining equations of the components in $E_{\mathrm{young}}$.  The symbol $E_{\mathrm{young}}$ refers to the union of the exceptional divisors created after the process of resolution of singularities began. 
 
\smallskip 
 
\indent$\circ$\quad 
$s$: It is the number of the components 
in $E_{\mathrm{old}} = E \setminus E_{\mathrm{new}}$.  The symbol $E_{\mathrm{new}}$ refers to the union of the exceptional divisors created after the time when the current value of $\mathrm{w\text{-}ord}$ first started. 
 
\medskip 
 
\noindent\underline{\rm Description of the triplet $(W,({\mathcal K},\kappa),G)$} 
 
$$ 
W = W, \quad 
({\mathcal K},\kappa) = \mathrm{Bdry} 
\left(\mathrm{Comp}({\mathcal I},a)\right), 
\quad\text{and}\quad 
G = E \setminus E_{\mathrm{old}} = E_{\mathrm{new}}, 
$$ 
where 
 
\indent$\bullet$\quad 
$\mathrm{Comp}({\mathcal I},a)$ is either 
the transformation of the one in the previous year 
if $\mathrm{w\text{-}ord}$ stays the same, or 
$$({\mathcal I}^M + \overline{\mathcal I}^a, M \cdot a) 
\quad\text{with}\quad M =\mathrm{w\text{-}ord} \cdot a 
$$ 
if $\mathrm{w\text{-}ord}$ strictly decreases, 
and where 
 
\indent$\bullet$\quad 
$\mathrm{Bdry}\left(\mathrm{Comp}({\mathcal I},a)\right)$ 
is either 
the transformation of the one in the previous year 
if $(\mathrm{w\text{-}ord},s)$ stays the same, or 
$$({\mathcal C} + 
(\sum_{D \subset E_{\mathrm{old}}}{\mathcal I}(D))^c, c) 
\quad\text{where}\quad 
({\mathcal C},c) = \mathrm{Comp}({\mathcal I},a),$$ 
if $(\mathrm{w\text{-}ord},s)$ strictly decreases. 
 
\medskip 
 
We note that we have $E_{\mathrm{new}} \subset E_{\mathrm{young}}$ and that they may not be equal in general.  We also note that, if the value of the pair $(\mathrm{w\text{-}ord},s)$ stays the same as in the previous year, then $({\mathcal K},\kappa)$ is the transformation of the one in the previous year.  We remark that the symbols ``$\mathrm{Comp}$'' and ``$\mathrm{Bdry}$'' represent the ``Companion'' modification and the ``Boundary'' modification, respectively. 
 
\medskip 
 
\noindent\underline{\rm Description of the triplet $(H,({\mathcal J},b),F)$ } 
 
\medskip 
 
\noindent\emph{Case$\colon 
$ The value of the pair $(\mathrm{w\text{-}ord},s)$ 
stays the same as in the previous year}. 
 
In this case, we simply take $(H,({\mathcal J},b),F)$ to be 
the transformation of the one in the previous year under blow up. 
 
\medskip 
 
\noindent\emph{Case$\colon 
$ The value of the pair $(\mathrm{w\text{-}ord},s)$ 
strictly decreases}. 
 
In this case, we construct $(H,({\mathcal J},b),F)$ as follows. 
$$\left\{\begin{array}{ccl} 
H &=& \text{the\ strict\ transform\ of\ }H_{i_{\mathrm{old}}},\\ 
({\mathcal J},b) &=& (\text{Coeff}({\mathcal K})|_H,\kappa !),\\ 
F &=& G|_H, 
\end{array}\right. 
$$ 
where $H_{i_\mathrm{old}}$ is taken in the following way: 
We go back to the year $\iota := i_{\mathrm{old}}$ 
when the current value of $\mathrm{w\text{-}ord}$ first started. 
Let $({\mathcal C}_{\iota},c_{\iota}) 
= \mathrm{Comp}({\mathcal I}_{\iota},a)$ 
be the companion modification constructed in year $\iota$. 
We take $f_{\iota} \in \left({\mathcal C}_{\iota}\right)_{P_{\iota}}$ and a differential operator $\delta_{\iota}$ of $\deg \delta_{\iota} = c_{\iota} - 1$ such that $\mathrm{ord}_{P_{\iota}}(f_{\iota}) = c_{\iota}$ and $\mathrm{ord}_{P_{\iota}}(\delta_{\iota} f_{\iota}) = 1$. 
We set $H_{i_{\mathrm{old}}}= 
H_{\iota} = \{\delta_{\iota} f_{\iota} = 0\}$. 
 
\medskip 
 
This completes the discussion of the mechanism to achieve resolution of singularities for $(W,({\mathcal I},a),E)$ in general.  We note, however, that we only overcome shortcomings \textcircled{\footnotesize 1} and \textcircled{\footnotesize 3} on the list, since the description so far is only local.  We discuss in \S 3.5 how to overcome shortcoming \textcircled{\footnotesize 2} and how to globalize the procedure via the strand of invariants woven in the next section. 
\end{subsection} 
\begin{subsection}{Weaving of the classical strand of invariants 
``$\inv_{\mathrm{classic}}$''} 
In \S 3.3, we interpret the inductive scheme explained in \S 3.2 in terms of weaving the strand of invariants ``$\inv_{\mathrm{classic}}$'', whose maximum locus (with respect to the lexicographical order) determines the center of blow up for the algorithm for resolution of singularities in characteristic zero. 
 
We weave the strand of invariants ``$\inv_{\mathrm{classic}}$''
 consisting of the units of the form $(\dim H^j, \allowbreak 
\mathrm{w\text{-}ord}^j,s^j)$, 
computed from the modifications 
$(H^j,$ $({\mathcal J}^j,b^j),F^j)$ constructed simultaneously along the weaving process.  We note that we are adding ``$\dim H$'' to the pair $(\mathrm{w\text{-}ord},s)$ as the first factor of the unit, in order to emphasize the role of the dimension in the inductive scheme. 
 
\medskip 
 
\textbf{Weaving Process} 
 
\medskip 
 
We describe the weaving process inductively. 
 
Note that constructing a sequence for resolution of singularities by blowing up is referred to as ``proceeding in the vertical direction'' passing from one year to the next, indicated by the subscript ``$i$'', while weaving the strand and constructing the modifications passing from one stage to the next, indicated by the superscript ``$j$'', is referred to ``proceeding in the horizontal direction'' staying in a fixed year. 
 
\medskip 
 
Suppose we have already woven the strands and constructed the modifications up to year $(i-1)$. 
 
Now we are in year $i$ (looking at the neighborhood of a closed point 
$P_i \in W_i$). 
 
We start with $(W_i,({\mathcal I}_i,a),E_i) = (H_i^0,({\mathcal J}_i^0,b_i^0),F_i^0)$, just renaming the transformation $(W_i,({\mathcal I}_i,a),E_i)$ in year $i$ of the resolution sequence as the $0$-th stage modification $(H_i^0,({\mathcal J}_i^0,b_i^0),F_i^0)$ in year $i$. 
 
Suppose that we have already woven the strand up to the $(j-1)$-th unit 
\begin{align*} 
\left(\inv_{\mathrm{classic}}\right)_i^{\leq j-1} 
=\ & 
(\dim H_i^0, \mathrm{w\text{-}ord}_i^0,s_i^0) 
(\dim H_i^1, \mathrm{w\text{-}ord}_i^1,s_i^1) 
 \\ 
& 
\cdots 
(\dim H_i^{j-1}, \mathrm{w\text{-}ord}_i^{j-1},s_i^{j-1}) 
\end{align*} 
and that we have also constructed the modifications up to the $j$-th one 
$$(H_i^0,({\mathcal J}_i^0,b_i^0),F_i^0), 
%(H_i^1,({\mathcal J}_i^1,b_i^1),F_i^1), 
\cdots, (H_i^{j-1},({\mathcal J}_i^{j-1},b_i^{j-1}),F_i^{j-1}), 
(H_i^j,({\mathcal J}_i^j,b_i^j),F_i^j).$$ 
 
Our task is to compute the $j$-th unit $(\dim H_i^j,\mathrm{w\text{-}ord}_i^j,s_i^j)$ and construct the $(j+1)$-th modification $(H_i^{j+1},({\mathcal J}_i^{j+1},b_i^{j+1}),F_i^{j+1})$ (unless the weaving process is over at the $j$-th stage). 
 
\medskip 
 
\noindent\underline{\rm Computation of the j-th unit 
$(\dim H_i^j,\mathrm{w\text{-}ord}_i^j,s_i^j)$} 
 
\medskip 
 
\indent$\circ\ \dim H_i^j$:\quad We just remark that we insert 
this first factor in characteristic zero only to emphasize the role of the dimension, which corresponds to the role of the invariant $\sigma$ in our algorithm in positive characteristic. 
 
\medskip 
 
\indent$\circ\ \mathrm{w\text{-}ord}_i^j$:\quad  We compute the second factor 
as follows. 
$$\mathrm{w\text{-}ord}_i^j = 
\begin{cases} 
\infty &\text{if}\quad {\mathcal J}_i^j \equiv 0, \\ 
\mathrm{ord}(\overline{{\mathcal J}_i^j}) 
/{b_i^j} 
&\text{if}\quad {\mathcal J}_i^j \not\equiv 0, 
\end{cases} 
$$ 
where the ideal $\overline{{\mathcal J}_i^j}$ is obtained from ${\mathcal J}_i^j$ by dividing the latter as much as possible by the defining ideals of the components in $(F_i^j)_{\mathrm{young}}$, i.e., 
$$\overline{{\mathcal J}_i^j} 
=  \left(\prod_{D \in (F_i^j)_{\mathrm{young}}}{\mathcal I}(D)^{- \mathrm{ord}_{\eta(D)}({\mathcal J}_i^j)}\right) \cdot {\mathcal J}_i^j$$ 
where $\eta(D)$ is the generic point of $D$ 
and where $(F_i^j)_{\mathrm{young}} (\subset F_i^j)$ is the union 
of the exceptional divisors created after the year when the value $\left(\inv_{\mathrm{classic}}\right)_i^{\leq j-1}(\dim H_i^j)$ first started. 
 
We note that, if $\mathrm{w\text{-}ord}_i^j = \infty$ or $0$, 
we declare that the $(j = m)$-th unit is the last one, and we stop the weaving process at the $m$-th stage in year $i$.  When $\mathrm{w\text{-}ord}_i^j = \infty$, 
the third factor is \emph{not} included. 
When $\mathrm{w\text{-}ord}_i^j = 0$, we are in the monomial case and we insert the invariant $\Gamma$ as the third factor instead of the invariant $s$. 
 
\medskip 
 
\indent$\circ\ s_i^j$:\quad It is the number of the components 
in $(F_i^j)_{\mathrm{old}} = F_i^j \setminus (F_i^j)_{\mathrm{new}}$, where $(F_i^j)_{\mathrm{new}} (\subset F_i^j)$ is the union of the exceptional divisors created after the year when the value $\left(\inv_{\mathrm{classic}}\right)_i^{\leq j-1}(\dim H_i^j, \mathrm{w\text{-}ord}_i^j)$ first started. 
We note that the third factor $s$ is \emph{only} included 
if $\mathrm{w\text{-}ord}_i^j \neq \infty$ or $0$. 
 
\medskip 
 
At the end, the weaving process of the strand comes to an end in a fixed year $i$, with $\left(\inv_{\mathrm{classic}}\right)_i$ taking the following form 
\begin{align*} 
\lefteqn{ 
\left({\inv_{\mathrm{classic}}}\right)_i = 
(\dim H_i^0,\mathrm{w\text{-}ord}_i^0,s_i^0) 
\cdots (\dim H_i^j, \mathrm{w\text{-}ord}_i^j,s_i^j) 
}\\ 
& \qquad 
\cdots (\dim H_i^{m-1}, \mathrm{w\text{-}ord}_i^{m-1},  s_i^{m-1}) 
\begin{cases} 
(\dim H_i^m, \mathrm{w\text{-}ord}_i^m = \infty), \ \text{or}\\ 
(\dim H_i^m, \mathrm{w\text{-}ord}_i^m = 0, \Gamma). 
\end{cases} 
\end{align*} 
 
\textbf{Termination in the horizontal direction} 
 
\medskip 
 
We note that termination of the weaving process in the horizontal direction is a consequence of the fact that going from the $j$-th unit to the $(j+1)$-th unit we have $\dim H_i^j > \dim H_i^{j+1}$ and that the dimension obviously satisfies the descending chain condition. 
 
\medskip 
 
\noindent\underline{\rm Construction of the (j+1)-th modification 
$(H_i^{j+1},({\mathcal J}_i^{j+1},b_i^{j+1}),F_i^{j+1})$} 
 
\medskip 
 
We note that we construct the $(j+1)$-th modification only when 
$\mathrm{w\text{-}ord}_i^j \neq \infty$ or $0$. 
 
\noindent\emph{Case$\colon 
\left({\inv_{\mathrm{classic}}}\right)_i^{\leq j} 
= \left({\inv_{\mathrm{classic}}}\right)_{i-1}^j$}. 
 
In this case, we simply take 
$(H_i^{j+1},({\mathcal J}_i^{j+1},b_i^{j+1}),F_i^{j+1})$ 
to be the transformation of 
$(H_{i-1}^{j+1},({\mathcal J}_{i-1}^{j+1}, b_{i-1}^{j+1}),F_{i-1}^{j+1})$ 
under the blow up. 
 
\noindent\emph{Case$\colon 
\left({\inv_{\mathrm{classic}}}\right)_i^{\leq j} 
< \left({\inv_{\mathrm{classic}}}\right)_{i-1}^j$}. 
 
In this case, we follow the construction described in the 
mechanism discussed in \S 3.2. 
 
Starting from $(H_i^j,({\mathcal J}_i^j,b_i^j),F_i^j)$, 
we firstly construct 
$(H_i^j,({\mathcal K}_i^j,\kappa_i^j), G_i^j)$, 
whose description is given below. 
$$ 
H_i^j 
= H_i^j, 
\ 
({\mathcal K}_i^j,\kappa_i^j) 
= \mathrm{Bdry}({\mathcal C}_i^j,c_i^j), \ \text{and} 
\ 
G_i^j = F_i^j \setminus (F_i^j)_{\text{old}} = (F_i^j)_{\text{new}}, 
$$ 
where 
$$ 
({\mathcal C}_i^j,c_i^j) = \mathrm{Comp}({\mathcal J}_i^j,b_i^j), 
\quad 
\mathrm{Bdry}({\mathcal C}_i^j,c_i^j)  = ({\mathcal C}_i^j 
+ (\sum_{D \subset (F_i^j)_{\mathrm{old}}}{\mathcal I}(D))^{c_i^j}, c_i^j), 
$$ 
and, denoting 
$\left({\inv_{\mathrm{classic}}}\right)_i^{\leq j-1}(\dim H_i^j, 
\mathrm{w\text{-}ord}_i^j)$ by $\alpha_i^j$, 
we set 
$\mathrm{Comp}({\mathcal J}_i^j, b_i^j)$ 
to be either the transformation of 
$\mathrm{Comp}({\mathcal J}_{i-1}^j,b_{i-1}^j)$ 
if $\alpha_i^j=\alpha_{i-1}^j$, or 
$$ 
({{\mathcal J}_i^j}^{M_i^j} + \overline{{\mathcal J}_i^j}^{b_i^j}, 
M_i^j \cdot b_i^j) 
\quad\text{with}\quad 
M_i^j =\mathrm{w\text{-}ord}_i^j \cdot b_i^j 
$$ 
if $\alpha_i^j<\alpha_{i-1}^j$. 
 
\medskip 
 
Then we secondly construct 
$(H_i^{j+1},({\mathcal J}_i^{j+1},b_i^{j+1}),F_i^{j+1})$ as follows. 
$$\left\{\begin{array}{ccl} 
H_i^{j+1} &=& \text{the\ strict\ transform\ of\ }H_{i_{\mathrm{old}}^j}^{j+1},\\ 
({\mathcal J}_i^{j+1},b_i^{j+1}) &=& (\text{Coeff}({\mathcal K}_i^j)|_{H_i^{j+1}},(\kappa_i^j)!) \\ 
F_i^{j+1} &=& G_i^j|_{H_i^{j+1}} = (F_i^j)_{\text{new}}|_{H_i^{j+1}}, 
\end{array}\right. 
$$ 
where $H_{i_{\mathrm{old}}^j}^{j+1}$ is taken in the following way: 
We go back to the year $\iota := i_{\mathrm{old}}^j$ 
when the current value of $\mathrm{inv}_i^{\leq j}(\dim H_i^j,\mathrm{w\text{-}ord}_i^j)$ first started. 
Let $({\mathcal C}_{\iota}^j,c_{\iota}^j) = \mathrm{Comp}({\mathcal I}_{\iota}^j,b_{\iota}^j)$ be the companion modification constructed in year $\iota$. 
We take $f_{\iota}^j \in 
\left({\mathcal C}_{\iota}^j\right)_{P_{\iota}}$ 
and a differential operator $\delta_{\iota}^j$ of $\deg \delta_{\iota}^j = c_{\iota}^j - 1$ such that $\mathrm{ord}_{P_{\iota}}(f_{\iota}^j) = c_{\iota}^j$ and $\mathrm{ord}_{P_{\iota}}(\delta_{\iota}^j f_{\iota}^j) = 1$.  We set 
$H_{i_{\mathrm{old}}^j}^{j+1} 
= H_{\iota}^{j+1} 
= \{\delta_{\iota}^j f_{\iota}^j = 0\}$. 
 
\medskip 
 
\textbf{Summary of the algorithm in $\operatorname{char}(k)\!=\!0$ 
in terms of ``$\inv_{\mathrm{classic}}$\!\!''} 
 
\medskip 
 
We start with $(W,({\mathcal I},a),E) = (W_0,({\mathcal I}_0,a),E_0)$. 
 
Suppose we have already constructed the resolution sequence up to year $i$ 
$$(W,({\mathcal I},a),E) = (W_0,({\mathcal I}_0,a),E_0) \leftarrow 
\cdots \leftarrow (W_i,({\mathcal I}_i,a),E_i).$$ 
We weave the strand of invariants in year $i$ described as above 
\begin{align*} 
\lefteqn{ 
\left({\inv_{\mathrm{classic}}}\right)_i 
= 
(\dim H_i^0,\mathrm{w\text{-}ord}_i^0,s_i^0) 
%(\dim H_i^1,\mathrm{w\text{-}ord}_i^1,s_i^1) 
\cdots 
(\dim H_i^j, \mathrm{w\text{-}ord}_i^j,s_i^j) 
}\\ 
& 
\cdots 
(\dim H_i^{m-1}, \mathrm{w\text{-}ord}_i^{m-1},s_i^{m-1}) 
\begin{cases} 
(\dim H_i^m, \mathrm{w\text{-}ord}_i^m = \infty),\ \text{or}\\ 
(\dim H_i^m, \mathrm{w\text{-}ord}_i^m = 0, \Gamma). 
\end{cases} 
\end{align*} 
There are two cases according to the form of the last unit: 
 
\medskip 
 
\noindent\emph{Case$\colon 
\mathrm{w\text{-}ord}_i^m = \infty$}. 
 
In this case, we take the center of blow up in year $i$ for the transformation to be the last hypersurface of maximal contact $H_i^m$. 
 
\noindent\emph{Case$\colon 
\mathrm{w\text{-}ord}_i^m = 0$}. 
 
In this case, we follow the procedure specified for resolution of singularities in the monomial case in \S 3.4, and take the center of blow up in year $i$ for the transformation to be the maximum locus of the invariant $\Gamma$ on $H_i^m$. 
 
\medskip 
 
In both cases, the center of blow up coincides with the maximum locus of the strand $\left({\inv_{\mathrm{classic}}}\right)_i$.

%\medskip 
\pagebreak[2] 
 
\textbf{Termination in the vertical direction} 
 
\medskip 
 
The value of the strand ``$\inv_{\mathrm{classic}}$'' never increases 
after the blow 
up described as above. 
By construction, the maximum locus of 
$(\inv_{\mathrm{classic}})^{\leq j}$ coincides with the singular locus of 
$(H^{j+1},({\mathcal J}^{j+1},b^{j+1}), 
F^{j+1})$. 
Therefore, resolution of singularities for 
$(H^{j+1},({\mathcal J}^{j+1},b^{j+1}), 
F^{j+1})$ 
implies the strict decrease of $(\inv_{\mathrm{classic}})^{\leq j}$. 
In particular, in the first 
case, $(\inv_{\mathrm{classic}})^{\leq m-1}$ strictly decreases. 
In the second case, either 
$(\inv_{\mathrm{classic}})^{\leq m-1}$ strictly 
decreases, or while $(\inv_{\mathrm{classic}})^{\leq m-1}$ may remain 
the same (and hence so does 
$(\inv_{\mathrm{classic}})^{\leq m-1} 
(\dim H_i^m,\mathrm{w\text{-}ord}_i^m = 0)$), 
the invariant $\Gamma$ strictly decreases. 
After all, we conclude that the value of the strand strictly decreases after each blow up, i.e., we have $\left({\inv_{\mathrm{classic}}}\right)_i > \left({\inv_{\mathrm{classic}}}\right)_{i+1}$. 
Now we claim that the value of the strand ``$\inv_{\mathrm{classic}}$'' can not decrease infinitely many times.  In fact, suppose by induction we have shown that the value of $\left({\inv_{\mathrm{classic}}}\right)^{\leq t-1}$ can not decrease infinitely many times.  Then after some year, the value of $\left({\inv_{\mathrm{classic}}}\right)^{\leq t-1}$ stabilizes.  This in turn implies that the value $b^t$, the second factor of the pair in the $t$-th modification, stays the same, say $b$. 
Now the value of $\mathrm{w\text{-}ord}^t$, having the fixed denominator $b$, 
can not decrease infinitely many times, and neither can the value $s^t$ being the nonnegative integer.  Therefore, we conclude that the value of $\left({\inv_{\mathrm{classic}}}\right)^{\leq t}$ can not decrease infinitely many times.  As the value of $t$ increases, the value of the dimension decreases by one.  Since obviously the value of the dimension satisfies the descending chain condition, the increase of the value of $t$ stops after finitely many times. Finally, therefore, we conclude that the value of the strand ``$\inv_{\mathrm{classic}}$'' can not decrease infinitely many times. 
 
Therefore, the algorithm terminates after finitely many years, 
achieving resolution of singularities for $(W,({\mathcal I},a),E)$. 
\end{subsection} 
\begin{subsection}{The monomial case in characteristic zero} 
The purpose of \S 3.4 is to discuss how to construct resolution of singularities for $(W,({\mathcal I},a),E)$ which is in the monomial case.  (Precisely speaking, the triplet sits in the middle of the sequence, say in year ``$i$'', for resolution of singularities.  However, we omit the subscript ``$(\ )_i$'' indicating the year for simplicity of the notation.) 
 
Recall that, in the monomial case, ${\mathcal I}$ is a monomial of the ideals defining the components of $E_{\text{young}} = \bigcup_{t = 1}^eD_t \subset E$ (See \S 3.3 for the definition of $E_{\text{young}}$.), i.e., 
$${\mathcal I} = \prod_{t = 1}^e{\mathcal I}(D_t)^{c_t} 
\quad\text{with}\quad c_t \in {\mathbb Z}_{\geq 0}.$$ 
 
\pagebreak[2] 
 
\textbf{Invariant ``$\Gamma$''} 
 
\begin{defn}[Invariant ``$\Gamma$''] Let the situation be as above.  We define the invariant $\Gamma = (\Gamma_1,\Gamma_2,\Gamma_3)$ at $P \in \text{Sing}({\mathcal I},a)$ in the following way: 
\begin{align*} 
\Gamma_1 &= - \min \{n \mid \exists\ t_1, \ldots\!, t_n 
\text{ s.t. } c_{t_1} + \cdots + c_{t_n} \geq a, 
P \in \!D_{t_1} \!\cap\!\cdots \cap D_{t_n}\}, \\ 
\Gamma_2 &=\phantom{-}\max \{(c_{t_1} + \cdots + c_{t_n})/a \mid 
n = -\Gamma_1, P \in D_{t_1} \cap \cdots \cap D_{t_n}\}, \\ 
\Gamma_3 &=\phantom{-}\max\left\{ 
(t_1, \ldots, t_n) \mid\begin{array}{l} 
n = -\Gamma_1, 
\ 
c_{t_1} + \cdots + c_{t_n} =a\Gamma_2, 
\\ 
P \in D_{t_1} \cap \cdots \cap D_{t_n}, 
\ 
t_1 < \cdots < t_n 
\end{array}\right\}. 
\end{align*} 
\end{defn} 
 
It is immediate to see the following properties of the invariant ``$\Gamma$''. 
 
\medskip 
 
\indent{\rm (1)}\quad The invariant $\Gamma$ is an upper semi-continuous function. 
 
\indent{\rm (2)}\quad The maximum locus $\text{MaxLocus}(\Gamma)$ is nonsingular, since it is the intersection of some components in $E_{\mathrm{young}} \subset E$, a simple normal crossing divisor. 
 
\medskip 
 
\textbf{Procedure and its termination} 
 
\medskip 
 
Now take the transformation with center $C = \text{MaxLocus}(\Gamma)$ 
$$(W,({\mathcal I},a),E) \overset{\pi}\leftarrow (W',({\mathcal I}',a),E')$$ 
where $E' = E \cup \pi^{-1}(C)$ and $E'_{\text{young}} = E_{\text{young}} \cup \pi^{-1}(C) = \bigcup_{t = 1}^{e+1}D_t$ with $D_{e+1} = \pi^{-1}(C)$.  Then it is easy to see that $(W',({\mathcal I}',a),E')$ is again in the monomial case and that the invariant $\Gamma$ strictly decreases, i.e., 
$$\Gamma > \Gamma'.$$ 
Since the value of $\Gamma$ can not decrease infinitely many times, this procedure must terminate after finitely many years, achieving resolution of singularities for $(W,({\mathcal I},a),E)$ in the monomial case. 
 
This completes the discussion on how to construct resolution of singularities for $(W,({\mathcal I},a),E)$ in the monomial case. 
\end{subsection} 
\begin{subsection}{Globalization} 
The strand  ``$\inv_{\mathrm{classic}}$'' a priori depends on the choice of the hypersurfaces of maximal contact we take in the process of weaving, and it is a priori only locally defined.  However, the strand ``$\inv_{\mathrm{classic}}$''  is actually independent of the choice, and hence it is globally well-defined.  This can be shown classically by the so-called Hironaka's trick, or more recently by incorporating W{\l}odarczyk's ``homogenization'' (cf.\cite{W}) or the first author's ``differential saturation'' (cf.\cite{K}) in the construction of the modification.  Therefore, the process of resolution of singularities, where we take the center of blow up to be the maximum locus of  ``$\inv_{\mathrm{classic}}$'', is also globally well-defined.  This is how we overcome shortcoming (2) of the key inductive lemma. 
\end{subsection} 
 
\medskip 
 
This finishes the quick review on the classical algorithm in characteristic zero. 
\end{section} 
\begin{section}{Our algorithm in positive characteristic} 
The goal of this section is to discuss the general mechanism of our algorithm in positive characteristic, which is modeled closely upon the classical algorithm in characteristic zero reviewed in \S 3. 
\begin{subsection}{Reformulation of the problem in our setting} 
First, we present the reformulation of the problem in our setting. 
\begin{problem}[Reformulation in terms of 
an idealistic filtration (cf. \cite{K} \cite{KM})] 
Suppose we are given the triplet of data $(W,{\mathcal R},E)$ 
such that 
$W$ is a nonsingular variety over $k$, 
${\mathcal R}  =  \bigoplus_{a \in {\mathbb Z}_{\geq 0}}({\mathcal I}_a,a)$ 
is an idealistic filtration of i.f.g.\! type (resp. of i.g. type), 
i.e., a finitely generated graded ${\mathcal O}_W\text{-algebra}$ 
(resp. a graded ${\mathcal O}_W$-algebra) 
satisfying the condition 
${\mathcal O}_W = {\mathcal I}_0 \supset {\mathcal I}_1 \supset 
{\mathcal I}_2 \cdots  \supset {\mathcal I}_a \supset \cdots$, 
where ``$a$'' in the second factor specifies 
the ``level'' of the ideal ${\mathcal I}_a$ in the first factor, 
and $E$ is a simple normal crossing divisor on $W$. 
 
We define its singular locus to be 
$$\mathrm{Sing}({\mathcal R}) := \{P \in W \mid 
\mathrm{ord}_P({\mathcal I}_a) \geq a\  \quad 
\forall a \in {\mathbb Z}_{\geq 0}\}.$$ 
(We note that we only consider ${\mathcal R}$ with 
${\mathcal I}_1 \neq 0$ for the resolution problem.) 
 
Then construct a sequence of transformations 
starting from 
$(W_0,{\mathcal R}_0, E_0) =(W,{\mathcal R},E)$ 
\begin{align*} 
(W_0,{\mathcal R}_0,E_0) & \leftarrow \!\cdots\! \leftarrow 
(W_i,{\mathcal R}_i,E_i)  \overset{\pi_{i+1}}\leftarrow 
(W_{i+1},{\mathcal R}_{i+1},E_{i+1}) \\ 
& \leftarrow \!\cdots\! \leftarrow (W_l,{\mathcal R}_l,E_l) 
\end{align*} 
such that $\mathrm{Sing}({\mathcal R}_l) = \emptyset$. 
 
We call such a sequence ``resolution of singularities for $(W,{\mathcal R},E)$''. 
 
We note that the transformation 
$$(W_i,{\mathcal R}_i,E_i)  \overset{\pi_{i+1}}\leftarrow (W_{i+1},{\mathcal R}_{i+1},E_{i+1})$$ 
is required to satisfy the following conditions: 
 
\medskip 
\indent{\rm (1)}\quad 
$W_i \overset{\pi_{i+1}}{\leftarrow} W_{i+1}$ is a blow up with smooth center $C_i \subset W_i$, 
 
\indent{\rm (2)}\quad 
$C_i \subset \mathrm{Sing}({\mathcal R}_i)$, and $C_i$ is transversal 
to $E_i$ (maybe contained in $E_i$), i.e., $C \pitchfork E_i$, 
 
\indent{\rm (3)}\quad 
${\mathcal R}_{i+1} = {\mathcal G}(\bigcup_{a \in {\mathbb Z}_{> 0}}({\mathcal J}_{a,i+1},a))$, where 
$${\mathcal J}_{a, i+1} = {\mathcal I}(\pi_{i+1}^{-1}(C_i))^{- a} \cdot \pi_{i+1}^{-1}({\mathcal I}_{a,i}){\mathcal O}_{W_{i+1}} \quad\text{for}\quad a \in {\mathbb Z}_{\geq 0},$$ 
i.e., ${\mathcal R}_{i+1}$ is the smallest idealistic filtration of i.f.g.\! type containing $({\mathcal J}_{a,i+1},a)$ for all $a \in {\mathbb Z}_{\geq 0}$ 
(We note that ${\mathcal I}_{a,i+1} \supset {\mathcal J}_{a,i+1}$ but 
they may not be equal in general.), 
 
\indent{\rm (4)}\quad 
$E_{i+1} = E_i \cup \pi_{i+1}^{-1}(C_i)$. 
\end{problem} 
\begin{remark}[Local version] Problem 4 is the``global'' version of the problem of resolution of singularities for the triplet of data $(W,{\mathcal R},E)$.  In the following, we formulate its local version: Starting from a closed point $P \in \mathrm{Sing}({\mathcal R}) \subset W$ and its neighborhood, we have a sequence of closed  points and their neighborhoods 
$$         P_0 \in \mathrm{Sing}({\mathcal R}_0) \!\subset\! W_0 
\leftarrow P_1 \in \mathrm{Sing}({\mathcal R}_1) \!\subset\! W_1 
\leftarrow \!\cdots\! 
\leftarrow P_i \in \mathrm{Sing}({\mathcal R}_i) \!\subset\! W_i 
$$ 
in the resolution sequence, 
where $W=W_0$, ${\mathcal R}={\mathcal R}_0$ and $P=P_0$. 
After we choose the center $P_i \in C_i \subset \mathrm{Sing}({\mathcal R}_i) \subset W_i$ and take the transformation $W_i \overset{\pi_{i+1}}\leftarrow W_{i+1}$ to extend the resolution sequence, the ``devil'' tries to choose a closed point $P_{i+1} \in \pi_i^{-1}(P_i) \cap  \mathrm{Sing}({\mathcal R}_{i+1}) \subset W_i$.  If $\pi_i^{-1}(P_i) \cap  \mathrm{Sing}({\mathcal R}_{i+1}) = \emptyset$, then the devil loses.  Our task is to provide a prescription on how to choose the center so that, no matter how the devil makes his choice, he will end up losing.  That is to say, the prescription should guarantee that we ultimately reach year $i = l-1$ so that, with the choice of center $C_{l-1}$, after the blow up we have $\pi_l^{-1}(P_{l-1}) \cap \mathrm{Sing}({\mathcal R}_l) = \emptyset$. 
 
\medskip 
 
\textbf{Our algorithm discussed in this paper is exclusively for this local version} of the problem of resolution of singularities for the triplet of data $(W,{\mathcal R},E)$.  The adjustments we have to make to our algorithm in order to solve the global version of the problem will be published elsewhere. 
\end{remark} 
 
The notion of ``the differential saturation'' ${\mathcal D}{\mathcal R}$ (of an idealistic filtration ${\mathcal R}$ of i.f.g.\! type) plays an important role in our algorithm. 
\begin{defn}\label{DefDR} 
Let ${\mathcal R}$ be an idealistic filtration of i.f.g.\! type.  We define its differential saturation ${\mathcal D}{\mathcal R}$ (at the level of the stalk for a point  $P \in W$) as follows: 
$${\mathcal D}{\mathcal R}_P := {\mathcal G}\left(\left\{\left(\delta f, \max\{a - \deg \delta, 0\}\right) \mid (f,a) \in {\mathcal R}_P, 
\ \delta\colon\text{a diff. op.}\right\}\right), 
$$ 
where the symbol ${\mathcal G}(S)$ denotes ``an idealistic filtration of i.g. type generated by the set $S$'', i.e., the smallest idealistic filtration of i.g. type containing $S$. 
\end{defn} 
\begin{remark} \ 
 
\indent{\rm (1)}\quad 
It follows immediately from the generalized product rule (cf. \cite{K}) 
that ${\mathcal D}{\mathcal R}$ is again an idealistic filtration of 
i.f.g.\! type, ${\mathcal R} \subset {\mathcal D}{\mathcal R}$, and 
that it is differentially saturated, i.e., 
${\mathcal D}({\mathcal D}{\mathcal R}) = {\mathcal D}{\mathcal R}$. 
 
\indent{\rm (2)}\quad The problem of constructing resolution of 
singularities for 
$(W, {\mathcal R}, E)$ is equivalent to the one for 
$(W,{\mathcal D}{\mathcal R},E)$, i.e., 
$$(W,{\mathcal R},E) \underset{\mathrm{equivalent\ to}} 
\sim (W,{\mathcal D}{\mathcal R},E).$$ 
\end{remark} 
\end{subsection} 
\begin{subsection}{Inductive scheme on the invariant ``$\sigma$''} 
The classical algorithm in characteristic zero works 
by induction on dimension, based upon the notion of a \emph{smooth} 
hypersurface of maximal contact, as reviewed in \S 3.  Narasimhan's example (cf. Remark 1 in \S 3) tells us, however, that there is no hope of 
finding a \emph{smooth} hypersurface of maximal contact in positive 
characteristic.  The following proposition gives rise to the notion of ``a leading generator system'' (called an LGS for short), which we consider as a collective substitute in positive characteristic for the notion of a hypersurface of maximal contact (called an HMC for short) in characteristic zero.  Our algorithm in positive characteristic works 
by induction on the invariant ``$\sigma$'', based upon 
the notion of an LGS.  Roughly speaking, introducing the notion of an LGS corresponds 
to considering \emph{singular} hypersurfaces of maximal contact. 
 
\bigskip 
 
\textbf{Definition of the invariant ``$\sigma$''} 
\begin{proposition}[cf. \cite{K}] Let ${\mathcal R} 
= \bigoplus_{a \in {\mathbb Z}_{\geq 0}}({\mathcal I}_a,a)$ 
be an idealistic filtration of i.f.g.\! type.  Assume that 
${\mathcal R}$ is differentially saturated, i.e., 
${\mathcal R} = {\mathcal D}{\mathcal R}$.  Fix 
a closed point $P \in \mathrm{Sing}({\mathcal R}) \subset W$. 
 
Consider the leading algebra $L_P({\mathcal R})$ 
$$L_P({\mathcal R}) := \bigoplus_{a \in {\mathbb Z}_{\geq 0}}\{f \mathrm{\ mod\ } {\mathfrak m}_P^{a + 1} \mid (f,a) \in {\mathcal R}_P, f \in {\mathfrak m}_P^a\} 
\!\subset \bigoplus_{a \in {\mathbb Z}_{\geq 0}}{\mathfrak m}_P^a/{\mathfrak m}_P^{a+1}.$$ 
Then there exists a regular system of parameters 
$(x_1, \ldots\!, x_t, x_{t+1}, \ldots\!, x_d)$ at $P$ 
such that the leading algebra takes the following form: 
 
\medskip 
 
\noindent\emph{Case$\colon 
\mathrm{char}(k) = 0$} 
$$L_P({\mathcal R}) = k[x_1, \ldots, x_t] \subset k[x_1, \ldots, x_t, x_{t+1}, \ldots, x_d] = \bigoplus_{a \in {\mathbb Z}_{\geq 0}}{\mathfrak m}_P^a/{\mathfrak m}_P^{a+1}.$$ 
Moreover, we observe the following: if we take an element $(h_i,1) \in {\mathcal R}_P$ with $h_i \equiv x_i \mathrm{\ mod\ }{\mathfrak m}_P^2$ ($i = 1, \ldots, t$), then the hypersurface $\{h_i = 0\}$ is a hypersurface of maximal contact in the classical sense. 
 
\medskip 
 
\noindent\emph{Case$\colon 
\mathrm{char}(k) = p > 0$} 
$$L_P({\mathcal R}) = k[x_1^{p^{e_1}}, \ldots, x_t^{p^{e_t}}] 
\subset k[x_1, \ldots, x_t, x_{t+1}, \ldots, x_d] = 
\!\bigoplus_{a \in {\mathbb Z}_{\geq 0}} 
\!{\mathfrak m}_P^a/{\mathfrak m}_P^{a+1}$$ 
for some $0 \leq e_1 \leq \cdots \leq e_t$. 
\end{proposition}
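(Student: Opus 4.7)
The plan is to exploit the hypothesis ${\mathcal R} = {\mathcal D}{\mathcal R}$ to show that $L_P({\mathcal R})$ is a graded subalgebra of $\mathrm{gr}_{{\mathfrak m}_P}{\mathcal O}_{W,P} \cong k[x_1,\ldots,x_d]$ that is stable under the natural action of the graded (Hasse-Schmidt) differential operators. Concretely, if $F \in L_P({\mathcal R})_a$ is the leading form of some $(f,a) \in {\mathcal R}_P$ with $f \in {\mathfrak m}_P^a$, and $\delta$ is a differential operator of degree $b \leq a$, then differential saturation gives $(\delta f, a-b) \in {\mathcal R}_P$, and its leading form modulo ${\mathfrak m}_P^{a-b+1}$ is precisely the symbol of $\delta$ applied to $F$; hence $\delta F \in L_P({\mathcal R})_{a-b}$. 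First I would write this closure statement out carefully using Hasse-Schmidt derivations $D_i^{[n]}$ with respect to a chosen regular system of parameters, so that the subsequent analysis is uniform in characteristic.

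With the closure property in hand, the characteristic zero case is essentially formal. Set $V := L_P({\mathcal R})_1$, choose a basis $x_1,\ldots,x_t$ of $V$, and extend it to a regular system of parameters $(x_1,\ldots,x_t,x_{t+1},\ldots,x_d)$. The inclusion $k[x_1,\ldots,x_t] \subseteq L_P({\mathcal R})$ is automatic, since $L_P({\mathcal R})$ is a subalgebra containing $V$. For the reverse inclusion, take $F \in L_P({\mathcal R})_a$ and apply partial derivatives $\partial^\alpha$ with $|\alpha| = a-1$: each $\partial^\alpha F$ lies in $L_P({\mathcal R})_1 = V$, so all its coefficients in $x_{t+1},\ldots,x_d$ vanish. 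Since in characteristic zero these order-$(a-1)$ partials together with the Taylor expansion determine $F$, an induction on $a$ yields $F \in k[x_1,\ldots,x_t]$. Finally, lifting each $x_i$ to some $(h_i,1) \in {\mathcal R}_P$ shows $\mathrm{Sing}({\mathcal R}) \subseteq \{h_i = 0\}$, so the $\{h_i = 0\}$ really are hypersurfaces of maximal contact in the classical sense.

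In positive characteristic, the closure under Hasse-Schmidt derivatives together with the minimality of generating degrees forces the $p^{e_i}$-structure. Let $a_1 \geq 1$ be the smallest degree such that $L_P({\mathcal R})_{a_1} \neq 0$, and pick $0 \neq F \in L_P({\mathcal R})_{a_1}$. By minimality, $D_i^{[b]} F = 0$ for all $0 < b < a_1$ and all $i$; a Kummer-type binomial argument (the vanishing of $\binom{n}{b}$ for all $0 < b < n$ forces $n$ to be a power of $p$) then shows $a_1 = p^{e_1}$ and that each monomial of $F$ is of the form $x_i^{p^{e_1}}$, so $F$ is a $k$-linear combination of pure $p^{e_1}$-th powers of the coordinates. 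A linear change of coordinates brings $F$ into the shape $x_1^{p^{e_1}}$. One then iterates on the ``reduced'' leading algebra in $k[x_2,\ldots,x_d]$ obtained by considering generators modulo $x_1^{p^{e_1}}$, producing successive coordinates $x_2,\ldots,x_t$ and exponents $e_2,\ldots,e_t$. The ordering $e_1 \leq e_2 \leq \cdots \leq e_t$ is arranged by always selecting the next generator of smallest degree.

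The main obstacle will be the inductive coordinate-change step in the positive characteristic case. After fixing $F = x_1^{p^{e_1}}$ in adjusted coordinates, one must show that the next generator can be taken to have the form $x_2^{p^{e_2}}$ for some $x_2$ independent of $x_1$, via a further change of coordinates that \emph{does not disturb} the already-fixed form of the first generator. Concretely, a candidate generator might a priori look like $x_2^{p^{e_2}} + g(x_1, x_2, \ldots)$ with $g$ involving mixed monomials; such tails must be absorbed into modifications of $x_2,\ldots,x_d$ alone, leaving $x_1$ untouched. Making this absorption precise---and simultaneously verifying that the smallest available degree at each subsequent stage is again forced to be a $p$-power by the same binomial argument applied to the quotient situation---is the technical heart of the proof, and is exactly where the positive-characteristic argument genuinely diverges from its characteristic-zero counterpart.
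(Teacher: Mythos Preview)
The paper does not actually prove this proposition: it is stated with the citation ``(cf.\ \cite{K})'' and no \texttt{proof} environment follows. The result is imported from Kawanoue's foundational paper, so there is no ``paper's own proof'' to compare against here. That said, your outline is the standard route and matches the argument in \cite{K}: one shows the leading algebra is stable under the graded Hasse--Schmidt operators, then classifies such $\mathcal{D}$-stable graded subalgebras of $k[x_1,\ldots,x_d]$.

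A few comments on the sketch itself. In characteristic zero your argument is fine; the key observation that for a monomial $x^\gamma$ with $|\gamma|=a$ and $\gamma_j>0$ one can choose $\beta=\gamma-e_j$ so that $D^\beta x^\gamma$ contributes a nonzero multiple of $x_j$ to $L_P(\mathcal R)_1$ does the job cleanly. In positive characteristic, your minimal-degree step is correct and can be made sharp: if $x^\gamma$ appears in $F\in L_P(\mathcal R)_{a_1}$ and $\gamma$ has two nonzero entries, say $\gamma_i,\gamma_j>0$, then $D^{\gamma_i e_i}F$ has positive degree $<a_1$ and nonzero $x^{\gamma-\gamma_i e_i}$-coefficient, contradicting minimality; so each monomial is a pure power $x_i^{a_1}$, and Lucas then forces $a_1=p^{e_1}$. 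The coordinate change $x_1\mapsto \sum c_i^{1/p^{e_1}}x_i$ uses that $k$ is algebraically closed (or at least perfect), which the paper assumes globally.

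You correctly flag the inductive coordinate-change as the crux. In \cite{K} this is handled not by ``absorbing tails'' ad hoc but by working modulo the subalgebra already generated: one passes to the quotient $k[x_1,\ldots,x_d]/(x_1^{p^{e_1}})$-picture, shows the image of $L_P(\mathcal R)$ there is again $\mathcal D$-stable, and repeats the minimal-degree analysis. The compatibility of the successive coordinate changes then follows because at each stage one only modifies the \emph{new} variable $x_j$ by a linear combination of the remaining $x_{j},\ldots,x_d$. Your description of the obstacle is accurate, but framing it via the quotient rather than via tail-absorption makes the induction go through more smoothly. Finally, your one-line justification that $\{h_i=0\}$ is a hypersurface of maximal contact only gives the initial containment $\mathrm{Sing}(\mathcal R)\subset\{h_i=0\}$; the persistence under transformation (which is what ``maximal contact'' really demands) requires the further observation that differential saturation is preserved under transformation, so the same argument applies after each blow up.
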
 
\begin{remark} The former \emph{Case$\colon\mathrm{char}(k) = 0$} 
in the above proposition can be regarded as a special case of 
the latter \emph{Case$\colon\mathrm{char}(k) = p > 0$}, 
by formally setting $p = \infty$, $0 = e_1 = \cdots = e_t$ and $\infty^0 = 1$, where all the $x^{p^e}$-terms with $e > 0$ become ``invisible'' as $p$ goes to $\infty$. 
\end{remark} 
\begin{defn}[Leading Generator System (cf. \cite{K} \cite{KM})]\label{DefLGS} 
Let the situation be as in the proposition above.  Take a subset 
${\mathbb H} =\{(h_{\alpha}, p^{e_{\alpha}})\}_{\alpha = 1}^t 
\subset {\mathcal R}_P$ with 
$$h_{\alpha} \equiv x_{\alpha}^{p^{e_{\alpha}}} \mathrm{\ mod\ }{\mathfrak m}_P^{p^{e_{\alpha}}+1} \quad\text{for}\quad{\alpha} = 1, \ldots, t.$$ 
We say that ${\mathbb H}$ is a leading generator system for ${\mathcal R}_P$ 
(called an LGS for short). 
A leading generator system for $\widehat{{\mathcal R}_P}$ is defined similarly. 
\end{defn} 
\begin{defn}[Invariant ``$\sigma$'' and ``$\tau$'' (cf. \cite{K} \cite{KM})] \label{DefST} 
Let the situation be as in the proposition above.  Then the invariants $\sigma$ and $\tau$ are defined by the following formulas 
$$\sigma(P) := (a_n)_{n \in {\mathbb Z}_{\geq 0}} 
\quad\text{where}\quad a_n = d - \#\{e_{\alpha} \mid e_{\alpha} \leq n\}$$ 
where the value set of the invariant $\sigma$ is given the lexicographical order, and 
$$\tau(P) := \# \text{ of the elements in an LGS} = \# {\mathbb H} = t.$$ 
(We note that the invariants $\sigma$ and $\tau$ are independent of the choice of a regular system of parameters or an LGS, and that it does not matter whether we compute them at the algebraic level or at the analytic level.)  The moral here is that the more $e_{\alpha}$'s we have at the lower level, the smaller the value of the invariant $\sigma$ is and hence we consider the better the LGS ${\mathbb H}$ is. 
\end{defn} 
 
\textbf{Basic strategy to establish our algorithm in 
positive characteristic}: \emph{Follow the construction 
of the algorithm in $\mathrm{char}(k) = 0$, replacing the notion of an HMC to use the induction on dimension by the notion of an LGS to use 
the induction on the invariant $\sigma$. } 
 
\medskip 
 
\textbf{Mechanism of the inductive scheme on the invariant $\sigma$} 
 
Given $(W,{\mathcal R},E)$ (Precisely speaking, the triplet sits in the middle of the sequence, say in year ``$i$'', for resolution of singularities.  However, we omit the subscript ``$(\ )_i$'' indicating 
the year for simplicity of the notation), 
we introduce a triplet of invariants $(\sigma,\widetilde{\mu},s)$ and its associated triplet of data $(W',{\mathcal R}',E')$.  Together they form the following mechanism to realize the inductive scheme on the invariant $\sigma$ 
(We note that there is \emph{no} Key Inductive Lemma in our setting): 
 
\medskip 
 
\noindent\underline{\rm Outline of the mechanism} 
 
\medskip 
 
\indent{\rm (1)}\quad 
We introduce a triplet of invariants $(\sigma, \widetilde{\mu},s)$. 
 
\noindent If $(\sigma,\widetilde{\mu},s) = (\sigma,\infty,0)$ or $(\sigma,0,0)$, then we do not construct the modification $(W',{\mathcal R}',E')$.

\indent$\bullet$\quad 
In case $(\sigma,\widetilde{\mu},s) = (\sigma,\infty,0)$, 
we blow up with center $C = \mathrm{Sing}({\mathcal R})$. 
The nonsingularity of $C$ is guaranteed by the 
Nonsingularity Principle (cf. \cite{K} \cite{KM}), 
while the transversality of $C$ to the boundary $E$ is 
guaranteed by the invariant $s = 0$.  After the blow up, 
the singular locus becomes empty, and resolution of 
singularities for $(W,{\mathcal R},E)$ is achieved. 
 
\indent$\bullet$\quad 
In case $(\sigma,\widetilde{\mu},s) = (\sigma,0,0)$, we are in the monomial case by definition, and we go to Step (5). 
 
\noindent If $(\sigma, \widetilde{\mu},s) \neq (\sigma,\infty,0)$ or $(\sigma,0,0)$, then we construct its associated triplet of data $(W',{\mathcal R}',E')$, called the modification of 
the original triplet $(W,{\mathcal R}$, $E)$, 
having the following properties.  (Actually the ambient space for the modification stays the same, i.e., $W' = W$.) 
 
\indent$\bullet$\quad 
Resolution of singularities for $(W',{\mathcal R}',E')$ implies the strict decrease of the (maximum) value of the triplet $(\sigma, \widetilde{\mu},s)$.  Note that the value of the triplet never increases after transformations. 
 
\indent$\bullet$\quad 
Either the value of $\sigma$ strictly decrease, or the value of $\sigma$ stays the same but the number of the components in the boundary strictly drops.  In either case, we have $(\sigma,\# E) > (\sigma',\# E')$. 
 
\indent{\rm (2)}\quad There is \emph{no} key inductive lemma in our new setting. 
 
\indent{\rm (3)}\quad 
When $(\sigma,\widetilde{\mu},s) \neq (\sigma,\infty,0)$ 
or $(\sigma,0,0)$ in (1), we achieve resolution of singularities 
for $(W',{\mathcal R}',E')$ by induction on $(\sigma,\# E)$, 
which implies the strict decrease of the (maximum) value 
of the triplet $(\sigma, \widetilde{\mu},s)$. 
 
\indent{\rm (4)}\quad 
Repeatedly decreasing the value of the triplet $(\sigma,\widetilde{\mu},s)$, we reach the case where $(\sigma,\widetilde{\mu},s) = (\sigma,\infty,0)$ 
or $(\sigma,0,0)$, the monomial case. 
 
\indent{\rm (5)}\quad 
Finally we have only to construct resolution 
of singularities in the monomial case in order to achieve resolution of singularities of the original triplet $(W, {\mathcal R}, E)$.  However, the problem of resolution of singularities in the monomial case in positive characteristic is quite subtle and very difficult.  We only provide a solution in dimension 3 in \S 5. 
 
\medskip 
 
\textbf{Warning}: Even though the invariant $\sigma$ never increases after blow ups chosen in our algorithm, the number of exceptional divisors $\# E$ and hence the pair $(\sigma,\# E)$ may increase.  Therefore, the description of ``by induction on $(\sigma,\# E)$'' in (3) above is slightly imprecise.  The precise mechanism of the induction is manifested as the weaving of the new strand of invariants ``$\inv_{\mathrm{new}}$''.  See  \S 4.3 for details. 
 
\smallskip 
 
\noindent\underline{\rm Description of the triplet of invariants $(\sigma,\widetilde{\mu},s)$} 
 
\medskip 
 
\indent$\circ\ \sigma$:\quad 
It is the invariant $\sigma$ associated to the differential saturation 
${\mathcal D}{\mathcal R}$ of the idealistic filtration of i.f.g.~type 
${\mathcal R}$ (cf. Definitions \ref{DefDR}, \ref{DefLGS}, and \ref{DefST}). 
 
\indent$\circ\ \widetilde{\mu}$:\quad 
It is the (normalized) weak order modulo LGS of the idealistic filtration 
${\mathcal R}$, with respect to $E_{\mathrm{young}}$. 
 
\indent$\circ\ s$:\quad 
It is the number of the components in $E_{\mathrm{aged}} 
= E \setminus E_{\mathrm{young}}$. 
 
\medskip 
 
We explain how to compute the invariant 
$\widetilde{\mu}$ more in detail: 
Let ${\mathbb H}$ be the LGS chosen. 
Given $f \in \widehat{{\mathcal O}_{W,P}}$, let $f = \sum c_{f,B}H^B$ 
be its power series expansion with respect to the LGS 
(and its associated regular system of parameters) (cf. \cite{KM}). 
Then we define 
\begin{align*} 
\mu_P({\mathcal R}) 
&= \inf\left\{ 
\frac1{a}{\mathrm{ord}_P(c_{f,{\mathbb O}})} 
\mid (f,a) \in {\mathcal R}_P, a > 0\right\} \\ 
&= \inf\left\{ 
\frac1{a}{\mathrm{ord}_P(c_{f,{\mathbb O}})} \mid (f,a) \in \widehat{{\mathcal R}_P}, a > 0\right\}, \\ 
\mu_{P,D}({\mathcal R}) 
&= \inf\left\{ 
\frac1{a}{\mathrm{ord}_{\xi_D}(c_{f,{\mathbb O}})} \mid (f,a) \in {\mathcal R}_P, a > 0\right\} \\ 
&= \inf\left\{ 
\frac1{a}{\mathrm{ord}_{\xi_D}(c_{f,{\mathbb O}})} \mid (f,a) \in \widehat{{\mathcal R}_P}, a > 0\right\}, 
\end{align*} 
where $\xi_D$ is the generic point of a component $D$ in $E_{\mathrm{young}}$.  (For the definition of $E_{\mathrm{young}}$, see (iii) of the remark below.) 
Now we define the invariant $\widetilde{\mu}$ by the following formula 
$$\widetilde{\mu} = \mu_P({\mathcal R}) - \sum_{D \subset E_{\mathrm{young}}} \mu_{P,D}({\mathcal R}).$$ 
It is straightforward to see via the formal coefficient lemma that $\widetilde{\mu}$ is independent of the choice of the LGS (and its associated regular system of parameters) and that $\widetilde{\mu}$ is a nonnegative rational number, since our idealistic filtration is of i.f.g.\! type (cf. \cite{K}\cite{KM}). 
 
\medskip 
 
We make the following remarks on the technical but important points about the LGS (and its associated regular system of parameters), the idealistic filtration of i.f.g.\! type ${\mathcal R}$, and $E_{\mathrm{young}} \subset E$ used in the computation above: 
 
\medskip 
 
\indent{\rm (i)}\quad 
The idealistic filtration of i.f.g.type ${\mathcal R}$ used in the computation depends on the history of the behavior of the invariant $\sigma$. 
 
\smallskip 
 
\noindent\emph{Case$\colon 
$ The value of 
$\sigma$ remains the same as the one in the previous year}. 
 
In this case, we keep ${\mathcal R}$ as it is, which is the transformation of the one in the previous year, even though we compute 
the invariant $\sigma$ using the differential saturation ${\mathcal D}{\mathcal R}$.  We take our LGS (a priori only in ${\mathcal D}{\mathcal R}$) to be the transformation of the one in the previous year, which hence sits inside of ${\mathcal R}$. 
 
\noindent\emph{Case$\colon 
$ The value of $\sigma$ is strictly less than the one in the previous year}. 
 
In this case, we replace the original ${\mathcal R}$ with its differential saturation.  We take our LGS from this replaced ${\mathcal R} = {\mathcal D}{\mathcal R}$, which is differentially saturated, and compute $\widetilde{\mu}$ accordingly.  We remark that, in this case, $E_{\mathrm{young}} = \emptyset$ 
and hence that $\widetilde{\mu} = \mu_P({\mathcal R})$. 
 
\smallskip 
 
We note that, in year $0$, we also replace the original ${\mathcal R}$ with its differential saturation (cf. Remark 3 (2)). 
 
\indent{\rm (ii)}\quad 
The LGS ${\mathbb H} = \{(h_{\alpha}, p^{e_{\alpha}})\}_{\alpha = 1}^t \subset \widehat{{\mathcal R}_P}$ and its associated regular system of 
parameters $X = (x_1, \ldots, x_t, x_{t+1}, \ldots, x_d) 
\subset \widehat{{\mathcal O}_{W,P}}$ are taken in such a way that they 
satisfy the condition $(\heartsuit)$ consisting of 
the three requirements described below: 
 
\smallskip 
 
\noindent\fbox{\textbf{Condition $(\heartsuit)$}} 
 
\smallskip 
 
\indent$\bullet$\quad $h_{\alpha} \equiv x_{\alpha}^{p^{e_{\alpha}}}\ 
\mathrm{ mod }\ \widehat{{\mathfrak m}}^{p^{e_{\alpha}} + 1}$ 
for $\alpha = 1, \ldots, t$, 
 
\indent$\bullet$\quad 
the idealistic filtration of i.f.g.\! type $\widehat{{\mathcal R}_P}$ is 
%$\{\frac{\partial^n}{\partial {x_{\alpha}}^n} \mid n \in {\mathbb Z}_{\geq 0}, \alpha = 1, \ldots, t\}$-saturated, 
saturated for $\{{\partial^n}/{\partial x_{\alpha}\!}^n |  n \in {\mathbb Z}_{\geq 0}, \alpha = 1, \ldots, t\}$, 
and 
 
\indent$\bullet$\quad 
the defining equations for the components of $E_{\mathrm{young}}$, which are transversal to the LGS, form a part of the regular system of parameters, i.e., $\{x_D \mid D \subset E_{\mathrm{young}}\} \subset \{x_{t+1}, \ldots, x_d\}$. 
 
\smallskip 
 
\noindent (We remark that it is easy to find such $H \subset {\mathcal R}$ 
and $X \subset {\mathcal O}_{W,P}$ that satisfy all the requirements 
but the second one in Condition $(\heartsuit)$.) 
 
\medskip 
 
We recall the following formal coefficient lemma (cf. \cite{KM}), where the assumption is slightly weaker than the original one in the sense that it does not require the idealistic filtration is ${\mathcal D}$-saturated.  However, the same conclusion is valid with the same proof. 
 
\medskip 
 
\noindent\fbox{\textbf{Formal Coefficient Lemma}} 
 
\smallskip 
 
Let ${\mathcal R}$ be an idealistic filtration of i.f.g.\! type. 
Take a subset ${\mathbb H} 
= \{(h_{\alpha}, p^{e_{\alpha}})\}_{\alpha = 1}^t 
\subset \widehat{{\mathcal R}_P}$ 
(not necessarily an LGS) and a regular system of parameters 
$X = (x_1, \ldots, x_t, x_{t+1}, \ldots, x_d) 
\subset \widehat{{\mathcal O}_{W,P}}$, 
satisfying the first two requirements in condition $(\heartsuit)$. 
Then, for $(f,a) \in \widehat{{\mathcal R}_P}$, we have 
$$(c_{f,B},\max\{a - |[B]|, 0\}) 
\in \widehat{{\mathcal R}_P}$$ for any $B$, and in particular, 
$$(c_{f,{\mathbb O}},a) \in \widehat{{\mathcal R}_P},$$ 
where $f = \sum c_{f,B}H^B$ is the power series expansion of $f$ 
with respect to ${\mathbb H}$ and $X$ (cf. \cite{KM}). 
 
\smallskip 
 
\indent{\rm (iii)}\quad 
The symbol $E_{\mathrm{young}}$ refers to the union of the exceptional divisors created after the time when the current value of $\sigma$ first started.  Therefore, by construction, $E_{\mathrm{young}}$ is transversal to the LGS.  We only use $E_{\mathrm{young}}$ in our algorithm, in contrast to the classical algorithm where we have to use both $E_{\mathrm{young}}$ and $E_{\mathrm{new}}$ (cf. \S 3). 
 
\medskip 
 
\noindent\underline{\rm Description of the triplet $(W',{\mathcal R}',E')$} 
$$ 
W' = W,\quad 
{\mathcal R}' = 
\mathrm{Bdry}\left(\mathrm{Comp}({\mathcal R})\right), 
\quad\text{and}\quad 
E' = E \setminus E_{\mathrm{aged}} = E_{\mathrm{young}}, 
$$ 
where 
 
\indent$\bullet$\quad 
$\mathrm{Comp}({\mathcal R})$ is either 
the transformation of the one in the previous year 
if $(\sigma,\widetilde{\mu})$ stays the same, or 
the one constructed below if 
$(\sigma,\widetilde{\mu})$ strictly decreases, 
and 
 
\indent$\bullet$\quad 
$\mathrm{Bdry}\left(\mathrm{Comp}({\mathcal R})\right)$ 
is either the transformation of the one in the previous year 
if $(\sigma,\widetilde{\mu},s)$ stays the same, or 
$${\mathcal G}(\mathrm{Comp}({\mathcal R}) 
\cup \{(x_D,1) \mid D \subset E_{\mathrm{aged}}\})$$ 
if $(\sigma,\widetilde{\mu},s)$ strictly decreases, 
where 
${\mathcal G}(S)$ is the idealistic filtration of 
i.g. type generated by the set $S$, i.e., the smallest 
idealistic filtration of i.g. type containing $S$, and 
$x_D$ is the defining equation of a component 
$D \subset E_{\mathrm{aged}}$. 
 
\medskip 
 
We note that, if the value of the triplet $(\sigma,\widetilde{\mu},s)$ stays the same as in the previous year, then $(W',{\mathcal R}',E')$ is the transformation of the one in the previous year.  We remark that the symbols ``$\mathrm{Comp}$'' and ``$\mathrm{Bdry}$'' represent the ``Companion'' modification and the ``Boundary'' modification, respectively. 
 
\medskip 
 
\noindent\underline{Construction of $\mathrm{Comp}({\mathcal R})$} 
 
\medskip 
 
We describe the construction of the companion modification 
$\mathrm{Comp}({\mathcal R})$, first at the analytic level, following closely  the construction in the classical setting, and then at the algebraic level, showing that the companion modification at the analytic level ``descends'' to the one at the algebraic level, via the argument of ``\'etale descent''. 
 
\medskip 
 
\fbox{Construction at the analytic level} 
 
\medskip 
 
First, we take an LGS ${\mathbb H} \subset \widehat{{\mathcal R}_P}$ and its associated regular system of parameters $X \subset \widehat{{\mathcal O}_{W,P}}$ satisfying the condition $(\heartsuit)$. 
 
We set 
$${\mathbb M}_X = \prod_{D \subset E_{\mathrm{young}}}x_D^{\mu_{P,D}({\mathcal R})}.$$ 
Recall that $\{x_D \mid D \subset E_{\mathrm{young}}\} \subset X$. 
 
Fix a common multiple $L \in {\mathbb Z}_{> 0}$ of the denominators of $\widetilde{\mu}, {\mu}_P({\mathcal R})$, and $\{\mu_{P,D} \mid D \subset E_{\mathrm{young}}\}$. 
Set 
$$ 
\Xi=\widehat{{\mathcal O}_{W,P}} \otimes_k k[x_{t+1}^{\pm \frac{1}{L}}, \ldots, x_d^{\pm \frac{1}{L}}]. 
$$ 
 
We consider the following notion of an idealistic filtration ${\mathcal Q}$ in the generalized sense: 
$${\mathcal Q} = \bigoplus_{n \in {\mathbb Z}_{\geq 0}} 
\left({\mathcal Q}_{\frac{n}{L}},\frac{n}{L}\right) 
\subset \bigoplus_{n \in {\mathbb Z}_{\geq 0}} 
\left( 
\Xi, 
\frac{n}{L}\right) 
$$ 
is a graded $\widehat{{\mathcal O}_{W,P}}$-algebra, where 
 
\medskip 
 
\indent$\bullet$\quad 
the grading is given by $\{{n}/{L} \mid n \in {\mathbb Z}_{\geq 0}\}$, and it is specified as the level in the second factor, 
 
\indent$\bullet$\quad 
${\mathcal Q}_{{n}/{L}} \subset \Xi$ is an $\widehat{{\mathcal O}_{W,P}}$-submodule with the $\widehat{{\mathcal O}_{W,P}}$-module structure induced by the left multiplication on 
$\Xi$ 
(We emphasize that the tensor ``$\otimes$'' is over $k$.) satisfying 
the condition 
${\mathcal Q}_{{0}/{L}} \supset {\mathcal Q}_{{1}/{L}} 
\supset {\mathcal Q}_{{2}/{L}} \supset \cdots$, 
 
\indent$\bullet$\quad 
the algebra structure is given by the addition and multiplication on $\Xi$, while the $\widehat{{\mathcal O}_{W,P}}$-algebra structure is given by the left multiplication on the first factor of $\Xi$, 
 
\indent$\bullet$\quad 
the differential operators act on the first factor, i.e., for a differential operator $\delta$ of $\deg \delta$ and 
$$q = \left(\sum f \otimes g,{n}/{L}\right) \in \left(\Xi, {n}/{L}\right)$$ 
with $f \in \widehat{{\mathcal O}_{W,P}}$ and $g \in k[x_{t+1}^{\pm \frac{1}{L}}, \ldots, x_d^{\pm \frac{1}{L}}]$, we set 
$$\delta q = \left(\sum \delta f \otimes g, \max\left\{{n}/{L} - \deg \delta, 0\right\}\right).$$ 
 
\medskip 
 
We construct $\widehat{\mathrm{Comp}({\mathcal R})}_{{\mathbb H},X}$ in the following manner: 
 
\medskip 
 
\indent{\rm Step 1.}\quad 
We take the idealistic filtration ${\mathcal Q}_1$ in the generalized sense generated by $\{(f \otimes 1,a) \mid (f,a) \in {\mathcal R}_P\}$ and $\{\left(c_{f,{\mathbb O}} \otimes ({\mathbb M}_X^{-1})^a, \widetilde{\mu} \cdot a\right) \mid (f,a) \in {\mathcal R}_P\}$, i.e., 
%$${\mathcal Q}_1 = {\mathcal G}\left(\left\{(f \otimes 1,a) \mid (f,a) \in {\mathcal R}_P\right\} \cup \left\{\left(c_{f,{\mathbb O}} \otimes ({\mathbb M}_X^{-1})^a, \widetilde{\mu} \cdot a\right) \mid (f,a) \in {\mathcal R}_P\right\}\right),$$ 
$${\mathcal Q}_1 = {\mathcal G}\left( 
\left\{(f \otimes 1,a) 
 ,\ %\mid (f,a) \in {\mathcal R}_P\right\}\cup \left\{ 
\left(c_{f,{\mathbb O}} \otimes ({\mathbb M}_X^{-1})^a, \widetilde{\mu} 
\cdot a\right) 
\mid (f,a) \in {\mathcal R}_P\right\}\right),$$ 
where $c_{f,{\mathbb O}}$ is the constant term of the power series expansion $f = \sum c_{f,B}H^B$ with respect to ${\mathbb H}$ and $X$. 
 
\indent{\rm Step 2.}\quad 
We take the idealistic filtration ${\mathcal Q}_2$ in the generalized sense to be the ${\mathcal D}_{E_{\mathrm{young}}}$-saturation of the idealistic filtration ${\mathcal Q}_1$ in the generalized sense, i.e., 
$${\mathcal Q}_2 = {\mathcal D}_{E_{\mathrm{young}}}\left({\mathcal Q}_1\right),$$ 
where ${\mathcal D}_{E_{\mathrm{young}}}$ represents the logarithmic differentials with respect to the simple normal crossing divisor $E_{\mathrm{young}}$ (cf. \cite{K}). 
 
\indent{\rm Step 3.}\quad 
We take the integral level part ${\mathcal P} = \mathrm{ILP}\left({\mathcal Q}_2\right)$ of the idealistic filtration ${\mathcal Q}_2$ in the generalized sense.  That is to say, 
${\mathcal P} = \bigoplus_{a \in {\mathbb Z}_{\geq 0}}({\mathcal P}_a,a)$ 
is a graded $\widehat{{\mathcal O}_{W,P}}$-algebra where, for 
$a \in {\mathbb Z}_{\geq 0}$, we set ${\mathcal P}_a = ({\mathcal Q}_2)_{{n}/{L}}$ with $a = {n}/{L}$. 
 
\indent{\rm Step 4.}\quad 
By taking the ``round up'' and contraction of ${\mathcal P}$, we obtain the usual idealistic filtration of i.f.g.\! type ${\mathcal C} = \bigoplus_{a \in {\mathbb Z}_{\geq 0}}({\mathcal C}_a,a)$ where we set 
$${\mathcal C}_a = \mathrm{RUC}({\mathcal P}_a) \quad\text{for}\quad a \in {\mathbb Z}_{\geq 0}.$$ 
Note that the ``round up'' and contraction map 
$\mathrm{RUC}: {\mathcal P}_a \rightarrow \widehat{{\mathcal O}_{W,P}}$ 
is given by $\mathrm{RUC}\left(\sum f \otimes ({\mathbb M}_X^{-1})^l\right) = \sum f \cdot \lceil({\mathbb M}_X^{-1})^l\rceil$ for $l \in {\mathbb Z}_{\geq 0}$, where 
$$\lceil({\mathbb M}_X^{-1})^l\rceil = \prod_{D \subset E_{\mathrm{young}}}x_D^{\lceil - \mu_{P,D}({\mathcal R}) \cdot l\rceil}.$$ 
%We observe that, 
Since we have $\mathrm{ord}_{\xi_D}(c_{f,{\mathbb O}}) \geq \mu_{P,D}({\mathcal R}) \cdot a$ for $(f,a) \in {\mathcal R}_P$ and $D \subset E_{\mathrm{young}}$ by definition, 
and since a logarithmic differential operator with respect to $E_{\mathrm{young}}$ does not decrease the power of $x_D$ 
for $D \subset E_{\mathrm{young}}$, 
we conclude that, 
though the image of the RUC map is only in 
$\widehat{{\mathcal O}_{W,P}}[x_{t+1}^{\pm 1}, \ldots, x_d^{\pm 1}]$ 
a priori, it actually lies within 
$\widehat{{\mathcal O}_{W,P}}[x_{t+1}, \ldots, x_d] = \widehat{{\mathcal O}_{W,P}}$, i.e., 
$${\mathcal C}_a = \mathrm{RUC}({\mathcal P}_a) \subset \widehat{{\mathcal O}_{W,P}} \quad\text{for}\quad a \in {\mathbb Z}_{\geq 0}.$$ 
\indent{\rm Step 5.}\quad We set ${\mathcal C} = \widehat{\mathrm{Comp}({\mathcal R})}_{{\mathbb H},X}$. 
 
\pagebreak[4] 
\begin{remark} \ 
 
\indent{\rm (1)}\quad 
We remark that ${\mathcal C}$ is finitely generated.  In fact, if $\{(f_{\lambda},a_{\lambda}) \mid \lambda \in \Lambda\}$ with $\# \Lambda < \infty$ is a set of (local) generators for ${\mathcal R}$, then by setting 
\begin{align*} 
{\mathcal Q}_{1,\Lambda} 
&= {\mathcal G}\left(\left\{(f_{\lambda} \otimes 1,a_{\lambda}) \mid (f_{\lambda},a_{\lambda}) \in {\mathcal R}_P, \lambda \in \Lambda\right\}\right. \\ 
&\phantom{= \left({\mathcal G}\right)} 
\left. \cup \left\{\left(c_{f_{\lambda},{\mathbb O}} \otimes ({\mathbb M}_X^{-1})^{a_{\lambda}}, \widetilde{\mu} \cdot a_{\lambda}\right) \mid (f,a_{\lambda}) \in {\mathcal R}_P, \lambda \in \Lambda\right\}\right), 
\end{align*} 
we see by the formal coefficient lemma applied to the case of an idealistic filtration in the generalized sense (the same proof works as in the usual case (cf. \cite{KM})) that ${\mathcal Q}_1 \subset {\mathcal D}_{E_{\mathrm{young}}}\left({\mathcal Q}_{1,\Lambda}\right)$ while obviously we have ${\mathcal Q}_{1,\Lambda} \subset {\mathcal Q}_1$, and hence that 
$${\mathcal D}_{E_{\mathrm{young}}}\left({\mathcal Q}_1\right) = {\mathcal D}_{E_{\mathrm{young}}}\left({\mathcal Q}_{1,\Lambda}\right).$$ 
It follows easily from this that ${\mathcal C}$ is finitely generated. 
 
\smallskip 
 
\indent{\rm (2)}\quad 
In Step 1, we take the set $\{\left(c_{f,{\mathbb O}} \otimes ({\mathbb M}_X^{-1})^a, \widetilde{\mu} \cdot a\right) \mid (f,a) \in {\mathcal R}_P\}$ as a part of the generators, where the elements $(f,a)$ belong to the idealistic filtration of i.f.g.\! type ${\mathcal R}_P$ at the algebraic level.  Even if we replace this with 
$\{\left(c_{f,{\mathbb O}} \otimes ({\mathbb M}_X^{-1})^a, 
\widetilde{\mu} \cdot a\right) \mid (f,a) \in \widehat{{\mathcal R}_P}\}$, 
where the elements $(f,a)$ belong to the idealistic filtration of i.f.g.\! type $\widehat{{\mathcal R}_P}$ at the analytic (completion) level, the resulting ${\mathcal C}$ will not change. 
 
In fact, this can be seen as follows: Let ${\mathcal C}_{\text{an}}$ be the one obtained by using the analytic one.  (Note that ${\mathcal Q}_{1,\text{an}}$ and ${\mathcal Q}_{2,\text{an}} = {\mathcal D}_{E_{\mathrm{young}}}\left({\mathcal Q}_{1,\text{an}}\right)$ are defined similarly, to be used in the proof of Lemma 3.)  Since 
\begin{align*} 
\lefteqn{ 
\{\left(c_{f,{\mathbb O}} \otimes ({\mathbb M}_X^{-1})^a, \widetilde{\mu} \cdot a\right) \mid (f,a) \in {\mathcal R}_P\} 
} \\ && 
\subset \{\left(c_{f,{\mathbb O}} \otimes ({\mathbb M}_X^{-1})^a, \widetilde{\mu} \cdot a\right) \mid (f,a) \in \widehat{{\mathcal R}_P}\}, 
\end{align*} 
it is obvious that we 
have ${\mathcal C} \subset {\mathcal C}_{\text{an}}$. 
On the other hand, take $(f,a) \in \widehat{{\mathcal R}_P}$ with the power series expansion $f = \sum c_{f,B}H^B$ with respect to ${\mathbb H}$ and $X$, and $c_{f,{\mathbb O}}$ its constant term.  Then by the formal coefficient lemma, we conclude $(c_{f,{\mathbb O}},a) \in \widehat{{\mathcal R}_P} = {\mathcal R}_P \otimes_{{\mathcal O}_{W,P}}\widehat{{\mathcal O}_{W,P}}$.  This means that there exits a finite set $\{(f_{\lambda},a) \in {\mathcal R}_P \mid \lambda \in \Lambda\}$ such that $c_{f,{\mathbb O}} = \sum r_{\lambda}f_{\lambda}$ for some $r_{\lambda} \in \widehat{{\mathcal O}_{W,P}}$. 
 
Set 
$g = \sum r_{\lambda}c_{f_{\lambda},{\mathbb O}}$ 
and 
$h = \sum r_{\lambda}(f_{\lambda} - c_{f_{\lambda},{\mathbb O}})$ 
with 
$c_{f,{\mathbb O}} = g + h$. 
Then since $c_{h,{\mathbb O}} = 0$, we conclude 
$c_{f,{\mathbb O}} = c_{g,{\mathbb O}} + c_{h,{\mathbb O}} = c_{g,{\mathbb O}}$. 
Now since $\{\left(c_{f_{\lambda},{\mathbb O}} \otimes ({\mathbb M}_X^{-1})^a, \widetilde{\mu} \cdot a\right)\} \subset {\mathcal Q}_1$ and hence 
$\left(g \otimes ({\mathbb M}_X^{-1})^a, \widetilde{\mu} \cdot a\right) 
\in {\mathcal Q}_1$, we conclude, by the formal coefficient lemma 
applied to the case of an idealistic filtration in the generalized sense, that 
$$\left(c_{f,{\mathbb O}} \otimes ({\mathbb M}_X^{-1})^a,\widetilde{\mu} \cdot a\right) = \left(c_{g,{\mathbb O}} \otimes ({\mathbb M}_X^{-1})^a,\widetilde{\mu} \cdot a\right) \in {\mathcal D}_{E_{\mathrm{young}}}\left({\mathcal Q}_1\right) = {\mathcal Q}_2,$$ 
which implies ${\mathcal C}_{\text{an}} \subset {\mathcal C}$. 
 
Therefore, we conclude ${\mathcal C} = {\mathcal C}_{\text{an}}$. 
\end{remark} 
\begin{lemma} The companion modification $\widehat{\mathrm{Comp}({\mathcal R})}_{{\mathbb H},X}$ constructed at the analytic level as above is independent of the choice of an LGS ${\mathbb H}$ and its associated regular system of parameters $X$ satisfying the condition $(\heartsuit)$.  That is to say, if ${\mathbb H}'$ and $X'$ are another LGS and its associated regular system of parameters satisfying the condition $(\heartsuit)$, then we have 
$$\widehat{\mathrm{Comp}({\mathcal R})}_{{\mathbb H},X} = \widehat{\mathrm{Comp}({\mathcal R})}_{{\mathbb H}',X'}.$$ 
We write, therefore, 
$$\widehat{\mathrm{Comp}({\mathcal R})} = \widehat{\mathrm{Comp}({\mathcal R})}_{{\mathbb H},X}$$ 
omitting the reference to the LGS and its associated regular system of parameters used in the construction. 
\end{lemma}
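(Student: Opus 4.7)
The plan is to prove the inclusion $\widehat{\mathrm{Comp}({\mathcal R})}_{{\Bbb H},X} \subset \widehat{\mathrm{Comp}({\mathcal R})}_{{\Bbb H}',X'}$ and then to conclude equality by the symmetry of the statement. Since each of the operations ${\mathcal G}$, ${\mathcal D}_{E_{\mathrm{young}}}$-saturation, $\mathrm{ILP}$, and $\mathrm{RUC}$ is determined by its input set, it suffices to check that the generating family used in Step 1 for $({\Bbb H},X)$ is contained in the idealistic filtration in the generalized sense produced from the generating family for $({\Bbb H}',X')$ after Step 2. The first family $\{(f\otimes 1,a) : (f,a)\in {\mathcal R}_P\}$ is literally the same for both choices, so the whole content of the lemma concerns the second family $\{(c_{f,{\Bbb O}}\otimes ({\Bbb M}_X^{-1})^a,\widetilde{\mu}\cdot a)\}$.

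I would decompose the passage from $({\Bbb H},X)$ to $({\Bbb H}',X')$ into three elementary steps, each individually preserving $(\heartsuit)$: (i) a change of the parameters $x_{t+1},\ldots,x_d$ transversal to the LGS, subject to the constraint that the given defining equations $x_D$ for $D\subset E_{\mathrm{young}}$ remain in the chosen regular system; (ii) a change of the LGS itself with the regular system of parameters $X$ held fixed; (iii) a change of the parameters $x_1,\ldots,x_t$ matched with the LGS. For step (i), the constant coefficient $c_{f,{\Bbb O}}$ and the monomial ${\Bbb M}_X$ depend only on ${\Bbb H}$ and on the $x_D$ (up to units coming from replacing $x_D$ by another defining equation), so the generators match after formal unit rescaling in the second factor; the exponents $\mu_{P,D}({\mathcal R})$ are intrinsic invariants of ${\mathcal R}$. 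For step (iii), because $h_i\equiv x_i^{p^{e_i}} \pmod{{\mathfrak m}^{p^{e_i}+1}}$ fixes $x_i$ up to a unit, the only effect is to modify $c_{f,{\Bbb O}}$ by a unit multiple plus an $\widehat{{\mathcal O}_{W,P}}$-linear combination of constants of other elements already generated, which is absorbed.

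The essential step is (ii), the change of LGS. Writing $h_i=h_i'+r_i$ with $r_i\in{\mathfrak m}^{p^{e_i}+1}$ and $(h_i',p^{e_i})\in{\mathcal R}_P$ (hence in $\widehat{{\mathcal R}_P}$), I would expand each monomial $H^B=\prod h_i^{B_i}$ binomially in terms of $h_i'$ and $r_i$, substitute into the expansion $f=\sum c_{f,B}H^B$, and re-expand the result in terms of the LGS ${\Bbb H}'$. Each ``cross term'' produced can be identified, via the coefficient lemma applied to $\widehat{{\mathcal R}_P}$ under the $\{\partial^n/\partial x_i^n \mid i=1,\ldots,t\}$-saturation hypothesis in $(\heartsuit)$, as the ${\Bbb H}'$-constant term of an element of $\widehat{{\mathcal R}_P}$ of the appropriate level. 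By Remark 6, replacing ${\mathcal R}_P$ by $\widehat{{\mathcal R}_P}$ in the second generating family does not change the outcome, so these new generators lie in $\widehat{\mathrm{Comp}({\mathcal R})}_{{\Bbb H}',X'}$, and the difference $c_{f,{\Bbb O}}-c'_{f,{\Bbb O}}$ tensored with $({\Bbb M}_X^{-1})^a$ ends up in ${\mathcal Q}_2$ for the primed data.

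The main obstacle will be the level bookkeeping in step (ii). One has to verify that every cross term produced by the binomial expansion, after multiplication by $({\Bbb M}_X^{-1})^a$, sits at level exactly $\widetilde{\mu}\cdot a$ (or higher), which requires combining the divisibility $\mathrm{ord}_{\xi_D}(c_{f,{\Bbb O}})\geq\mu_{P,D}({\mathcal R})\cdot a$ at each component $D\subset E_{\mathrm{young}}$ with the additional order contributed by $r_i$, and then controlling that the logarithmic derivatives produced by ${\mathcal D}_{E_{\mathrm{young}}}$-saturation do not destroy the required $x_D$-divisibility. Once this accounting is carried out, the inclusion follows formally, and swapping the roles of $({\Bbb H},X)$ and $({\Bbb H}',X')$ yields the reverse inclusion.
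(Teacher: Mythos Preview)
Your overall plan (prove one inclusion, then swap roles) matches the paper, but your three-step decomposition and the arguments for the individual steps diverge from the paper's proof, and step (iii) contains a genuine error.

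The paper uses a two-case decomposition rather than your three steps. In the first case the new LGS $\mathbb{H}'$ consists of linear combinations of $p$-th powers of the old $h_j$'s while $X'$ is \emph{arbitrary}; in the second case $X=X'$ while $\mathbb{H}'$ is arbitrary. Any two pairs satisfying $(\heartsuit)$ are connected by one move of each type: given $(\mathbb{H},X)$ and $(\mathbb{H}',X')$, one builds an intermediate $\mathbb{H}''$ by taking the linear combinations of $h_j^{p^{e-e_j}}$ whose leading terms are $(x'_i)^{p^{e_i}}$, so $(\mathbb{H},X)\to(\mathbb{H}'',X')$ is Case 1 and $(\mathbb{H}'',X')\to(\mathbb{H}',X')$ is Case 2. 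The key advantage of this decomposition is that it completely sidesteps the ``level bookkeeping'' you flag as the main obstacle. In Case 1 the paper does \emph{not} expand binomially; instead it observes that $g:=c_{f,\mathbb{O}}$ already satisfies $(g\otimes(\mathbb{M}_X^{-1})^a,\widetilde{\mu}\cdot a)\in\mathcal{Q}_1$, and then applies the coefficient-lemma argument \emph{inside the generalized idealistic filtration} using the $\mathcal{D}_{E_{\mathrm{young}}}$-saturation to extract $(c'_{g,\mathbb{O}}\otimes(\mathbb{M}_X^{-1})^a,\widetilde{\mu}\cdot a)\in\mathcal{Q}_2$. Since $h:=f-c_{f,\mathbb{O}}$ lies in the ideal generated by the $h_j$'s and the $h'_i$ are combinations of their $p$-th powers, one gets $c'_{h,\mathbb{O}}=0$, hence $c'_{f,\mathbb{O}}=c'_{g,\mathbb{O}}$. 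In Case 2 the paper uses the ordinary coefficient lemma to get $(c'_{f,\mathbb{O}},a)\in\widehat{\mathcal{R}_P}$, notes that with $X=X'$ this element is its own $\mathbb{H}$-constant term, and invokes Remark 6. No explicit cross-term accounting is needed in either case.

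Your step (iii) is where your argument breaks. The congruence $h_i\equiv x_i^{p^{e_i}}\pmod{\mathfrak{m}^{p^{e_i}+1}}$ does \emph{not} fix $x_i$ up to a unit: in characteristic $p$ it says $(x_i-x'_i)^{p^{e_i}}\in\mathfrak{m}^{p^{e_i}+1}$, i.e.\ only $x_i\equiv x'_i\pmod{\mathfrak{m}^2}$. So $x'_i$ can differ from $x_i$ by an arbitrary element of $\mathfrak{m}^2$, and there is no reason $c_{f,\mathbb{O}}$ changes merely by a unit plus already-generated constants. This gap is exactly what the paper's Case 1 handles by allowing $X'$ to vary freely while constraining $\mathbb{H}'$ instead, and then using the saturation built into $\mathcal{Q}_2$ rather than tracking coefficients by hand.
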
 
 
\noindent We call $\widehat{\mathrm{Comp}({\mathcal R})}$ the companion modification at the analytic level. 
 
\medskip 
 
\begin{proof} We consider the following two cases. 
 
\smallskip 
 
\noindent\emph{Case$\colon 
$ For any $e \in {\mathbb Z}_{\geq 0}$ and $(h_{\alpha},p^{e_{\alpha}}) \in {\mathbb H}$ with $e_{\alpha} = e$, 
the element $h_{\alpha}$ is a linear combination of 
$$\{(h'_{\beta})^{p^{e-e'_{\beta}}} \mid e'_{\beta} \leq e\}.$$ 
There is no condition on $X$ or $X'$ other than their being associated to ${\mathbb H}$ and ${\mathbb H}'$, respectively}. 
 
In this case, we claim $\widehat{\mathrm{Comp}({\mathcal R})}_{{\mathbb H}',X'} \subset \widehat{\mathrm{Comp}({\mathcal R})}_{{\mathbb H},X}$. 
 
\smallskip 
 
Take $(f,a) \in {\mathcal R}_P$.  Let $f = \sum c_{f,B}H$ be the power series expansion of $f$ with respect to ${\mathbb H}$ and $X$, with its constant term $c_{f,{\mathbb O}}$.  We have $\left(c_{f,{\mathbb O}} \otimes ({\mathbb M}_X^{-1})^a, \widetilde{\mu} \cdot a\right) \in {\mathcal Q}_1$.  Set $g = c_{f,{\mathbb O}}$ and $h = f - c_{f,{\mathbb O}}$.  Let $g = \sum c'_{g,B}{H'}^B$ and $h = \sum c'_{h,B}{H'}^B$ be the power series expansions of $g$ and $h$, respectively, with respect to ${\mathbb H}'$ and $X'$.  Now we conclude, by the formal coefficient lemma applied to the case of an idealistic filtration in the generalized sense, that 
$$\left(c'_{g,{\mathbb O}} \otimes ({\mathbb M}_X^{-1})^a, \widetilde{\mu} \cdot a\right) \in {\mathcal D}_{E_{\mathrm{young}}}\left({\mathcal Q}_{1,\text{an}}\right) = {\mathcal Q}_{2,\text{an}}.$$ 
On the other hand, the assumption of this case implies that $c'_{h,{\mathbb O}} = 0$ and hence that 
$$c'_{g,{\mathbb O}} = c'_{g,{\mathbb O}} + c'_{h,{\mathbb O}} = c'_{g + h,{\mathbb O}} = c'_{f,{\mathbb O}},$$ 
the constant term of the power series expansion $f = \sum c'_{f,B}{H'}^B$ with respect to ${\mathbb H}'$ and $X'$.  Thus, we have 
$$\left(c'_{f,{\mathbb O}} \otimes ({\mathbb M}_X^{-1})^a, \widetilde{\mu} \cdot a\right) \in {\mathcal D}_{E_{\mathrm{young}}}\left({\mathcal Q}_{1,\text{an}}\right) = {\mathcal Q}_{2,\text{an}}.$$ 
This implies by Remark 5 (2) 
$$\widehat{\mathrm{Comp}({\mathcal R})}_{{\mathbb H}',X'} \subset {\mathcal C}_{\text{an}} = \widehat{\mathrm{Comp}({\mathcal R})}_{{\mathbb H},X}.$$ 
(We note that the generators we choose in Step 1 of the construction for $\widehat{\mathrm{Comp}({\mathcal R})}_{{\mathbb H}',X'}$ are of the form 
$(c'_{f,{\mathbb O}} \otimes ({\mathbb M}_{X'}^{-1})^a, 
\widetilde{\mu} \cdot a)$ (not $\otimes ({\mathbb M}_X^{-1})^a)$, 
which are sitting inside of 
$$ 
\widehat{{\mathcal O}_{W,P}} \otimes_k k[{x'_{t+1}}^{\pm \frac{1}{L}}, \ldots, {x'_d}^{\pm \frac{1}{L}}] 
\quad(\text{not of}\ 
\Xi= 
\widehat{{\mathcal O}_{W,P}} \otimes_k k[x_{t+1}^{\pm \frac{1}{L}}, \ldots, x_d^{\pm \frac{1}{L}}]). 
$$ 
However, these differences only contribute to the multiplication of units after Step 3 and Step 4, and hence do not matter for us to conclude the inclusion above.) 
 
\medskip 
 
\noindent\emph{Case$\colon 
X = X'$}. 
 
In this case, we also claim $\widehat{\mathrm{Comp}({\mathcal R})}_{{\mathbb H}',X'} \subset \widehat{\mathrm{Comp}({\mathcal R})}_{{\mathbb H},X}$. 
 
\smallskip 
 
Take $(f,a) \in {\mathcal R}_P$.  Let $f = \sum c'_{f,B}H'$ be the power series expansion of $f$ with respect to ${\mathbb H}'$ and $X'$, with its constant term $c'_{f,{\mathbb O}}$.  By the formal coefficient lemma, we have 
$(c'_{f,{\mathbb O}},a) \in \widehat{{\mathcal R}_P}$. 
Set $g' = c'_{f,{\mathbb O}} \in \widehat{{\mathcal O}_{W,P}}$.  Let $g' = \sum c_{g',B}H^B$ be the power series expansion of $g'$ with respect to $H$ and $X$ with its constant term $c_{g',{\mathbb O}}$.  Then the assumption of $X = X'$ implies $g' = c_{g',{\mathbb O}}$.  Therefore, we conclude that 
$$\left(c'_{f,{\mathbb O}} \otimes ({\mathbb M}_X^{-1})^a,\widetilde{\mu} \cdot a\right) \in \left\{\left(c_{h,{\mathbb O}} \otimes ({\mathbb M}_X^{-1})^a,\widetilde{\mu} \cdot a\right) \mid (h,a) \in \widehat{{\mathcal R}_P}\right\}$$ 
and hence by Remark 5 (2) (cf. the note at the end of the previous case) that 
$$\widehat{\mathrm{Comp}({\mathcal R})}_{{\mathbb H}',X'} \subset {\mathcal C}_{\text{an}} = {\mathcal C} = \widehat{\mathrm{Comp}({\mathcal R})}_{{\mathbb H},X}.$$ 
 
\medskip 
 
Observe that, given an LGS ${\mathbb H}$ and its associated regular system of parameters $X$ satisfying the condition $(\heartsuit)$, we can reach another LGS ${\mathbb H}'$ and its associated regular system of parameters $X'$ satisfying the condition $(\heartsuit)$ by a transformation described in the former case followed by another transformation described in the latter case.  Therefore, by the above analysis, we have $\widehat{\mathrm{Comp}({\mathcal R})}_{{\mathbb H}',X'} \subset \widehat{\mathrm{Comp}({\mathcal R})}_{{\mathbb H},X}$.  Reversing the role of ${\mathbb H}$ and $X$ with that of ${\mathbb H}'$ and $X'$, we then have 
$\widehat{\mathrm{Comp}({\mathcal R})}_{{\mathbb H},X} \subset 
\widehat{\mathrm{Comp}({\mathcal R})}_{{\mathbb H}',X'}$. 
 
Finally we conclude 
$\widehat{\mathrm{Comp}({\mathcal R})}_{{\mathbb H},X} = \widehat{\mathrm{Comp}({\mathcal R})}_{{\mathbb H}',X'}$. 
\end{proof}

\fbox{Construction at the algebraic level } 
 
\begin{proposition} There exists an idealistic filtration of 
i.f.g.\! type $\mathrm{Comp}({\mathcal R})$ at the algebraic 
level (i.e., over ${\mathcal O}_{W,P}$) such that its completion 
coincides with the companion modification 
$\widehat{\mathrm{Comp}({\mathcal R})}$ at the analytic level, i.e., 
$$\{\mathrm{Comp}({\mathcal R})\}^{\widehat{}} 
:= \mathrm{Comp}({\mathcal R}) \otimes_{{\mathcal O}_{W,P}} 
\widehat{{\mathcal O}_{W,P}} = \widehat{\mathrm{Comp}({\mathcal R})}.$$ 
\end{proposition} 
 
\noindent We call $\mathrm{Comp}({\mathcal R})$ the companion modification at the algebraic level. 
\begin{proof} 
\indent{\rm Step 1}.\quad Descent to the Henselization level. 
 
We first note that the ingredients that we used to construct the companion modification at the analytic level; 
 
\medskip 
 
\indent{\rm (i)}\quad 
the LGS ${\mathbb H}$ and its associated regular system of parameters $X$ satisfying the condition $(\heartsuit)$, 
 
\indent{\rm (ii)}\quad 
the constant term $c_{f,{\mathbb O}}$ of the power series 
expansion $f = \sum c_{f,B}H^B$ for $(f,a) \in {\mathcal R}_P$ 
with respect to ${\mathbb H}$ and $X$ (which shows up in the form of $\left(c_{f,{\mathbb O}} \otimes ({\mathbb M}_X^{-1})^a, \widetilde{\mu} \cdot a\right)$ in the construction of the companion modification), 
 
\medskip 
 
actually can be taken at the Henselization level (i.e., they can be 
taken from the Henselization $({\mathcal O}_{W,P})^{h}$ of 
${\mathcal O}_{W,P}$). 
 
In fact, (i) at the Henselization level is a consequence of 
the classical Weierstrass Preparation Theorem and 
Weierstrass Division Theorem (cf. the proof of Proposition 4 (1)). 
(Alternatively, (i) at the Henselization level can be seen using 
the same argument as the one used to see (ii) at the Henselization level.) 
 
We see (ii) at the Henselization level as follows: 
Set $R = {\mathcal O}_{W,P}$, $R^h$ its Henselization, 
and $\widehat{R}$ its completion.  By replacing $R$ with some local ring of an \'etale cover of $\mathrm{Spec}\ R$, we may assume that the LGS ${\mathbb H}$ and the regular system of parameters $X$ satisfying the condition $(\heartsuit)$ are taken from $R$.  Set $A = k[x_{t+1}, \ldots, x_d]_{(x_{t+1}, \ldots, x_d)}$, $A^h$ its Henselization, and $\widehat{A}$ its completion.  By looking at the power series expansion with respect to ${\mathbb H}$ and its associated regular system of parameters $X$ (cf. \cite{KM}), we have 
$$\phi: \widehat{R}/(h_1, \ldots, h_t) \overset{\sim}\longrightarrow \sum_K \widehat{A}X^K$$ 
where on the right hand side the subscript 
$K = (k_1, \ldots, k_t, k_{t+1}, \ldots, k_d) $ $\in {\mathbb Z}_{\geq 0}^d$ 
for the summation varies in the finite range 
$$ 
\begin{cases} 
0 \leq k_{\alpha} \leq p^{e_{\alpha}} - 1 &\text{for}\quad \alpha = 1, \ldots, t \\ 
k_{\alpha} = 0 &\text{for}\quad \alpha = t+1, \ldots, d. 
\end{cases} 
$$ 
Take an element $f \in R \subset \widehat{R}$. 
What we want to show is $c_{f,\mathbb O} \in R^h$. 
 
It suffices to show 
$$(\star) \quad \phi\left(f\ \mathrm{mod}\ (h_1, \ldots, h_t)\right) = c_{f,{\mathbb O}} \in \sum_KA^hX^K.$$ 
We take the coordinate ring $S$ of an affine open 
neighborhood $P \in \mathrm{Spec}\ S \subset W$ such 
that $f \in S$ and $\{h_{\alpha}\}_{\alpha = 1}^t, X \subset S$.  We denote by ${\mathfrak m}_{S,P}$ the maximal ideal of $S$ corresponding to the point $P$.  Note that the ideal $(h_1, \ldots, h_t, x_{t+1}, \ldots, x_d)R$ is ${\mathfrak m}_{S,P}R = {\mathfrak m}_P = (x_1, \ldots, x_t, x_{t+1}, \ldots, x_d)$-primary (in $R$).  By shrinking $\mathrm{Spec}\ S$ if necessary, we may assume that the only prime ideal containing the ideal $(h_1, \ldots, h_t, x_{t+1}, \ldots, x_d)$ is ${\mathfrak m}_{S,P}$. 
 
We regard $R, R^h, S, A, A^h$ as subrings of $\widehat{R}$, 
i.e., $R, R^h, S, A, A^h \subset \widehat{R}$, and we consider 
the subring $SA^h \subset \widehat{R}$ generated by $S, A^h \subset \widehat{R}$.  By abuse of notation, we denote the image of the natural 
projection $SA^h \subset \widehat{R} \rightarrow 
\widehat{R}/(h_1, \ldots, h_t)$ by $SA^h/(h_1, \ldots, h_t)$. 
 
We claim 
$$(\star\star) \quad \phi(SA^h/(h_1, \ldots, h_t)) = \sum_K A^hX^K,$$ 
which clearly implies the assertion $(\star)$. 
 
In the following, we omit the isomorphism $\phi$ from the left hand side in order to ease the notation.  Therefore, the claim $(\star\star)$ is expressed as an equality 
$$SA^h/(h_1, \ldots, h_t) = \sum_K A^hX^K.$$ 
Obviously, we have 
$$SA^h/(h_1, \ldots, h_t) \supset \sum_K A^hX^K.$$ 
Our goal is to show the equality after taking $\otimes_{A^h}\widehat{A}$, i.e., 
$$\mathrm{L.H.S.} = SA^h/(h_1, \ldots, h_t) \otimes_{A^h}\widehat{A} =  \sum_K A^hX^K \otimes_{A^h}\widehat{A} = \mathrm{R.H.S.},$$ 
which, since $\widehat{A}$ is faithfully flat over $A^h$, implies the original equality above before taking $\otimes_{A^h}\widehat{A}$, i.e., $(\star\star)$. 
 
\medskip 
 
\indent$\bullet$\quad Analysis of R.H.S. 
 
\medskip 
 
We have 
$$\mathrm{R.H.S.} = \sum_K A^hX^K \otimes_{A^h}\widehat{A} = \sum_K\widehat{A}X^K.$$ 
 
\indent$\bullet$\quad Analysis of L.H.S.

\medskip 
 
In order to analyze L.H.S., we look at the morphism 
$\theta\colon \mathrm{Spec}(SA^h/ (h_1, \ldots, h_t)) 
\rightarrow \mathrm{Spec}(A^h)$. 
Observe that $\theta$ is quasi-finite, i.e., 
 
\medskip 
 
\indent{\rm (a)}\quad 
it is of finite type, and 
 
\indent{\rm (b)}\quad 
it has finite fibers. 
 
\medskip 
 
The assertion (a) is immediate, since $S$ is 
finitely generated over $k$.  In order to see 
the assertion (b), first look at the morphism 
$\mathrm{Spec}(SA/ (h_1, \ldots, h_t)) \rightarrow \mathrm{Spec}(A)$. 
The fiber of this morphism over the origin downstairs is the origin upstairs, since the ideal $(h_1, \ldots, h_t, x_{t+1}, \ldots, x_d)$ is $(x_1, \ldots, x_t, x_{t+1}, \ldots, x_d)$-primary. 
Now by what is called ``Zariski's Main Theorem by Grothendieck'' 
(cf. \cite{Raynaud}), we conclude that the morphism has finite fibers.  From this it follows easily that the morphism 
$\theta: \mathrm{Spec}(SA^h/(h_1, \ldots, h_t)) 
\rightarrow \mathrm{Spec}(A^h)$ has finite fibers. 
 
Therefore, since $A^h$ is Henselian, we conclude that the morphism 
$\theta$ is finite, i.e., $SA^h/(h_1, \ldots, h_t)$ is a finite 
$A^h$-module.  This implies that $SA^h/(h_1, \ldots, h_t) 
\otimes_{A^h}\widehat{A}$ is the $(x_{t+1}, \ldots, x_d)$-adic 
completion of the $A^h$-module $SA^h/(h_1, \ldots, h_t)$.  
The latter coincides with the 
$(h_1, \ldots, h_t, x_{t+1}, \ldots, x_d)$-adic completion 
of $SA^h/(h_1, \ldots, h_t)$ viewed as an $SA^h$-module.  Since the ideal 
$(h_1, \ldots, \allowbreak h_t, x_{t+1}, \ldots, x_d)$ 
is $(x_1, \ldots, x_t, x_{t+1}, \ldots, x_d)$-pri\-mary, 
the $(h_1, \ldots, h_t, x_{t+1}, \ldots, x_d)$-adic completion 
coincides with the  $(x_1, \ldots, x_t, x_{t+1}, \ldots, x_d)$-adic 
completion. 
Observing that the 
$(x_1, \ldots,  x_t, x_{t+1}, \ldots, x_d)$-adic completion 
of $SA^h$ is $\widehat{R}$, we see that the 
$(x_1, \ldots, x_t,x_{t+1}, \ldots, x_d)$-adic completion 
of $SA^h/(h_1, \ldots, h_t)$ is $\widehat{R}/(h_1, \ldots, h_t)$.  Summarizing and remembering the convention of expressing the isomorphism $\phi$ as an equality, we conclude 
$$\mathrm{L.H.S.} = SA^h/(h_1, \ldots, h_t) \otimes_{A^h}\widehat{A} = \widehat{R}/(h_1, \ldots, h_t) = \sum_K\widehat{A}X^K.$$ 
Therefore, we have 
$$\mathrm{L.H.S.} = \sum_K\widehat{A}X^K = \mathrm{R.H.S.}.$$ 
This completes the argument for Step 1. 
 
\bigskip 
 
Therefore, we conclude that, for each LGS ${\mathbb H}$ and 
its associated regular system of parameters $X$ satisfying 
the condition $(\heartsuit)$, taken at the Henselization level, 
we have an idealistic filtration of i.f.g.\! type 
$\mathrm{Comp}({\mathcal R})_{{\mathbb H},X}$ at 
the Henselization level (i.e., all the ideals are those of 
$({\mathcal O}_{W,P})^{h}$) such that 
$\widehat{\mathrm{Comp}({\mathcal R})} 
= \widehat{\mathrm{Comp}({\mathcal R})}_{{\mathbb H},X} 
= \{\mathrm{Comp}({\mathcal R})_{{\mathbb H},X}\}^{\widehat{}}$. 
 
\bigskip 
 
\indent{\rm Step 2}.\quad Descent to the algebraic level. 
 
Let $U = \mathrm{Spec}\ {\mathcal O}_{W,P}$.  Noting that 
the Henselization is the direct limit of the local rings 
at the closed points over $P$ on the \'etale covers of $U$, 
we can take a collection of \'etale covers 
$\pi_{\lambda}:U_{\lambda} \rightarrow U$ and 
idealistic filtrations of i.f.g.\! type 
$\mathrm{Comp}({\mathcal R})_{\lambda}$ over 
$U_{\lambda}$ such that, for  $Q \in \pi_{\lambda}^{-1}(P)$, 
we have $\{\mathrm{Comp}({\mathcal R})_{\lambda,Q}\}^{h} 
= \mathrm{Comp}({\mathcal R})_{{\mathbb H},X}$ 
for some ${\mathbb H}$ and $X$ 
described as in Step 1.  This implies 
\begin{align*} 
\{\mathrm{Comp}({\mathcal R})_{\lambda,Q}\}^{\widehat{}} 
&= \{\{\mathrm{Comp}({\mathcal R})_{\lambda,Q}\}^{h}\}^{\widehat{}} 
= \{\mathrm{Comp}({\mathcal R})_{{\mathbb H},X}\}^{\widehat{}} \\ 
&= \widehat{\mathrm{Comp}({\mathcal R})}_{{\mathbb H},X} 
= \widehat{\mathrm{Comp}({\mathcal R})}. 
\end{align*} 
That is to say,  the completion of $\mathrm{Comp}({\mathcal R})_{\lambda,Q}$ canonically coincides with the companion modification at the analytic level $\widehat{\mathrm{Comp}({\mathcal R})}$.  This in turn implies that, over $U_{\lambda} \cap U_{\mu} = U_{\lambda} \times_U U_{\mu}$, we have 
$$\mathrm{Comp}({\mathcal R})_{\lambda}|_{U_{\lambda} \cap U_{\mu}} \overset{\phi_{\lambda \mu}}= \mathrm{Comp}({\mathcal R})_{\mu}|_{U_{\lambda} \cap U_{\mu}},$$ 
and this identification $\phi_{\lambda \mu}$ is canonical (and hence the collection of these identifications automatically satisfies the 
cocycle condition $\phi_{\lambda \mu} \circ \phi_{\mu \nu} 
= \phi_{\lambda \nu}$). 
Now it is a consequence of the general \'etale descent argument (cf. \cite{SGA1}\cite{Milne}) that there exists an idealistic filtration of i.f.g.\! type $\mathrm{Comp}({\mathcal R})$ such that $\pi_{\lambda}^*\left(\mathrm{Comp}({\mathcal R})\right) = \mathrm{Comp}({\mathcal R})_{\lambda}$, and hence that $\{\mathrm{Comp}({\mathcal R})\}^{\widehat{}}= \widehat{\mathrm{Comp}({\mathcal R})}$. 
 
This completes the proof of Proposition 2. 
\end{proof} 
 
\medskip 
 
\noindent\underline{\rm Detailed discussion of the mechanism} 
 
\medskip 
 
Note that constructing the resolution sequence by blowing up is referred to as ``proceeding in the vertical direction'', where the process is numbered by ``year'', while constructing the triplet of invariants and its associated modification in a fixed year, is referred to as ``proceeding in the horizontal direction'', where the process is numbered by ``stage''.  See \S 4.3 for more details. 
 
\medskip 
 
\fbox{Proceeding in the horizontal direction} 
 
\begin{proposition} The value of the pair $(\sigma, \# E)$ strictly decreases as we proceed in the horizontal direction from $(W,{\mathcal R},E)$ to $(W',{\mathcal R}',E')$, i.e., we have 
$$(\sigma, \# E) > (\sigma',\# E').$$ 
\end{proposition}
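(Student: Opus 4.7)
The plan is to first establish the weak inequality $\sigma' \leq \sigma$ by a monotonicity argument, and then to split into two cases on $s = \#E_{\mathrm{aged}}$ in order to obtain the strict lexicographic decrease. By construction $\mathcal{R} \subset \mathrm{Comp}(\mathcal{R}) \subset \mathrm{Bdry}(\mathrm{Comp}(\mathcal{R})) = \mathcal{R}'$, so $\mathcal{D}\mathcal{R} \subset \mathcal{D}\mathcal{R}'$ and hence $L_P(\mathcal{D}\mathcal{R}) \subset L_P(\mathcal{D}\mathcal{R}')$. By Proposition 1 each leading algebra has the canonical form $k[x_1^{p^{e_1}}, \ldots, x_t^{p^{e_t}}]$ in a suitable regular system of parameters, and under inclusion the associated multiset of exponents $\{e_i\}$ can only improve (in the sense of acquiring new members or having existing members replaced by smaller values); since $\sigma$ is defined from this multiset via $a_n = d - \#\{e_i \mid e_i \leq n\}$, we conclude $\sigma' \leq \sigma$.

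In the case $s > 0$, we have $\#E' = \#E_{\mathrm{young}} = \#E - s < \#E$, and combined with $\sigma' \leq \sigma$ this immediately yields $(\sigma', \#E') < (\sigma, \#E)$ lexicographically. (Incidentally, the Boundary modification adjoins each $(x_D, 1)$ with $D \subset E_{\mathrm{aged}}$ to $\mathcal{R}'$, introducing a new LGS element with exponent $e = 0$ and thus forcing $\sigma$ itself to drop strictly; but this stronger fact is not required for the proposition.)

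In the case $s = 0$ we have $\#E' = \#E$ and must establish the strict inequality $\sigma' < \sigma$. The exclusion from Step $(0)$, together with $s = 0$, forces $0 < \widetilde{\mu} < \infty$. The strict decrease will come from the Companion construction. Choose $(f, a) \in \mathcal{R}_P$ with $a > 0$ a sufficiently divisible multiple of the common denominator $L$, attaining $\mathrm{ord}_P(c_{f, \mathbb{O}}) = \mu_P(\mathcal{R}) \cdot a$, and such that both $\widetilde{\mu} \cdot a$ and each $\mu_{P,D}(\mathcal{R}) \cdot a$ are integers. Running Steps $1$--$5$ of the Companion construction, the generator $(c_{f, \mathbb{O}} \otimes \mathbb{M}_X^{-a}, \widetilde{\mu} \cdot a)$ descends, after logarithmic differential saturation along $E_{\mathrm{young}}$, restriction to integer levels, and round-up-and-contraction, to an element
$$g \;=\; c_{f, \mathbb{O}} \cdot \prod_{D \subset E_{\mathrm{young}}} x_D^{-\mu_{P,D}(\mathcal{R}) \cdot a} \;\in\; \widehat{\mathcal{R}'_P}$$
whose order equals its level $\widetilde{\mu} \cdot a$. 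The leading form of $g$ in the associated graded is a nonzero polynomial in $x_{t+1}, \ldots, x_d$ modulo the LGS, and in particular does not belong to the old leading algebra $L_P(\mathcal{D}\mathcal{R}) = k[x_1^{p^{e_1}}, \ldots, x_t^{p^{e_t}}]$. Therefore $L_P(\mathcal{D}\mathcal{R}') \supsetneq L_P(\mathcal{D}\mathcal{R})$, and by the monotonicity of the first paragraph this strict inclusion forces $\sigma' < \sigma$.

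The main obstacle is the clean verification, in the case $s = 0$, that the element $g$ produced by the Companion construction genuinely has order equal to its level and that its leading form survives outside the old leading algebra; this requires careful orders-and-levels bookkeeping through Steps $2$--$4$, invoking the coefficient lemma and the defining infima of $\mu_P$ and $\mu_{P,D}$, and in particular checking that the logarithmic differential saturation along $E_{\mathrm{young}}$ does not drop the order below the level and that the round-up does not inadvertently absorb the constant term $c_{f, \mathbb{O}}$. Once a new generator of the leading algebra has been produced, its promotion to a new LGS entry via Proposition 1 and the resulting strict drop of $\sigma$ is routine.
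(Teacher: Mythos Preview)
Your approach and the paper's coincide in spirit: show ${\mathcal R} \subset {\mathcal R}'$ gives $\sigma \geq \sigma'$, then split into cases. The paper splits on $\widetilde{\mu}$ (proving $\sigma > \sigma'$ whenever $0 < \widetilde{\mu} < \infty$, and $\#E > \#E'$ when $\widetilde{\mu} \in \{0, \infty\}$, which forces $s \neq 0$), while you split on $s$; the two decompositions are equivalent since your case $s = 0$ forces $0 < \widetilde{\mu} < \infty$.

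There is, however, a genuine gap in your $s = 0$ argument. You assert one can choose $(f, a) \in {\mathcal R}_P$ attaining $\mathrm{ord}_P(c_{f,{\Bbb O}}) = \mu_P({\mathcal R}) \cdot a$ with $a$ so divisible that every $\mu_{P,D}({\mathcal R}) \cdot a$ is an integer. The infimum $\mu_P$ is attained at some $(f_0, a_0)$ because ${\mathcal R}$ is of i.f.g.\ type, but there is no reason it is attained at level $m a_0$ for arbitrary $m$: the map $f \mapsto c_{f,{\Bbb O}}$ is not multiplicative (reducing $(c_{f_0,{\Bbb O}})^m$ back into the coefficient range $0 \leq k_i < p^{e_i}$ via the relations $h_i = x_i^{p^{e_i}} + \cdots$ can strictly raise the order), so $\mathrm{ord}_P(c_{f_0^m,{\Bbb O}})$ may exceed $m \cdot \mu_P \cdot a_0$. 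The paper circumvents this with a second subcase: when some $\mu_{P,D} \cdot a \notin {\Bbb Z}$, one works instead with the power $\{c_{f,{\Bbb O}} \cdot ({\Bbb M}^{-1})^a\}^l$ at level $l \cdot \widetilde{\mu} \cdot a$ (for $l$ clearing denominators) and observes that its initial form is divisible by $x_D$---since $\mathrm{ord}_{\xi_D}(c_{f,{\Bbb O}}) \in {\Bbb Z}$ must strictly exceed $\mu_{P,D} \cdot a \notin {\Bbb Z}$---hence lies outside $k[x_1^{p^{e_1}}, \ldots, x_t^{p^{e_t}}]$. Your parenthetical in the $s > 0$ case (that adjoining $(x_D, 1)$ for $D \subset E_{\mathrm{aged}}$ forces $\sigma$ to drop) is also unjustified, since divisors in $E_{\mathrm{aged}}$ need not be transversal to the LGS; but as you note, it is not needed.
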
 
\begin{proof} We analyze the assertion in the following two cases.  Note that, since ${\mathcal R} \subset {\mathcal R}'$, we have $\sigma \geq \sigma'$ in both cases. 
 
\medskip 
 
\noindent\emph{Case$\colon 
\widetilde{\mu} \neq 0$ or $\infty$.} 
 
In this case, we claim $\sigma > \sigma'$.  We take the LGS ${\mathbb H} = 
 \{(h_{\alpha},p^{e_{\alpha}})\}_{{\alpha} = 1}^t$ and its associated 
 regular system of parameters 
$X = (x_1, \ldots, x_t, x_{t+1},\ldots, x_d)$ 
satisfying the condition $(\heartsuit$) as given in ``Description of the triplet of invariants $(\sigma,\widetilde{\mu},s)$''.  Since ${\mathcal R}$ is an idealistic filtration of i.f.g.\! type, there exists $(f,a) \in {\mathcal R}_P$ 
such that $\mu_P({\mathcal R}) = {\mathrm{ord}_P(c_{f,{\mathbb O}})}/{a}$, 
where $f = \sum c_{f,B}H^B$ is the power series expansion of $f$ with respect to ${\mathbb H}$ and $X$. 
Set 
$${\mathbb M} 
= \prod_{D \subset E_{\mathrm{young}}}x_D^{\mu_{P,D}({\mathcal R})}.$$ 
 
\smallskip 
 
\emph{Subcase$\colon 
\mu_{P,D}({\mathcal R}) \cdot a \in {\mathbb Z}_{\geq 0} 
\quad \forall D \subset E_{\mathrm{young}}$}. 
 
In this subcase, we have ${\mathbb M}^a \in \widehat{{\mathcal O}_{W,P}}$ and $c_{f,{\mathbb O}} \cdot ({\mathbb M}^{-1})^a = c_{f,{\mathbb O}} \cdot ({\mathbb M}^a)^{-1} \in \widehat{{\mathcal O}_{W,P}}$ by definition. 
Moreover, by construction of the companion modification, 
we have $\left(c_{f,{\mathbb O}} \cdot ({\mathbb M}^a)^{-1}, 
\widetilde{\mu} \cdot a\right) \in \widehat{\mathrm{Comp}({\mathcal R})} 
\subset \widehat{{\mathcal R}'}$.  Note that 
$$\widetilde{\mu} \cdot a = \left(\mu_P({\mathcal R}) - \sum_{D \subset E_{\mathrm{young}}} \mu_{P,D}({\mathcal R})\right) \cdot a = \mathrm{ord}_P\left(c_{f,{\mathbb O}} \cdot ({\mathbb M}^{-1})^a\right).$$ 
Note also that, by the characterization of the power series expansion with respect to ${\mathbb H}$ and $X$, we have 
$$c_{f,{\mathbb O}} = \sum_{K \in ({\mathbb Z}_{\geq 0})^d}b_{f,{\mathbb O},K}X^K$$ 
with $b_{f,{\mathbb O},K} \in k[[x_{t+1}, \ldots, x_d]]$ and with $K = (k_1, \ldots, k_t, k_{t+1}, \ldots, k_d)$ varying in the range satisfying the condition 
$$\left\{\begin{array}{ccl} 
0 \leq k_{\alpha} \leq p^{e_{\alpha}} - 1 &\text{ for }& \alpha = 1, \ldots, t \\ 
k_{\alpha} = 0 &\text{ for }& \alpha = t+1, \ldots, d. 
\end{array}\right.$$ 
Therefore, we conclude 
$$ 
\mathrm{In}\left(c_{f,{\mathbb O}} \cdot ({\mathbb M}^{-1})^a\right) 
= c_{f,{\mathbb O}} \cdot ({\mathbb M}^{-1})^a\ \mathrm{mod}\ 
\widehat{{\mathfrak m}}^{\widetilde{\mu} \cdot a + 1} 
= \sum_{K \in ({\mathbb Z}_{\geq 0})^d}a_{f,{\mathbb O},K}X^K 
$$ 
with $a_{f,{\mathbb O},K} \in k[x_{t+1}, \ldots, x_d]$.  Since $L_P({\mathcal R}) = k[x_1^{p^{e_1}}, \ldots, x_t^{p^{e_t}}]$, we conclude 
$$\mathrm{In}\left(c_{f,{\mathbb O}} \cdot ({\mathbb M}^{-1})^a\right) \not\in L_P({\mathcal R}),$$ 
while by definition 
$$\mathrm{In}\left(c_{f,{\mathbb O}} \cdot ({\mathbb M}^{-1})^a\right) \in L_P(\widehat{{\mathcal R}'}) = L_P({\mathcal R}').$$ 
Therefore, we conclude 
$L_P({\mathcal R}) 
\subsetneqq 
L_P(\widehat{{\mathcal R}'})$ and hence $\sigma > \sigma'$. 
 
\smallskip 
 
\emph{Subcase$\colon 
\mu_{P,D}({\mathcal R}) \cdot a \not\in{\mathbb Z}_{\geq 0}$ for some $D \subset E_{\mathrm{young}}$}. 
 
In this case, take $l \in {\mathbb Z}_{> 0}$ such that 
$$l \cdot \mu_{P,D}({\mathcal R}) \cdot a \in {\mathbb Z}_{\geq 0} 
\quad \forall D \subset E_{\mathrm{young}}.$$ 
Then we have $({\mathbb M}^a)^l \in {\mathcal O}_{W,P}$ 
and $\left\{c_{f,{\mathbb O}} \cdot ({\mathbb M}^{-1})^a)\right\}^l 
\in \widehat{{\mathcal O}_{W,P}}$ by definition. 
Moreover, by construction of the companion modification, we have 
$(\left\{c_{f,{\mathbb O}} \cdot ({\mathbb M}^{-1})^a\right\}^l, l \cdot \widetilde{\mu} \cdot a) \in \widehat{\mathrm{Comp}({\mathcal R})} \subset \widehat{{\mathcal R}'}$.  Note that 
$$l \cdot \widetilde{\mu} \cdot a 
= l \cdot (\mu_P({\mathcal R}) - \sum_{D \subset E_{\mathrm{young}}} \mu_{P,D}({\mathcal R})) \cdot a 
= \mathrm{ord}_P\left(\left\{c_{f,{\mathbb O}} \cdot ({\mathbb M}^{-1})^a\right\}^l\right).$$ 
Note also that, since $\mathrm{ord}_{\xi_D}(c_{f,{\mathbb O}}) \geq \mu_{P,D}({\mathcal R}) \cdot a$ by definition, 
since $\mathrm{ord}_{\xi_D}(c_{f,{\mathbb O}}) \in {\mathbb Z}_{\geq 0}$ and since $\mu_{P,D}({\mathcal R}) \cdot a \not\in {\mathbb Z}_{\geq 0}$ by the subcase assumption, we conclude 
$$\mathrm{ord}_{\xi_D}(c_{f,{\mathbb O}}) > \mu_{P,D}({\mathcal R}) \cdot a.$$ 
This implies, $\left\{c_{f,{\mathbb O}} \cdot ({\mathbb M}^{-1})^a\right\}^l$ is divisible by $x_D$, and hence so is 
$$\mathrm{In}\left(\left\{c_{f,{\mathbb O}} \cdot ({\mathbb M}^{-1})^a\right\}^l\right) = \left\{c_{f,{\mathbb O}} \cdot ({\mathbb M}^{-1})^a\right\}^l\ \mathrm{mod}\ \widehat{{\mathfrak m}}^{l \cdot \widetilde{\mu} \cdot a + 1}.$$ 
Since $L_P({\mathcal R}) = k[x_1^{p^{e_1}}, \ldots, x_t^{p^{e_t}}]$ and since $x_D \in \{x_{t+1}, \ldots, x_d\}$, we conclude 
$$\mathrm{In}\left(\left\{c_{f,{\mathbb O}} \cdot ({\mathbb M}^{-1})^a\right\}^l\right) \not\in L_P({\mathcal R}),$$ 
while by definition 
$$\mathrm{In}\left(\left\{c_{f,{\mathbb O}} \cdot ({\mathbb M}^{-1})^a\right\}^l\right) \in L_P(\widehat{{\mathcal R}'}) = L_P({\mathcal R}').$$ 
Therefore, we conclude $L_P({\mathcal R}) 
%\underset{\neq}\subset 
\subsetneqq 
L_P(\widehat{{\mathcal R}'})$ and hence $\sigma > \sigma'$. 
 
\smallskip 
 
\noindent\emph{Case$\colon 
\widetilde{\mu} = 0$ or $\infty$.} 
 
In this case, we claim $(\sigma, \# E) > (\sigma', \# E')$. 
Observe $s \neq 0$.  (If $s = 0$, then $(\sigma,\widetilde{\mu},s) 
= (\sigma,0,0)$ or $(\sigma,\infty,0)$, and hence we do not construct 
the modification $(W',{\mathcal R}',E')$.)  Therefore, by definition, 
there exists a divisor $D \subset E_{\mathrm{aged}}$ containing $P$. 
Thus, we have $\# E > \# (E \setminus E_{\mathrm{aged}}) = \# E'$. 
(Note that ``$\#$'' represents the number of the components passing through $P$.) 
\end{proof} 
 
\pagebreak[2] 
\fbox{Proceeding in the vertical direction} 
\begin{proposition} Let $(W,{\mathcal R},E) \overset{\pi}\leftarrow 
(\widetilde{W},\widetilde{\mathcal R},\widetilde{E})$ 
be a transformation in the resolution sequence 
(i.e., $(W_i,{\mathcal R}_i,E_i) \overset{\pi_{i+1}}\leftarrow 
(W_{i+1},{\mathcal R}_{i+1},E_{i+1})$ in the sequence described in 
Problem 4 (cf. Remark 2)).  Take a point 
$\widetilde{P} \in \pi^{-1}(P) \cap \mathrm{Sing}(\widetilde{\mathcal R})$. 
Then the value of the triplet $(\sigma, \widetilde{\mu},s)$ 
does not increase as we proceed in the vertical direction, i.e., we have 
$$(\sigma, \widetilde{\mu},s) \geq (\widetilde{\sigma}, 
\widetilde{\ \widetilde{\mu}\ },\widetilde{s}) 
\quad 
\left(\text{i.e.,}\ (\sigma_i, \widetilde{\mu}_i,s_i) \geq 
(\sigma_{i+1}, \widetilde{\mu}_{i+1},s_{i+1})\right).$$ 
More precisely, we have the following: 
 
\medskip 
 
\indent{\rm (1)}\quad 
The invariant $\sigma$ does not increase, i.e., 
$\sigma \geq \widetilde{\sigma}$.  When $\sigma = \widetilde{\sigma}$, 
the transformation of the LGS for $\widehat{{\mathcal R}_P}$ is 
an LGS for $\widehat{\widetilde{\mathcal R}_{\widetilde{P}}}$. 
Moreover, the following property is preserved going from 
$(W,{\mathcal R},E)$ to 
$(\widetilde{W},\widetilde{\mathcal R},\widetilde{E})$: We can choose 
an LGS ${\mathbb H} = \{(h_{\alpha},p^{e_{\alpha}})\}_{\alpha = 1}^t 
\subset \widehat{{\mathcal R}_P}$ and a regular system of parameters 
$X = (x_1, \ldots, x_t, x_{t+1}, \ldots, x_d)$, taken from 
$\widehat{{\mathcal O}_{W,P}}$, satisfying the condition $(\heartsuit)$ below; 
 
\medskip 
 
\indent$\bullet$\quad 
$h_{\alpha} \equiv x_{\alpha}^{p^{e_{\alpha}}}\ 
\mathrm{ mod }\ \widehat{{\mathfrak m}}^{p^{e_{\alpha}} + 1}$ 
for $\alpha = 1, \ldots, t$, 
 
\indent$\bullet$\quad 
the idealistic filtration of i.f.g.\! type $\widehat{{\mathcal R}_P}$ is 
%$\{\frac{\partial^n}{\partial {x_{\alpha}}^n} \mid n \in {\mathbb Z}_{\geq 0}, \alpha = 1, \ldots, t\}$-saturated, 
saturated for $\{{\partial^n}/{\partial {x_{\alpha}\!}^n} | 
n \in {\mathbb Z}_{\geq 0}, \alpha = 1, \ldots, t\}$, 
and 
 
\indent$\bullet$\quad 
the defining equations for the components of $E_{\mathrm{young}}$, which are transversal to the LGS, form a part of the regular system of parameters, i.e., 
$\{x_D \mid D \subset E_{\mathrm{young}}\} \subset \{x_{t+1}, \ldots, x_d\}$. 
 
\medskip 
 
\indent{\rm (2)}\quad 
When $\sigma = \widetilde{\sigma}$, the value of the invariant 
$\widetilde{\mu}$ does not increase, i.e., 
$(\sigma,\widetilde{\mu}) \geq 
(\sigma,\widetilde{\ \widetilde{\mu}\ }) 
= (\widetilde{\sigma},\widetilde{\ \widetilde{\mu}\ })$. 
 
\indent{\rm (3)}\quad 
When $\sigma = \widetilde{\sigma}$, the value of the invariant $s$ does not increase, i.e., $s \geq \widetilde{s}$, and hence combined with (2) we have 
$$(\sigma,\widetilde{\mu},s) 
\geq (\sigma,\widetilde{\ \widetilde{\mu}\ },\widetilde{s}) 
= (\widetilde{\sigma},\widetilde{\ \widetilde{\mu}\ },\widetilde{s}).$$ 
\end{proposition} 
 
\begin{proof} 
{\rm (1)}\quad 
Since $C \subset \mathrm{Sing}({\mathcal R})$ and since $C$ is nonsingular, we conclude that there exists a regular system of parameters 
$(y_1, \ldots, y_r, y_{r+1}, \ldots, y_d) 
\subset {\mathcal O}_{W,P}$ such that 
\begin{itemize} 
\item $C = \{y_1 = \cdots = y_r = 0\}$, and 
 
\item we have for $\alpha = 1, \ldots, t$ 
$$\left\{\begin{array}{lcl} 
h_{\alpha} &\equiv& x_{\alpha}^{p^{e_{\alpha}}}\ \mathrm{ mod }\ \widehat{{\mathfrak m}_P}^{p^{e_{\alpha}}+1}, \text{ and }\\ 
x_{\alpha} &\equiv& \sum_{\beta = 1}^r c_{\alpha,\beta}y_{\beta}\ \mathrm{ mod }\ \widehat{{\mathfrak m}_P}^2 \text{ for some }c_{\alpha,\beta} \in k. 
\end{array}\right.$$ 
 
\end{itemize} 
 
By replacing $(y_1, \ldots, y_r, y_{r+1}, \ldots, y_d)$ with some linear transformation and then by replacing $(x_1, \ldots, x_t, x_{t+1}, \ldots, x_d)$ with $(y_1, \ldots, y_r$, 
$y_{r+1},\ldots, y_d)$, we may assume that we have a regular system of parameters 
$(x_1, \ldots, x_t, x_{t+1}, \ldots, x_d) 
\subset \widehat{{\mathcal O}_{W,P}}$ such that 
\begin{itemize} 
\item $C = \{x_1 = \cdots = x_r = 0\}$ ($t \leq r$), and 
\item $h_{\alpha} \equiv x_{\alpha}^{p^{e_{\alpha}}}\ \mathrm{ mod }\ \widehat{{\mathfrak m}_P}^{p^{e_{\alpha}}+1}$ for $\alpha = 1, \ldots, t$. 
\end{itemize} 
 
Observe that, if $t = r$, then after the blow up, for any point $\widetilde{P}$ over each $x_{\alpha}$-chart $(\alpha = 1, \ldots, t)$ we have $\mathrm{ord}_{\widetilde{P}}(\widetilde{h_{\alpha}}) = 0 < p^{e_{\alpha}}$ 
where $\widetilde{h_{\alpha}} 
= h_{\alpha}/x_{\alpha}^{p^{e_{\alpha}}}$, and hence $\pi^{-1}(P) 
\cap \mathrm{Sing}({\mathcal R}) = \emptyset$.  Therefore, 
we may assume $t < r$ and that our point 
$\widetilde{P} \in \pi^{-1}(P) \cap \mathrm{Sing}(\widetilde{\mathcal R})$ 
is in the $x_{\beta}$-chart for some $(t < \beta \leq r)$ 
with the regular system of parameters $(\widetilde{x}_1, 
\ldots, \widetilde{x}_t, \widetilde{x}_{t+1}, \ldots, 
\widetilde{x}_r, \widetilde{x}_{r+1}, \ldots,\widetilde{x}_d)$ where 
$$\widetilde{x}_{\alpha} =\begin{cases} 
x_{\alpha}/x_{\beta} 
&\text{for}\quad 1 \leq \alpha \leq r, \alpha \neq \beta\\ 
x_{\beta} 
&\text{for}\quad \alpha = \beta, \\ 
x_{\alpha} 
&\text{for}\quad r + 1 \leq \alpha \leq d. 
\end{cases} 
$$ 
(For the indices $t + 1 \leq \alpha \leq r, \alpha \neq \beta$, 
we may have to replace $x_{\alpha}$ with $x_{\alpha} - c_{\alpha}x_{\beta}$ for some $c_{\alpha} \in k$ if necessary.) 
 
Now we look at the transformation $\widetilde{h_{\alpha}} = h_{\alpha}/x_{\beta}^{p^{e_{\alpha}}}$ of 
$$h_{\alpha} = \sum_{K = (k_1, \ldots, k_d) \in ({\mathbb Z}_{\geq 0})^d}c_{\alpha,K}X^K 
\quad\text{with}\quad 
c_{\alpha,K} \in k 
$$ 
for $\alpha = 1, \ldots, t$. 
We compute 
$$ 
\widetilde{h_{\alpha}} 
= \frac{h_{\alpha}}{x_{\beta}^{p^{e_{\alpha}}}} 
= 
\sum_{K = (k_1, \ldots, k_d) \in ({\mathbb Z}_{\geq 0})^d} 
c_{\alpha,K}\frac{X^K}{x_{\beta}^{p^{e_{\alpha}}} } 
= 
\sum_{\widetilde{K} 
= (\widetilde{k}_1, \ldots, \widetilde{k}_d) \in ({\mathbb Z}_{\geq 0})^d} 
c_{\alpha,K}\widetilde{X}^{\widetilde{K}} 
$$ 
where 
$\widetilde{k}_u = k_u$ for $u \neq \beta$ and 
$\widetilde{k}_{\beta} = \sum_{u = 1}^rk_u - p^{e_{\alpha}}$. 
Therefore, we have 
$$\sum_{1 \leq u \leq r, u \neq \beta}k_u \geq p^{e_{\alpha}} \Rightarrow \deg_{\widetilde{X}}(\widetilde{X}^{\widetilde{K}}) \geq \deg_X(X^K).$$ 
Therefore, the only terms $\widetilde{X}^{\widetilde{K}}$ with 
$ \deg_{\widetilde{X}}(\widetilde{X}^{\widetilde{K}}) = p^{e_{\alpha}} 
<\deg_X(X^K)$ 
that can possibly appear 
in $\widetilde{h_{\alpha}}$ have to contain one of $\widetilde{x}_{\beta}, \widetilde{x}_{r+1}, \ldots, \widetilde{x}_d$. 
 
Looking at $\widetilde{h_1}, \widetilde{h_2}, \ldots, \widetilde{h_t}$ in the ascending order, we conclude that 
 
\pagebreak[3] 
 
\indent$\bullet$\quad 
either $\sigma > \widetilde{\sigma}$, 
 
\indent$\bullet$\quad 
or $\sigma = \widetilde{\sigma}$, and we have 
$$\widetilde{h_{\alpha}} \equiv \widetilde{x}_{\alpha}^{p^{e_{\alpha}}} + c_{\alpha,\beta}\widetilde{x}_{\beta}^{p^{e_{\alpha}}} + \sum_{u = r+1}^d c_{\alpha,u}\widetilde{x}_u^{p^{e_{\alpha}}}\ \mathrm{ mod }\ \widehat{{\mathfrak m}_P}^{p^{e_{\alpha}} + 1}$$ 
with $\{c_{\alpha,\beta}, c_{\alpha,r+1}, \ldots, c_{\alpha,d}\} \subset k$ for $\alpha = 1, \ldots, t$, and hence 
$\widetilde{\mathbb H} 
= \{(\widetilde{h_{\alpha}}, p^{e_{\alpha}})\}_{\alpha = 1}^t$ 
is an LGS for $\widetilde{\mathcal R}$. 
 
\medskip 
 
This completes the proof of the first part of (1). 
 
\medskip 
 
Next we look at the ``Moreover'' part of (1) 
 
We show the existence of such an LGS ${\mathbb H} = \{(h_{\alpha},p^{e_{\alpha}})\}_{\alpha = 1}^t$ and its associated regular system of parameters $X = (x_1, \ldots, x_t, x_{t+1}, \ldots, x_d)$ that satisfy the condition $(\heartsuit)$ by induction on the year . 
 
When we are in the year when the value of $\sigma$ first started 
(i.e., the value of $\sigma$ is strictly less than the one in the 
previous year), we replace the original ${\mathcal R}$ with its 
differential saturation (cf. the technical but important points 
in the description of the triplet of invariants 
$(\sigma,\widetilde{\mu},s)$ (i)).  Thus we may assume 
${\mathcal R}$ is ${\mathcal D}$-saturated.  Moreover, 
we have $E_{\mathrm{young}} = \emptyset$.  Therefore, 
we have only to take an LGS ${\mathbb H}$ and $X$ such that 
$h_{\alpha} \equiv x_{\alpha}^{p^{e_{\alpha}}}\ 
\mathrm{ mod }\ \widehat{{\mathfrak m}_P}^{p^{e_{\alpha}} + 1}$ 
for $\alpha = 1, \ldots, t$.  The remaining requirements in 
the condition $(\heartsuit$) are then automatically satisfied. 
 
Now we assume that in the current year we have such an 
LGS ${\mathbb H}$ and its associated regular system of parameters $X$ 
satisfying the condition $(\heartsuit)$. 
We show that, assuming $\sigma = \widetilde{\sigma}$, even after the 
transformation we have such an LGS $\widetilde{\mathbb H}$ and 
$\widetilde{X}$ that satisfy the condition $(\heartsuit)$. 
 
\smallskip 
 
\indent{\rm Step 1}.\quad 
We modify our $X$ so that the LGS ${\mathbb H}$ and $X$ are still 
associated, satisfy the condition $(\heartsuit)$ as before, and 
now satisfy the extra condition that the center is defined by 
$C = \{x_1 = \cdots = x_r = 0\}\ (t \leq r)$. 
 
\medskip 
 
\indent{\rm (a)}\quad 
We take another regular system of parameters 
$Y = (y_1, \ldots, y_r$, 
$y_{r+1},\ldots, y_d)$ such that the center is defined by 
$C = \{y_1 = \cdots = y_r = 0\}$.  Note that, if $C \subset D$ 
for any component $D \subset E_{\mathrm{young}}$, then we include 
$x_D$ in $Y$.  Note that such $x_D$ is included in $X$. 
 
Then since $C \subset \mathrm{Sing}({\mathcal R})$, we conclude that 
$$x_{\alpha} \equiv \sum_{\beta = 1}^r c_{\alpha,\beta}y_{\beta}\ 
\mathrm{ mod }\ \widehat{{\mathfrak m}_P}^2 
\quad \text{ for some }c_{\alpha,\beta} \in k $$ 
for $\alpha = 1, \ldots, t$. 
Therefore, by taking a suitable linear transformation among 
$\{y_1, \ldots, y_r\}$, we may assume 
$$x_{\alpha} \equiv y_{\alpha}\ \mathrm{ mod }\ 
\widehat{{\mathfrak m}_P}^2 \quad\text{for}\quad \alpha = 1, \ldots, t.$$ 
It is straightforward to see that, since ${\mathbb H}$ and 
$X = (x_1, \ldots, x_t, x_{t+1}, \ldots, x_d)$ 
satisfy the condition $(\heartsuit)$, so does ${\mathbb H}$ and $(y_1, \ldots, y_t, x_{t+1}, \ldots, x_d)$.  By replacing $X$ with $(y_1, \ldots, y_t, x_{t+1}, \ldots, x_d)$, we may assume that ${\mathbb H}$ and $X$ satisfy the condition $(\heartsuit)$, and $(x_1, \ldots, x_t, y_{t+1}, \ldots, y_r, y_{r+1}, \ldots, y_d)$ is a regular system of parameters 
with $C = \{x_1 = \cdots = x_t = y_{t+1} = \cdots = y_r = 0\}$. 
 
\indent{\rm (b)}\quad 
We look at $\{y_{\beta} \mid t+1 \leq \beta \leq r\}$.  By applying the Weierstrass Division Theorem consecutively (and replacing the original $y_{\beta}$ after multiplying some unit if necessary) or by simply looking at the power series expansion with respect to $X$, we write 
$$y_{\beta} = \sum_{\alpha = 1}^t q_{\beta,\alpha}x_{\alpha} 
+ g(x_{t+1}, \ldots, x_d) \quad\text{for}\quad \beta = t+1, \ldots, r$$ 
with $q_{\beta,\alpha} \in \widehat{{\mathcal O}_{W,P}}$ and 
$g(x_{t+1}, \ldots, x_d) \in k[[x_{t + 1}, \ldots, x_d]]$.  Set 
$$y_{\beta}' = y_{\beta} - \sum_{\alpha = 1}^t q_{\beta,\alpha}x_{\alpha} 
\quad\text{for}\quad \beta = t+1, \ldots, r.$$ 
Choose $\{x_{r+1}, \ldots, x_d\} \subset \{x_{t+1}, \ldots, x_d\}$ 
(after renumbering of 
$x_{t+1},  \ldots, x_d$ if necessary and keeping the condition 
$\{x_D \mid D \subset E_{\mathrm{young}}\} \subset 
\{x_{t+1}, \ldots, x_d\}$) such that  $(x_1, \ldots, x_t, y'_{t+1}, \ldots, y'_r, x_{r+1}, \ldots, x_d)$ is a regular system of parameters.  Now replace $X$ with 
$(x_1, \ldots, x_t, y'_{t+1}, \ldots, y'_r$, 
$x_{r+1}, \ldots, x_d)$.  Then it is straightforward to see that, since ${\mathbb H}$ and the previous $X$ satisfy the condition $(\heartsuit)$, so do ${\mathbb H}$ and the new $X$.  Now by construction, the center $C$ is defined 
by $C = \{x_1 = \cdots = x_r = 0\}\ (t \leq r)$. 
 
\medskip 
 
Step 2. Analysis after blow up. 
 
\smallskip 
 
As in the proof of the first part, we may assume $t < r$ and that our point $\widetilde{P} \in \pi^{-1}(P) \cap \mathrm{Sing}(\widetilde{\mathcal R})$ is in the $x_{\beta}$-chart for some $(t < \beta \leq r)$ with the regular system of coordinates $\widetilde{X} = (\widetilde{x}_1, \ldots, \widetilde{x}_t, \widetilde{x}_{t+1}, \ldots, 
\widetilde{x}_r, \widetilde{x}_{r+1}, \ldots, \widetilde{x}_d)$ 
where 
$$\widetilde{x}_{\alpha} =\begin{cases} 
x_{\alpha}/x_{\beta} 
&\text{for}\quad 1 \leq \alpha \leq r,\ \alpha \neq \beta, 
\\ 
x_{\beta} 
&\text{for}\quad \alpha = {\beta}, 
\\ 
x_{\alpha} 
&\text{for}\quad r + 1 \leq \alpha \leq d. 
 
\end{cases}$$ 
(For the indices 
$t + 1 \leq \alpha \leq r, \alpha \neq \beta$, 
we may have to replace $x_{\alpha}$ with $x_{\alpha} - c_{\alpha}x_{\beta}$ for some $c_{\alpha} \in k$ if necessary.) 
 
It is straightforward to see that the above $\widetilde{X}$ satisfies the following two conditions; 
 
\smallskip 
 
\indent$\bullet$\quad 
the idealistic filtration of i.f.g.\! type $\widehat{\widetilde{\mathcal R}_{\widetilde{P}}}$, which is (the completion of) the transformation of $\widehat{{\mathcal R}_P}$, is 
%$\{\frac{\partial^n}{\partial {\widetilde{x}_{\alpha}}^n} \mid n \in {\mathbb Z}_{\geq 0}, {\alpha} = 1, \ldots, t\}$-saturated, 
saturated for $\{{\partial^n}/{\partial {\widetilde{x}_{\alpha}}^n} \mid n \in {\mathbb Z}_{\geq 0}, {\alpha} = 1, \ldots, t\}$, 
and 
 
\indent$\bullet$\quad 
the defining equations for the $\widetilde{E}_{\mathrm{young}}$ form a part of the regular system of parameters, i.e., 
$\{\widetilde{x}_{\widetilde{D}} \mid \widetilde{D} \subset \widetilde{E}_{\mathrm{young}}\} \subset \{\widetilde{x}_{t+1}, \ldots, \widetilde{x}_d\}$. 
 
\smallskip 
 
We have only to replace $\widetilde{X}$ with $(\widetilde{x}_1', \ldots, \widetilde{x}_t', \widetilde{x}_{t+1}, \ldots, \widetilde{x}_d)$ in order to satisfy the remaining condition 
 
\smallskip 
 
\indent$\bullet$\quad 
$\widetilde{h_{\alpha}} \equiv 
\widetilde{x}_{\alpha}^{p^{e_i}}\ \mathrm{ mod }\ 
\widehat{{\mathfrak m}_{\widetilde{P}}}^{p^{e_{\alpha}} + 1}$ 
for $\alpha = 1, \ldots, t$, 
 
\noindent 
while keeping the other two requirements as above, where 
$$\widetilde{x}_{\alpha}' 
= \widetilde{x}_{\alpha} + c_{\alpha,\beta}^{1/p^{e_{\alpha}}} 
\widetilde{x}_{\beta} + \sum_{u = r+1}^d c_{\alpha,u}^{1/p^{e_{\alpha}}} 
\widetilde{x}_u 
\quad\text{for}\quad \alpha = 1, \ldots, t$$ 
(using the same notation as in the proof of the first part). 
 
Then the LGS $\widetilde{\mathbb H}$ and $\widetilde{X}$ satisfy the condition $(\heartsuit)$. 
 
This finishes the proof of ``Moreover'' part of (1). 
 
\medskip 
 
{\rm (2)}\quad 
We use the same notation used in the proof of (1).  We take an LGS 
${\mathbb H}$ and its associated regular system of parameters $X$, 
satisfying the condition $(\heartsuit)$ and the extra condition that 
the center is defined by $C = \{x_1 = \cdots = x_r = 0\}$ 
(see Step 1 in the proof of ``Moreover'' part of (1)). 
When $\sigma = \widetilde{\sigma}$, we make the following two observations. 
 
\smallskip 
 
\indent$\bullet$\quad 
The transformation $\widetilde{\mathbb H}$ is an LGS of $\widetilde{\mathcal R}$. 
 
\indent$\bullet$\quad 
For $(f,a) \in \widehat{{\mathcal R}_P}$, let $\sum c_{f,B}H^B$ be 
the power series expansion of $f$ with respect to ${\mathbb H}$ and 
its associated regular system of parameters 
$(x_1, \ldots, x_t, x_{t+1}, \ldots, x_d)$, with the 
``constant term'' being $c_{f,{\mathbb O}}$.  Look at 
its transformation $(\widetilde{f},a)$ with 
$\widetilde{f} = f/x_{\beta}^a$.  The constant term 
$c_{\widetilde{f},{\mathbb O}}$ of the transformation 
$\widetilde{f}$ with respect to $\widetilde{H}$ and its 
associated regular system of parameters 
$(\widetilde{x}_1', \ldots, \widetilde{x}_t', \widetilde{x}_{t+1}, 
\ldots, \widetilde{x}_d)$ is the transformation 
$\widetilde{c_{f,{\mathbb O}}} = c_{f,{\mathbb O}}/x_{\beta}^a$, where 
$$\widetilde{x}_{\alpha}' = \widetilde{x}_{\alpha} 
+ c_{\alpha,\beta}^{1/p^{e_{\alpha}}}\widetilde{x}_{\beta} 
+ \sum_{u = r+1}^d c_{\alpha,u}^{1/p^{e_{\alpha}}}\widetilde{x}_u 
\quad\text{for}\quad 
\alpha = 1, \ldots, t.$$ 
 
\smallskip 
 
Now the inequality $\widetilde{\mu} \geq \widetilde{\ \widetilde{\mu}\ }$ follows from these two observations and the condition $C \subset \mathrm{Sing}\left(\mathrm{Comp}({\mathcal R})\right)$ in our new setting by the same argument as the one used to show that the invariant $\mathrm{w\text{-}ord}$ does not increase under transformation in the classical setting. 
 
\medskip 
 
{\rm (3)}\quad 
This follows immediately from the fact that the center $C$ of blow up for the transformation $\pi$ is nonsingular and transversal to the boundary $E$ (hence to $E_{\mathrm{aged}}$), and from the fact that, since $\sigma = \widetilde{\sigma}$, the aged part $\widetilde{E}_{\mathrm{aged}}$ of $\widetilde{E}$ is the strict transform of $E_{\mathrm{aged}}$ by definition. 
\end{proof} 
\end{subsection} 
\begin{subsection}{Weaving of the new strand of invariants ``$\inv_{\mathrm{new}}$''} 
In \S 4.3, we interpret the inductive scheme explained in \S 4.2 in terms of weaving the new strand of invariants ``$\inv_{\mathrm{new}}$''. 
 
We weave the strand of invariants ``$\inv_{\mathrm{new}}$'' consisting of the units of the form $(\sigma^j, \widetilde{\mu}^j,s^j)$, computed from the modifications $(W^j,{\mathcal R}^j,E^j)$ constructed simultaneously along the weaving process. 
 
\medskip 
 
\textbf{Weaving Process} 
 
\medskip 
 
We describe the weaving process inductively. 
 
Note that constructing a sequence for resolution of singularities by blowing up is referred to as ``proceeding in the vertical direction'' passing from one year to the next, indicated by the subscript ``$i$'', while weaving the strand and constructing the modifications passing from one stage to the next, indicated by the superscript ``$j$'', is referred to ``proceeding in the horizontal direction'' staying in a fixed year. 
 
\smallskip 
 
Suppose we have already woven the strands and constructed the modifications up to year $(i-1)$. 
 
Now we are in year $i$ (looking at the neighborhood of a point  $P_i \in \text{Sing}({\mathcal R}_i) \subset W_i$ (cf. Remark 2)). 
 
We start with $(W_i,{\mathcal R}_i,E_i) = (W_i^0,{\mathcal R}_i^0,E_i^0)$, just renaming the transformation $(W_i,{\mathcal R}_i,E_i)$ in year $i$ of the resolution sequence as the $0$-th stage modification $(W_i^0,{\mathcal R}_i^0,E_i^0)$ in year $i$. 
 
Suppose that we have already woven the strand 
up to the $(j-1)$-th unit 
$$\left(\inv_{\mathrm{new}}\right)_i^{\leq j-1} = (\sigma_i^0,\widetilde{\mu}_i^0,s_i^0)(\sigma_i^1,\widetilde{\mu}_i^1,s_i^1) \cdots 
(\sigma_i^{j-1}, \widetilde{\mu}_i^{j-1},s_i^{j-1})$$ 
and that we have also constructed the modifications up to the $j$-th one 
$$(W_i^0,{\mathcal R}_i^0,E_i^0), (W_i^1,{\mathcal R}_i^1,E_i^1), 
\ldots, (W_i^{j-1},{\mathcal R}_i^{j-1},E_i^{j-1}), 
(W_i^j,{\mathcal R}_i^j,E_i^j).$$ 
 
Our task is to compute the $j$-th unit $(\sigma_i^j,\widetilde{\mu}_i^j,s_i^j)$ and construct the $(j+1)$-th modification $(W_i^{j+1},{\mathcal R}_i^{j+1},E_i^{j+1})$ (unless the weaving process is over at the $j$-th stage). 
 
\medskip 
 
\noindent\underline{\rm Computation of the j-th unit $(\sigma_i^j,\widetilde{\mu}_i^j,s_i^j)$} 
 
\medskip 
 
\indent$\circ\ \sigma_i^j$:\quad 
It is the invariant $\sigma$ associated to the 
differential saturation ${\mathcal D}{\mathcal R}_i^j$ 
of the idealistic filtration of i.f.g.\! type ${\mathcal R}_i^j$ 
(cf. Proposition 1). 
 
\smallskip 
 
\indent$\circ\ \widetilde{\mu}_i^j$:\quad 
It is the (normalized) weak order modulo LGS of the idealistic filtration of i.f.g.\! type ${\mathcal R}_i^j$ with respect to $(E_i^j)_{\mathrm{young}}$. 
(For the definition of $(E_i^j)_{\mathrm{young}} (\subset E_i^j)$, see 
the description of the invariant $s_i^j$ below. 
 
We note that, in order to compute $\widetilde{\mu}_i^j$, 
 
\indent$\bullet$\quad 
we keep ${\mathcal R}_i^j$ as it is, which is the 
 transformation of ${\mathcal R}_{i-1}^j$, 
 
\indent$\bullet$\quad 
we take the LGS to be the transformation of the one in 
the previous year if 
$\left(\inv_{\mathrm{new}}\right)_i^{\leq j-1}(\sigma_i^j) 
=  \left(\inv_{\mathrm{new}}\right)_{i-1}^{\leq j-1}(\sigma_{i-1}^j)$, 
 
\indent$\bullet$\quad 
we replace the original ${\mathcal R}_i^j$ 
with its differential saturation ${\mathcal D}{\mathcal R}_i^j$, and 
 
\indent$\bullet$\quad 
we take the 
%new LGS from the replaced 
LGS from the replaced 
${\mathcal R}_i^j = {\mathcal D}{\mathcal R}_i^j$ if 
$\left(\inv_{\mathrm{new}}\right)_i^{\leq j-1} (\sigma_i^j) 
< \left(\inv_{\mathrm{new}}\right)_{i-1}^{\leq j-1}(\sigma_{i-1}^j)$. 
 
\smallskip 
 
\indent$\circ\ s_i^j$:\quad 
It is the number of the components in 
$(E_i^j)_{\mathrm{aged}} = E_i^j \setminus (E_i^j)_{\mathrm{young}}$, 
where $(E_i^j)_{\mathrm{young}} (\subset E_i^j)$ is the union of the exceptional divisors created after the year when the value $\left(\inv_{\mathrm{new}}\right)_i^{\leq j-1}(\sigma_i^j)$ first started.  We note that this third factor 
is included even if $\widetilde{\mu}_i^j = \infty$ or $0$. 
 
\medskip 
 
We note that, if $(\sigma_i^j,\widetilde{\mu}_i^j,s_i^j) 
= (\sigma_i^j,\infty,0)$ or $(\sigma_i^j,0,0)$, 
then we declare that the $(j = m)$-th unit is the last one, and we stop the weaving process at the $m$-th stage in year $i$. 
 
Thus the weaving process of the strand comes to 
an end in a fixed year $i$, with the strand 
$\left(\inv_{\mathrm{new}}\right)_i$ taking the following form 
$$ 
\left({\inv_{\mathrm{new}}}\right)_i 
= (\sigma_i^0,\widetilde{\mu}_i^0,s_i^0) 
(\sigma_i^1,\widetilde{\mu}_i^1,s_i^1) \cdots 
(\sigma_i^j, \widetilde{\mu}_i^j,s_i^j) \cdots 
%(\sigma_i^{m-1}, \widetilde{\mu}_i^{m-1},s_i^{m-1}) 
(\sigma_i^m, \widetilde{\mu}_i^m,s_i^m), 
$$ 
where 
$(\sigma_i^m, \widetilde{\mu}_i^m,s_i^m) = 
(\sigma_i^m,\infty,0)$ or $(\sigma_i^m,0,0)$. 
 
\medskip 
 
\textbf{Termination in the horizontal direction} 
 
\medskip 
 
We note that termination of the weaving process in the horizontal direction is a consequence of the fact that going from the $j$-th unit to the $(j+1)$-th unit we have $(\sigma_i^j, \# E_i^j) > (\sigma_i^{j+1}, \# E_i^{j+1})$ (cf. Proposition 3) and that the value set of $(\sigma, \# E)$ satisfies the descending chain condition. 
 
\medskip 
 
\noindent\underline{\rm Construction of the $(j+1)$-th modification $(W_i^{j+1},{\mathcal R}_i^{j+1},E_i^{j+1})$} 
 
\medskip 
 
We note that we construct the $(j+1)$-th modification 
only when $(\sigma_i^j,\widetilde{\mu}_i^j,s_i^j) 
\neq (\sigma_i^j,\infty,0)$ or $(\sigma_i^j,0,0)$. 
 
We follow the construction described in the mechanism discussed in \S 4.2. 
Starting from $(W_i^j,{\mathcal R}_i^j,E_i^j)$, we construct 
$(W_i^{j+1},{\mathcal R}_i^{j+1},E_i^{j+1})$ as below: 
\begin{align*} 
W_i^{j+1} &= W_i^j, 
\quad 
{\mathcal R}_i^{j+1} = \mathrm{Bdry}\left(\mathrm{Comp} 
({\mathcal R}_i^j)\right), 
\quad 
\text{and}\\ 
E_i^{j+1} &= E_i^j \setminus (E_i^j)_{\mathrm{aged}} 
= (E_i^j)_{\mathrm{young}}, 
\end{align*} 
where 
 
\indent$\bullet$\quad 
denoting 
$\left(\inv_{\mathrm{new}}\right)_i^{\leq j-1} 
(\sigma_i^j,\widetilde{\mu}_i^j)$ 
by $\beta_i^j$, 
we set $\mathrm{Comp}({\mathcal R}_i^j)$ to be either 
the transformation of $\mathrm{Comp}({\mathcal R}_{i-1}^j)$ if 
$\beta_i=\beta_{i-1}$, or 
the one obtained by applying the construction in \S 4.2 if 
$\beta_i^j<\beta_{i-1}^j$, and 
 
\indent$\bullet$\quad 
denoting 
$\left(\inv_{\mathrm{new}}\right)_i^{\leq j}$ by $\gamma_i^j$, 
we set $\mathrm{Bdry}(\mathrm{Comp}({\mathcal R}_i^j))$ to be either 
the transformation of 
$\mathrm{Bdry}(\mathrm{Comp} ({\mathcal R}_{i-1}^j))$ if 
$\gamma_i^j=\gamma_{i-1}^j$, or 
$${\mathcal G}(\mathrm{Comp}({\mathcal R}_i^j) \cup 
\{(x_D,1) \mid D \subset (E_i^j)_{\mathrm{aged}}\})$$ 
if $\gamma_i^j<\gamma_{i-1}^j$. 
 
\medskip 
 
We note that, if 
%$\left(\inv_{\mathrm{new}}\right)_i^{\leq j} 
%= \left(\inv_{\mathrm{new}}\right)_{i-1}^{\leq j}$, 
$\gamma_i^j=\gamma_{i-1}^j$, 
then 
$(W_i^{j+1},{\mathcal R}_i^{j+1},E_i^{j+1})$ is 
the transformation of 
$(W_{i-1}^{j+1},{\mathcal R}_{i-1}^{j+1},E_{i-1}^{j+1})$. 
 
\medskip 
 
We also observe the following. 
 
\smallskip 
 
\noindent$\ast$\quad 
The ambient space remains 
the same throughout a fixed year $i$, i.e., 
$$W_i = W_i^0 = W_i^1 = \cdots = W_i^j = W_i^{j+1} = \cdots 
= W_i^{m-1} = W_i^m.$$ 
This is in clear contrast to the classical setting, where we 
take a consecutive sequence of the hypersurfaces of maximal 
contact (cf. \S 3.3) 
$$W_i = H_i^0 \supset H_i^1 \supset \cdots 
\supset H_i^j \supset H_i^{j+1} \supset \cdots \supset 
H_i^{m-1} \supset H_i^m.$$ 
 
\noindent$\ast$\quad 
The idealistic filtration of i.f.g.\! type gets enlarged 
(not necessarily strictly) under modification, i.e., 
$${\mathcal R}_i = {\mathcal R}_i^0 \subset {\mathcal R}_i^1 
\subset \cdots \subset {\mathcal R}_i^j \subset {\mathcal R}_i^{j+1} \cdots 
\subset {\mathcal R}_i^{m-1} \subset {\mathcal R}_i^m.$$ 
 
\noindent$\ast$\quad 
The boundary divisor decreases (not necessarily strictly) 
under modification, i.e., 
$$E_i = E_i^0 \supset E_i^1 \supset \cdots \supset E_i^j 
\supset E_i^{j+1} \supset \cdots \supset E_i^{m-1} \supset E_i^m.$$ 
 
\medskip 
 
\textbf{Summary of our algorithm in char$(k) = p > 0$ 
in terms of ``$\inv_{\mathrm{new}}$''} 
 
\medskip 
 
We start with $(W,{\mathcal R},E) = (W_0,{\mathcal R}_0,E_0)$. 
 
Suppose we have already constructed the resolution sequence 
up to year $i$ 
$$(W,{\mathcal R},E) = (W_0,{\mathcal R}_0,E_0) 
\leftarrow \cdots \leftarrow (W_i,{\mathcal R}_i,E_i).$$ 
 
We weave the strand of invariants in year $i$ described as above 
$$ 
\left({\inv_{\mathrm{new}}}\right)_i 
= 
(\sigma_i^0,\widetilde{\mu}_i^0,s_i^0) 
(\sigma_i^1,\widetilde{\mu}_i^1,s_i^1) 
\cdots 
(\sigma_i^j, \widetilde{\mu}_i^j,s_i^j) 
\cdots 
(\sigma_i^m, \widetilde{\mu}_i^m, s_i^m), 
$$ 
where 
$(\sigma_i^m, \widetilde{\mu}_i^m, s_i^m)= 
(\sigma_i^m,\infty,0)$ or $(\sigma_i^m,0,0)$. 
 
\medskip 
 
\noindent\emph{Case$\colon 
(\sigma_i^m, \widetilde{\mu}_i^m, s_i^m) = (\sigma_i^m,\infty,0)$}. 
 
In this case, we take the center of blow up in year $i$ for 
the transformation to be the singular locus of the last 
modification $(W_i^m,{\mathcal R}_i^m,E_i^m)$, i.e., 
$\mathrm{Sing}({\mathcal R}_i^m)$, which is easily seen 
to be nonsingular as follows.  We go back to the year 
$\iota:=i_{\text{aged}}^m$ when the value of 
$\left(\inv_{\mathrm{new}}\right)_i^{\leq m-1}(\sigma_i^m)$ 
first started. 
Observe that $\widetilde{\mu}_i^m = \infty$ implies 
$\widetilde{\mu}_{\iota}^m = \infty$.  The Nonsingularity 
Principle (cf. \cite{K} and \cite{KM}) applied to 
${\mathcal R}_{\iota}^m 
= {\mathcal D}{\mathcal R}_{\iota}^m$ 
tells us that there exists a regular system of parameters 
$(x_1, \ldots, x_t, x_{t+1}, \ldots, x_d)$ at $P_{\iota}$ 
such that 
${\mathcal R}_{\iota}^m = {\mathcal G}((x_1^{p^{e_1}}, p^{e_1}), 
\ldots, (x_t^{p^{e_t}}, p^{e_t}))$.  Note that the center 
$C_{\iota}$ of blow up in year $\iota$ is contained in 
$\mathrm{Sing}({\mathcal R}_{\iota}^m)$.  From this it 
follows inductively that there exists a regular system 
of parameters $(x_{1,i'}, \ldots, x_{t,i'}, 
x_{t+1,i'}, \ldots, x_{d,i'})$ at $P_{i'} \in 
\text{Sing}({\mathcal R}_{i'}^m) \subset W_{i'}$ 
such that $\text{Sing}({\mathcal R}_{i'}^j) 
= {\mathcal G}((x_{1,i'}^{p^{e_1}}, p^{e_1}), 
\ldots, (x_{t,i'}^{p^{e_t}}, p^{e_t}))$ for 
$\iota \leq i' \leq i$.  In particular, 
$\mathrm{Sing}({\mathcal R}_i^m)$ is nonsingular.  We note that 
$\mathrm{Sing}({\mathcal R}_i^m) \subset 
\bigcap_{D \subset E_i \setminus (E_i^m)_{\mathrm{young}}}D$ 
by construction of the boundary modifications, and that 
$\mathrm{Sing}({\mathcal R}_i^m)$ is transversal to 
$(E_i^m)_{\mathrm{young}}$.  This implies that the center 
$\mathrm{Sing}({\mathcal R}_i^m)$ is transversal to $E_i$. 
After blow up, the singular locus of $(W_i^m,{\mathcal R}_i^m,E_i^m)$ 
disappears.  Since resolution of singularities for 
$(W_i^m,{\mathcal R}_i^m,E_i^m)$ implies the strict 
decrease of the value of 
$\left({\inv_{\mathrm{new}}}\right)^{\leq m-1}$, we have 
$\left({\inv_{\mathrm{new}}}\right)_i^{\leq m-1} > 
\left({\inv_{\mathrm{new}}}\right)_{i+1}^{\leq m-1}$. 
 
\noindent\emph{Case$\colon 
(\sigma_i^m, \widetilde{\mu}_i^m, s_i^m) = (\sigma_i^m,0,0)$}. 
 
In this case, by following the procedure specified for 
resolution of singularities in the monomial case, we achieve 
resolution of singularities of the $m$-th modification, which 
implies the strict decrease of the value of 
$\left({\inv_{\mathrm{new}}}\right)^{\leq m-1}$.  
(It is possible that in the middle of the procedure 
the value of the $\left({\inv_{\mathrm{new}}}\right)^{\leq m-1}$ 
(or $\left({\inv_{\mathrm{new}}}\right)^{\leq m-1}(\sigma^m)$) 
strictly decreases). 
 
\medskip 
 
\noindent Note: In both cases above, we are using the fact that 
the value of ``${\inv_{\mathrm{new}}}$'' never increases after 
each transformation in the resolution sequence, in order to derive 
the strict decrease of 
$\left({\inv_{\mathrm{new}}}\right)^{\leq m-1}$ (or 
$\left({\inv_{\mathrm{new}}}\right)^{\leq m-1}(\sigma^m)$). 
 
\medskip 
 
\textbf{Termination in the vertical direction} 
 
\medskip 
 
By looking at the conclusions of the two cases above at the end 
of the weaving process in the horizontal direction, we conclude 
that in some year $i'$, the value of the strand strictly decreases, i.e., 
$\left({\inv_{\mathrm{new}}}\right)_i > 
\left({\inv_{\mathrm{new}}}\right)_{i'}$. 
 
Now we claim that the value of the strand ``$\inv_{\mathrm{new}}$'' can not decrease infinitely many times.  In fact, suppose by induction we have shown that the value of $\left({\inv_{\mathrm{new}}}\right)^{\leq t-1}$ can not decrease infinitely many times.  Then after some year the value of $\left({\inv_{\mathrm{new}}}\right)^{\leq t-1}$ stabilizes.  Since the value of the invariant $\sigma$ satisfies the descending chain condition, after some year (say after year $i_{t-1}'$), the value of $\left({\inv_{\mathrm{new}}}\right)^{\leq t-1}(\sigma^t)$ stabilizes.  Therefore, after year $i_{t-1}'$, we use the transformation of ${\mathcal R}_{i_{t-1}'}^t$ in order to compute the invariant $\widetilde{\mu}$.  (See (i) in the technical but important points in the computation of  the triplet $(\sigma,\widetilde{\mu},s)$.)  This implies that the denominator of the invariant $\widetilde{\mu}$ is bounded, and hence that the invariant $\widetilde{\mu}$ can not decrease infinitely many times.  Since the invariant $s$, being a nonnegative integer, can not decrease infinitely many times, we conclude that $\left({\inv_{\mathrm{new}}}\right)^{\leq t-1}(\sigma^t,\widetilde{\mu}^t,s^t) = \left({\inv_{\mathrm{new}}}\right)^{\leq t}$ can not decrease infinitely many times. 
 
For each $t$, let year $i_t$ be the time when the stabilization of $\left({\inv_{\mathrm{new}}}\right)^{\leq t}$ first starts, i.e., 
$$ 
\left({\inv_{\mathrm{new}}}\right)_{i_t - 1}^{\leq t} 
> \left({\inv_{\mathrm{new}}}\right)_{i_t}^{\leq t} 
= \left({\inv_{\mathrm{new}}}\right)_i^{\leq t} 
\quad\text{for}\quad i \geq i_t. 
$$ 
Note that $\{i_t\}$ is a (not necessarily strictly) increasing sequence, 
i.e., $i_t \leq i_{t'}$ if $t \leq t'$. 
Let $\sigma_t = \sigma_{i_t}^t$ be the first factor of the $t$-th unit of $\left({\inv_{\mathrm{new}}}\right)_{i_t}^{\leq t}$.  Note that $\{\sigma_t\}$ is a (not necessarily strictly) decreasing sequence.  That is to say, we have $\sigma_t \geq \sigma_{t+1}$, which follows easily if we look at year $i_{t+1}$ 
and see $\sigma_t = \sigma_{i_t}^t 
= \sigma_{i_{t+1}}^t \geq \sigma_{i_{t+1}}^{t+1} = \sigma_{t+1}$. 
 
We claim that we have either $\sigma_t > \sigma_{t+1}$ or $\sigma_t = \sigma_{t+1} > \sigma_{t+2}$.  This can be seen by the following reasoning. 
 
\smallskip 
 
\indent$\bullet$\quad 
First look at the $t$-th unit in year $i_t$, and observe $(\sigma_{i_t}^t,\widetilde{\mu}_{i_t}^t,s_{i_t}^t) \neq (\sigma_{i_t}^t,\infty,0)$ or $(\sigma_{i_t},0,0)$.  In fact, if $(\sigma_{i_t}^t,\widetilde{\mu}_{i_t}^t,s_{i_t}^t) = (\sigma_{i_t}^t,\infty,0)$ or $(\sigma_{i_t},0,0)$, then the weaving process is over at the $t$-th stage in year $i_t$. 
When $(\sigma_{i_t}^t,\widetilde{\mu}_{i_t}^t,s_{i_t}^t) = (\sigma_{i_t}^t,\infty,0)$, by the single blow up with center $\mathrm{Sing}({\mathcal R}_i^t)$, we accomplish resolution of singularities for $(W_{i_t}^t,{\mathcal R}_{i_t}^t,E_{i_t}^t)$. 
This, however, implies the strict decrease of $\left({\inv_{\mathrm{new}}}\right)^{\leq t-1}$, contradicting its stability after year $i_{t-1} (\leq i_t)$.  If $(\sigma_{i_t}^t,\widetilde{\mu}_{i_t}^t,s_{i_t}^t) = (\sigma_{i_t},0,0)$, then by the procedure of resolution of singularities in the monomial case, we accomplish resolution of singularities for $(W_{i_t}^t,{\mathcal R}_{i_t}^t,E_{i_t}^t)$, which implies the strict decrease of the value of $\left({\inv_{\mathrm{new}}}\right)^{\leq t-1}$. 
This also contradicts the stability of $\left({\inv_{\mathrm{new}}}\right)^{\leq t-1}$ after year $i_{t-1} (\leq i_t)$.  (It is possible that in the middle of the procedure the value of the $\left({\inv_{\mathrm{new}}}\right)^{\leq t-1}$ (or $\left({\inv_{\mathrm{new}}}\right)^{\leq t-1}(\sigma^t)$) strictly decreases. 
In the former case, it would contradict the stability of $\left({\inv_{\mathrm{new}}}\right)^{\leq t-1}$ after year $i_{t-1} (\leq i_t)$.  In the latter case, it would contradict the stability of $\left({\inv_{\mathrm{new}}}\right)^{\leq t}$ after year $i_t$.) 
 
\smallskip 
 
\indent$\bullet$\quad 
If $\widetilde{\mu}_{i_t}^t \neq 0$ or $\infty$, 
then $\sigma_{i_t}^t > \sigma_{i_t}^{t+1}$ 
(cf. the proof of Proposition 3). 
Since $\sigma_{i_t}^{t+1} \geq \sigma_{i_{t+1}}^{t+1}$, we 
conclude $\sigma_t = \sigma_{i_t}^t > \sigma_{i_t}^{t+1} 
\geq \sigma_{i_{t+1}}^{t+1} = \sigma_{t+1}$. 
 
\smallskip 
 
\indent$\bullet$\quad 
We consider the case where $\widetilde{\mu}_{i_t}^t = 0$.  By the first observation, we have $s_{i_t}^t \neq 0$ and the weaving process continues onto the $(t+1)$-th unit in year $i_t$.  We have $\sigma_{i_t}^t \geq \sigma_{i_t}^{t+1}$. 
 
\noindent\emph{Case$\colon 
\sigma_{i_t}^t > \sigma_{i_t}^{t+1}$}. 
 
We have $\sigma_t = \sigma_{i_t}^t > \sigma_{i_t}^{t+1} \geq \sigma_{i_{t+1}}^{t+1} = \sigma_{t+1}$. 
 
\noindent\emph{Case$\colon 
\sigma_{i_t}^t = \sigma_{i_t}^{t+1}$}. 
 
Since year $i_t$ is the time when the value of $\left({\inv_{\mathrm{new}}}\right)_{i_t}^{\leq t}$ first started, we have $\left(E_{i_t}^{t+1}\right)_{\mathrm{young}} = \emptyset$.  The idealistic filtration of i.f.g.\! type ${\mathcal R}_{i_t}^{t+1} \supset {\mathcal R}_{i_t}^t$ contains a monomial of the defining equations of $(E_{i_t}^t)_{\mathrm{young}}$. 
This implies $\widetilde{\mu}_{i_t}^{t+1} \neq 0$ or $\infty$. 
We have $\sigma_{i_t}^{t+1} \geq \sigma_{i_{t+1}}^{t+1}$. 
 
\noindent\emph{Subcase$\colon 
\sigma_{i_t}^{t+1} > \sigma_{i_{t+1}}^{t+1}$}. 
 
We have 
$\sigma_t = \sigma_{i_t}^t = \sigma_{i_t}^{t+1} 
> \sigma_{i_{t+1}}^{t+1} = \sigma_{t+1}$. 
 
\noindent\emph{Subcase$\colon 
\sigma_{i_t}^{t+1} = \sigma_{i_{t+1}}^{t+1}$}. 
 
Since $\widetilde{\mu}_{i_t}^{t+1} \neq \infty$, 
we have $\widetilde{\mu}_{i_{t+1}}^{t+1} \neq \infty$. 
 
\noindent\emph{Subsubcase$\colon 
\widetilde{\mu}_{i_{t+1}}^{t+1} \neq 0$}. 
 
We have $\sigma_{i_{t+1}}^{t+1} > \sigma_{i_{t+1}}^{t+2}$.  This implies $\sigma_t = \sigma_{i_t}^t = \sigma_{i_t}^{t+1} =  \sigma_{i_{t+1}}^{t+1} > \sigma_{i_{t+1}}^{t+2} \geq \sigma_{i_{t+2}}^{t+2} = \sigma_{t+2}$. 
 
\noindent\emph{Subsubcase$\colon 
\widetilde{\mu}_{i_{t+1}}^{t+1} = 0$}. 
 
By the first observation, we have $s_{i_{t+1}}^{t + 1} \neq 0$.  That is to say, there is a component $D$ of $(E_{i_{t+1}}^{t+1})_{\mathrm{aged}} \subset E_{i_{t+1}}^{t+1} = E_{i_{t+1}}^t \setminus (E_{i_{t+1}}^t)_{\mathrm{aged}} = (E_{i_{t+1}}^t)_{\mathrm{young}}$ passing through that point. 
$(E_{i_{t+1}}^t)_{\mathrm{young}}$ is the union of the exceptional divisors created after the year when the value of $\left({\inv_{\mathrm{new}}}\right)_{i_{t+1}}^{\leq t-1}(\sigma_{i_{t+1}}^t) = \left({\inv_{\mathrm{new}}}\right)_{i_t}^{\leq t-1}(\sigma_{i_t}^t)$ first started. 
Therefore, $D$ is transversal to the LGS of ${\mathcal R}_{i_{t+1}}^{t+1}$, which is the transformation of the LGS of ${\mathcal R}_{i_t}^t$ since $\sigma_{i_t}^t = \sigma_{i_t}^{t+1} = \sigma_{i_{t+1}}^{t+1}$. 
Since ${\mathcal R}_{i_{t+1}}^{t+2}$ contains $(x_D,1)$, where $x_D$ is the defining equation of $D$, we conclude $\sigma_{i_{t+1}}^{t+1} > \sigma_{i_{t+1}}^{t+2}$. 
Therefore, we have $\sigma_t = \sigma_{i_t}^t = \sigma_{i_t}^{t+1} 
= \sigma_{i_{t+1}}^{t+1} > \sigma_{i_{t+1}}^{t+2} 
\geq \sigma_{i_{t+2}}^{t+2} = \sigma_{t+2}$. 
 
\indent$\bullet$\quad 
We consider the case where $\widetilde{\mu}_{i_t}^t = \infty$.  By the first observation, we have $s_{i_t}^t \neq 0$ and the weaving process continues onto the $(t+1)$-th unit in year $i_t$.  We have $\sigma_{i_t}^t \geq \sigma_{i_t}^{t+1}$. 
 
\noindent\emph{Case$\colon 
\sigma_{i_t}^t > \sigma_{i_t}^{t+1}$}. 
 
We have $\sigma_t = \sigma_{i_t}^t > \sigma_{i_t}^{t+1} \geq \sigma_{i_{t+1}}^{t+1} = \sigma_{t+1}$. 
 
\noindent\emph{Case$\colon 
\sigma_{i_t}^t = \sigma_{i_t}^{t+1}$}. 
 
Since year $i_t$ is the time when the value of $\left({\inv_{\mathrm{new}}}\right)_{i_t}^{\leq t}$ first started, we have $\left(E_{i_t}^{t+1}\right)_{\mathrm{young}} = \emptyset$.  This implies $\widetilde{\mu}_{i_t}^{t+1} \neq 0$.  If $\widetilde{\mu}_{i_t}^{t+1} \neq \infty$, then we can carry the same argument as above ($\widetilde{\mu}_{i_t}^t = 0$ 
and \emph{Case}$\colon 
\sigma_{i_t}^t = \sigma_{i_t}^{t+1}$) to conclude that either $\sigma_t > \sigma_{t+1}$ or $\sigma_t = \sigma_{t+1} > \sigma_{t+2}$.  Therefore, we have only to consider the case where $\widetilde{\mu}_{i_t}^{t+1} = \infty$.  We have $\sigma_{i_t}^{t+1} \geq \sigma_{i_{t+1}}^{t+1}$. 
 
\noindent\emph{Subcase$\colon 
\sigma_{i_t}^{t+1} > \sigma_{i_{t+1}}^{t+1}$}. 
 
We have $\sigma_t = \sigma_{i_t}^t = \sigma_{i_t}^{t+1} > 
\sigma_{i_{t+1}}^{t+1} = \sigma_{t+1}$. 
 
\noindent\emph{Subcase$\colon 
\sigma_{i_t}^{t+1} = \sigma_{i_{t+1}}^{t+1}$}. 
 
Since $\widetilde{\mu}_{i_t}^{t+1} = \infty$, we have $\widetilde{\mu}_{i_{t+1}}^{t+1} = \infty$. 
By the first observation, we have $s_{i_{t+1}}^{t + 1} \neq 0$.  That is to say, there is a component $D$ of $(E_{i_{t+1}}^{t+1})_{\mathrm{aged}} \subset E_{i_{t+1}}^{t+1} = E_{i_{t+1}}^t \setminus (E_{i_{t+1}}^t)_{\mathrm{aged}} = (E_{i_{t+1}}^t)_{\mathrm{young}}$ passing through that point. 
$(E_{i_{t+1}}^t)_{\mathrm{young}}$ is the union of the exceptional divisors created after the year when the value of $\left({\inv_{\mathrm{new}}}\right)_{i_{t+1}}^{\leq t-1}(\sigma_{i_{t+1}}^t) = \left({\inv_{\mathrm{new}}}\right)_{i_t}^{\leq t-1}(\sigma_{i_t}^t)$ first started. 
Therefore, $D$ is transversal to the LGS of ${\mathcal R}_{i_{t+1}}^{t+1}$, which is the transformation of the LGS of ${\mathcal R}_{i_t}^t$ since $\sigma_{i_t}^t = \sigma_{i_t}^{t+1} = \sigma_{i_{t+1}}^{t+1}$.  Since ${\mathcal R}_{i_{t+1}}^{t+2}$ contains $(x_D,1)$, where $x_D$ is the defining equation of $D$, 
 we conclude $\sigma_{i_{t+1}}^{t+1} > \sigma_{i_{t+1}}^{t+2}$. 
Therefore, 
we have $\sigma_t = \sigma_{i_t}^t = \sigma_{i_t}^{t+1} 
= \sigma_{i_{t+1}}^{t+1} > 
\sigma_{i_{t+1}}^{t+2} \geq \sigma_{i_{t+2}}^{t+2} = \sigma_{t+2}$. 
 
\smallskip 
 
This completes the reasoning for the claim that we have 
either $\sigma_t > \sigma_{t+1}$ or $\sigma_t = \sigma_{t+1} > \sigma_{t+2}$. 
 
\medskip 
 
Now we finish the argument for termination in the vertical direction as follows. 
 
\smallskip 
 
Since the value of the invariant $\sigma$ satisfies the descending chain condition, the increase of the value of $t$ stops after finitely many times. Finally, therefore, we conclude that the value of the strand ``$\inv_{\mathrm{new}}$'' can not decrease infinitely many times. 
 
Therefore, the algorithm terminates after finitely many years, 
achieving resolution of singularities for $(W,{\mathcal R},E)$. 
\end{subsection} 
\begin{subsection}{Brief discussion on the monomial case in positive characteristic} 
Here in \S 4.4, we briefly discuss why the problem of resolution of singularities in the monomial case in positive characteristic is much more subtle and difficult than the one in characteristic zero. 
 
Recall we say that the triplet $(W,{\mathcal R},E)$ is in the monomial case  (at $P \in \mathrm{Sing}({\mathcal R}) \subset W$) in our setting if (and only if) the triplet of invariants takes the value $(\sigma,\widetilde{\mu},s) = (\sigma,0,0)$.  (Precisely speaking, the triplet $(W,{\mathcal R},E)$ sits in the middle of constructing the sequence, say in year ``$i$'' and at stage ``$j$'', for resolution of singularities.  However, we omit the subscript and superscript ``$(\ )_i^j$'', indicating the year and the stage, for simplicity of the notation.) 
 
\smallskip 
 
The description of the monomial case at the analytic level is given below. 
 
\medskip 
 
\noindent\fbox{SITUATION} 
 
\medskip 
 
\indent$\circ$\quad 
The condition $\widetilde{\mu} = 0$ is interpreted as follows: 
 
\smallskip 
 
We can choose a regular system of parameters 
$(x_1, \ldots, x_t, x_{t+1}, \ldots,  x_d)$, 
taken from $\widehat{{\mathcal O}_{W,P}}$, such that 
 
\medskip 
 
\indent{\rm (1)}\quad 
the elements in the LGS ${\mathbb H} = \{(h_{\alpha},p^{e_{\alpha}})\}_{\alpha = 1}^t$ are of the form 
$$h_{\alpha} = x_{\alpha}^{p^{e_{\alpha}}} + \text{higher terms} \quad\text{for}\quad  \alpha = 1, \ldots, t,$$ 
(We sometimes call the higher terms in the above expression ``the tail part''.) 
 
\indent{\rm (2)}\quad 
there is a monomial ${\mathbb M} = \prod_{D \subset E_{\mathrm{young}}}x_D^{r_D}$ of the defining equations $x_D$ of the components $D$ in $E_{\mathrm{young}}$ with 
$$ 
({\mathbb M},a) \in \widehat{{\mathcal R}_P} 
\text{ for some }a \in {\mathbb Z}_{> 0}, 
$$ 
where $\{x_D \mid D \subset E_{\mathrm{young}}\} \subset \{x_{\alpha} \mid \alpha = t + 1, \ldots, d\}$ and $\sum r_D > a$, 
 
\indent{\rm (3)}\quad 
the idealistic filtration of i.f.g.\! type $\widehat{{\mathcal R}_P}$ 
is 
%$\{\frac{\partial^n}{\partial {x_{\alpha}}^n} \mid n \in {\mathbb Z}_{\geq 0}, \alpha = 1, \ldots, t\}$-saturated, 
saturated for 
$\{{\partial^n}/{\partial {x_{\alpha}}^n} \mid n \in {\mathbb Z}_{\geq 0}, \alpha = 1, \ldots, t\}$, 
 
\medskip 
 
\noindent 
satisfying the following condition: for an arbitrary $(f,\lambda) \in \widehat{{\mathcal R}_P}$ with $f = \sum c_{f,B}H^B$ 
being the power series expansion with respect to the LGS ${\mathbb H}$ and the associated to the regular system of parameters 
$(x_1, \ldots, x_t, x_{t+1}, \ldots, x_d)$, 
we have ${\mathbb M}^{{\lambda}/{a}}$ dividing the constant term $c_{f,\mathbb O}$, i.e., ${\mathbb M}^{{\lambda}/{a}} | c_{f,\mathbb O}$.  Note that, using the formal coefficient lemma, we see $(c_{f,\mathbb O},\lambda) \in \widehat{{\mathcal R}_P}$. 
 
\medskip 
 
\indent$\circ$\quad 
The condition $s = 0$ is of course equivalent to saying that 
there is no component of $E_{\mathrm{aged}}$ passing through $P$. 
 
\medskip 
 
\noindent \fbox{$\mathrm{char}(k) = 0$} 
 
\smallskip 
 
In fact, the above description \fbox{SITUATION} of the monomial 
case is also valid when $\mathrm{char}(k) = 0$, with all the 
elements of the LGS concentrated at level 1, i.e., 
$p^{e_{\alpha}} = 1$ for $\alpha = 1, \ldots, t$.  
Moreover, we can replace $x_{\alpha}$ with $h_{\alpha}$ 
so that we have $h_{\alpha} = x_{\alpha}$ 
for $\alpha = 1, \ldots, t$. 
Then there is \emph{no} ``tail part''. 
In order to construct resolution of singularities for 
$(W,{\mathcal R},E)$, we have only to carry out the 
resolution process for $(V, (({\mathbb M}),a)|_V, E|_V)$, 
which is the triplet in the monomial case in the classical 
setting, where $V = \{x_1 = \cdots = x_t = 0\}$ is a nonsingular 
subvariety inside of $W$.  Note that, since $E_{\mathrm{aged}} 
= \emptyset$ (in a neighborhood of $P$, because $s = 0$), 
the third factor $E|_V$ is a simple normal crossing divisor on $V$. 
 
\medskip 
 
\noindent \fbox{$\mathrm{char}(k) = p > 0$} 
 
\smallskip 
 
In contrast to the case in $\mathrm{char}(k) = 0$, the elements in 
the LGS may not be concentrated at level 1 in general.  Therefore, 
we usually have the tail parts for those $h_{\alpha}$'s at higher levels in 
$\mathrm{char}(k) = p > 0$. 
The hypersurfaces defined by $\{h_{\alpha} = 0\}$ by those elements are 
singular hypersurfaces.  Therefore, one is forced to analyze the 
monomial restricted to a singular subvariety (defined as the 
intersection of the singular hypersurfaces), or alternatively 
to analyze the combination of the monomial and the elements of 
the LGS with the tail parts included, while sticking to the original 
nonsingular ambient space.  The latter is what we do in \S 5 of this 
paper in dimension 3. 
 
\medskip 
 
In the simplest terms, NO or YES ``tail part'' is what makes the 
difference between the monomial case in $\mathrm{char}(k) = 0$ 
and the one in $\mathrm{char}(k) = p > 0$. 
\end{subsection} 
\end{section} 
\begin{section}{Detailed discussion on the monomial case in dimension 3} 
The purpose of \S 5 is to discuss how to construct resolution of singularities (at the local level (cf. Remark 2)) in the monomial case.  We refer the 
reader to \S 4.4 \fbox{SITUATION} for the precise description of the monomial case. 
\begin{subsection}{Case analysis according to the invariant ``$\tau$''} 
In \S 5.1, we analyze the situation according to 
the value of the invariant $\tau$. 
 
Recall that the invariant $\tau$ is just the number of the elements in an LGS, and hence that, in dimension 3, it takes the value $\tau = 0,1,2,3$. 
 
It turns out that the analysis of the case $\tau = 0,2$ or $3$ is rather easy.  We devote \S 5.2, \S 5.3, \S 5.4 to the analysis of the most subtle and difficult case $\tau = 1$. 
 
\smallskip 
 
\noindent \fbox{\textbf{Case}$\colon 
\tau = 0$} 
 
\smallskip 
 
In this case, there is no element in an LGS.  We conclude that for an arbitrary $(f,\lambda) \in \widehat{{\mathcal R}_P}$ the monomial 
${\mathbb M}^{{\lambda}/{a}}$ divides $c_{f,{\mathbb O}} = f$.  Therefore, we can carry out the same algorithm for resolution of singularities of the triplet $(W,({\mathcal I},a),E) = (W,(({\mathbb M}),a),E)$ in the monomial case in the classical setting in characteristic zero, using the invariant $\Gamma$ discussed in \S 3.4.  (Note that in the middle of the procedure the invariant $\sigma$ may drop.  If that happens, then we are no longer in the monomial case.  In that case, we go through the mechanism described in \S 4.2 with the reduced new value of $\sigma$ to reach the new monomial case.) 
 
\smallskip 
 
\noindent \fbox{\textbf{Case}$\colon 
\tau = 1$} 
 
\smallskip 
 
This case will be thoroughly discussed in \S 5.2, \S 5.3, \S 5.4. 
 
\smallskip 
 
\noindent \fbox{\textbf{Case}$\colon 
\tau = 2$} 
 
\smallskip 
 
In this case, we can choose a regular system of parameters $(x,y,z)$, taken from $\widehat{{\mathcal O}_{W,P}}$, such that 
 
\medskip 
 
\indent{\rm (1)}\quad 
the two elements in the LGS ${\mathbb H} = \{(h_1,p^{e_1}), (h_2,p^{e_2})\}$ are of the form 
$$\left\{\begin{array}{lcl} 
h_1 &=& z^{p^{e_1}} + \text{higher terms} \\ 
h_2 &=& y^{p^{e_2}} + \text{higher terms}, 
\end{array}\right.$$ 
 
\indent{\rm (2)}\quad there is a monomial ${\mathbb M} = x^r$ of the defining equation $x$ of the component $\{x = 0\} \subset E_{\mathrm{young}}$ with 
$$({\mathbb M},a) = (x^r,a) \in \widehat{{\mathcal R}_P} \text{ for some }a \in {\mathbb Z}_{> 0},$$ 
 
\indent{\rm (3)}\quad the idealistic filtration $\widehat{{\mathcal R}_P}$ is 
%$\left\{\frac{\partial^n}{\partial z^n},  \frac{\partial^m}{\partial y^m}\mid n, m \in {\mathbb Z}_{\geq 0}\right\}$-saturated. 
saturated for $\{{\partial^n}/{\partial z^n},  {\partial^m}/{\partial y^m}\mid n, m \in {\mathbb Z}_{\geq 0}\}$. 
 
\medskip 
 
Then it is easy to see that $\mathrm{Sing}({\mathcal R}) = P$ (in a neighborhood of $P$) and hence that 
the only possible transformation is the blow up with center $P$. 
After blow up, we see that the (possibly) non-empty singular locus lies only over the $x$-chart.  We also see that the singular locus, if non-empty, consists of a single point $\widetilde{P} \in \widetilde{W}$ (in a neighborhood of the inverse image of $P$) with a regular system of parameters $(\widetilde{x}, \widetilde{y}, \widetilde{z}) = (x, y/x, z/x)$.  The new LGS 
$$\widetilde{\mathbb H} = \{(\widetilde{h_1},p^{e_1}), (\widetilde{h_2},p^{e_2})\} = \{(h_1/x^{p^{e_1}},p^{e_1}), (h_2/x^{p^{e_2}},p^{e_2})\}$$ 
are of the form 
$$\left\{\begin{array}{lcl} 
\widetilde{h_1} &=& (\widetilde{z}')^{p^{e_1}} + \text{higher terms} \\ 
\widetilde{h_2} &=& (\widetilde{y}')^{p^{e_2}} + \text{higher terms} 
\end{array}\right.$$ 
where (cf. the proof of Proposition 4) 
$$\left\{\begin{array}{lcl} 
\widetilde{z}' &=& \widetilde{z} + (c_z)^{1/p^{e_1}}\widetilde{x} \\ 
\widetilde{y}' &=& \widetilde{y} + (c_y)^{1/p^{e_2}}\widetilde{x} 
\end{array}\right. \quad\text{for some}\quad c_z, c_y \in k$$ 
(in case the invariant $\sigma$ does not decrease).  We calculate the new monomial to be 
$$(\widetilde{x}^{r - a},a).$$ 
Since the power of $x$ in the above monomial can not decrease infinitely many times, we achieve resolution of singularities after finitely many repetitions of this procedure.  (Note that in the middle of the procedure the invariant $\sigma$ may drop.  If that happens, then we are no longer in the monomial case.  In that case, we go through the mechanism described in \S 4.2 with the reduced new value of $\sigma$ to reach the new monomial case.) 
 
\smallskip 
 
\noindent \fbox{\textbf{Case}$\colon 
\tau = 3$} 
 
\smallskip 
 
This case does not happen.  In fact, suppose this case did happen. 
 Then go back to the year $\iota:=i_{\mathrm{aged}}$ 
when the current value of $\sigma$ first started.  Since the current 
value of $\tau$ is equal to $3$, so is the value of $\tau$ back in 
year $\iota$.  Take the blow up with center 
$P_{\iota}$, which is the image of $P$ in year 
$\iota$.  Then it is immediate to see that there 
is no singular locus (in a neighborhood of the inverse image of 
$P_{\iota}$) after blow up.  This implies in turn that 
$\mathrm{Sing}({\mathcal R}_{\iota}) = P_{\iota}$ 
(in a neighborhood of $P_{\iota}$).  The only possible 
transformation in the resolution sequence, therefore, is the blow up 
with center $P_{\iota}$.  After blow up, however, 
we already saw $\mathrm{Sing}({\mathcal R}_{\iota+1}) 
= \emptyset$ over $P_{\iota}$.  This is a contradiction, 
since $P_{\iota+1}$, which is the image of $P$ in year 
$\iota+1$, should be included in 
$\mathrm{Sing}({\mathcal R}_{\iota+1})$.  (Note that 
we have $\iota < i$, since the value of 
$\widetilde{\mu}$ is never zero when the new value of $\sigma$ 
starts in year $\iota$ and since the current value of 
$\widetilde{\mu}$ is zero being in the monomial case in year $i$.) 
 
\medskip 
 
\underline{\textbf{Focus on the case $\tau = 1$}} 
 
\medskip 
 
In the following \S 5.2, \S 5.3, \S 5.4, we focus on, and restrict ourselves to, the case $\tau = 1$.  We carry out the computation of the invariants at the analytic (completion) level, even though the centers of blow ups are chosen at the algebraic level, and hence all the procedures in the algorithm are carried out at the algebraic level. 
 
\medskip 
 
We restate the \fbox{SITUATION} described in \S 4.4 in a slightly refined form in our particular case $\tau = 1$, for the sole purpose of fixing the notation for \S 5.2, \S 5.3, \S 5.4.

\medskip 
 
\noindent \fbox{SITUATION} 
 
\medskip 
 
We can choose a regular system of parameters $(x,y,z)$, taken from $\widehat{{\mathcal O}_{W,P}}$, such that 
 
\medskip 
 
\indent{\rm (1)}\quad 
via the Weierstrass Preparation Theorem the unique element in the LGS ${\mathbb H} = \{(h,p^e)\}$ is of the form 
$$h = z^{p^e} + a_1z^{p^e-1} + a_2z^{p^e-2} + \cdots + a_{p^e-1}z + a_{p^e}$$ 
with 
$$a_i \in k[[x,y]] \quad \text{and}\quad \mathrm{ord}_P(a_i) > i \quad\text{for}\quad i = 1, \ldots,p^e,$$ 
 
\indent{\rm (2)}\quad 
there is a monomial ${\mathbb M} = x^{\alpha}y^{\beta}$ of the defining equation(s) of the component(s) $H_x = \{x = 0\}$ (and possibly $H_y = \{y = 0\}$) in $E_{\mathrm{young}}$ with 
$$({\mathbb M},a) = (x^{\alpha}y^{\beta},a) \in {\mathcal R}_P 
\quad\text{and}\quad \alpha + \beta > a,$$ 
(We write ${\mathbb M}_u := x^{\alpha/a}y^{\beta/a}$ and call ${\mathbb M}_u$ 
the \emph{usual} monomial.) 
 
\indent{\rm (3)}\quad 
the idealistic filtration $\widehat{{\mathcal R}_P}$ is 
%$\left\{\frac{\partial^n}{\partial z^n} \mid n \in {\mathbb Z}_{\geq 0}\right\}$-saturated, 
saturated for $\{{\partial^n}/{\partial z^n}\mid n\in {\mathbb Z}_{\geq 0}\}$, 
 
\medskip 
 
\noindent 
satisfying the following condition: for an arbitrary $(f,\lambda) \in \widehat{{\mathcal R}_P}$ 
with $f = \sum c_{f,B}H^B = \sum c_{f,b}h^b$ being the 
power series expansion with respect to the LGS ${\mathbb H} = \{(h,p^e)\}$ and its associated regular system of parameters $(x,y,z)$, we have 
${\mathbb M}^{{\lambda}/{a}}$ dividing the constant term $c_{f,{\mathbb O}} = c_{f,0}$, i.e., ${\mathbb M}^{{\lambda}/{a}} \mid c_{f,{\mathbb O}} = c_{f,0}$. 
 
\medskip 
 
In particular, by looking at ${\partial^n h}/{\partial z^n}$ for 
$n = 1, \ldots, p^e - 1$, we see that the coefficient $a_i$ is divisible by $({\mathbb M}_u)^i$ for 
$i = 1, \ldots, p^e - 1$ (but maybe not for $i = p^e$).  That is to say, 
$$({\mathbb M}_u)^i \mid a_i,\quad \text{i.e.,}\ \ 
x^{\lceil {i \alpha}/{a}\rceil}y^{\lceil {i \beta}/{a}\rceil} \mid a_i \quad\text{for}\quad i = 1, \ldots, p^e - 1.$$ 
 
Throughout \S 5.2, \S 5.3, \S 5.4, we are under \fbox{SITUATION} described as above. 
\end{subsection} 
\begin{subsection}{``Cleaning'' and the invariant ``$\mathrm{H}$'' (in the case $\tau = 1$)} 
The purpose of this section is to introduce the invariant ``$\mathrm{H}$'' through the process of ``cleaning''.  We follow closely the argument developed by Benito-Villamayor \cite{BV3}, making some modifications to fit it into our own setting in the framework of the Idealistic Filtration Program. 
\begin{defn} We define the slope of $h$ at $P$ with respect to $(x,y,z)$ by the formula 
$$\mathrm{Slope}_{h,(x,y,z)}(P) 
= \min\left\{\frac1{p^e} {\mathrm{ord}_P(a_{p^e})},\ \mu(P)\right\}.$$ 
\end{defn} 
\begin{remark} \ 
 
\indent{\rm (i)}\quad 
Since we are in the monomial case and hence $\widetilde{\mu} = 0$ and since the monomial is $({\mathbb M},a) = (x^{\alpha}y^{\beta},a)$, we compute 
$$\mu(P) 
= \frac1{a}{\mathrm{ord}_P(x^{\alpha}y^{\beta})} 
= \frac{\alpha + \beta}{a} 
= \mathrm{ord}_P({\mathbb M}_u),$$ 
while 
$$\mu(\xi_{H_x}) = \frac{\alpha}{a} 
\quad\text{and}\quad 
\mu(\xi_{H_y}) = \frac{\beta}{a}.$$ 
 
\indent{\rm (ii)}\quad 
We have a ``very good'' control over the coefficients $a_i$ except for the constant term $a_{p^e}$, in the sense that 
$$({\mathbb M}_u)^i = \left(x^{\alpha/a}y^{\beta/a}\right)^{i} \mid 
a_i \quad\text{for}\quad i = 1, \ldots, p^e - 1,$$ 
which implies 
$$\frac1{i}{\mathrm{ord}_P(a_i)} \geq \mu(P) \quad\text{for}\quad 
i = 1, \ldots, p^e - 1.$$ 
Therefore, this control leads to the following observation 
\begin{align*} 
\mathrm{Slope}_{h,(x,y,z)}(P) 
&= \min\left\{\mu(P),\ \frac1{p^e}{\mathrm{ord}_P(a_{p^e})}\right\} \\ 
&= \min\left\{\mu(P),\ \frac1{i}{\mathrm{ord}_P(a_i)}\ ;\ 
i = 1, \ldots, p^e\right\}. 
\end{align*} 
\end{remark} 
\begin{defn}[Well-adaptedness (cf. \cite{BV3})]\label{DefWA} 
We say $h$ is well-adapted at $P$ with respect to $(x,y,z)$ if one of the following two conditions holds: 
\begin{enumerate} 
\item[A.] $\mathrm{Slope}_{h,(x,y,z)}(P) = \mu(P)$. 
\item[B.] 
$\mathrm{Slope}_{h,(x,y,z)}(P) 
= {\mathrm{ord}_P(a_{p^e})}/{p^e} <\mu(P)$ 
and the initial form $\mathrm{In}_P(a_{p^e})$ 
is \emph{not} a $p^e$-th power. 
\end{enumerate} 
Similarly, we say $h$ is well-adapted at $\xi_{H_x}$, where $\xi_{H_x}$ is the generic point of the hypersurface $H_x = \{x = 0\}$ in $E_{\text{young}}$, if one of the following two conditions holds: 
 
\begin{enumerate} 
\item[A.] $\mathrm{Slope}_{h,(x,y,z)}(\xi_{H_x}) = \mu(\xi_{H_x}) = \alpha/a$. 
\item[B.] $\mathrm{Slope}_{h,(x,y,z)}(\xi_{H_x}) 
= {\mathrm{ord}_{\xi_{H_x}}(a_{p^e})}/{p^e} <\mu(\xi_{H_x})$ 
and the initial form $\mathrm{In}_{\xi_{H_x}}(a_{p^e})$ 
is \emph{not} a $p^e$-th power. 
\end{enumerate} 
 
The notion of $h$ being well-adapted at $\xi_{H_y}$, where $\xi_{H_y}$ is the generic point of the hypersurface $H_y = \{y = 0\}$ in $E_{\text{young}}$, is defined in an identical manner. 
 
Note that if 
$$a_{p^e} = \sum_{k + l \geq d} c_{kl}x^ky^l = x^r\left\{g(y) + x \cdot \omega(x,y)\right\}$$ 
where $c_{kl} \in k$, $0 \neq g(y) \in k[[y]]$, 
$\omega(x,y) \in k[[x,y]]$, $d = \mathrm{ord}_P(a_{p^e})$ and 
$r = \mathrm{ord}_{\xi_{H_x}}(a_{p^e})$, then 
$$ 
\mathrm{In}_P(a_{p^e}) = \sum_{k + l = d}c_{kl}x^ky^l 
\quad\text{and}\quad 
\mathrm{In}_{\xi_{H_x}}(a_{p^e}) = x^rg(y). 
$$ 
\end{defn} 
\begin{proposition} \ 
 
\indent{\rm (1)}\quad 
There exist an LGS ${\mathbb H} = \{(h,p^e)\}$ and a regular system of parameters $(x,y,z)$, as described in \fbox{SITUATION} in \S 5.1, such that $h$ is well-adapted at $P$, $\xi_{H_x}$ and $\xi_{H_y}$ simultaneously with respect to $(x,y,z)$.  Note that we require the property that the idealistic filtration ${\mathcal R}_P$ is $\left\{\frac{\partial^n}{\partial z^n} \mid n \in {\mathbb Z}_{\geq 0}\right\}$-saturated with respect to the regular system of parameters $(x,y,z)$ (cf. \fbox{SITUATION} (3)). 
 
\indent{\rm (2)}\quad 
If $h$ is well-adapted at $* = P, \xi_{H_x}$ or $\xi_{H_y}$ with respect to $(x,y,z)$, then $\mathrm{Slope}_{h,(x,y,z)}(*)$ is independent of the choice of $h$ and $(x,y,z)$. 
\end{proposition}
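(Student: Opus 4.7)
For part (1), I would start with any $h$ and regular system of parameters $(x,y,z)$ satisfying $\boxed{\text{SITUATION}}$, and iteratively apply a \emph{cleaning} operation. Whenever $h$ is not well-adapted at a point $* \in \{P, \xi_{H_x}, \xi_{H_y}\}$, Definition~3 forces $\mathrm{Slope}_{h,(x,y,z)}(*) < \mu(*)$ and $\mathrm{In}_*(a_{p^e}) = \phi_*^{p^e}$ for some $\phi_* \in k[[x,y]]$ (using the perfectness of $k$). I would then substitute $z \leftarrow z - \phi_*$. Since $\mathrm{char}(k) = p > 0$, the Frobenius identity $(z - \phi_*)^{p^e} = z^{p^e} - \phi_*^{p^e}$ together with the divisibility $({\Bbb M}_u)^i \mid a_i$ for $i < p^e$ recorded in $\boxed{\text{SITUATION}}$ ensures that the new $a_{p^e}$ has strictly larger $\mathrm{ord}_*$. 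The substitution preserves $\boxed{\text{SITUATION}}$ and, crucially, the $\{\partial^n/\partial z^n\}$-saturation of ${\mathcal R}_P$, since $\partial/\partial z = \partial/\partial(z - \phi_*)$ holds for any $\phi_* \in k[[x,y]]$.

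The delicate step is to achieve well-adaptedness at the three points \emph{simultaneously}. I would perform the cleanings in a prescribed order---first at $\xi_{H_x}$, then at $\xi_{H_y}$, then at $P$---and loop. Each correction term lies in $(x,y)^{\geq 1}$, so the successive substitutions form a Cauchy sequence in the $(x,y)$-adic topology on $\widehat{{\mathcal O}_{W,P}}$. Either the process terminates, or the sequence converges to a limit $z_\infty$ for which $h$ is well-adapted at all three points. This is the technical heart of the argument: one must track how cleaning at $P$ (which adds a homogeneous polynomial $\phi_P \in k[x,y]$) can temporarily perturb the orders at $\xi_{H_x}$ and $\xi_{H_y}$, and show that an aggregate quantity is monotone enough to force convergence. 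This is essentially the refinement of the classical Tschirnhaus transformation carried out by Benito-Villamayor.

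For part (2), the plan is to characterize $\mathrm{Slope}_{h,(x,y,z)}(*)$ as the supremum of the slope over all admissible $(h,(x,y,z))$ within $\boxed{\text{SITUATION}}$, and to show that this supremum is attained \emph{precisely} when $h$ is well-adapted at $*$. Indeed, if $h$ is not well-adapted at $*$, then the cleaning strictly increases $\mathrm{ord}_*(a_{p^e})/p^e$ (subject to the cap $\mu(*)$) and thus strictly increases the slope; conversely, if $h$ is well-adapted then no substitution $z \mapsto z + \psi$ with $\psi \in k[[x,y]]$ can raise the slope, because in Case A we already sit at the cap $\mu(*)$, and in Case B the initial form $\mathrm{In}_*(a_{p^e})$ is not a $p^e$-th power and therefore cannot be cancelled by any $\psi^{p^e}$. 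Since $\mu(*)$ is intrinsic (depending only on ${\mathcal R}_P$ and the boundary divisor), so is the maximal slope, yielding the independence asserted in (2).

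The main obstacle I anticipate is precisely the interaction among the three cleaning operations in part (1): the correction polynomial $\phi_P$, homogeneous of total degree $\mathrm{Slope}_h(P)$ but otherwise arbitrary, may contain monomials of low $x$- or $y$-valuation that partially undo well-adaptedness at the generic points $\xi_{H_x}$ and $\xi_{H_y}$. Controlling this interaction via a monotone aggregate invariant across $\{P, \xi_{H_x}, \xi_{H_y}\}$, and passing to the $(x,y)$-adic limit in $\widehat{{\mathcal O}_{W,P}}$ in the non-terminating case, will be the technical core of the argument.
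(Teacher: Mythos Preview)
Your plan for Part~(1) creates the very difficulty you anticipate, because you clean in the wrong order. The paper cleans first at $P$, then at $\xi_{H_x}$, then at $\xi_{H_y}$, and the key observation is that each later cleaning \emph{preserves} the well-adaptedness already achieved. For instance, cleaning at $\xi_{H_x}$ subtracts $\bigl(\mathrm{In}_{\xi_{H_x}}(a_{p^e})\bigr)^{1/p^e}$ from $z$; in Case~A at $P$ the bound $\mathrm{ord}_P(a_{p^e})/p^e \geq \mu(P)$ is maintained by a direct order estimate, and in Case~B at $P$ the initial form $\mathrm{In}_P(a_{p^e})$ is altered only by subtraction of a $p^e$-th power (namely the degree-$d$ part of $\mathrm{In}_{\xi_{H_x}}(a_{p^e})$), so it remains not a $p^e$-th power. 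Thus no looping and no $(x,y)$-adic limit is needed: each of the three stages terminates after finitely many substitutions because $\mu(*) < \infty$, and the whole construction is finite. Your ordering (generic points first, then $P$) forces you to worry that the homogeneous correction $\phi_P$ damages the generic-point conditions, which is exactly the ``main obstacle'' you describe; the paper's ordering sidesteps it entirely.

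For Part~(2), your plan has a genuine gap: you only argue that no substitution $z \mapsto z + \psi$ with $\psi \in k[[x,y]]$ can raise the slope once $h$ is well-adapted, but the statement also allows a \emph{different element} $h'$ of an LGS, not just a different $z$. The paper handles this via the coefficient lemma: writing $h' = u \cdot h + c_0$ for a unit $u$ and an element $c_0$ with $({\Bbb M}_u)^{p^e} \mid c_0$, one gets $\mathrm{ord}_*(c_0|_{Z'})/p^e \geq \mu(*)$, so $c_0$ contributes nothing to the slope and the comparison reduces to that of $h|_{Z'}$ versus $h|_Z$. Only then does one invoke Weierstrass preparation to write $z' = v(z + w)$ and carry out the three-case analysis on $\mathrm{ord}_P(w)$ that your sketch anticipates. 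Without the coefficient-lemma step your argument establishes independence only among presentations with the same $h$, which is strictly weaker than what is claimed.
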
 
\begin{proof} \ 
 
\indent{\rm (1)}\quad 
We start with $h$ and $(x,y,z)$ 
as given in \fbox{SITUATION} in \S 5.1. 
 
\medskip 
 
\indent{\rm Step 1}.\quad 
Modify $h$ and $(x,y,z)$ to be well-adapted at $P$. 
 
Suppose we are in Case A or Case B as described in Definition \ref{DefWA}. 
Then $h$ is already well-adapted at $P$ with respect to $(x,y,z)$ and there is no modification needed. 
 
Therefore, we may assume that we are not in 
either Case A or Case B.  That is to say, we have 
$$ 
\mathrm{Slope}_{h,(x,y,z)}(P) 
= \frac1{p^e}{\mathrm{ord}_P(a_{p^e})} < \mu(P), 
$$ 
and 
$$ 
\mathrm{In}_P(a_{p^e}) 
= \sum_{k + l = d}c_{kl}x^ky^l, 
\quad \text{with}\ 
d = \mathrm{ord}_P(a_{p^e}),\ \text{is a $p^e$-th power}. 
$$ 
Take 
$$\left\{\mathrm{In}_P(a_{p^e})\right\}^{1/p^e} \in k[x,y],$$ 
and set 
$$z' = z + \left\{\mathrm{In}_P(a_{p^e})\right\}^{1/p^e}, 
\quad\text{i.e.,}\ \  
z = z' - \left\{\mathrm{In}_P(a_{p^e})\right\}^{1/p^e}.$$ 
Plug this into 
$$h = z^{p^e} + a_1z^{p^e-1} + a_2z^{p^e-2} + \cdots + a_{p^e-1}z + a_{p^e}$$ 
to obtain 
$$h = z'^{p^e} + a'_1z'^{p^e-1} + a'_2z'^{p^e-2} + \cdots + a'_{p^e-1}z' + a'_{p^e}$$ 
with 
$$a'_i \in k[[x,y]] \quad\text{for}\quad i = 1, \ldots, p^e - 1, p^e.$$ 
Since for $i = 1, \ldots, p^e - 1$, we have (cf. Remark 6 (ii)) 
$$\frac1{i} 
{\mathrm{ord}_P(a_i)} 
\geq \mu(P) > 
\frac1{p^e} 
{\mathrm{ord}_P(a_{p^e})} 
= \mathrm{ord}_P\left(\left\{\mathrm{In}_P(a_{p^e})\right\}^{1/p^e}\right),$$ 
we conclude that $a'_{p^e}$ is of the form 
$$a'_{p^e} = a_{p^e} - \mathrm{In}_P(a_{p^e}) + \text{higher terms}$$ 
and hence that 
$$\frac1{p^e}{\mathrm{ord}_P(a'_{p^e})} > 
\frac1{p^e}{\mathrm{ord}_P(a_{p^e})}.$$ 
 
We go back to the starting point, replacing the original $h$ and $(x,y,z)$ by $h'$ and $(x',y',z') = (x,y,z')$, with strictly increased 
${\mathrm{ord}_P(a'_{p^e})}/{p^e}$. 
Since $\mu(P) < \infty$, we conclude that, after finitely 
many repetitions of this process, we have to come to the situation 
where we are in Case A or Case B, i.e., where $h$ 
is well-adapted at $P$ with respect to $(x,y,z)$. 
 
\medskip 
 
\indent{\rm Step 2}.\quad 
Modify $h$ and $(x,y,z)$ further to be well-adapted at $\xi_{H_x}$ without destroying the well-adaptedness at $P$. 
 
Take $h$ which is well-adapted at $P$ with respect to $(x,y,z)$, as obtained through Step 1. 
 
Suppose we are in Case A or Case B as described in the second half of Definition \ref{DefWA}.  Then $h$ is already well-adapted at $\xi_{H_x}$ with respect to $(x,y,z)$ and there is no modification needed. 
 
Therefore, we may assume that we are not in 
either Case A or Case B.  That is to say, we have 
$$ 
\mathrm{Slope}_{h,(x,y,z)}(\xi_{H_x}) 
= \frac1{p^e}{\mathrm{ord}_{\xi_{H_x}}(a_{p^e})} < 
\mu(\xi_{H_x}),$$ 
and 
$$ 
\mathrm{In}_{\xi_{H_x}}(a_{p^e}) 
= x^r g(y) \text{ is a }p^e\text{-th power}, 
$$ 
where $a_{p^e} =  x^r \left\{g(y) + x \cdot \omega(x,y)\right\}$ 
and $r = \mathrm{ord}_{\xi_{H_x}}(a_{p^e})$. 
Take 
$$\left\{\mathrm{In}_{\xi_{H_x}}(a_{p^e})\right\}^{1/p^e} \in k[[y]][x],$$ 
and set 
$$z' = z + \left\{\mathrm{In}_{\xi_{H_x}}(a_{p^e})\right\}^{1/p^e}, 
\quad\text{i.e.,}\ \ 
z = z' - \left\{\mathrm{In}_{\xi_{H_x}}(a_{p^e})\right\}^{1/p^e}.$$ 
Then as in Step 1, we see 
$$\frac1{p^e}{\mathrm{ord}_{\xi_{H_x}}(a'_{p^e})} > 
\frac1{p^e}{\mathrm{ord}_{\xi_{H_x}}(a_{p^e})}.$$ 
We go back to the starting point, replacing the original $h$ and $(x,y,z)$ by $h'$ and $(x',y',z') = (x,y,z')$, with strictly increased 
${\mathrm{ord}_{\xi_{H_x}}(a'_{p^e})}/{p^e}$. 
Since $\mu(\xi_{H_x}) < \infty$, we conclude that, 
after finitely many repetitions of this process, we have to come to 
the situation where we are in Case A or Case B, i.e., where $h$ 
is well-adapted at $\xi_{H_x}$ with respect to $(x,y,z)$. 
 
The only issue here is to check, in the process, the property that $h$ is well-adapted at $P$ is preserved. 
 
\medskip 
 
\noindent\emph{Case$\colon 
{\mathrm{ord}_P(a_{p^e})}/{p^e} \geq \mu(P)$}. 
 
In this case, we have 
$$\mathrm{ord}_P\left(\left\{\mathrm{In}_{\xi_{H_x}}(a_{p^e})\right\}^{1/p^e}\right) 
\geq \frac1{p^e}{\mathrm{ord}_P(a_{p^e})} \geq \mu(P)$$ 
and, for $i = 1, \ldots, p^e - 1$, we have (cf. Remark 6 (ii)) 
$$\frac1{i}{\mathrm{ord}_P(a_i)} \geq \mu(P).$$ 
Therefore, we conclude that 
$$\frac1{p^e}{\mathrm{ord}_P(a'_{p^e})} \geq \mu(P),$$ 
and hence that $h'$ stays well-adapted at $P$ with respect to $(x',y',z')$. 
 
\smallskip 
 
\noindent\emph{Case$\colon 
{\mathrm{ord}_P(a_{p^e})}/{p^e} < \mu(P)$}. 
 
In this case, we have 
$$\mathrm{ord}_P 
\left(\left\{\mathrm{In}_{\xi_{H_x}}(a_{p^e})\right\}^{1/p^e}\right) 
\geq \frac1{p^e}{\mathrm{ord}_P(a_{p^e})}$$ 
and, for $i = 1, \ldots, p^e - 1$, we have (cf. Remark 6 (ii)) 
$$\frac1{i}{\mathrm{ord}_P(a_i)} \geq \mu(P) > 
\frac1{p^e}{\mathrm{ord}_P(a_{p^e})}.$$ 
Hence, since $\mathrm{In}_P(a_{p^e})$ is not a $p^e$-th power and since the degree $d = \mathrm{ord}_p(a_{p^e})$-part of $\mathrm{In}_{\xi_{H_x}}(a_{p^e})$ is a $p^e$-th power, we have 
$$\mathrm{In}_P(a'_{p^e}) = \mathrm{In}_P(a_{p^e}) - \left\{\text{the degree }d = \mathrm{ord}_p(a_{p^e})\text{-part of }\mathrm{In}_{\xi_{H_x}}(a_{p^e})\right\}.$$ 
Therefore, we conclude that 
$$\mathrm{ord}_P(a'_{p^e}) = \mathrm{ord}_P\left(\mathrm{In}_P(a'_{p^e})\right) = \mathrm{ord}_P\left(\mathrm{In}_P(a_{p^e})\right) = \mathrm{ord}_P(a_{p^e}) < \mu(P)$$ 
and $\mathrm{In}_P(a'_{p^e})$ is not a $p^e$-th power, and hence that $h'$ stays well-adapted at $P$ with respect to $(x',y',z')$. 
 
\medskip 
 
\indent{\rm Step 3}.\quad 
Modify $h$ and $(x,y,z)$ still further to be well-adapted at $\xi_{H_y}$ without destroying the well-adaptedness at $P$ and $\xi_{H_x}$. 
 
The process of this step is almost identical to that of Step 2, and hence is left to the reader as an exercise. 
 
\smallskip 
 
We note that the requirement as described in \fbox{SITUATION} (3) is met, since the original $(x,y,z)$ satisfies the property and since we only modify $z$ by adding the elements in $k[[x,y]]$ throughout the process. 
 
\smallskip 
 
This finishes the proof of (1). 
 
\medskip 
 
\indent{\rm (2)}\quad 
We only give a proof for the case where $* = P$, since the proof for the case where $* = \xi_{H_x}$ is identical. 
 
Take $h$ which is well-adapted at $P$ with respect to $(x,y,z)$, with the property that the idealistic filtration ${\mathcal R}_P$ is 
%$\left\{\frac{\partial^n}{\partial z^n} \mid n \in {\mathbb Z}_{\geq 0}\right\}$-saturated. 
saturated for $\{{\partial^n}/{\partial z^n} \mid n \in {\mathbb Z}_{\geq 0}\}$. 
 
We set 
$$ 
\mathrm{H}(P)= 
\min\left\{\mu(P),\ 
\max\left\{\frac1{p^e}{\mathrm{ord}_P(h'|_{Z'})} 
\, \left|\, 
\begin{array}{l} 
h', (x',y',z'), \\ 
Z' = \{z' = 0\} 
\end{array} 
\right\}\right. 
\right\}$$ 
where, computing the above ``$\max$'', we let $h'$ and $(x',y',z')$ vary among all such pairs consisting of the unique element in an LGS and a regular system of parameters that satisfy the condition 
$$h' \equiv {z'}^{p^e} \mathrm{mod}\ \widehat{{\mathfrak m}_P}^{p^e + 1}.$$ 
 
It suffices to show 
$$\mathrm{Slope}_{h,(x,y,z)}(P) = \mathrm{H}(P),$$ 
since the number $\mathrm{H}(P)$ 
is obviously independent of the choice of $h$ and $(x,y,z)$. 
 
\smallskip 
 
Observe 
$$\mathrm{Slope}_{h,(x,y,z)}(P) = 
\min\left\{ 
\frac1{p^e}{\mathrm{ord}_P(a_{p^e})}\ ,\mu(P)\right\} \leq \mathrm{H}(P),$$ 
since 
$$\frac1{p^e}{\mathrm{ord}_P(h|_Z)} 
= \frac1{p^e}{\mathrm{ord}_P(a_{p^e})}$$ 
and since $h$ and $(x,y,z)$ form such a pair consisting of the unique element in an LGS and a regular system of parameters that satisfy the condition 
$$h \equiv z^{p^e} \mathrm{mod}\ \widehat{{\mathfrak m}_P}^{p^e + 1}.$$ 
 
Now we prove the inequality in the opposite direction 
$$\mathrm{Slope}_{h,(x,y,z)}(P) = \min\left\{ 
\frac1{p^e}{\mathrm{ord}_P(a_{p^e})},\ \mu(P)\right\} \geq \mathrm{H}(P).$$ 
If $\mathrm{Slope}_{h,(x,y,z)}(P) = \mu(P)$, 
then the above inequality obviously holds.  Therefore, we may assume that 
$\mathrm{Slope}_{h,(x,y,z)}(P) = {\mathrm{ord}_P(a_{p^e})}/{p^e} < \mu(P)$ and that $\mathrm{In}_P(a_{p^e})$ is not a $p^e$-th power. 
 
Take an arbitrary pair $h'$ and $(x',y',z')$ as described in the definition of $H(P)$ above. 
 
We claim that 
$${\mathrm{ord}_P(h'|_{Z'})} 
= {\mathrm{ord}_P(h|_{Z'})} 
\leq {\mathrm{ord}_P(h|_Z)} 
= {\mathrm{ord}_P(a_{p^e})} < p^e\mu(P),$$ 
which implies the required inequality. 
 
Let 
$$h' = \sum c_BH^B = \sum_{b \in {\mathbb Z}_{\geq 0}} c_bh^b$$ 
be the power series expansion of $h'$ with respect to the LGS ${\mathbb H} = \{(h,p^e)\}$ and its associated regular system of parameters $(x,y,z)$. 
 
Since 
$$h' \equiv z'^{p^e} \equiv c \cdot z^{p^e}\ \mathrm{mod}\ 
\widehat{{\mathfrak m}_P}^{p^e + 1} \quad\text{for some} 
\quad c \in k^{\times},$$ 
we conclude 
$$h' = \sum_{b > 0}c_bh^b + c_0 = u \cdot h + c_0 $$ 
for some unit $u$ in $\widehat{{\mathcal O}_{W,P}}$. 
Moreover, by the formal coefficient lemma, we have 
$(c_0,p^e) \in \widehat{{\mathcal R}_P}$.  This implies that 
$({\mathbb M}_u)^{p^e} \mid c_0$ and hence that 
${\mathrm{ord}_P(c_0|_{Z'})} 
\geq {\mathrm{ord}_P(c_0)} 
\geq p^e\mu(P)$. 
 
Therefore, it suffices to prove 
$${\mathrm{ord}_P(h|_{Z'})} 
\leq {\mathrm{ord}_P(h|_Z)} 
\left(= {\mathrm{ord}_P(a_{p^e})} < {p^e}\mu(P)\right).$$ 
(Then 
${\mathrm{ord}_P(h|_{Z'}\!)} 
\!=\! {\mathrm{ord}_P((u \cdot h)|_{Z'}\!)} 
\!=\! {\mathrm{ord}_P((u \cdot h + c_0)|_{Z'}\!)} 
\!=\! {\mathrm{ord}_P(h'|_{Z'}\!)}$.) 
 
\smallskip 
 
Now by the Weierstrass Preparation Theorem, we have 
$$z' = v \cdot (z + w)$$ 
for some unit $v$ in $\widehat{{\mathcal O}_{W,P}}$ 
and $w \in k[[x,y]]$. 
Since $Z' = \{z' = 0\} = \{z + w = 0\}$, by replacing 
$z'$ with $z + w$, we may assume that $z'$ is of the form 
$$z' = z + w, \quad\text{i.e.,}\ \ z = z' - w 
\quad\text{with}\quad w \in k[[x,y]].$$ 
Plug this into 
$$h = z^{p^e} + a_1z^{p^e-1} + a_2z^{p^e-2} + \cdots + a_{p^e-1}z + a_{p^e}$$ 
to obtain 
$$h = z'^{p^e} + a'_1z'^{p^e-1} + a'_2z'^{p^e-2} + \cdots + a'_{p^e-1}z' + a'_{p^e}$$ 
with 
$$a'_i \in k[[x,y]] \quad\text{for}\quad i = 1, \ldots, p^e - 1, p^e,$$ 
where 
$$a'_{p^e} = (- w)^{p^e} + a_1(- w)^{p^e - 1} + a_2(- w)^{p^e - 2} 
+ \cdots + a_{p^e - 1}(- w) + a_{p^e}.$$ 
Observe that, since (cf. \fbox{SITUATION} in \S 5.1) 
$$({\mathbb M}_u)^i \mid a_i \quad\text{for}\quad i = 1, \ldots, p^e - 1,$$ 
we have 
$$\frac1{i}{\mathrm{ord}_P(a_i)} 
\geq \mathrm{ord}_P({\mathbb M}_u) = \mu(P) > 
\frac1{p^e}{\mathrm{ord}_P(a_{p^e})} 
\quad\text{for}\quad 1\leq i<p^e.$$ 
 
\smallskip 
 
\noindent\emph{Case$\colon 
\mathrm{ord}_P(w) > {\mathrm{ord}_P(a_{p^e})}/{p^e}$}. 
 
In this case, we have 
$${\mathrm{ord}_P(h|_{Z'})} 
= {\mathrm{ord}_P(a'_{p^e})} 
= {\mathrm{ord}_P(a_{p^e})} 
= {\mathrm{ord}_P(h|_Z)}.$$ 
 
\noindent\emph{Case$\colon 
\mathrm{ord}_P(w) = {\mathrm{ord}_P(a_{p^e})}/{p^e}$}. 
 
In this case, since $\mathrm{In}_P(a_{p^e})$ is not a $p^e$-th power, we see via the observation above that 
$$\mathrm{In}_P(a'_{p^e}) = \mathrm{In}_P((- w)^{p^e}) + \mathrm{In}_P(a_{p^e}) \neq 0$$ 
and 
$${\mathrm{ord}_P(h|_{Z'})} 
= {\mathrm{ord}_P(a'_{p^e})} 
= {\mathrm{ord}_P(a_{p^e})} 
= {\mathrm{ord}_P(h|_Z)}.$$ 
 
\noindent\emph{Case$\colon 
\mathrm{ord}_P(w) < {\mathrm{ord}_P(a_{p^e})}/{p^e}$}. 
 
In this case, we have 
\begin{align*} 
{\mathrm{ord}_P(h|_{Z'})} 
&= {\mathrm{ord}_P(a'_{p^e})} = {\mathrm{ord}_P((- w)^{p^e})} \\ 
&= p^e\mathrm{ord}_p(w) < {\mathrm{ord}_P(a_{p^e})}= {\mathrm{ord}_P(h|_Z)}. 
\end{align*} 
 
Therefore, in all the cases above, we have 
$${\mathrm{ord}_P(h|_{Z'})}\leq {\mathrm{ord}_P(h|_Z)}.$$ 
This completes the proof of Proposition 5. 
\end{proof} 
\begin{defn}[Invariant ``$\mathrm{H}$''] We define the invariant $\mathrm{H}$ by the following formula 
$$\mathrm{H}(*) := \mathrm{Slope}_{h,(x,y,z)}(*)$$ 
where $h$ is well-adapted at $* = P, \xi_{H_x}$, or $\xi_{H_y}$ with respect to $(x,y,z)$.  By Proposition 5, the invariant $\mathrm{H}$ is 
independent of the choice of $h$ and $(x,y,z)$. 
\end{defn} 
\begin{defn}[the \emph{tight} monomial] We define the \emph{tight} 
monomial ${\mathbb M}_t$ by the formula 
$${\mathbb M}_t = x^{h_x}y^{h_y} 
\quad\text{where}\quad 
h_x = \mathrm{H}(\xi_{H_x}), h_y = \mathrm{H}(\xi_{H_y}).$$ 
Recall that the \emph{usual} monomial ${\mathbb M}_u$ is defined by the formula 
$${\mathbb M}_u = x^{\alpha/a}y^{\beta/a} 
\quad\text{where}\quad 
\frac{\alpha}{a} = \mu(\xi_{H_x}), 
\ 
\frac{\beta}{a} = \mu(\xi_{H_y}).$$ 
Note that we have 
${\mathbb M}_t \mid {\mathbb M}_u$, 
that is to say, we have $0 \leq h_x \leq \mu(\xi_{H_x})$ and $0 \leq h_y \leq \mu(\xi_{H_y})$, which follow easily from the definition. 
\end{defn} 
\end{subsection} 
\begin{subsection}{Description of the procedure (in the case $\tau = 1$)} 
 
\medskip 
 
\textbf{Analysis of the singular locus 
$\mathrm{Sing}({\mathcal R})$ at $P$} 
 
\medskip 
 
First we analyze the singular locus $\mathrm{Sing}({\mathcal R})$ of the idealistic filtration ${\mathcal R}$ of i.f.g.\! type at $P$. 
\begin{proposition} We have the following description of the singular locus $\mathrm{Sing}({\mathcal R})$ at $P$, denoted by $\mathrm{Sing}({\mathcal R})_P$, according to the values of 
$h_x = \mathrm{H}(\xi_{H_x})$ and $h_y = \mathrm{H}(\xi_{H_y})$: 
 
$$\mathrm{Sing}({\mathcal R})_P = \left\{\begin{array}{lclcl} 
V(z,x) \cup V(z,y)&\text{ if } &h_x \geq 1 &\text{ and } &h_y \geq 1\\ 
V(z,x) &\text{ if } &h_x \geq 1 &\text{ and } &h_y < 1\\ 
V(z, y) &\text{ if } &h_x < 1 &\text{ and } &h_y \geq 1\\ 
V(z,x,y) = P &\text{ if } &h_x < 1 &\text{ and } &h_y < 1, 
\end{array}\right.$$ 
where ``V'' denotes the vanishing locus and where $(x,y,z)$ is a regular system of parameters at $P$ with respect to which $h$ is well-adapted simultaneously at $P$, $\xi_{H_x}$, and $\xi_{H_y}$. 
\end{proposition}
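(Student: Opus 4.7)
The plan is to localize at the two candidate generic points and to use the power series expansion in $h$ (together with the coefficient lemma under the weaker $\{\partial^n/\partial z^n\}$-saturation, valid in the monomial case) to verify or refute the singular-locus condition at each. First, I would establish the containment $\mathrm{Supp}(\mathcal R)\subset V(z,x)\cup V(z,y)$ locally at $P$: since $(h,p^e)\in\mathcal R_P$ and $h\equiv z^{p^e}\pmod{\mathfrak m_P^{p^e+1}}$, the reduced subscheme of $V(h)$ near $P$ is $V(z)$, and since $(\Bbb M,a)=(x^\alpha y^\beta,a)\in\mathcal R_P$, the support lies inside $V(x)\cup V(y)$. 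Hence the only candidate one-dimensional components are $V(z,x)$ and $V(z,y)$, and everything else must collapse to $P$.

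For the sufficient direction, suppose $h_x\ge 1$ and let $\xi$ be the generic point of $V(z,x)$, so $\mathcal O_{W,\xi}$ is a two-dimensional regular local ring with parameters $(x,z)$. For any $(f,\lambda)\in\mathcal R_P$, expand $f=\sum_b c_b h^b$ with $c_b\in k[[x,y]][z]_{<p^e}$. By the coefficient lemma $(c_b,\lambda-bp^e)\in\widehat{\mathcal R_P}$, and since $c_b$ already has $z$-degree $<p^e$ it equals its own LGS constant term; the monomial-case hypothesis then gives $\Bbb M^{(\lambda-bp^e)/a}\mid c_b$ whenever $\lambda-bp^e\ge 0$. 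The well-adaptedness of $h$ at $\xi_{H_x}$, together with Remark~7(ii), translates $h_x\ge 1$ into $\mathrm{ord}_{\xi_{H_x}}(a_i)\ge i$ for every $i$ — giving $\mathrm{ord}_\xi(h)\ge p^e$ — and into $\alpha/a\ge 1$, giving $\mathrm{ord}_\xi(c_b)\ge(\alpha/a)(\lambda-bp^e)\ge\lambda-bp^e$. Combining, $\mathrm{ord}_\xi(c_b h^b)\ge\lambda$ for every $b$ (the case $bp^e>\lambda$ being trivial from $\mathrm{ord}_\xi(h)\ge p^e$), so $\mathrm{ord}_\xi(f)\ge\lambda$, i.e.\ $\xi\in\mathrm{Supp}(\mathcal R)$. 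Upper semi-continuity of the order extends this to the whole curve $V(z,x)$ in a neighborhood of $P$, and the $h_y\ge 1$ case is symmetric.

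For the necessary direction, suppose $h_x<1$. Since $h_x=\min\{\mathrm{ord}_{\xi_{H_x}}(a_{p^e})/p^e,\,\alpha/a\}$, at least one of the two entries is $<1$. If $\mathrm{ord}_\xi(a_{p^e})<p^e$, then $\mathrm{ord}_\xi(h)\le\mathrm{ord}_\xi(a_{p^e})<p^e$, violating $(h,p^e)\in\mathcal R$ at $\xi$; if $\alpha<a$, then $\mathrm{ord}_\xi(\Bbb M)=\alpha<a$, violating $(\Bbb M,a)\in\mathcal R$ at $\xi$. Either way $\xi\notin\mathrm{Supp}(\mathcal R)$, so $V(z,x)\cap\mathrm{Supp}(\mathcal R)$ is a proper closed subset of the curve $V(z,x)$ and hence reduces to $\{P\}$ near $P$. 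The symmetric statement in $y$, combined with the four sign patterns of $(h_x-1,\,h_y-1)$, then yields the four cases of the proposition.

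The delicate step I expect to be the main obstacle is the careful deployment of the coefficient lemma at each level of the $h$-adic expansion: one must verify that the divisibility by $\Bbb M^{(\lambda-bp^e)/a}$ propagates through to every $k[[x,y]]$-coefficient $c_{b,k}(x,y)$ of $z^k$ ($k<p^e$) inside $c_b$, so that the $x$-adic bound $\mathrm{ord}_\xi(c_b)\ge(\alpha/a)(\lambda-bp^e)$ genuinely holds in $\mathcal O_{W,\xi}$ and is not spoiled by the mixed $z$-monomials. Once this bookkeeping is in place, the remaining arguments — computing $\mathrm{ord}_\xi(h)$ and $\mathrm{ord}_\xi(\Bbb M)$ at the two candidate generic points and reading off the value of $h_x$ (resp.\ $h_y$) — are routine.
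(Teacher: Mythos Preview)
Your generic-point arguments for the dichotomy $h_x\ge1$ versus $h_x<1$ are correct and somewhat cleaner than the paper's closed-point computations, but the opening step has a genuine gap. The claim that ``the reduced subscheme of $V(h)$ near $P$ is $V(z)$'' is false: for instance if $h=z^{p^e}+g(y)+x\cdot(\text{stuff})$ with $0\ne g\in k[[y]]$ (which is exactly what happens when $h_x=0$ and $\alpha/a>0$), the zero locus $V(h)\cap V(x)$ is the curve $\{x=0,\,z^{p^e}+g(y)=0\}$, which is set-theoretically a graph over the $y$-line and not contained in $V(z)$. So your containment $\mathrm{Supp}(\mathcal R)\subset V(z,x)\cup V(z,y)$ is not justified, and without it your necessary direction only shows that the \emph{particular} curve $V(z,x)$ is excluded from the support when $h_x<1$; it does not rule out some other curve $C\subset V(x)$ lying in $\mathrm{Supp}(\mathcal R)$.

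The paper closes exactly this gap by a case analysis on $h_x$. When $h_x>0$ one has $x\mid a_i$ for every $i$ (including $i=p^e$), so $h|_{H_x}=z^{p^e}$ and $\mathrm{Supp}(\mathcal R)\cap H_x\subset V(z,x)$ follows immediately. The subtle case is $h_x=0$ with $\alpha/a>0$: there $h|_{H_x}=z^{p^e}+g(y)$ with $g\ne0$, and one must use the well-adaptedness at $\xi_{H_x}$ (so $g$ is not a $p^e$-th power) to find $e'<e$ with $\partial_y^{p^{e'}}g\ne0$; then $\mathrm{ord}_Q(h|_{H_x})\ge p^e$ forces $\mathrm{ord}_Q(\partial_y^{p^{e'}}g)>0$, hence $y(Q)=0$ near $P$, and finally $z(Q)=0$ follows from $a_{p^e}(0,0)=0$. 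This is the missing ingredient in your proposal, and it is precisely where the hypothesis that $h$ is well-adapted at $\xi_{H_x}$ does real work. Your identification of the coefficient-lemma bookkeeping as the ``delicate step'' is off target: that part goes through cleanly (your observation that $c_b$ equals its own LGS constant term is correct), whereas the actual obstacle is ruling out the extraneous curves in $V(x)\setminus V(z,x)$.
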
 
\begin{proof} Note first that, since $({\mathbb M},a) = (x^{\alpha}y^{\beta},a) \in \widehat{{\mathcal R}_P}$ 
with $a \in {\mathbb Z}_{> 0}$, we have 
$$\mathrm{Sing}({\mathcal R})_P \subset \{x = 0\} \cup \{y = 0\} = H_x \cup H_y.$$ 
Then the asserted description is a consequence of the following analysis of $\mathrm{Sing}({\mathcal R})_P \cap H_x$ (and that of $\mathrm{Sing}({\mathcal R})_P \cap H_y$, which is identical and hence omitted). 
 
\medskip 
 
\noindent\emph{Case$\colon 
h_x \geq 1$}. 
 
In this case, we have 
 
\indent$\bullet$\quad 
$(h,p^e) \in \widehat{{\mathcal R}_P}$ with $h = z^{p^e} + a_1z^{p^e-1} 
+ \cdots + a_{p^e-1}z + a_{p^e}$ 
being well-adapted both at $P$ and $\xi_{H_x}$, 
 
\indent$\bullet$\quad 
$x \mid a_i$ for $i = 0, \ldots, p^e - 1$, since $\alpha/a \geq h_x \geq 1$ and since ${\mathbb M}_u \mid a_i$ for $i = 0, \ldots, p^e - 1$, 
 
\indent$\bullet$\quad 
$x \mid a_{p^e}$, since $h_x \geq 1$, 
 
\indent$\bullet$\quad 
$h = 0$ on $\mathrm{Sing}({\mathcal R})_P$, 
 
\noindent which imply 
 
\indent$\bullet$\quad 
$z = 0$ on $\mathrm{Sing}({\mathcal R})_P \cap H_x$. 
 
Therefore, we conclude 
$$\mathrm{Sing}({\mathcal R})_P \cap H_x \subset V(z,x).$$ 
 
On the other hand, for $Q \in V(z,x)$, we have 
 
\indent$\bullet$\quad 
$\mathrm{ord}_Q(a_i) \geq \mathrm{ord}_Q(({\mathbb M}_u)^i) \geq \alpha/a \cdot i \geq i$ for $i = 0, \ldots, p^e - 1$, since ${\mathbb M}_u \mid a_i$ for $i = 0, \ldots, p^e - 1$ and since $\alpha/a \geq h_x \geq 1$, 
 
\indent$\bullet$\quad 
$\mathrm{ord}_Q(a_{p^e}) \geq h_x \cdot p^e \geq p^e$, 
since $h_x \geq 1$, 
 
\noindent which imply 
 
\indent$\bullet$\quad 
$\mathrm{ord}_Q(h) \geq p^e$. 
 
Therefore, for any $(f,\lambda) \in \widehat{{\mathcal R}_P}$ with $f = \sum c_bh^b$ being the power series expansion with respect to the LGS ${\mathbb H} = \{(h,p^e)\}$ and its associated regular system of parameters $(x,y,z)$, we have 
$$\mathrm{ord}_Q(f) \geq \lambda$$ 
since 
 
\indent$\bullet$\quad 
$\mathrm{ord}_Q(c_b) \geq \mathrm{ord}_Q(({\mathbb M}_u)^{\lambda - p^e \cdot b}) \geq \alpha/a \cdot (\lambda - p^e \cdot b) \geq \lambda - p^e \cdot b$ for $b$ with $\lambda - p^e \cdot b \geq 0$, since $(c_b,\lambda - p^e \cdot b) \in \widehat{{\mathcal R}_P}$, which follows from the formal coefficient lemma. 
 
Therefore, we conclude 
$$\mathrm{Sing}({\mathcal R})_P \cap H_x \supset V(z,x),$$ 
and hence 
$$\mathrm{Sing}({\mathcal R})_P \cap H_x = V(z,x).$$ 
 
\medskip 
 
\noindent\emph{Case$\colon 
1 > h_x >  0$.} 
 
In this case, we have 
 
\indent$\bullet$\quad 
$(h,p^e) \in \widehat{{\mathcal R}_P}$ with $h = z^{p^e} + a_1z^{p^e-1} 
+ \cdots + a_{p^e-1}z + a_{p^e}$ 
being well-adapted both at $P$ and $\xi_{H_x}$, 
 
\indent$\bullet$\quad 
$x \mid a_i$ for $i = 0, \ldots, p^e - 1$, since $\alpha/a \geq h_x > 0$ and since ${\mathbb M}_u \mid a_i$ for $i = 0, \ldots, p^e - 1$, 
 
\indent$\bullet$\quad 
$x \mid a_{p^e}$, since $h_x > 0$, 
 
\indent$\bullet$\quad 
$h = 0$ on $\mathrm{Sing}({\mathcal R})_P$, 
 
\noindent 
which imply 
 
\indent$\bullet$\quad 
$z = 0$ on $\mathrm{Sing}({\mathcal R})_P \cap H_x$. 
 
\medskip 
 
\noindent\emph{Subcase$\colon 
h_x = \mu(\xi_x)$}. 
\quad 
In this subcase, the inequality $1 > h_x$ and the inclusion $(M,a) \in \widehat{{\mathcal R}_P}$ imply 
 
$$\mathrm{Sing}({\mathcal R})_P \cap H_x \subset V(z,x,y).$$ 
 
\noindent\emph{Subcase$\colon 
h_x = {\mathrm{ord}_{\xi_x}(a_{p^e})}/{p^e}$}. 
\quad 
In this subcase, we have $1 > h_x = {r}/{p^e}$ where 
$r = \mathrm{ord}_{\xi_{H_x}}(a_{p^e})$, and hence 
$a_{p^e} = x^r \cdot \gamma(x,y)$ where $\gamma(x,y)$ is not divisible by $x$.  Therefore, we conclude 
$$\mathrm{Sing}({\mathcal R})_P \cap H_x \subset V(z,x) \cap \{\gamma(x,y) = 0\} = V(z,x,y).$$ 
 
Therefore, in both subcases, we have 
$$\mathrm{Sing}({\mathcal R})_P \cap H_x = V(z,x,y) = P.$$ 
 
\medskip 
 
\noindent\emph{Case$\colon 
h_x = 0$}. 
 
\smallskip 
 
\noindent\emph{Subcase$\colon 
\alpha/a = \mu(\xi_{H_x}) = 0$}.\quad 
In this subcase, we have $\beta/a =  \mu(\xi_{H_y}) \geq 1$. 
Therefore, we conclude 
$$\mathrm{Sing}({\mathcal R})_P \cap H_x \subset V(z,x,y) 
\ \text{and hence}\ 
\mathrm{Sing}({\mathcal R})_P \cap H_x = V(z,x,y) = P.$$ 
 
\smallskip 
 
\noindent\emph{Subcase$\colon 
\alpha/a = \mu(\xi_{H_x}) > 0$}.\quad 
In this subcase, we have 
 
\indent$\bullet$\quad 
$x \mid a_i$ for $i = 0, \ldots, p^e - 1$, since $\alpha/a > 0$ and since ${\mathbb M}_u \mid a_i$ for $i = 0, \ldots, p^e - 1$. 
 
Set $a_{p^e} = g(y) + x \cdot \omega(x,y)$.  Then $g(y) \neq 0$ and $g(y) = \mathrm{In}_{\xi_{H_x}}(a_{p^e})$ is not a $p^e$-th power, since $h$ is well-adapted at $\xi_{H_x}$.  We also observe that, for $Q \in \mathrm{Sing}({\mathcal R})_P \cap H_x$, we have 
$$p^e \leq \mathrm{ord}_Q(h) \leq \mathrm{ord}_Q(h|_{H_x}) = \mathrm{ord}_Q(z^{p^e} + g(y)).$$ 
Therefore, there exists $e' < e$ such that 
$$\frac{\partial^{p^{e'}}}{\partial y^{p^{e'}}}(z^{p^e} + g(y)) = \frac{\partial^{p^{e'}}}{\partial y^{p^{e'}}}g(y) \neq 0$$ 
and that 
$$\mathrm{ord}_Q\left(\frac{\partial^{p^{e'}}}{\partial y^{p^{e'}}}g(y)\right) =  \mathrm{ord}_Q\left(\frac{\partial^{p^{e'}}}{\partial y^{p^{e'}}}(z^{p^e} + g(y))\right)\geq p^e - p^{e'} > 0.$$ 
This implies $y = 0$ at $Q$.  Therefore, we conclude 
$$\mathrm{Sing}({\mathcal R})_P \cap H_x \subset V(z,x,y), 
\ \text{and hence}\ 
\mathrm{Sing}({\mathcal R})_P \cap H_x = V(z,x,y) = P.$$ 
This completes the proof of Proposition 6. 
\end{proof} 
 
\medskip 
 
\textbf{Description of the procedure for 
resolution of singularities} 
 
\medskip 
 
Now based upon the analysis of the support $\mathrm{Sing}({\mathcal R})_P$, we give the following (local) description of the procedure (around the point $P$) for resolution of singularities in the monomial case with $\tau = 1$: 
 
\medskip 
 
\indent{\rm Step 1.}\quad 
Check if $\dim \mathrm{Sing}({\mathcal R})_P = 1$. 
 
If the answer is yes, then blow up the 1-dimensional components 
one by one.  If there are two 1-dimensional components meeting at $P$, then we blow up the one associated to the boundary divisor with bigger $\mu$ first.  If the boundary divisors associated to the two 1-dimensional components 
have the same $\mu$, then we blow up first the one 
associated to the boundary divisor created later in the history. 
Since the invariant $\mu$ strictly decreases under this procedure, 
this step comes to an end after finitely many times with the dimension 
of the singular locus dropping to $0$. 
 
If the answer is no, i.e., $\dim \mathrm{Sing}({\mathcal R})_P = 0$, 
then go to Step 2. 
 
\medskip 
 
\indent{\rm Step 2.}\quad 
Once $\dim \mathrm{Sing}({\mathcal R})_P = 0$, blow up the isolated point in the singular locus.  Then go back to Step 1. 
 
\smallskip 
 
\indent Repeat these steps. 
 
\medskip 
 
(We note that, as long as the value of the invariant $\sigma$ remains the same, we stay in the monomial case during the procedure above.  We also note that in the middle of the procedure the invariant $\sigma$ may drop.  If the latter happens, we are no longer in the monomial case.  In this case, we go through the mechanism described in \S 4.2 with the reduced new value of $\sigma$ to reach the new monomial case.) 
 
\medskip 
 
What remains to be shown is that the above procedure terminates after finitely many repetitions.  This termination of the procedure is the main subject of \S 5.4. 
 
\end{subsection} 
\begin{subsection}{Termination of the procedure (in the case $\tau = 1$)} 
 
\medskip 
 
\textbf{Notion of a good/bad point (resp. hypersurface)} 
 
\medskip 
 
In order to analyze the termination of the procedure, we introduce the following notion of the point $P$ being ``good or bad'' and the boundary divisor $H_x$ (or $H_y$) being ``good or bad''. 
 
\begin{defn}[``good/bad'' point (cf. \cite{BV3}]  We say 
$P$ is a \emph{good} (resp. a \emph{bad}) point if 
$\mu(P)-\mathrm{H}(P) = 0$ (resp. $>0$). 
Similarly, we say 
$H_x$ is a \emph{good} (resp. \emph{bad}) hypersurface if 
$\mu(\xi_{H_x})-\mathrm{H}(\xi_{H_x}) = 0$ 
(resp. $>0$), 
where $\xi_{H_x}$ is the generic point of the hypersurface $H_x$. 
 
The notion of $H_y$ being a good or bad hypersurface is defined in an identical manner. 
\end{defn} 
\begin{lemma} Let $W \overset{\pi}{\leftarrow} \widetilde{W}$ be the blow up with center $P$, $E_P$ the exceptional divisor, $\widetilde{{\mathcal R}}$ the transformation of the idealistic filtration of i.f.g.\! type ${\mathcal R}$.  Then 
$P$ is a good (resp. bad) point if and only if 
$E_P$ is a good (resp. bad) hypersurface. 
\end{lemma} 
\begin{proof} Take $h$ and a regular system of parameters as described 
in \fbox{SITUATION} in \S 5.1.  We may further assume that $h$ is 
well-adapted at $P$ with respect to the regular system of 
parameters $(x,y,z)$ (cf. Proposition 5).  Take a point 
$\widetilde{P} \in \pi^{-1}(P) \cap \mathrm{Supp}(\widetilde{{\mathcal R}}) \subset \widetilde{W}$.  Since the singular locus is empty over the $z$-chart, $\widetilde{P}$ should be 
either in the $x$-chart or in the $y$-chart. 
Say, $\widetilde{P}$ is in the $x$-chart with a regular system of parameters $(\widetilde{x},\widetilde{y},\widetilde{z}) = (x, y/x - c,z/x)$ for some $c \in k$.  We also assume that the invariant $\sigma$ stays the same, and hence (cf. Proposition 4) that $(\widetilde{h},p^e)$ is the unique element in the LGS at $\widetilde{P}$ where 
$$\widetilde{h} 
= \frac{\pi^*(h)}{x^{p^e}} 
= \widetilde{z}^{p^e} + \widetilde{a_1}\widetilde{z}^{p^e - 1} 
+ \cdots + \widetilde{a_{p^e}} 
\quad \text{with}\quad 
\widetilde{a_i} = \frac{\pi^*(a_i)}{x^i} 
\in k[[\widetilde{x},\widetilde{y}]].$$ 
We compute 
\begin{align*} 
\lefteqn{ 
\mathrm{Slope}_{\widetilde{h}, 
(\widetilde{x},\widetilde{y},\widetilde{z})}(\xi_{E_P}) 
= \min\left\{\frac1{p^e}{\mathrm{ord}_{\xi_{E_P}}(\widetilde{a_{p^e}})}, 
\ \mu({\xi_{E_P})}\right\} 
}\\ 
&= \min\left\{\frac1{p^e}{\mathrm{ord}_P(a_{p^e})}- 1, 
\ \mu(P) - 1\right\} 
= \min\left\{\frac1{p^e}{\mathrm{ord}_P(a_{p^e})}, 
\ \mu(P)\right\} - 1\\ 
&= \mathrm{H}(P) - 1. 
\end{align*} 
 
\noindent\emph{Case$\colon 
P$ is a good point, i.e., $\mathrm{H}(P) = \mu(P)$}. 
 
In this case, we have 
$$\mathrm{Slope}_{\widetilde{h}, (\widetilde{x},\widetilde{y},\widetilde{z})}(\xi_{E_P}) = \mathrm{H}(P) - 1 = \mu(P) - 1 = \mu(\xi_{E_P}).$$ 
Therefore, we conclude that $E_P$ is a good hypersurface (with $\widetilde{h}$ well-adapted at $\xi_{E_P}$ with respect to $(\widetilde{x},\widetilde{y},\widetilde{z})$ and $\mathrm{H}(\xi_{E_P}) = \mu(\xi_{E_P})$). 
 
\medskip 
 
\noindent\emph{Case$\colon 
P$ is a bad point, i.e., $\mathrm{H}(P) < \mu(P)$}. 
 
In this case, we have 
$$\mathrm{Slope}_{\widetilde{h}, (\widetilde{x},\widetilde{y},\widetilde{z})}(\xi_{E_P}) = \mathrm{H}(P) - 1 < \mu(P) - 1 = \mu(\xi_{E_P}).$$ 
Therefore, in order to see that $E_P$ is a bad hypersurface, 
i.e., $\mathrm{H}(\xi_{E_P}) < \mu(\xi_{E_P})$, we have only to 
show that $\widetilde{h}$ is well-adapted at $\xi_{E_P}$ 
with respect to $(\widetilde{x},\widetilde{y},\widetilde{z})$, 
i.e., $\mathrm{In}_{\xi_{E_P}}(\widetilde{a_{p^e}})$ is not 
a $p^e$-th power, and hence that 
$\mathrm{Slope}_{\widetilde{h}, (\widetilde{x},\widetilde{y}, 
\widetilde{z})}(\xi_{E_P}) = H(\xi_{E_P})$. 
 
Set $a_{p^e} = \sum_{k + l \geq d} c_{kl}x^ky^l$ where $d = \mathrm{ord}_P(a_{p^e})$.  Then 
\begin{align*} 
\pi^*(a_{p^e}) &= \sum_{k + l \geq d}c_{kl}x^{k + l} 
\left(\frac{y}{x}\right)^l 
= x^d\left\{\sum_{k + l = d}c_{kl} 
\left(\frac{y}{x}\right)^l 
+ x \cdot 
\Omega\left(x,\frac{y}{x}\right) 
\right\} \\ 
&= x^d\left\{\phi\left(\frac{y}{x}\right) + x \cdot 
\Omega\left(x,\frac{y}{x}\right)\right\}, 
\end{align*} 
where $\phi(T) = \sum_{k + l = d}c_{kl}T^l$. 
Hence, we have 
\begin{align*} 
\widetilde{a_{p^e}} &= \frac{\pi^*(a_{p^e})}{x^{p^e}} 
= x^{d - p^e}\left\{ 
\phi\left(\frac{y}{x} - c + c\right) 
+ x \cdot 
\Omega\left(x,\frac{y}{x} - c + c\right) 
\right\} \\ 
&= \widetilde{x}^{d - p^e}\left\{\phi(\widetilde{y} + c) + \widetilde{x} \cdot \Omega(\widetilde{x},\widetilde{y} + c)\right\}.\\ 
\end{align*} 
Therefore, we conclude 
\begin{align*} 
\lefteqn{ 
\mathrm{In}_P(a_{p^e}) = \sum_{k + l = d}c_{kl}x^ky^l = 
x^d\phi\left(\frac{y}{x}\right) 
\text{ is a }p^e\text{-th power} 
} \\ 
&\Longleftrightarrow 
p^e\mid d \text{ and } \phi(T) \text{ is a }p^e\text{-th power} \\ 
&\Longleftrightarrow 
\mathrm{In}_{\xi_{E_P}}(\widetilde{a_{p^e}}) = \widetilde{x}^{d - p^e}\phi(\widetilde{y} + c) \text{ is a }p^e\text{-th power}. 
\end{align*} 
Now since $P$ is a bad point in this case, $\mathrm{In}_P(a_{p^e})$ is not a $p^e$-th power.  Therefore, by the above equivalence, 
$\mathrm{In}_{\xi_{E_P}}(\widetilde{a_{p^e}})$ is not a $p^e$-th power, either. 
 
\smallskip 
 
This completes the proof of Lemma 4. 
\end{proof} 
\begin{remark} \ 
 
\indent{\rm (1)}\quad 
The above proof is slightly sloppy in the sense that, after blow up, we may end up having $\mathrm{ord}_{\widetilde{P}}(\widetilde{a_{p^e}}) = p^e$ 
and $\mathrm{In}_{\widetilde{P}}(\widetilde{a_{p^e}}) 
= c\widetilde{x}^{p^e}$ for some 
$c \in k \setminus \{0\}$, even under the condition  $\widetilde{P} \in \pi^{-1}(P) \cap \mathrm{Supp}(\widetilde{{\mathcal R}}) \subset \widetilde{W}$ and the assumption that the invariant $\sigma$ stays the same (cf. the proof of Proposition 4 (1)).  Then we would have to replace $(\widetilde{x},\widetilde{y},\widetilde{z})$ 
with $(\widetilde{x},\widetilde{y},\widetilde{z}' 
= \widetilde{z} + c^{1/p^e}\widetilde{x})$ 
to guarantee condition (1) in \fbox{SITUATION} in \S 5.1.  Accordingly, we have to analyze $\widetilde{a_{p^e}}'$.  It is straightforward, however, to see that the same statement holds for $\widetilde{a_{p^e}}'$.  The details are left to the reader as an exercise. 
 
\indent{\rm (2)}\quad 
As will be clear in the presentation that follows, especially in the way we classify the configurations and we define the new invariant 
``$\mathrm{inv}_{\mathrm{MON}}$'', the focus of our proof centers around the analysis looking at whether the hypersurface of our concern is good/bad.  The notion of a point being good/bad, though related to our analysis 
via Lemma 4, is somewhat auxiliary. 
\end{remark} 
\textbf{Configurations} 
 
\medskip 
 
Looking at the boundary divisor(s) in $E_{\text{young}}$ 
at the point $P \in \mathrm{Sing}({\mathcal R})$ and seeing 
whether they are good or bad, we come up with the following classification of the ``configurations''.  Note that the pictures depict the configurations in a 2-dimensional manner, taking the intersection with the hypersurface $Z = \{z = 0\}$. 
 
\medskip 
 
\indent{\rm \textcircled{\footnotesize 1}}\quad 
The point $P$ is only on one boundary divisor (in $E_{\text{young}}$), say $H_x$, which is good. 
\ZU{ 
 \VCL{$H_x$}{good} 
 \CPO{$P$} 
} 
\indent{\rm \textcircled{\footnotesize 2}}\quad 
The point $P$ is at the intersection of two boundary divisors (in $E_{\text{young}}$), both of which are good. 
\ZU{ 
 \VCL{$H_x$}{good} 
 \HOL{$H_y$}{good} 
 \CPO{$P$} 
} 
\indent{\rm \textcircled{\footnotesize 3}}\quad 
The point $P$ is only on one boundary divisor (in $E_{\text{young}}$), say $H_x$, which is bad. 
\ZU{ 
 \VCL{$H_x$}{bad} 
 \CPO{$P$} 
} 
\indent{\rm \textcircled{\footnotesize 4}}\quad 
The point $P$ is at the intersection of two boundary divisors (in $E_{\text{young}}$), one of which, say, $H_x$, is bad, while the other, say $H_y$, is good. 
\ZU{ 
 \VCL{$H_x$}{bad} 
 \HOL{$H_y$}{good} 
 \CPO{$P$} 
} 
\indent{\rm \textcircled{\footnotesize 5}}\quad 
The point $P$ is at the intersection of two boundary divisors (in $E_{\text{young}}$), say $H_x$ and $H_y$, both of which are bad. 
\ZU{ 
 \VCL{$H_x$}{bad} 
 \HOL{$H_y$}{bad} 
 \CPO{$P$} 
} 
 
\noindent\fbox{\textbf{Basic strategy to show termination of the procedure}} 
 
\smallskip 
 
After the blow up $W \overset{\pi}\leftarrow \widetilde{W}$ specified by the procedure described in \S 5.3, we show that, at $\widetilde{P} \in \pi^{-1}(P) \in \widetilde{W}$, one of the following holds: 
 
\medskip 
 
\indent$\bullet$\quad 
$\widetilde{P} \not\in \mathrm{Sing}(\widetilde{R})$, i.e., 
$\mathrm{Sing}(\widetilde{R}) = \emptyset$ in a neighborhood 
of $\widetilde{P}$, 
 
\indent$\bullet$\quad 
$\widetilde{P} \in \mathrm{Sing}(\widetilde{R})$ and the invariant $\sigma$ drops, or 
 
\indent$\bullet$\quad 
$\widetilde{P} \in \mathrm{Sing}(\widetilde{R})$ and the invariant ``$\mathrm{inv}_{\mathrm{MON}}(P)$'' strictly decreases, i.e., 
$\mathrm{inv}_{\mathrm{MON}}(P) > \mathrm{inv}_{\mathrm{MON}}(\widetilde{P})$.  We note that ``$\mathrm{inv}_{\mathrm{MON}}(P)$'' is a new invariant, which we introduce below, attached to the point $P \in \mathrm{Sing}({\mathcal R})$ in any one of the configurations \textcircled{\footnotesize 1} through \textcircled{\footnotesize 5}. 
 
\medskip 
 
Since the invariant ``$\mathrm{inv}_{\mathrm{MON}}$'' can not decrease infinitely many times, the procedure described in \S 5.3 terminates either with $\widetilde{P} \not\in \mathrm{Supp}(\widetilde{R})$ or with the drop of the invariant $\sigma$ after finitely many repetitions. 
 
This completes the description of the basic strategy. 
 
\begin{defn}[Invariant ``$\mathrm{inv}_{\mathrm{MON}}$''] 
We define the invariant 
``$\mathrm{inv}_{\mathrm{MON}}(P)$'' associated to a point $P \in \mathrm{Sing}(\widetilde{R})$ in each of the configurations \textcircled{\footnotesize 1} through \textcircled{\footnotesize 5} (cf. \textbf{Configurations}) as follows (Note that ``$\mathrm{MON}$'' is short for ``$\mathrm{MONOMIAL}$''.): 
$$\mathrm{inv}_{\mathrm{MON}}(P) = 
\begin{cases} 
(0, 0, \mu_x) & \text{in configuration \textcircled{\footnotesize 1}}, \\ 
(0, 0, \min\{\mu_x,\mu_y\}, \max\{\mu_x,\mu_y\}) & \text{in configuration \textcircled{\footnotesize 2}}, \\ 
(\rho_x, 0, \mu_x) & \text{in configuration \textcircled{\footnotesize 3}},\\ 
(\min\{\rho_x,\mu_x\}, \max\{\rho_x,\mu_x\}) & \text{in configuration \textcircled{\footnotesize 4}},\\ 
(\min\{\rho_x,\rho_y\}, \max\{\rho_x,\rho_y\}) & \text{in configuration \textcircled{\footnotesize 5}}, 
\end{cases} 
$$ 
where $\mu_x = \mu(\xi_{H_x}), \mu_y = \mu(\xi_{H_y})$ and the invariant $\rho$ is defined as below to determine $\rho_x, \rho_y$. 
\end{defn} 
\begin{proposition}Let $P$ be the point in configuration 
\textcircled{\footnotesize 3}, \textcircled{\footnotesize 4}, 
or \textcircled{\footnotesize 5}, and let $H_x$ be 
a bad boundary divisor (in $E_{\text{young}}$). 
Suppose $h$ is well-adapted at $\xi_{H_x}$ 
with respect to $(x,y,z)$ (satisfying the conditions 
as described in \fbox{SITUATION} in \S 5.1 at the same time).  Write 
$$a_{p^e} = x^r \left\{g(y) + x \cdot \omega(x,y)\right\} 
\quad\text{where}\quad 
r = \mathrm{ord}_{\xi_{H_x}}(a_{p^e}).$$ 
Set 
\begin{align*} 
\rho_{h,(x,y,z),H_x}(P) &= 
\frac1{p^e} 
\{\mathrm{res\text{-}ord}^{(p^e)}_P(\mathrm{In}_{\xi_{H_x}}(a_{p^e})) 
- \mathrm{ord}_{\xi_{H_x}}(a_{p^e})\} \\ 
&=\frac1{p^e} 
\{\mathrm{res\text{-}ord}^{(p^e)}_P\left(x^rg(y)\right) - r\}, 
\end{align*} 
i.e., 
$$\rho_{h,(x,y,z),H_x}(P) =\begin{cases} 
\mathrm{ord}_P\left(g(y)\right)/p^e 
& \text{in case }r \not\equiv 0 \text{ mod }\ p^e \\ 
\mathrm{res\text{-}ord}^{(p^e)}_P\left(g(y)\right) /p^e 
& \text{in case }r \equiv 0 \text{ mod }\ p^e, 
\end{cases} 
$$ 
where $\mathrm{res\text{-}ord}^{(p^e)}_P$ is the smallest degree of the term which appears with a nonzero coefficient and which is not a $p^e$-th power. 
 
Then $\rho_{h,(x,y,z),H_x}(P)$ is independent of the choice of 
$h$ and $(x,y,z)$. 
\end{proposition}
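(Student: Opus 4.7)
The plan is to show that any two well-adapted pairs $(h, (x,y,z))$ and $(h', (x',y',z'))$ yield the same value of $\rho$, by decomposing the comparison into a sequence of elementary transformations and verifying the invariance of $\rho$ under each. The guiding principle is that $\mathrm{res\text{-}ord}^{(p^e)}$ is unchanged by unit multiplication and by addition of a $p^e$-th power in $k[[y]]$, and that, since $H_x$ is bad, we have $\mu(\xi_{H_x}) = \alpha/a > 0$, so $x$ divides $a_i$ for every $i < p^e$.

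First I reduce to elementary transformations. Since $x$ and $x'$ both generate the fixed ideal of $H_x$, we have $x' = u \cdot x$ for some unit $u$; after rescaling we may assume $x' = x$. Applying the Weierstrass Preparation Theorem as in the proof of Proposition 5, we may further write $z' = z + w$ with $w \in k[[x,y]]$, while $y'$ is a regular parameter in $k[[x,y]]$ with $(x, y')$ generating the same subring. The comparison of the two well-adapted pairs is thus reduced to three elementary transformations: $(\mathrm{i})$ the change $(x,y,z) \to (x,y,z+w)$ with $h$ fixed; $(\mathrm{ii})$ a change $y \mapsto y'$ within $k[[x,y]]$ with $h$ and $z$ fixed; $(\mathrm{iii})$ a change of LGS element $h \mapsto h'$ with the coordinates fixed, where the coefficient-lemma argument in the proof of Proposition 5 yields $h' = u \cdot h + c_0$ with $u \in \widehat{{\mathcal O}_{W,P}}^{\times}$ and $(c_0,p^e) \in \widehat{{\mathcal R}_P}$, so that $({\Bbb M}_u)^{p^e}$ divides $c_0$.

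For $(\mathrm{i})$, the formula
$$a'_{p^e} = (-w)^{p^e} + \sum_{i=1}^{p^e - 1} a_i (-w)^{p^e - i} + a_{p^e}$$
from the proof of Proposition 5 combined with $x \mid a_i$ for $i < p^e$ yields $a'_{p^e}|_{x = 0} = (-w|_{x=0})^{p^e} + a_{p^e}|_{x=0}$. When $r = 0$ this gives $g'(y) - g(y) = (-w|_{x=0})^{p^e}$, a $p^e$-th power, preserving $\mathrm{res\text{-}ord}^{(p^e)}$. When $r > 0$, well-adaptedness of $h$ with respect to the new coordinates forces $x \mid w$, so that $(-w)^{p^e}$ is divisible by $x^{p^e}$; after dividing by $x^r$ and restricting to $\{x = 0\}$, this contribution either vanishes or yields a $p^e$-th power of an element of $k[[y]]$ added to $g(y)$. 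For $(\mathrm{ii})$, a change of $y$-parameter acts on $g(y)$ by a unit rescaling followed by a substitution $y \mapsto v y$ for a unit $v \in k[[y]]$, both of which preserve the smallest degree of a non-$p^e$-power monomial. For $(\mathrm{iii})$, after reducing $u \cdot h$ modulo $h$ by Weierstrass division, the constant-in-$z$ part of $u \cdot h + c_0$ restricted to $\{x = 0\}$ equals $(u_0|_{x=0}) \cdot g(y)$ modulo a $p^e$-th power in $y$ coming from $c_0$, so $\rho$ is preserved.

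The main obstacle is the bookkeeping in $(\mathrm{iii})$: the unit $u$ may involve $z$, and extracting its constant-in-$z$ component after Weierstrass reduction involves cross terms of the form $u_k z^k \cdot a_j z^{p^e - j}$ with $k + (p^e - j) \geq p^e$. Using the well-adaptedness of both $h$ and $h'$ at $\xi_{H_x}$ (which pins the common value $r = p^e \cdot \mathrm{H}(\xi_{H_x})$) together with the structural divisibility $({\Bbb M}_u)^{p^e} \mid c_0$, these ancillary contributions either vanish upon restriction to $\{x = 0\}$ or land in $p^e$-th powers of $k[[y]]$ plus terms of residual order strictly exceeding $\rho$, so they do not alter $\mathrm{res\text{-}ord}^{(p^e)}$.
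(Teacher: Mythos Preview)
Your decomposition strategy differs from the paper's approach, which introduces an intrinsic quantity
\[
\rho_x \;=\; \max\Bigl\{\tfrac{1}{p^e}\,\mathrm{ord}_P\bigl(\{(h'|_{Z'})\cdot {x'}^{-r}\}\big|_{Z'\cap H_{x'}}\bigr)\Bigr\}
\]
over all pairs $(h',(x',y',z'))$ with $h'\equiv {z'}^{p^e}\bmod\mathfrak m^{p^e+1}$ and $\mathrm{ord}_{\xi_{H_x}}(h'|_{Z'})=r$, and then proves $\rho_{h,(x,y,z),H_x}=\rho_x$ by two inequalities. That route handles the change of $h$ and the change of $z$ simultaneously; your step-by-step route can be made to work, but as written it has gaps.

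\textbf{Gap in step~(i).} You invoke ``well-adaptedness of $h$ with respect to the new coordinates'' to force $x\mid w$, but the intermediate pair $(h,(x,y,z+w))$ is not assumed well-adapted; only the endpoints are. In fact one can show a posteriori that this intermediate pair \emph{is} well-adapted (because step~(iii), done backwards, gives $g_h=g_{h'}$ exactly, see below), but you have not argued this, so the step is circular. Moreover, even granting well-adaptedness, the conclusion $x\mid w$ is too weak when $r>p^e$: what is actually forced is $\mathrm{ord}_{\xi_{H_x}}(w)\ge r/p^e$, and only this stronger bound makes $(-w)^{p^e}/x^r$ regular along $H_x$ with the correct behavior (vanishing when $r\not\equiv 0$, a $p^e$-th power in $k[[y]]$ when $r\equiv 0$).

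\textbf{Gap in step~(iii).} Your ``main obstacle'' is a phantom. Once the coordinates are fixed at $(x,y',z')$, both $h$ and $h'$ are monic Weierstrass polynomials of degree $p^e$ in $z'$; hence in the expansion $h'=\sum_b c_b h^b$ one has $c_b=0$ for $b\ge 2$ and $c_1=1$, i.e.\ $u=1$ and $c_0=h'-h$. There are no cross terms, and since $\mathrm{ord}_{\xi_{H_x}}(c_0)>r$ one gets $g'=g$ on the nose. Your sketch instead asserts that $g'=(u_0|_{x=0})\cdot g$ up to a $p^e$-th power and that this preserves $\rho$; but multiplication by a generic unit in $k[[y]]$ does \emph{not} preserve $\mathrm{res\text{-}ord}^{(p^e)}$ (e.g.\ in characteristic $2$, $(1+y)(y^2+y^3)=y^2+y^4$ is a square while $y^2+y^3$ is not), so that line of reasoning would fail if $u\ne 1$ actually occurred.
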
 
\begin{proof} Note first that $r = \mathrm{H}(\xi_{H_x}) \cdot p^e$ is independent of the choice of $h$ and $(x,y,z)$ by Proposition 5 (2). 
 
We set 
$$\rho_x = \max\left\{ \frac1{p^e} 
{\mathrm{ord}_P\left(\left\{(h'|_{Z'}) \cdot {x'}^{- r}\right\}|_{Z' \cap H_{x'}}\right)} 
\ \left|\ 
\begin{array}{l} 
h', (x',y',z'), \\ 
Z' = \{z' = 0\}, \\ 
H_{x'} = \{x' = 0\} \\ 
\phantom{H_{x'}} = H_x 
\end{array}\right\}\right.,$$ 
where, computing the above ``$\max$'', we let $h'$ and $(x',y',z')$ vary among all such pairs consisting of the unique element in an LGS and a regular system of parameters that satisfy the condition 
$$\left\{ 
\begin{array}{lcl} 
h' &\equiv& {z'}^{p^e}\ \mathrm{ mod }\ \widehat{{\mathfrak m}_P}^{p^e + 1}, \text{ and}\\ 
\mathrm{ord}_{\xi_{H_{x'}}}(h'|_{Z'}) &=& \mathrm{ord}_{\xi_{H_x}}(h'|_{Z'}) = r. 
\end{array}\right.$$ 
It suffices to show that 
$$\rho_{h,(x,y,z),H_x}(P) = \rho_x,$$ 
as the number $\rho_x$ is obviously independent of the choice of $h$ and $(x,y,z)$. 
 
\smallskip 
 
Firstly we claim that the inequality 
$$\rho_{h,(x,y,z),H_x}(P) \leq \rho_x$$ 
holds.  Note that $h$ and $(x,y,z)$ form such a pair satisfying the above 
condition described in the definition of $\rho_x$ computing the ``$\max$'', 
since $h$ is well-adapted at $\xi_{H_x}$ with respect to $(x,y,z)$. 
 
\smallskip 
 
\noindent\emph{Case$\colon 
r \not\equiv 0\ \mathrm{ mod }\ p^e$}. 
 
In this case, we have 
$$\rho_{h,(x,y,z),H_x}(P) 
= \frac1{p^e}{\mathrm{ord}_P(g(y))} 
= \frac1{p^e}{\mathrm{ord}_P\left(\left\{(h|_{Z}) 
\cdot x^{- r}\right\}|_{Z \cap H_x}\right)} \leq \rho_x$$ 
by the definition of $\rho_x$ above taking the ``max'' among all such pairs. 
 
\smallskip 
 
\noindent\emph{Case$\colon 
r \equiv 0\ \mathrm{ mod }\ p^e$}. 
 
\smallskip 
 
In this case, we modify $z$ in the following way. 
 
Set $g(y) = \sum_{n \in {\mathbb Z}_{\geq 0}}b_ny^n$ with $b_n \in k$. 
We have 
$$ 
x^r \left\{ 
\sum_{n < 
\mathrm{res\text{-}ord}_P^{(p^e)}(g(y))}b_ny^n\right\} = w^{p^e} 
\quad\text{for some}\quad w \in k[x,y], 
$$ 
since L.H.S. is a $p^e$-th power by the case assumption $r \equiv 0\ \mathrm{ mod }\ p^e$ and by the definition of $ \mathrm{res\text{-}ord}_P^{(p^e)}$. 
Then set $z' = z + w$, i.e., $z = z' - w$. 
 
Plug this into 
$$h = z^{p^e} + a_1z^{p^e-1} + a_2z^{p^e-2} + \cdots + a_{p^e-1}z + a_{p^e}$$ 
to obtain 
$$h = z'^{p^e} + a'_1z'^{p^e-1} + a'_2z'^{p^e-2} + \cdots 
+ a'_{p^e-1}z' + a'_{p^e}$$ 
with 
$a'_i \in k[[x,y]]$ for $i = 1, \ldots, p^e - 1, p^e,$ 
where 
$$a'_{p^e} = (- w)^{p^e} + a_1(- w)^{p^e - 1} + a_2(- w)^{p^e - 2} 
+ \cdots + a_{p^e}.$$ 
Observe that, since (cf. \fbox{SITUATION} in \S 5.1) 
$$({\mathbb M}_u)^i \mid a_i \quad\text{for}\quad i = 1, \ldots, p^e - 1,$$ 
we have 
$$\frac1{i}{\mathrm{ord}_{\xi_{H_x}}(a_i)} 
\geq \mathrm{ord}_{\xi_{H_x}}({\mathbb M}_u) = \mu(\xi_{H_x}) > 
\frac1{p^e}{\mathrm{ord}_{\xi_{H_x}}(a_{p^e})} 
= \frac{r}{p^e}$$ 
for $i = 1, \ldots, p^e - 1$, 
where the second strict inequality follows from the assumption that $H_x$ is a bad boundary divisor. 
 
Hence, we observe that $a'_{p^e}$ is of the following form 
$$a'_{p^e} = x^r \left\{g'(y) + x \cdot \omega'(x,y)\right\}$$ 
where 
$$g'(y) = g(y) - w^{p^e} \cdot x^{-r} 
= \sum_{n \geq \mathrm{res\text{-}ord}_P^{(p^e)}(g(y))}b_ny^n.$$ 
Therefore, we conclude that 
\begin{align*} 
\rho_{h,(x,y,z),H_x}(P) 
&= \frac1{p^e} 
{\mathrm{res\text{-}ord}_P^{(p^e)}(g(y))} 
= \frac1{p^e}{\mathrm{ord}_P(g'(y))} 
\\ 
&=\frac1{p^e}{\mathrm{ord}_P\left(\left\{(h|_{Z'}) \cdot 
x^{- r}\right\}|_{Z' \cap H_x}\right)} 
\leq \rho_x. 
\end{align*} 
Secondly we prove the inequality in the opposite direction holds, i.e., 
$$\rho_{h,(x,y,z),H_x}(P) \geq \rho_x.$$ 
 
Take an arbitrary pair of $h'$ and $(x',y',z')$ 
satisfying the conditions described in the definition of $\rho_x$ 
computing the ``$\max$''. 
 
We claim that 
$$\rho_{h,(x,y,z),H_x}(P) \geq 
\frac1{p^e}{\mathrm{ord}_P\left(\left\{(h'|_{Z'}) 
\cdot {x'}^{-r}\right\}|_{Z' \cap H_{x'}}\right)},$$ 
which implies the required inequality. 
 
Let 
$$h' = \sum c_BH^B = \sum_{b \in {\mathbb Z}_{\geq 0}} c_bh^b$$ 
be the power series expansion of $h'$ with respect to the LGS ${\mathbb H} = \{(h,p^e)\}$ and its associated regular system of parameters $(x,y,z)$. 
 
Since 
$$h' \equiv z'^{p^e} \equiv c \cdot z^{p^e}\ \mathrm{mod}\ \widehat{{\mathfrak m}_P}^{p^e + 1} 
\quad\text{for some}\quad c \in k^{\times},$$ 
we conclude 
$$h' = \sum_{b > 0}c_bh^b + c_0 = u \cdot h + c_0 $$ 
for some unit $u$ in $\widehat{{\mathcal O}_{W,P}}$. 
 
Moreover, by the formal coefficient lemma, we have 
$(c_0,p^e) \in \widehat{{\mathcal R}_P}$.  This implies 
that $({\mathbb M}_u)^{p^e} \mid c_0$, and hence that 
$${\mathrm{ord}_{\xi_{H_x}}(c_0|_{Z'})} 
\geq {\mathrm{ord}_{\xi_{H_x}}(c_0)} 
\geq {p^e} \mu(\xi_{H_x}) > {r}.$$ 
Hence we have 
$$\left\{(c_0|_{Z'}) \cdot {x'}^{-r}\right\}|_{Z' \cap H_{x'}} = \left\{(c_0|_{Z'}) \cdot x^{-r}\right\}|_{Z' \cap H_x}= 0,$$ 
which implies 
\begin{align*} 
&{\mathrm{ord}_P\!\left(\left\{\!(h'|_{Z'}) 
\cdot {x'}^{-r}\!\right\}|_{Z' \cap H_{x'}}\!\right)} 
= {\mathrm{ord}_P\!\left(\left\{\!\left((u \cdot h + c_0)|_{Z'}\right) 
\cdot {x'}^{-r}\!\right\}|_{Z' \cap H_{x'}}\!\right)} 
\\ 
&\quad= {\mathrm{ord}_P\!\left(\left\{(u \cdot h|_{Z'}) \cdot {x'}^{-r}\right\}|_{Z' \cap H_{x'}}\right)} 
= {\mathrm{ord}_P\!\left(\left\{(h|_{Z'}) \cdot {x'}^{-r}\right\}|_{Z' \cap H_{x'}}\right)} \\ 
&\quad= {\mathrm{ord}_P\left(\left\{(h|_{Z'}) \cdot x^{-r}\right\}|_{Z' \cap H_x}\right)}. 
\end{align*} 
Therefore, it suffices to prove 
$$\rho_{h,(x,y,z),H_x}(P) \geq 
\frac1{p^e}{\mathrm{ord}_P\left(\left\{(h|_{Z'}) 
\cdot x^{-r}\right\}|_{Z' \cap H_x}\right)}.$$ 
Now by the Weierstrass Preparation Theorem, we have 
$$z' = v \cdot (z + w)$$ 
for some unit $v$ in $\widehat{{\mathcal O}_{W,P}}$ and $w \in k[[x,y]]$. 
Since $Z' = \{z' = 0\} = \{z + w = 0\}$, by replacing $z'$ with $z + w$, 
we may assume that $z'$ is of the form 
$$z' = z + w, \quad\text{i.e.,}\ \  z = z' - w 
\quad\text{with}\quad w \in k[[x,y]].$$ 
Plug this into 
$$h = z^{p^e} + a_1z^{p^e-1} + a_2z^{p^e-2} 
+ \cdots + a_{p^e-1}z + a_{p^e}$$ 
to obtain 
$$h = z'^{p^e} + a'_1z'^{p^e-1} + a'_2z'^{p^e-2} + \cdots 
+ a'_{p^e-1}z' + a'_{p^e}$$ 
with 
$a'_i \in k[[x,y]]$ for $i = 1, \ldots, p^e - 1, p^e,$ 
where 
$$a'_{p^e} = (- w)^{p^e} + a_1(- w)^{p^e - 1} + a_2(- w)^{p^e - 2} 
+ \cdots + a_{p^e}.$$ 
Observe that, since (cf. \fbox{SITUATION} in \S 5.1) 
$$({\mathbb M}_u)^i \mid a_i \quad\text{for}\quad i = 1, \ldots, p^e - 1,$$ 
we have 
$$\frac1{i}{\mathrm{ord}_{\xi_{H_x}}(a_i)} 
\geq \mathrm{ord}_{\xi_{H_x}}({\mathbb M}_u) = \mu(\xi_{H_x}) > 
\frac1{p^e}{\mathrm{ord}_{\xi_{H_x}}(a_{p^e})} = \frac{r}{p^e}$$ 
for $i = 1, \ldots, p^e - 1$. We claim that 
$$\mathrm{ord}_{\xi_{H_x}}(w) \geq \frac{r}{p^e}.$$ 
In fact, suppose $\mathrm{ord}_{\xi_{H_x}}(w) < {r}/{p^e}$. 
Then using the above observation we would have 
$$\mathrm{ord}_{\xi_{H_x}}(h|_{Z'}) = \mathrm{ord}_{\xi_{H_x}}(a'_{p^e}) 
=  \mathrm{ord}_{\xi_{H_x}}((- w)^{p^e}) < r.$$ 
By the equation $h' = u \cdot h + c_0$ and by the inequality $\mathrm{ord}_{\xi_{H_x}}(c_0|_{Z'}) > r$, this would also imply 
$$\mathrm{ord}_{\xi_{H_x}}(h'|_{Z'}) = \mathrm{ord}_{\xi_{H_x}}(h|_{Z'}) < r,$$ 
which is against the choice of $h'$ and $(x',y',z')$ that we started with, satisfying the conditions described in 
the definition of $\rho_x$ computing the ``$\max$''. 
 
\smallskip 
 
Now we are at the stage to finish the argument to prove the inequality 
$$\rho_{h,(x,y,z),H_x}(P) \geq 
\frac1{p^e}{\mathrm{ord}_P\left(\left\{(h|_{Z'}) \cdot x^{-r}\right\}|_{Z' \cap H_x}\right)}.$$ 
 
\pagebreak[2] 
 
\noindent\emph{Case$\colon 
r \not\equiv 0\ \mathrm{mod}\ p^e$}. 
 
In this case, the claimed inequality $\mathrm{ord}_{\xi_{H_x}}(w) \geq 
{r}/{p^e}$ implies the strict inequality $\mathrm{ord}_{\xi_{H_x}}(w) > 
{r}/{p^e}$, since $\mathrm{ord}_{\xi_{H_x}}(w)$ is an integer.  Together with the observation, we conclude that 
$$\mathrm{In}_{\xi_{H_x}}(a'_{p^e}) = \mathrm{In}_{\xi_{H_x}}(a_{p^e}) = x^r g(y),$$ 
and hence that 
$$\rho_{h,(x,y,z),H_x}(P) 
= \frac1{p^e}{\mathrm{ord}_P(g(y))} 
= \frac1{p^e}{\mathrm{ord}_P\left(\left\{(h|_{Z'}) \cdot x^{-r}\right\}|_{Z' \cap H_x}\right)}.$$ 
 
\noindent\emph{Case$\colon 
r \equiv 0\ \mathrm{mod}\ p^e$}. 
 
In this case, set 
$$w = x^{{r}/{p^e}} \left\{h(y) + x \cdot \theta(x,y)\right\} 
\quad\text{with}\quad h(y) \in k[[y]], \theta(x,y) \in k[[x,y]].$$ 
Together with the observation, we conclude that 
$$\mathrm{In}_{\xi_{H_x}}(a'_{p^e}) = \mathrm{In}_{\xi_{H_x}}(a_{p^e}) - x^r h(y)^{p^e}= x^r \left\{g(y) - h(y)^{p^e}\right\},$$ 
and hence that 
\begin{align*} 
\rho_{h,(x,y,z),H_x}(P) 
&= \frac1{p^e} 
{\mathrm{res\text{-}ord}^{(p^e)}_P\left(g(y)\right)} 
\geq \frac1{p^e} 
{\mathrm{ord}_P\left(g(y) - h(y)^{p^e}\right)} 
\\ 
&= \frac1{p^e} 
{\mathrm{ord}_P\left(\left\{(h|_{Z'}) \cdot x^{-r}\right\}|_{Z' \cap H_x}\right)}. 
\end{align*} 
This completes the proof of Proposition 7. 
\end{proof} 
\begin{defn}[Invariant ``$\rho$''] Let the situation be 
as described in Proposition 7.  We define 
the invariant $\rho$ of $H_x$ at $P$, denoted by $\rho_x$, by the formula 
$$\rho_x =  \rho_{h,(x,y,z),H_x}(P).$$ 
Invariant $\rho_y$ is defined in an identical manner, 
in case $H_y$ is a bad boundary divisor (in $E_{\text{young}}$) 
passing through $P$ in configuration \textcircled{\footnotesize 5}. 
\end{defn} 
 
\medskip 
 
\textbf{Study of the behavior of the invariants under blow up} 
 
\medskip 
 
\noindent \fbox{\textbf{Case}$\colon 
$ blow up with 1-dimensional center} 
\begin{claim} Let $W \overset{\pi}\leftarrow \widetilde{W}$ be 
the blow up with center being a 1-dimensional component $C = V(z,x)$ 
of $\mathrm{Sing}({\mathcal R})$ (cf. Proposition 6), and 
$\widetilde{{\mathcal R}}$ the transformation of the idealistic 
filtration ${\mathcal R}$ of i.f.g.\! type.  Then there is possibly 
only one point $\widetilde{P} \in \mathrm{Sing}(\widetilde{{\mathcal R}}) 
\cap \pi^{-1}(P) \subset \widetilde{W}$, lying in the $x$-chart, 
with the regular system of parameters 
$(\widetilde{x},\widetilde{y},\widetilde{z}) = (x,y,z/x)$. 
The behavior of the invariants under blow up is described 
as follows (in case the invariant $\sigma$ does not drop): 
\begin{align*} 
\mu_{\widetilde{x}} &= \mu_x - 1, 
&h_{\widetilde{x}} &= h_x - 1, 
&\text{and, in case }H_x \text{ is bad,}\quad 
&\rho_{\widetilde{x}} = \rho_x. 
\\ 
\mu_{\widetilde{y}} &= \mu_y,\phantom{-1} 
&h_{\widetilde{y}} &= h_y,\phantom{-1} 
& \text{and, in case }H_y \text{ is bad,}\quad 
&\rho_{\widetilde{y}} = \rho_y - 1. 
\end{align*} 
\end{claim}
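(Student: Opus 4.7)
The plan is to identify the only possible candidate for $\widetilde{P}$ and then read off each transformed invariant directly from the standard monomial-case transformation $\widetilde{h} = h/\widetilde{x}^{p^e}$. The blow up has two charts: in the $z$-chart the transform reads $h/z^{p^e} = 1 + a_1/z + \cdots + a_{p^e}/z^{p^e}$, which evaluates to $1$ at the origin of the $z$-chart; every other point of $\pi^{-1}(P)$ is visible in the $x$-chart, where a candidate point $(0,0,c) \in \pi^{-1}(P)$ would produce a shifted leading form $(\widetilde{z}-c)^{p^e} = \widetilde{z}'^{p^e} + c^{p^e}$ by the Frobenius identity, forcing $c = 0$ whenever $\mathrm{ord}_{(0,0,c)}(\widetilde{h}) \geq p^e$. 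Hence if $\widetilde{P}$ exists it is the origin of the $x$-chart with $(\widetilde{x},\widetilde{y},\widetilde{z}) = (x,y,z/x)$. The divisibilities $x^i \mid a_i$ for $i < p^e$ (from $({\Bbb M}_u)^i \mid a_i$ combined with $\alpha/a \geq h_x \geq 1$) and $x^{p^e} \mid a_{p^e}$ ensure $\widetilde{a}_i := a_i/\widetilde{x}^i \in k[[\widetilde{x},\widetilde{y}]]$, and the transformed monomial is $(\widetilde{x}^{\alpha-a}\widetilde{y}^\beta, a)$, giving $\mu_{\widetilde{x}} = \mu_x - 1$ and $\mu_{\widetilde{y}} = \mu_y$ directly.

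Next I would compute the slope invariants. Since $\mathrm{ord}_{\xi_{H_{\widetilde{x}}}}(\widetilde{a}_{p^e}) = \mathrm{ord}_{\xi_{H_x}}(a_{p^e}) - p^e$, the slope of $\widetilde{h}$ at $\xi_{H_{\widetilde{x}}}$ equals $h_x - 1$; writing $a_{p^e} = x^r\{g_x(y) + x\omega_x\}$, the new initial form $\widetilde{x}^{r-p^e} g_x(\widetilde{y})$ is a $p^e$-th power iff $x^r g_x(y)$ is (direct monomial inspection), so $\widetilde{h}$ remains well-adapted at $\xi_{H_{\widetilde{x}}}$ and Proposition 5(2) delivers $h_{\widetilde{x}} = h_x - 1$. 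At $\xi_{H_{\widetilde{y}}}$ the coordinate $\widetilde{x}$ is a unit, so the division by $\widetilde{x}^{p^e}$ is cosmetic at this generic point; hence $\mathrm{ord}_{\xi_{H_{\widetilde{y}}}}(\widetilde{a}_{p^e}) = \mathrm{ord}_{\xi_{H_y}}(a_{p^e})$ and $\mu_{\widetilde{y}} = \mu_y$, yielding $h_{\widetilde{y}} = h_y$.

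Finally I would compute the $\rho$ invariants using Proposition 7. When $H_x$ is bad, the expansion $\widetilde{a}_{p^e} = \widetilde{x}^{r-p^e}\{g_x(\widetilde{y}) + \widetilde{x}\omega_x\}$ shows that $\widetilde{r} = r - p^e$ has the same residue modulo $p^e$ and the residual polynomial $g_x$ is unchanged, so $\rho_{\widetilde{x}} = \rho_x$. When $H_y$ is bad, $x^{p^e} \mid a_{p^e}$ forces $x^{p^e} \mid g_y(x)$ in $a_{p^e} = y^s\{g_y(x) + y\omega_y\}$, so $\widetilde{a}_{p^e} = \widetilde{y}^s\{g_y(\widetilde{x})/\widetilde{x}^{p^e} + \widetilde{y}\widetilde{\omega}_y\}$ with $\widetilde{s} = s$; the operation $g_y(x) \mapsto g_y(x)/\widetilde{x}^{p^e}$ shifts every exponent down by $p^e$ while preserving whether the coefficient at a given degree corresponds to a $p^e$-th power, so both $\mathrm{ord}_P(g_y)$ and $\mathrm{res\text{-}ord}^{(p^e)}_P(g_y)$ drop by exactly $p^e$, yielding $\rho_{\widetilde{y}} = \rho_y - 1$. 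The main technical hurdle is the uniqueness argument for $\widetilde{P}$ via the Frobenius identity together with the bookkeeping that $r \bmod p^e$ and $s \bmod p^e$ are unchanged (so that the case distinction in the definition of $\rho$ is respected); throughout we tacitly use the hypothesis that $\sigma$ does not drop, so that $\widetilde{h}$ remains the unique element of an LGS at $\widetilde{P}$.
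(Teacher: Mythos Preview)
Your proposal is correct and follows essentially the same route as the paper: write the transform $\widetilde{h}=h/x^{p^e}$, rule out the $z$-chart and nonzero translates in the $x$-chart, read off the new monomial, verify that $\widetilde{h}$ remains well-adapted so that the slope formulas give $h_{\widetilde{x}}=h_x-1$ and $h_{\widetilde{y}}=h_y$, and then observe that $r\bmod p^e$ and $s\bmod p^e$ are unchanged so the defining formulas of Proposition~7 yield $\rho_{\widetilde{x}}=\rho_x$ and $\rho_{\widetilde{y}}=\rho_y-1$; the paper in fact leaves the $H_y$-side as an exercise, which you sketch out. The only point the paper adds (in a remark following the proof) is that after blow up one may have $\mathrm{ord}_{\widetilde P}(\widetilde{a}_{p^e})=p^e$ with $\mathrm{In}_{\widetilde P}(\widetilde{a}_{p^e})=c\,\widetilde{x}^{p^e}$, requiring the shift $\widetilde{z}\mapsto\widetilde{z}+c^{1/p^e}\widetilde{x}$ to restore condition~(1) of \textsc{Situation}; you should be aware of this wrinkle, though the paper itself calls its own proof ``slightly sloppy'' for omitting it.
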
 
\begin{proof}  Write down 
$$h = z^{p^e} + a_1z^{p^e-1} + a_2z^{p^e-2} + \cdots 
+ a_{p^e-1}z + a_{p^e}$$ 
with 
$$a_i \in k[[x,y]] \quad\text{and}\quad 
\mathrm{ord}_P(a_i) > i \quad\text{for}\quad i = 1, \ldots, p^e,$$ 
as described in \fbox{SITUATION} in \S 5.1.  We may assume further that $h$ is well-adapted at $P$, $\xi_{H_x}$ and $\xi_{H_y}$ simultaneously with respect to $(x,y,z)$ (cf. Proposition 5). 
 
It is straightforward to see that, if 
a point $\widetilde{P} \in \pi^{-1}(P) \subset \widetilde{W}$ 
lies in the $z$-chart, then $\mathrm{ord}_P(\widetilde{h}) < p^e$, where $\widetilde{h} = {\pi^*(h)}/{z^{p^e}}$, and hence 
$\widetilde{P} \not\in \mathrm{Sing}(\widetilde{{\mathcal R}})$. 
Therefore, there is possibly only one point 
$\widetilde{P} \in \mathrm{Sing}(\widetilde{{\mathcal R}}) \cap 
\pi^{-1}(P) \subset \widetilde{W}$, lying in the $x$-chart, 
with the regular system of parameters $(\widetilde{x},\widetilde{y},\widetilde{z}) = (x,y,z/x)$. 
 
With respect to this regular system of parameters, we compute the transform of the monomial $(x^{\alpha}y^{\beta},a)$ to be $(\widetilde{x}^{\alpha - a}\widetilde{y}^{\beta},a)$.  Therefore, we conclude 
$$ 
\mu_{\widetilde{x}} = \mu_x - 1, \quad 
\mu_{\widetilde{y}} = \mu_y. 
$$ 
Set 
$$a_{p^e} = x^r \left\{g(y) + x \cdot \omega(x,y)\right\}$$ 
with 
$$r = \mathrm{ord}_{\xi_{H_x}}(a_{p^e}),\quad 
0\neq g(y) \in k[[y]], 
\quad\text{and}\quad 
\omega(x,y) \in k[[x,y]].$$ 
After blow up, we compute 
$$\widetilde{h} 
= \frac{\pi^*(h)}{x^{p^e}} 
= \widetilde{z}^{p^e} + \widetilde{a_1}\widetilde{z}^{p^e-1} 
+ \widetilde{a_2}\widetilde{z}^{p^e-2} + \cdots 
+ \widetilde{a_{p^e-1}}\widetilde{z} + \widetilde{a_{p^e}}$$ 
with 
$$\widetilde{a_i} = \frac{\pi^*(a_i)}{x^i} 
\quad\text{for}\quad i = 1, \ldots, p^e.$$ 
In particular, we have 
$$ 
\widetilde{a_{p^e}} 
= \frac{\pi^*(a_{p^e})}{x^{p^e}} 
= x^{r - p^e} \left\{g(y) + x \cdot \omega(x,y)\right\} 
= \widetilde{x}^{r - p^e} \left\{g(\widetilde{y}) + \widetilde{x} \cdot \omega(\widetilde{x},\widetilde{y})\right\}. 
$$ 
(Note that we have $r \geq p^e$, since $h_x = {r}/{p^e} \geq 1$.  (cf. Proposition 6)) 
Therefore, we compute 
\begin{align*} 
\lefteqn{ 
\mathrm{Slope}_{\widetilde{h}, 
(\widetilde{x},\widetilde{y},\widetilde{z})}(\xi_{H_{\widetilde{x}}}) 
= \min\left\{ 
\frac1{p^e} 
{\mathrm{ord}_{\xi_{H_{\widetilde{x}}}}(\widetilde{a_{p^e}})},\ 
\mu(\xi_{H_{\widetilde{x}}})\right\} 
} 
\\ 
&= \min\left\{\frac{r - p^e}{p^e}, \mu_{\widetilde{x}}\right\} 
= \min\left\{\frac1{p^e}{\mathrm{ord}_{\xi_{H_x}}(a_{p^e})} - 1, 
\mu_x - 1\right\} 
\\ 
&= \min\left\{\frac1{p^e}{\mathrm{ord}_{\xi_{H_x}}(a_{p^e})}, \mu_x\right\} - 1 
= \mathrm{Slope}_{h,(x,y,z)}(\xi_{H_x}) - 1. 
\end{align*} 
Hence, if $\mathrm{Slope}_{h,(x,y,z)}(\xi_{H_x}) = \mu(\xi_{H_x})$, 
then 
$$\mathrm{Slope}_{\widetilde{h},(\widetilde{x}, 
\widetilde{y},\widetilde{z})}(\xi_{H_{\widetilde{x}}}) 
= \mu(\xi_{H_x}) - 1 = \mu(\xi_{H_{\widetilde{x}}}).$$ 
If $\mathrm{Slope}_{h,(x,y,z)}(\xi_{H_x}) < \mu(\xi_{H_x})$, then 
$$ 
\mathrm{Slope}_{\widetilde{h},(\widetilde{x},\widetilde{y}, 
\widetilde{z})}(\xi_{H_{\widetilde{x}}}) = 
\mathrm{Slope}_{h,(x,y,z)}(\xi_{H_x}) - 1 
<  \mu(\xi_{H_{\widetilde{x}}}) - 1 = \mu(\xi_{H_{\widetilde{x}}}) 
$$ 
and 
$$ 
\mathrm{In}_{\xi_{H_{\widetilde{x}}}}(\widetilde{a_{p^e}}) 
= \widetilde{x}^{r - p^e} g(\widetilde{y}) 
\text{ is not a $p^e$-th power}, 
$$ 
since $\mathrm{In}_{\xi_{H_x}}(a_{p^e}) = x^r g(y)$ is not a $p^e$-th power. 
Therefore, $\widetilde{h}$ is well-adapted at $\xi_{H_{\widetilde{x}}}$ with respect to $(\widetilde{x},\widetilde{y},\widetilde{z})$. 
 
Therefore, we conclude 
$$h_{\widetilde{x}} = \mathrm{Slope}_{\widetilde{h},(\widetilde{x},\widetilde{y},\widetilde{z})}(\xi_{H_{\widetilde{x}}}) = \mathrm{Slope}_{h,(x,y,z)}(\xi_{H_x}) - 1 = h_x - 1.$$ 
 
In case $H_x$ is bad, i.e., $h_x < \mu_x$, so is 
$H_{\widetilde{x}}$ since $h_{\widetilde{x}} 
= h_x - 1 < \mu_x - 1 = \mu_{\widetilde{x}}$.  Moreover, we compute 
\begin{align*} 
\rho_{\widetilde{x}} 
= \rho_{\widetilde{h},(\widetilde{x},\widetilde{y},\widetilde{z}),H_{\widetilde{x}}}(\widetilde{P}) 
&= 
\begin{cases} 
\mathrm{ord}_{\widetilde{P}}\left(g(\widetilde{y})\right)/p^e & \text{if }r - p^e \not\equiv 0 \text{ mod }\ p^e, \\ 
\mathrm{res\text{-}ord}^{(p^e)}_{\widetilde{P}}\left(g(\widetilde{y})\right)/p^e & \text{if }r - p^e \equiv 0 \text{ mod }\ p^e 
\end{cases} \\ 
&= 
\begin{cases} 
\mathrm{ord}_P\left(g(y)\right)/p^e 
& \text{if }r \not\equiv 0 \text{ mod }\ p^e, \\ 
\mathrm{res\text{-}ord}^{(p^e)}_P\left(g(y)\right)/p^e 
& \text{if }r \equiv 0 \text{ mod }\ p^e 
\end{cases} 
\\ 
&= \rho_{h,(x,y,z),H_x}(P) 
= \rho_x. 
\end{align*} 
In summary, we have 
$$h_{\widetilde{x}} = h_x - 1, \quad 
\rho_{\widetilde{x}} = \rho_x. $$ 
The proof for the formulas 
$$ 
h_{\widetilde{y}} = h_y, \quad 
\rho_{\widetilde{y}} = \rho_y -1 
$$ 
is similar, and left to the reader as an exercise. 
\end{proof} 
\begin{remark} The above proof is slightly sloppy in the sense that, after blow up, we may end up having $\mathrm{ord}_{\widetilde{P}}(\widetilde{a_{p^e}}) = p^e$ 
and $\mathrm{In}_{\widetilde{P}}(\widetilde{a_{p^e}}) 
= c\widetilde{x}^{p^e} + c'\widetilde{y}^{p^e}$ 
for some $(c, c')  \in k^2 \setminus \{(0,0)\}$, even under the condition $\widetilde{P} \in \pi^{-1}(P) \cap \mathrm{Sing}(\widetilde{{\mathcal R}}) \subset \widetilde{W}$ and the assumption that the invariant $\sigma$ stays the same (cf. the proof of Proposition 4 (1)).  Then we have to replace $(\widetilde{x},\widetilde{y},\widetilde{z})$ with 
$$(\widetilde{x},\widetilde{y},\widetilde{z}' 
= \widetilde{z} + c^{1/p^e}\widetilde{x} + c'^{1/p^e}\widetilde{y})$$ 
to guarantee condition (1) in \fbox{SITUATION} in \S 5.1.  Accordingly, we have to analyze $\widetilde{a_{p^e}}'$.  It is straightforward, however, to see that the same calculations hold with $\widetilde{a_{p^e}}'$.  The details are left to the reader as an exercise. 
\end{remark} 
 
\noindent \fbox{\textbf{Case}$\colon 
$ blow up with 0-dimensional center} 
 
\begin{claim} Let $W \overset{\pi}\leftarrow \widetilde{W}$ be the blow up with a 0-dimensional center $C = P = V(z,x,y) \in \mathrm{Sing}({\mathcal R})$ (cf. Proposition 6 and the description of the procedure in \S 5.3), and $\widetilde{{\mathcal R}}$ the transformation of the idealistic filtration ${\mathcal R}$ of i.f.g.\! type.  Set $Z = \{z = 0\}$.  Then a point 
$\widetilde{P} \in \mathrm{Sing}(\widetilde{{\mathcal R}}) 
\cap \pi^{-1}(P) \subset \widetilde{W}$ must be on the strict 
transform $Z'$ of $Z$, lying either in the $x$-chart or in the $y$-chart.  Assume that the invariant $\sigma$ stays the same, i.e., 
$\sigma(P) = \sigma(\widetilde{P})$.  We make the following 
three observations regarding the behavior of the invariants 
under blow up.  (We denote the strict transforms of $H_x$ and 
$H_y$ by  $H'_x$ and $H'_y$, and the exceptional divisor by $E_P$.  
Note that the pictures depict the configurations in a 2-dimensional 
manner, taking the intersection with the hypersurface $Z$ 
before blow up and with its strict transform $Z'$ after blow up. 
): 
 
\medskip 
 
\indent{\rm (1)}\quad 
The point $P$ is in case \textcircled{\footnotesize 3}, \textcircled{\footnotesize 4} or \textcircled{\footnotesize 5}. 
 
{\rm (1.1)}\quad 
Suppose $h_x < 1$.  Look at the point $\widetilde{P} = E_P \cap H'_x \cap Z'$ in the $y$-chart with a regular system of parameters $(\widetilde{x},\widetilde{y},\widetilde{z}) = (x/y,y,z/y)$.  Then the hypersurface $H'_x = H_{\widetilde{x}}$ is bad, and we have 
$$\rho_x > \rho_{\widetilde{x}}.$$ 
 
{\rm (1.2)}\quad 
Suppose $P$ is bad, and hence $E_P$ is 
also bad (cf. Lemma 4). 
Look at a point $\widetilde{P} \in (E_P \setminus H'_x) \cap Z'$ in the $x$-chart with a regular system of parameters 
$(\widetilde{x} = e,\widetilde{y},\widetilde{z}) 
= (x,y/x - c,z/x)$ for some $c \in k$.  Then we have 
$$\rho_x \geq \rho_e.$$ 
\ZU{ 
 \VCL{$H_x$}{bad} 
 \HDL{$H_y$}{} 
 \CPO{$P$} 
} 
\hskip143.5pt $\uparrow$ 
\ZU{ 
 \VLL{$H_x'$}{bad} 
 \RDL{$H_y'$}{} 
 \HOL{$E_P$}{$\mathrm{bad}_{(1.2)}$} 
 \LPO{$\widetilde{P}_{(1.1)}$} 
 \CPO{$\widetilde{P}_{(1.2)}$ \text{ or }} 
\RPO{$\widetilde{P}_{(1.2)}$} 
} 
 
\indent{\rm (2)}\quad 
The point $P$ is in case \textcircled{\footnotesize 4}.  Suppose $P$ is bad, and hence $E_P$ is also bad (cf. Lemma 4).  Look at a point $\widetilde{P} \in (E_P \setminus H'_y) \cap Z'$ with a regular system of parameters $(\widetilde{x},\widetilde{y} = e,\widetilde{z}) = (x/y - c,y,z/y)$ for some $c \in k$.  Then we have 
$$\mu_x > \rho_e.$$ 
 
\ZU{ 
 \VCL{$H_x$}{bad} 
 \HOL{$H_y$}{good} 
 \CPO{$P$} 
} 
\hskip143.5pt $\uparrow$ 
\ZU{ 
 \VLL{$H_x'$}{bad} 
 \VRL{$H_y'$}{good} 
 \HOL{$E_P$}{bad} 
 \LPO{$\widetilde{P}$ \text{ or }} 
 \CPO{$\widetilde{P}$} 
} 
 
\indent{\rm (3)}\quad 
The point $P$ is in case \textcircled{\footnotesize 5}.  Suppose $P$ is good.  Then we have 
$$\rho_x > \mu_y \quad\text{and}\quad \rho_y > \mu_x.$$ 
Look at the point $\widetilde{P} = E_P \cap H'_x \cap Z'$ in the $y$-chart 
with a regular system of parameters 
$(\widetilde{x},\widetilde{y},\widetilde{z}) = (x/y,y,z/y)$.  Since 
$\mu_{\widetilde{x}} = \mu_x$, we have as a consequence 
$$\rho_y > \mu_{\widetilde{x}}.$$ 
We draw a similar conclusion looking 
at the point $E_P \cap H'_y \cap Z'$ in the $x$-chart. 
 
\ZU{ 
 \VCL{$H_x$}{bad} 
 \HOL{$H_y$}{bad} 
 \CPO{$P$} 
} 
\hskip143.5pt $\uparrow$ 
\ZU{ 
 \VLL{$H_x'$}{bad} 
 \VRL{$H_y'$}{bad} 
 \HOL{$E_P$}{good} 
 \LPO{$\widetilde{P}$} 
} 
\end{claim}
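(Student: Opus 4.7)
The plan is to prove each of the three parts of Claim 2 by explicitly computing the transform $\widetilde{h}$ of a well-adapted element $h$ in the appropriate chart, verifying that $\widetilde{h}$ remains well-adapted at the relevant generic point, and then reading off the new invariants from the explicit formula. Throughout, I would start with $h$ well-adapted at $P$, $\xi_{H_x}$, and $\xi_{H_y}$ simultaneously (Proposition 5), and write
$$a_{p^e}=x^r\{g(y)+x\omega(x,y)\}, \qquad r=h_x\cdot p^e,$$
with $g(y)=y^{\rho_x p^e}\cdot u(y)$ for a unit $u\in k[[y]]$ (or an analogous normal form after res-ord cleaning in the case $r\equiv 0\bmod p^e$).

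For (1.1), I work in the $y$-chart with $(\widetilde{x},\widetilde{y},\widetilde{z})=(x/y,y,z/y)$, so that $H_{\widetilde{x}}=H'_x=\{\widetilde{x}=0\}$. The substitution yields
$$\widetilde{a_{p^e}}=\widetilde{x}^r\widetilde{y}^{r-p^e}g(\widetilde{y})+\widetilde{x}^{r+1}\widetilde{y}^{r-p^e+1}\omega(\widetilde{x}\widetilde{y},\widetilde{y}).$$
Since $0\le r<p^e$ by hypothesis, the factor $\widetilde{x}^r$ in the $\xi_{H_{\widetilde{x}}}$-initial form is not a $p^e$-th power, so $\widetilde{h}$ remains well-adapted at $\xi_{H_{\widetilde{x}}}$ (with $h_{\widetilde{x}}=h_x<\mu_x=\mu_{\widetilde{x}}$, so $H_{\widetilde{x}}$ is bad), and the formula $\rho_{\widetilde{x}}=\rho_x+h_x-1<\rho_x$ drops out. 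Parts (1.2) and (2) are structurally the same: I choose the appropriate chart ($x$-chart for (1.2), $y$-chart for (2)) with parameter $c\in k$ specifying $\widetilde{P}$, and the decisive step is to verify that $\widetilde{h}$ is well-adapted at $\xi_{E_P}$. This verification follows the same template as Lemma 4: since $P$ is bad, $\mathrm{In}_P(a_{p^e})$ is not a $p^e$-th power, and the equivalence established in that proof implies $\mathrm{In}_{\xi_{E_P}}(\widetilde{a_{p^e}})$ is not a $p^e$-th power either. Once well-adaptedness is confirmed, I read off $\rho_e$ by substituting $y=e(\widetilde{y}+c)$ (resp.\ $x=e(\widetilde{x}+c)$) into the normal form of $a_{p^e}$; in (1.2), $\rho_x\ge\rho_e$ follows because the substitution cannot increase the $\widetilde{y}$-order at $\widetilde{P}$ of the transformed $\widetilde{g}_e$ beyond the $y$-order of $g(y)$ at $P$, while in (2) the good-hypersurface hypothesis $h_y=\mu_y$ forces $g(y)$ to be divisible by a sufficiently high power of $y$ to yield the sharper bound $\mu_x>\rho_e$.

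Part (3) is purely arithmetic: $P$ good gives $\mathrm{H}(P)=\mu(P)=\mu_x+\mu_y$, hence $\mathrm{ord}_P(a_{p^e})/p^e\ge\mu_x+\mu_y$. From $a_{p^e}=x^rg(y)+x^{r+1}\omega$ one reads $\mathrm{ord}_P(a_{p^e})\le r+\rho_x\cdot p^e$, which gives $h_x+\rho_x\ge\mu_x+\mu_y$, and combining with $h_x<\mu_x$ ($H_x$ bad) yields $\rho_x>\mu_y$; the other inequality is symmetric. The consequence $\rho_y>\mu_{\widetilde{x}}$ in the $y$-chart then follows from the transformation rule $\mu_{\widetilde{x}}=\mu_x$ for point blow-up, established exactly as in the computation at the start of Claim 1. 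The main obstacle I expect is the case analysis in (1.2) and (2), where the $e$-order of $\widetilde{a_{p^e}}$ involves a competition between the $g$-contribution at $e$-order $r-p^e+\rho_x p^e$ and the $\omega$-contribution at $e$-order $r-p^e+1$, and the subcase $c=0$ vs $c\ne 0$ must be tracked separately because it determines whether $H'_y$ (resp.\ $H'_x$) still passes through $\widetilde{P}$ and thus which monomial $\mu_e$ is computed from. Each subcase, however, reduces to a clean algebraic inequality between orders of power series, so no conceptual obstruction should arise.
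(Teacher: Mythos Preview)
Your proposal is correct and follows essentially the same approach as the paper: explicit computation of $\widetilde{a_{p^e}}$ in the relevant chart, verification of well-adaptedness at the new generic point via the Lemma~4 template, and reading off $\rho$ from the resulting normal form; part~(3) in particular is identical to the paper's argument. Two minor points: your well-adaptedness argument in (1.1) (``$\widetilde{x}^r$ is not a $p^e$-th power since $0\le r<p^e$'') breaks when $r=0$, so you should instead argue, as the paper does, that $\widetilde{x}^r\widetilde{y}^{r-p^e}g(\widetilde{y})$ fails to be a $p^e$-th power because $x^rg(y)=\mathrm{In}_{\xi_{H_x}}(a_{p^e})$ does; and the $c=0$ vs.\ $c\neq 0$ case split you anticipate in (1.2) and (2) is unnecessary---the paper bounds $\rho_e\le\deg\phi/p^e$ uniformly (where $\phi$ is the one-variable polynomial coming from $\mathrm{In}_P(a_{p^e})$) and then bounds $\deg\phi$ by a Newton-polygon inequality, with no dependence on $c$.
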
 
\begin{proof} 
\indent{\rm (1) (1.1)}\quad 
Take (as described in \fbox{SITUATION} in \S 5.1) 
$$h = z^{p^e} + a_1z^{p^e-1} + a_2z^{p^e-2} + \cdots + a_{p^e-1}z + a_{p^e}$$ 
which is well-adapted at $\xi_{H_x}$ 
with respect to $(x,y,z)$ (cf. Proposition 5).  Set 
$$a_{p^e} = x^r \left\{g(y) + x \cdot \omega(x,y)\right\} 
\ \text{with}\ 0 \neq g(y) \in k[[y]],\ \omega(x,y) \in k[[x,y]].$$ 
Then we have 
$$h_x = \mathrm{H}(\xi_{H_x}) = \mathrm{Slope}_{h,(x,y,z)}(\xi_{H_x}) 
=\frac1{p^e} {\mathrm{ord}_{\xi_{H_x}}(a_{p^e})} 
= \frac{r}{p^e} < \mu(\xi_{H_x}).$$ 
Since $h_x < 1$ by assumption, we have $r < p^e$.  We compute 
$$ 
\widetilde{a_{p^e}} 
= \frac{a_{p^e}}{y^{p^e}} 
= \frac{(\widetilde{x}\widetilde{y})^r}{\widetilde{y}^{p^e}} 
\left\{g(\widetilde{y}) + \widetilde{x}\widetilde{y} \cdot 
\omega(\widetilde{x}\widetilde{y},\widetilde{y})\right\} 
= \widetilde{x}^r\left\{\widetilde{g}(\widetilde{y}) 
+ \widetilde{x} \cdot \widetilde{\omega}(\widetilde{x},\widetilde{y})\right\} 
$$ 
where 
$$\widetilde{g}(\widetilde{y}) = \widetilde{y}^{r - p^e} g(\widetilde{y}) 
\quad\text{and}\quad 
\widetilde{\omega}(\widetilde{x},\widetilde{y}) = \widetilde{y}^{r - p^e + 1} 
\omega(\widetilde{x}\widetilde{y},\widetilde{y}). 
$$ 
We observe that $\widetilde{h} = {h}/{y^{p^e}}$ is well-adapted at $\xi_{H_{\widetilde{x}}}$ with respect to $(\widetilde{x},\widetilde{y},\widetilde{z})$, since 
$$\mathrm{Slope}_{\widetilde{h},(\widetilde{x},\widetilde{y},\widetilde{z})}(\xi_{H_{\widetilde{x}}}) = \frac{r}{p^e} < \mu(\xi_{H_x}) = \mu(\xi_{H_{\widetilde{x}}})$$ 
and since 
$$\mathrm{In}_{\xi_{H_{\widetilde{x}}}}(\widetilde{a_{p^e}}) = \widetilde{x}^r\widetilde{g}(\widetilde{y}) =  \widetilde{x}^r \widetilde{y}^{r - p^e}g(\widetilde{y})$$ 
is not a $p^e$-th power, a fact which follows easily from the fact that 
$\mathrm{In}_{\xi_{H_x}}(a_{p^e}) = x^rg(y)$ is not a $p^e$-th power. 
Therefore, we conclude that 
\begin{align*} 
\rho_x &= \rho_{h,(x,y,z),H_x}(P) 
\\ 
&= 
\begin{cases} 
\mathrm{ord}_P\left(g(y)\right)/p^e & \text{in case }r \not\equiv 0 \text{ mod }p^e, \\ 
\mathrm{res\text{-}ord}^{(p^e)}_P\left(g(y)\right)/p^e & 
\text{in case }r \equiv 0 \text{ mod }p^e, \quad\text{i.e.,}\ \  r = 0 
\end{cases} 
\\ 
&> 
\begin{cases} 
\mathrm{ord}_{\widetilde{y}}\left(\widetilde{y}^{r - p^e}g(\widetilde{y})\right)/p^e & \text{in case }r \not\equiv 0 \text{ mod }p^e, \\ 
\mathrm{res\text{-}ord}^{(p^e)}_{\widetilde{y}}
\left(\widetilde{y}^{- p^e}g(\widetilde{y})\right)/p^e 
& \text{in case }r \equiv 0 \text{ mod }p^e,\quad \text{i.e.,}\ \  r = 0 
\end{cases} 
\\ 
&=\begin{cases} 
\mathrm{ord}_{\widetilde{y}}\left(\widetilde{g}(\widetilde{y})\right)/p^e 
& \text{in case }r \not\equiv 0 \text{ mod }p^e, \\ 
\mathrm{res\text{-}ord}^{(p^e)}_{\widetilde{y}}
\left(\widetilde{g}(\widetilde{y})\right)/p^e 
& \text{in case }r \equiv 0 \text{ mod }p^e, 
\quad\text{i.e.}\ \  r = 0 
\end{cases} 
\\ 
&= \rho_{\widetilde{h},(\widetilde{x},\widetilde{y},\widetilde{z}),H_{\widetilde{x}}}(\widetilde{P}) = \rho_{\widetilde{x}}. 
\end{align*} 
 
(Note that, even under the condition $\widetilde{P} \in \pi^{-1}(P) 
\cap \mathrm{Sing}(\widetilde{{\mathcal R}}) \subset \widetilde{W}$ 
and the assumption that the invariant $\sigma$ stays the same, there is a possibility that we may end up having $\mathrm{ord}_{\widetilde{P}}(\widetilde{a_{p^e}}) = p^e$ and $\mathrm{In}_{\widetilde{P}}(\widetilde{a_{p^e}}) = c\widetilde{y}^{p^e}$ for some $c \in k \setminus \{0\}$.  (In this case, we necessarily have $r = 0$.)  Then we have to replace $(\widetilde{x},\widetilde{y},\widetilde{z})$ with $(\widetilde{x},\widetilde{y},\widetilde{z}' = \widetilde{z} + c^{1/p^e}\widetilde{y})$ to guarantee condition (1) in \fbox{SITUATION}.  Accordingly, we have to analyze $\widetilde{a_{p^e}}'$.  It is straightforward, however, to see 
that the same calculations hold with $\widetilde{a_{p^e}}'$.) 
 
\medskip 
 
\indent{\rm (1) (1.2)}\quad 
Take (as described in \fbox{SITUATION} in \S 5.1) 
$$h = z^{p^e} + a_1z^{p^e-1} + a_2z^{p^e-2} + \cdots + a_{p^e-1}z + a_{p^e}$$ 
which is well-adapted at $P$ and $\xi_{H_x}$ simultaneously with respect to $(x,y,z)$ (cf. Proposition 5). 
 
Set 
\begin{align*} 
a_{p^e} &= \sum_{k + l \geq d} c_{kl}x^ky^l & \text{with}\ \ 
&d = \mathrm{ord}_P(a_{p^e}) > p^e\\ 
&= x^r\{g(y) + x \cdot \omega(x,y)\} & \text{with}\ \  
&r = \mathrm{ord}_{\xi_{H_x}}(a_{p^e}). 
\end{align*} 
Then we compute 
\begin{align*} 
\pi^*(a_{p^e}) 
&= \sum_{k + l \geq d}c_{kl}x^{k + l} 
\left(\frac{y}{x}\right)^l 
= x^d\left\{\sum_{k + l = d}c_{kl} 
\left(\frac{y}{x}\right)^l 
+ x \cdot 
\Omega\left(x,\frac{y}{x}\right) 
\right\} \\ 
&= x^d\left\{\phi\left(\frac{y}{x}\right) + x \cdot 
\Omega\left(x,\frac{y}{x}\right)\right\} 
\quad\text{where}\quad\phi(T) = \sum_{k + l = d}c_{kl}T^l, 
\end{align*} 
and hence 
\begin{align*} 
\widetilde{a_{p^e}} &= \frac{\pi^*(a_{p^e})}{x^{p^e}} = 
x^{d - p^e}\left\{ 
\phi\left(\frac{y}{x} - c + c\right) 
+ x \cdot 
\Omega\left(x,\frac{y}{x} - c + c\right) 
\right\} \\ 
&= \widetilde{x}^{d - p^e}\left\{\phi(\widetilde{y} + c) + \widetilde{x} \cdot \Omega(\widetilde{x},\widetilde{y} + c)\right\}. 
\end{align*} 
Moreover, just as in the analysis of the 
``\emph{Case$\colon P$ is a bad point}'' in Lemma 4, we see that 
$\widetilde{h}$ is well-adapted at $\xi_{E_P}$ with respect to $(\widetilde{x},\widetilde{y},\widetilde{z})$, since 
\begin{align*} 
\mathrm{Slope}_{\widetilde{h},(\widetilde{x},\widetilde{y},\widetilde{z})}(\xi_{H_{\widetilde{x}}}) 
&= \frac1{p^e}{\mathrm{ord}_{\xi_{H_{\widetilde{x}}}}(\widetilde{a_{p^e}})} 
= \frac{d - p^e}{p^e} 
\\ 
&=\frac1{p^e}{\mathrm{ord}_P(a_{p^e})} - 1 
\underset{\text{since }P \text{ is bad}}< \mu(P) - 1 
= \mu(\xi_{H_{\widetilde{x}}}), 
\end{align*} 
and since 
$\mathrm{In}_{\xi_{H_{\widetilde{x}}}(\widetilde{a_{p^e}}}) 
= \widetilde{x}^{d - p^e}\phi(\widetilde{y} + c)$ 
is not a $p^e$-th power, which is obvious if 
$d \not\equiv 0\ \mathrm{ mod }\ p^e$ 
and which follows from the fact 
$\mathrm{In}_P(a_{p^e}) = \sum_{k + l = d}c_{kl}x^ky^l$ 
is not a $p^e$-th power and hence 
$\phi(T) = \sum_{k + l = d}c_{kl}T^l$ is not a $p^e$-th power if 
$d \equiv 0\ \mathrm{ mod }\ p^e$. 
 
Therefore, we conclude 
\begin{align*} 
\rho_{\widetilde{x} = e} 
&= \rho_{\widetilde{h},(\widetilde{x},\widetilde{y},\widetilde{z}),H_{\widetilde{x}}} \\ 
&= 
\begin{cases} 
\mathrm{ord}_{\widetilde{y}}\left(\phi(\widetilde{y} + c)\right)/p^e 
& \text{in case }r \not\equiv 0 \text{ mod }p^e \\ 
\mathrm{res\text{-}ord}^{(p^e)}_{\widetilde{y}}\left(\phi(\widetilde{y} + c)\right)/p^e 
& \text{in case }r \equiv 0 \text{ mod }p^e 
\end{cases} 
\\ 
&\leq \frac1{p^e}\deg \phi(\widetilde{y} + c) = \frac1{p^e}\deg \phi(y). 
\end{align*} 
That is to say, we have 
$$(\star) \quad \rho_e \leq \frac1{p^e}\deg \phi(y).$$ 
On the other hand, set 
$$M = \left\{\begin{array}{ll} 
\mathrm{ord}_y\left(g(y)\right) & \text{in case }r \not\equiv 0 \text{ mod }p^e \\ 
\mathrm{res\text{-}ord}^{(p^e)}_y\left(g(y)\right) & \text{in case }r \equiv 0 \text{ mod }p^e 
\end{array}\right.$$ 
Then we conclude, for all those $(k,l)$ with $k + l = d$ and $c_{kl} \neq 0$, 
that we have 
$$ k + l = d \leq r + M, \quad\text{and}\quad k \geq r $$ 
and hence that 
$$l = d - k \leq d - r \leq (r + M) - r = M.$$ 
This implies 
$$(\star\star) \quad \deg \phi(y) = \deg \left(\sum_{k+l = d}c_{kl}y^l\right) \leq M.$$ 
{}From the inequalities $(\star)$ and $(\star\star)$, we finally conclude 
$$\rho_x = \frac{M}{p^e} \geq \frac1{p^e}\deg(\phi(y)) \geq \rho_e.$$ 
 
(Note that, even under the condition $\widetilde{P} \in \pi^{-1}(P) 
\cap \mathrm{Sing}(\widetilde{{\mathcal R}}) \subset \widetilde{W}$ 
and the assumption that the invariant $\sigma$ stays the same, there is a possibility that we may end up having $\mathrm{ord}_{\widetilde{P}}(\widetilde{a_{p^e}}) = p^e$ and $\mathrm{In}_{\widetilde{P}}(\widetilde{a_{p^e}}) = c\widetilde{x}^{p^e}$ for some $c \in k \setminus \{0\}$.  Then we have to replace $(\widetilde{x},\widetilde{y},\widetilde{z})$ with $(\widetilde{x},\widetilde{y},\widetilde{z}' = \widetilde{z} + c^{1/p^e}\widetilde{x})$ to guarantee condition (1) 
in \fbox{SITUATION} in \S 5.1. 
Accordingly, we have to analyze $\widetilde{a_{p^e}}'$.  It is straightforward, however, to see that the same calculations hold with $\widetilde{a_{p^e}}'$.) 
 
\medskip 
 
\indent{\rm (2)}\quad 
Take (as described in \fbox{SITUATION} in \S 5.1) 
$$h = z^{p^e} + a_1z^{p^e-1} + a_2z^{p^e-2} + \cdots + a_{p^e-1}z + a_{p^e}$$ 
which is well-adapted at $P$ and $\xi_{H_y}$ simultaneously with respect to $(x,y,z)$ (cf. Proposition 5).  Set 
$$a_{p^e} = \sum_{k + l \geq d} c_{kl}x^ky^l 
\quad\text{with}\quad d = \mathrm{ord}_P(a_{p^e}).$$ 
Then we see 
$$\frac{d}{p^e} = \frac1{p^e}{\mathrm{ord}_P(a_{p^e})} < \mu(P) 
\quad\text{and}\quad 
\mathrm{In}_P(a_{p^e}) \text{ is not a }p^e \text{-th power},$$ 
since $P$ is bad and since $h$ is well-adapted at $P$ with respect to $(x,y,z)$.  Then we conclude, for all those $(k,l)$ with $k + l = d$ and $c_{kl} \neq 0$, that we have 
$$\frac{\alpha + \beta}{a} = \mu(P) > \frac{d}{p^e} = \frac{k + l}{p^e} = \frac{k}{p^e} + \frac{l}{p^e} \geq \frac{k}{p^e} + \frac{\beta}{a},$$ 
since 
$$\frac{l}{p^e} \geq \frac1{p^e}{\mathrm{ord}_{\xi_{H_y}}(a_{p^e})} 
\geq \mu(\xi_{H_y}) = \frac{\beta}{a},$$ 
where the second inequality follows from the assumption that $H_y$ is good and that $h$ is well-adapted at $\xi_{H_y}$ with respect to $(x,y,z)$.  Therefore, we conclude 
$$\mu_x = \mu(\xi_{H_x}) = \frac{\alpha}{a} > \frac{k}{p^e}.$$ 
On the other hand, we compute 
\begin{align*} 
\pi^*(a_{p^e}) &= \sum_{k + l \geq d}c_{kl} 
\left(\frac{x}{y}\right)^ky^{k+l} = 
y^d\left\{\sum_{k + l = d}c_{kl}\left(\frac{x}{y}\right)^k + y \cdot 
\varOmega\left(\frac{x}{y},y\right)\right\} \\ 
&= y^d\left\{\varphi\left(\frac{x}{y}\right) + y \cdot 
\varOmega\left(\frac{x}{y},y\right)\right\}, 
\end{align*} 
where $\varphi(T) = \sum_{k + l = d}c_{kl}T^k$, 
and hence 
\begin{align*} 
\widetilde{a_{p^e}} 
&= \frac{\pi^*(a_{p^e})}{y^{p^e}} 
= y^{d - p^e}\left\{ 
\varphi\left(\frac{x}{y} - c + c\right) + y \cdot 
\varOmega\left(\frac{x}{y} - c + c,y\right)\right\} \\ 
&= \widetilde{y}^{d - p^e}\left\{\varphi(\widetilde{x} + c) + \widetilde{y} \cdot \varOmega(\widetilde{x} + c,\widetilde{y})\right\}. 
\end{align*} 
Moreover, just as in the analysis of the 
``\emph{Case$\colon P$ is a bad point}'' in Lemma 4, 
we see that $\widetilde{h}$ is well-adapted at 
$\xi_{E_P}$ with respect to $(\widetilde{x},\widetilde{y},\widetilde{z})$, 
since 
\begin{align*} 
\lefteqn{ 
\mathrm{Slope}_{\widetilde{h},(\widetilde{x}, 
\widetilde{y},\widetilde{z})}(\xi_{H_{\widetilde{y}}}) = 
\frac1{p^e} {\mathrm{ord}_{\xi_{H_{\widetilde{y}}}}(\widetilde{a_{p^e}})} 
= \frac{d - p^e}{p^e} 
} \\ 
&\qquad= \frac1{p^e}{\mathrm{ord}_P(a_{p^e})} - 1 
\underset{\text{since }P \text{ is bad}}< \mu(P) - 1 
= \mu(\xi_{H_{\widetilde{y}}}), 
\end{align*} 
and since $\mathrm{In}_{\xi_{H_{\widetilde{y}}}}(\widetilde{a_{p^e}}) = \widetilde{y}^{d - p^e}\varphi(\widetilde{x} + c)$ is not a $p^e$-th power, which is obvious if $d \not\equiv 0\ \mathrm{ mod }\ p^e$ and which follows from the fact $\mathrm{In}_P(a_{p^e}) = \sum_{k + l = d}c_{kl}x^ky^l$ is not a $p^e$-th power and hence $\varphi(T) = \sum_{k + l = d}c_{kl}T^k$ is not a $p^e$-th power if $d \equiv 0\ \mathrm{ mod }\ p^e$. 
 
Therefore, we conclude 
\begin{align*} 
&\rho_{\widetilde{y} = e} 
= \rho_{\widetilde{h},(\widetilde{x},\widetilde{y},\widetilde{z}), 
H_{\widetilde{y}}} 
= 
\begin{cases} 
\mathrm{ord}_{\widetilde{x}}\left(\varphi(\widetilde{x} + c)\right)/p^e & 
\text{if }r \not\equiv 0 \text{ mod }p^e \\ 
\mathrm{res\text{-}ord}^{(p^e)}_{\widetilde{x}}\left(\varphi(\widetilde{x} + c)\right)/p^e & 
\text{if }r \equiv 0 \text{ mod }p^e 
\end{cases} 
\\ 
&\quad 
\leq 
\frac1{p^e}\deg \varphi(\widetilde{y} + c) 
=\frac1{p^e} \deg \varphi(y) 
= \frac1{p^e}\deg\left(\sum_{k + l = d}c_{kl}T^k\right) 
< \frac{\alpha}{a} = \mu_x. 
\end{align*} 
(Note that, even under the condition $\widetilde{P} \in \pi^{-1}(P) 
\cap \mathrm{Sing}(\widetilde{{\mathcal R}}) \subset \widetilde{W}$ 
and the assumption that the invariant $\sigma$ stays the same, there is a possibility that we may end up having $\mathrm{ord}_{\widetilde{P}}(\widetilde{a_{p^e}}) = p^e$ and $\mathrm{In}_{\widetilde{P}}(\widetilde{a_{p^e}}) 
= c\widetilde{y}^{p^e}$ for some $c \in k \setminus \{0\}$. 
(In this case, we necessarily have $r = 0$.) 
Then we have to replace $(\widetilde{x},\widetilde{y},\widetilde{z})$ with $(\widetilde{x},\widetilde{y},\widetilde{z}' 
= \widetilde{z} + c^{1/p^e}\widetilde{y})$ to guarantee 
condition (1) in \fbox{SITUATION} in \S 5.1. 
Accordingly, we have to analyze $\widetilde{a_{p^e}}'$. 
It is straightforward, however, 
to see that the same calculations hold with $\widetilde{a_{p^e}}'$.) 
 
\smallskip 
 
\indent{\rm (3)}\quad 
Take (as described in \fbox{SITUATION} in \S 5.1) 
$$h = z^{p^e} + a_1z^{p^e-1} + a_2z^{p^e-2} + \cdots + a_{p^e-1}z + a_{p^e}$$ 
which is well-adapted at $P$ and $\xi_{H_x}$ 
simultaneously with respect to $(x,y,z)$ (cf. Proposition 5). 
 
Set 
$$a_{p^e} = x^r \left\{g(y) + x \cdot \omega(x,y)\right\} 
\ \text{with}\  0 \neq g(y) \in k[[y]],\ \omega(x,y) \in k[[x,y]].$$ 
Set 
$$M = 
\begin{cases} 
\mathrm{ord}_P\left(g(y)\right) 
& \text{in case }r \not\equiv 0 \text{ mod }p^e, \\ 
\mathrm{res\text{-}ord}^{(p^e)}_P\left(g(y)\right) & \text{in case }r \equiv 0 \text{ mod }p^e 
\end{cases} 
$$ 
so that 
$$\rho_x = \rho_{h,(x,y,z),H_x}(P) = \frac{M}{p^e}.$$ 
Now we compute 
\begin{align*} 
\frac{r + M}{p^e} \geq 
\frac1{p^e}{\mathrm{ord}_P(a_{p^e})} 
& \underset{\text{since }P\text{ is good}}\geq  \mu(P) 
= \frac{\alpha + \beta}{a} = \mu(\xi_{H_x}) + \frac{\beta}{a} \\ 
&\underset{\text{since }H_x\text{ is bad}}> 
\mathrm{H}(\xi_{H_x}) + \frac{\beta}{a} 
= \frac{r}{p^e} + \frac{\beta}{a}. 
\end{align*} 
Therefore, we conclude 
$$\rho_x = \frac{M}{p^e} > \frac{\beta}{a} = \mu(\xi_{H_y}) = \mu_y.$$ 
The proof for the inequality $\rho_y > \mu_x$ is identical. 
 
\smallskip 
 
This completes the proof of Claim 2. 
\end{proof} 
\begin{theorem} Let $P \in \mathrm{Sing}({\mathcal R}) \subset W$ be a point in the monomial case as described in \fbox{SITUATION} in \S 5.1.  Let $W \overset{\pi}\leftarrow \widetilde{W}$ be the blow up with center $C$ specified by the procedure described in \S 5.3, and $\widetilde{{\mathcal R}}$ the transformation of the idealistic filtration ${\mathcal R}$ of i.f.g.\! type. 
 
Then at $\widetilde{P} \in \pi^{-1}(P) \in \widetilde{W}$, one of the following holds: 
\begin{itemize} 
\item $\widetilde{P} \not\in \mathrm{Sing}(\widetilde{R})$, i.e., 
$\mathrm{Sing}(\widetilde{R}) = \emptyset$ in a neighborhood of 
$\widetilde{P}$, 
\item $\widetilde{P} \in \mathrm{Sing}(\widetilde{R})$ and the 
invariant $\sigma$ drops, or 
\item $\widetilde{P} \in \mathrm{Sing}(\widetilde{R})$ and the invariant ``$\mathrm{inv}_{\mathrm{MON}}(P)$'' strictly decreases, i.e., $\mathrm{inv}_{\mathrm{MON}}(P) > \mathrm{inv}_{\mathrm{MON}}(\widetilde{P})$. 
\end{itemize} 
 
Since the invariant ``$\mathrm{inv}_{\mathrm{MON}}$'' can not decrease infinitely many times, the procedure described in \S 5.3 terminates either with $\widetilde{P} \not\in \mathrm{Sing}(\widetilde{R})$ or with the drop of the invariant $\sigma$ after finitely many repetitions. 
 
That is to say, the basic strategy to show termination of the procedure in the monomial case with $\tau = 1$ is established. 
\end{theorem}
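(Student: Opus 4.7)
The plan is to proceed by an exhaustive case analysis, indexed by the configuration ($\textcircled{1}$--$\textcircled{5}$) at $P$ and by the type of blow up (either $1$-dimensional or $0$-dimensional). Under the hypothesis that both $\widetilde{P}\in\mathrm{Supp}(\widetilde{\mathcal R})$ and $\sigma(P)=\sigma(\widetilde{P})$, I will show $\mathrm{inv}_{\mathrm{MON}}(P)>\mathrm{inv}_{\mathrm{MON}}(\widetilde{P})$ using Claims 1 and 2, together with Lemma 4 to read off the goodness/badness of the new exceptional divisor.

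First, by Proposition 6, the possible centers specified by the procedure in \S 5.3 are exactly the curves $V(z,x)$ (when $h_x\ge 1$) and $V(z,y)$ (when $h_y\ge 1$), or the isolated point $P$ itself (when both $h_x<1$ and $h_y<1$, so that $\mathrm{Sing}({\mathcal R})=\{P\}$ locally). In the $1$-dimensional case, the only chart containing points of $\mathrm{Supp}(\widetilde{\mathcal R})$ over $P$ is the one in which the new exceptional divisor $E_C$ inherits the local defining equation of the blown-up boundary; thus $E_C$ plays the same role as the corresponding $H_x$ (or $H_y$), while the other old boundary divisor, if present, passes through $\widetilde{P}$ as its strict transform. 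In the $0$-dimensional case, $\widetilde{P}$ necessarily lies on the strict transform of $Z=\{z=0\}$, either over the $x$-chart or over the $y$-chart.

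For the $1$-dimensional blow ups I invoke Claim 1: along the blown-up boundary, $\mu$ strictly drops by one, $\rho$ is preserved, and goodness/badness is preserved; along the non-blown-up boundary, all invariants are unchanged. In each of configurations $\textcircled{1}$--$\textcircled{5}$ I check that at least one coordinate of $\mathrm{inv}_{\mathrm{MON}}$ decreases in the lexicographic sense, being careful when $\widetilde{P}$ no longer lies on the strict transform of the other boundary (so the configuration may change, for example from $\textcircled{4}$ to $\textcircled{3}$, or from $\textcircled{2}$ to $\textcircled{1}$). The tie-breaking rules in the procedure (blowing up the component with larger $\mu$ first, and the later-created component first in case of equality) ensure that the lexicographic comparison in configurations $\textcircled{2}$ and $\textcircled{5}$ also strictly drops. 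For the $0$-dimensional blow up of $P$, Lemma 4 identifies the goodness/badness of $E_P$ with that of $P$. If $P$ is good in configuration $\textcircled{5}$, then $E_P$ is good and $\widetilde{P}$ lies either in configuration $\textcircled{4}$ at $E_P\cap H'_x$ (respectively $E_P\cap H'_y$) or in configuration $\textcircled{2}$ away from the strict transforms; parts (1.1) and (3) of Claim 2 yield $\rho_x>\rho_{\widetilde{x}}$ and $\rho_y>\mu_{\widetilde{x}}$, from which the lexicographic decrease follows. If $P$ is bad in any of $\textcircled{3}$, $\textcircled{4}$, $\textcircled{5}$, then $E_P$ is bad as well, and parts (1.1) (along the strict transform of a bad $H_x$), (1.2) (away from all strict transforms, with $E_P$ as the sole bad hypersurface), and (2) (toward the strict transform of a good $H_y$) of Claim 2 furnish the required inequalities for $\rho_x$, $\rho_y$ and $\rho_{E_P}$.

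The main obstacle, and the point demanding the most delicate bookkeeping, will be the transition between configurations of different shape: in particular, comparing the three-entry tuple attached to $\textcircled{3}$ with the two-entry tuples attached to $\textcircled{4}$ and $\textcircled{5}$, and ensuring that the weak inequality $\rho_x\ge\rho_{E_P}$ coming from Claim 2 (1.2) combines with the strict drop $\mu_{E_P}=\mu(P)-1<\mu_x$ (or with a strict drop in the complementary $\rho$-coordinate) to produce a genuine lexicographic strict drop of $\mathrm{inv}_{\mathrm{MON}}$ rather than a merely weak one. Once all these bookkeeping cases are dispatched, termination is immediate: the values of $\mathrm{inv}_{\mathrm{MON}}$ lie in a product of rationals whose denominators are uniformly bounded (by $a$ and by $p^e$) and whose numerators are bounded above by the initial data, so no infinite strictly descending chain is possible. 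Combined with the fact that the invariant $\sigma$ itself satisfies the descending chain condition, this establishes termination of the monomial-case procedure in the $\tau=1$ regime.
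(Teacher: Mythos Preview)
Your approach is essentially the paper's: reduce to the assumption $\widetilde{P}\in\mathrm{Supp}(\widetilde{\mathcal R})$ and $\sigma(P)=\sigma(\widetilde{P})$, then carry out an exhaustive case analysis by configuration and by dimension of the center, using Claim~1 for the $1$-dimensional blow ups and Claim~2 together with Lemma~4 for the point blow ups. The structure and the key inputs are correct.

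Two corrections are worth making. First, in the $1$-dimensional case the configuration \emph{never} changes: Claim~1 gives $\mu_{\widetilde{x}}=\mu_x-1$, $h_{\widetilde{x}}=h_x-1$, $\mu_{\widetilde{y}}=\mu_y$, $h_{\widetilde{y}}=h_y$, so goodness/badness of each hypersurface is preserved and $\widetilde{P}$ lies on the same boundary divisors as $P$. Moreover the tie-breaking rule in the procedure is \emph{not} what forces the lexicographic drop; a direct check shows that in configurations $\textcircled{2}$ and $\textcircled{5}$ the pair $(\min,\max)$ drops strictly regardless of which component is blown up first (one entry goes down by $1$, the other is fixed). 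Second, your $0$-dimensional case list is incomplete: you treat $P$ good in $\textcircled{5}$ and $P$ bad in $\textcircled{3},\textcircled{4},\textcircled{5}$, but you omit configurations $\textcircled{1}$ and $\textcircled{2}$ (where $P$ is automatically good, since $h_x=\mu_x<1$ forces $\mathrm{ord}_P(a_{p^e})/p^e\ge h_x=\mu(P)$, hence $H(P)=\mu(P)$), and you omit $P$ good in $\textcircled{3}$ and $\textcircled{4}$. These missing cases are all easy---when $E_P$ is good and $\widetilde{P}$ falls into configuration $\textcircled{1}$ or $\textcircled{2}$ the first entry of $\mathrm{inv}_{\mathrm{MON}}(\widetilde{P})$ is $0$ while $\rho_x>0$ (because $h_x<1$ and $\mathrm{ord}_P(a_{p^e})>p^e$ force $\mathrm{ord}_P(g(y))>0$), and the intersection cases $E_P\cap H'_x$ with $E_P$ good are handled exactly by Claim~2~(1.1) as in your $\textcircled{5}$ analysis---but they should be listed to make the case analysis genuinely exhaustive.
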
 
 
\begin{proof} We have only to show (cf. Proposition 4) that, 
at $\widetilde{P} \in \pi^{-1}(P) \in \widetilde{W}$, assuming 
$\widetilde{P} \in \mathrm{Sing}(\widetilde{R})$ and 
$\sigma(P) = \sigma(\widetilde{P})$, we have 
$$\mathrm{inv}_{\mathrm{MON}}(P) > \mathrm{inv}_{\mathrm{MON}}(\widetilde{P}).$$ 
\noindent\emph{Case$\colon 
\dim C = 1$}. 
 
In this case, by Claim 1, it is easy to see that $P$ and $\widetilde{P}$ are in the same configuration and 
$$\mathrm{inv}_{\mathrm{MON}}(P) > \mathrm{inv}_{\mathrm{MON}}(\widetilde{P}).$$ 
\noindent\emph{Case$\colon 
\dim C = 0$, i.e., $C = P$}. 
 
\smallskip 
 
\noindent\underline{1.\ $P$ is in configuration \textcircled{\footnotesize 1} 
or \textcircled{\footnotesize 2}} 
 
\smallskip 
 
In this subcase, since $P \in \mathrm{Sing}({\mathcal R})$ is isolated, 
we see by Proposition 6 that $\mu_x = h_x < 1$ (and $\mu_y = h_y < 1$ in configuration \textcircled{\footnotesize 2}).  This implies $\mu_e < \mu_x$ ($\mu_e < \min\{\mu_x,\mu_y\}$ in configuration \textcircled{\footnotesize 2}).  Moreover, 
it is easy to see that $P$ is necessarily a good point, hence $E_P$ is also good, and that $\widetilde{P}$ is also in configuration \textcircled{\footnotesize 1} or \textcircled{\footnotesize 2}.  Now it is straightforward to see 
$$\mathrm{inv}_{\mathrm{MON}}(P) > \mathrm{inv}_{\mathrm{MON}}(\widetilde{P}).$$ 
 
\noindent\underline{2.\ $P$ is in configuration \textcircled{\footnotesize 3}} 
 
\smallskip 
 
\noindent\emph{{\rm (i)} 
$\widetilde{P} = \left(E_P \setminus H'_x\right) \cap Z'$ with 
$E_P$ being bad, and hence $\widetilde{P}$ is 
in configuration \textcircled{\footnotesize 3}}.\quad 
In this subcase, we have $\rho_x \geq \rho_e$ by Claim 2 (1.2), 
while $\mu_x > \mu_x - 1 = \mu_e$.  Therefore, we conclude 
$$\mathrm{inv}_{\mathrm{MON}}(P) = (\rho_x,0,\mu_x) >  (\rho_e,0,\mu_e) = \mathrm{inv}_{\mathrm{MON}}(\widetilde{P}).$$ 
\noindent\emph{{\rm (ii)} 
$\widetilde{P} = E_P \cap H'_x \cap Z'$ with $E_P$ being good, 
and hence $\widetilde{P}$ is in configuration 
\textcircled{\footnotesize 4}}.\quad 
In this subcase, we have $\rho_x > \rho_{\widetilde{x}}$ by Claim 2 (1.1). 
Therefore, we conclude 
$$\mathrm{inv}_{\mathrm{MON}}(P) = (\rho_x,0,\mu_x) > (\min\{\rho_{\widetilde{x}},\mu_{\widetilde{x}}\}, \max\{\rho_{\widetilde{x}},\mu_{\widetilde{x}}\}) = \mathrm{inv}_{\mathrm{MON}}(\widetilde{P}).$$ 
\noindent\emph{{\rm (iii)} 
$\widetilde{P} = E_P \cap H'_x \cap Z'$ with $E_P$ being bad, 
and hence $\widetilde{P}$ is in configuration 
\textcircled{\footnotesize 5}}.\quad 
In this case, we have $\rho_x > \rho_{\widetilde{x}}$ by Claim 2 (1.1).  Therefore, we conclude 
$$\mathrm{inv}_{\mathrm{MON}}(P) = (\rho_x,0,\mu_x) > (\min\{\rho_{\widetilde{x}},\rho_{\widetilde{y}}\}, \max\{\rho_{\widetilde{x}},\rho_{\widetilde{y}}\}) = \mathrm{inv}_{\mathrm{MON}}(\widetilde{P}).$$ 
\noindent\underline{3.\ $P$ is in configuration \textcircled{\footnotesize 4}} 
 
\smallskip 
 
\noindent\emph{{\rm (i)} 
$\widetilde{P} = \left(E_P \setminus (H'_x \cup H'_y)\right) \cap Z'$ with $E_P$ being bad, 
and hence $\widetilde{P}$ is in configuration 
\textcircled{\footnotesize 3}}.\quad 
In this case, we have $\rho_x \geq \rho_e$ and 
$\mu_x > \rho_e (\geq 0)$ by Claim 2 (1.2) and (2). 
Therefore, we conclude 
$$\mathrm{inv}_{\mathrm{MON}}(P) 
= (\min\{\rho_x,\mu_x\},\max\{\rho_x,\mu_x\}) 
> (\rho_e,0,\mu_e) = \mathrm{inv}_{\mathrm{MON}}(\widetilde{P}).$$ 
\noindent\emph{{\rm (ii)} 
$\widetilde{P} = E_P \cap H'_x \cap Z'$ with $E_P$ being good, 
and hence $\widetilde{P}$ is in configuration 
\textcircled{\footnotesize 4}}.\quad 
In this case, we have $\rho_x > \rho_{\widetilde{x}}$ by Claim 2 (1.1), while $\mu_x = \mu_{\widetilde{x}}$.  Therefore, we conclude 
\begin{align*} 
\mathrm{inv}_{\mathrm{MON}}(P) 
&= (\min\{\rho_x,\mu_x\},\max\{\rho_x,\mu_x\}) 
\\ 
&> (\min\{\rho_{\widetilde{x}},\mu_{\widetilde{x}}\}, 
\max\{\rho_{\widetilde{x}},\mu_{\widetilde{x}}\}) 
= \mathrm{inv}_{\mathrm{MON}}(\widetilde{P}). 
\end{align*} 
\noindent\emph{{\rm (iii)} 
$\widetilde{P} = E_P \cap H'_y \cap Z'$ with $E_P$ being bad, 
and hence $\widetilde{P}$ is in configuration 
\textcircled{\footnotesize 4}}.\quad 
In this case, we have $\rho_x \geq \rho_e$ by Claim 2 (1.2), while 
$\mu_x = {\alpha}/{a} > ({\alpha + \beta})/{a} - 1 
= \mu_e$ since ${\beta}/{a} = \mu_y = h_y < 1$. 
Therefore, we conclude 
\begin{align*} 
\mathrm{inv}_{\mathrm{MON}}(P) 
&= (\min\{\rho_x,\mu_x\},\max\{\rho_x,\mu_x\}) 
\\ 
&> (\min\{\rho_e,\mu_e\},\max\{\rho_e,\mu_e\}) 
= \mathrm{inv}_{\mathrm{MON}}(\widetilde{P}). 
\end{align*} 
\noindent\emph{{\rm (iv)} 
$\widetilde{P} = E_P \cap H'_x \cap Z'$ with $E_P$ being bad, 
and hence $\widetilde{P}$ is in configuration 
\textcircled{\footnotesize 5}}.\quad 
In this case, we have $\rho_x > \rho_{\widetilde{x}}$ and $\mu_x > \rho_{\widetilde{y} = e}$ by Claim 2 (1.1) and (2).  Therefore, we conclude 
\begin{align*} 
\mathrm{inv}_{\mathrm{MON}}(P) 
&= (\min\{\rho_x,\mu_x\},\max\{\rho_x,\mu_x\}) \\ 
&> (\min\{\rho_{\widetilde{x}},\rho_{\widetilde{y}}\}, \max\{\rho_{\widetilde{x}},\rho_{\widetilde{y}}\}) 
= \mathrm{inv}_{\mathrm{MON}}(\widetilde{P}). 
\end{align*} 
\noindent\underline{4.\ $P$ is in configuration \textcircled{\footnotesize 5}} 
 
\medskip 
 
\noindent\emph{{\rm (i)} 
$\widetilde{P} = \left(E_P \setminus (H'_x \cup H'_y)\right) \cap Z'$ 
with $E_P$ being bad, and hence $\widetilde{P}$ is in configuration 
\textcircled{\footnotesize 3}}.\quad 
In this case, we have $\rho_x \geq \rho_e$ and $\rho_y \geq \rho_e$ 
by Claim 2 (1.2).  We also have $\rho_x, \rho_y > 0$ since $h_y, h_x < 1$ 
with $P \in \mathrm{Sing}({\mathcal R})$.  Therefore, we conclude 
$$\mathrm{inv}_{\mathrm{MON}}(P) = (\min\{\rho_x,\rho_y\},\max\{\rho_x,\rho_y\}) > (\rho_e,0,\mu_e) = \mathrm{inv}_{\mathrm{MON}}(\widetilde{P}).$$ 
\noindent\emph{{\rm (ii)} 
$\widetilde{P} = E_P \cap H'_x \cap Z'$ with $E_P$ being good, 
and hence $\widetilde{P}$ is in configuration 
\textcircled{\footnotesize 4}}.\quad 
In this case, we have $\rho_x > \rho_{\widetilde{x}}$ and 
$\rho_y > \mu_{\widetilde{x}}$ by Claim 2 (1.1) and (3). 
Therefore, we conclude 
\begin{align*} 
\mathrm{inv}_{\mathrm{MON}}(P) 
&= (\min\{\rho_x,\rho_y\},\max\{\rho_x,\rho_y\}) 
\\ 
&> (\min\{\rho_{\widetilde{x}},\mu_{\widetilde{x}}\}, 
\max\{\rho_{\widetilde{x}},\mu_{\widetilde{x}}\}) 
= \mathrm{inv}_{\mathrm{MON}}(\widetilde{P}). 
\end{align*} 
\noindent\emph{{\rm (iii)} 
$\widetilde{P} = E_P \cap H'_x \cap Z'$ with $E_P$ being bad, 
and hence $\widetilde{P}$ is in configuration 
\textcircled{\footnotesize 5}}.\quad 
In this case, we have $\rho_x > \rho_{\widetilde{x}}$ and 
$\rho_y \geq \rho_{\widetilde{y} = e}$ by Claim 2 (1.1) and (1.2). 
Therefore, we conclude 
\begin{align*} 
\mathrm{inv}_{\mathrm{MON}}(P) 
&= (\min\{\rho_x,\rho_y\},\max\{\rho_x,\rho_y\}) 
\\ 
&> (\min\{\rho_{\widetilde{x}},\rho_{\widetilde{y}}\}, 
\max\{\rho_{\widetilde{x}},\rho_{\widetilde{y}}\}) 
= \mathrm{inv}_{\mathrm{MON}}(\widetilde{P}). 
\end{align*} 
 
\medskip 
 
This completes the proof of Theorem 1. 
\end{proof} 
 
This completes the detailed discussion of the monomial case in dimension 3, and hence completes the presentation of (the local version of) our algorithm in dimension 3. 
 
\medskip 
 
We finish this paper by making a couple of remarks. 
\begin{remark}[\textbf{Invariant whose maximum locus determines 
the center of blow up in the monomial case?}] 
The invariant ``$\mathrm{inv}_{\mathrm{MON}}$'' is only used to show effectively the termination of the procedure (in the monomial case), while the choice of the center is dictated by the study of the dimension of the singular locus (cf. Proposition 6 and the description of the procedure in 5.3).  Actually all the existing algorithms, including the one in \cite{BV3}, use the analysis of the dimension of the singular locus for the choice of the center. 
 
It would be desirable to have an invariant 
(manifested as the invariant $\Gamma$ in characteristic zero), 
which satisfies the following properties: 
 
\medskip 
 
\indent{\rm (1)}\quad 
it is upper semi-continuous, 
 
\indent{\rm (2)}\quad 
its maximum locus determines the nonsingular center of blow up for constructing the sequence of transformations for resolution of singularities, and 
 
\indent{\rm (3)}\quad 
it strictly drops after each blow up (over the center), and can not strictly decrease infinitely many times. 
 
\medskip 
 
Such an invariant would not only show the termination effectively but also dictate the choice of the center. 
\end{remark} 
 
\begin{remark} \ 
 
\indent{\rm (1)}(\textbf{Global version})\quad 
The presentation of our 
algorithm in this paper is restricted to the local version (cf. Remark 2).  However, it is not so difficult, though technical and rather lengthy, to make some adjustments in order for us to turn the local version into the global version.  The detailed discussion of these adjustments will be published elsewhere. 
 
\smallskip 
 
\indent{\rm (2)}(\textbf{Embedded resolution of singularities of a surface 
$X \subset W$ with $\dim W = 3$})\quad Consider a surface $X \subset W$ 
embedded in a nonsingular ambient space $W$ of $\dim W = 3$.  Then we can establish embedded resolution of singularities for $X \subset W$ (cf. Problem 2) in the following manner: 
 
\indent{\rm (i)}\quad 
The problem of embedded resolution of singularities for $X \subset W$ is reduced to the problem of resolution of singularities for the triplet 
$(W,({\mathcal I}_X, 1),\emptyset)$ (cf. Lemma 1). 
 
\indent{\rm (ii)}\quad 
The problem of resolution of singularities for the triplet 
$(W,  ({\mathcal I}_X  ,1),\emptyset)$ is equivalent to 
the problem of resolution of singularities for $(W,{\mathcal R},\emptyset)$ where the idealistic filtration of i.f.g.type is given by ${\mathcal R} = {\mathcal G}(({\mathcal I}_X,1))$ (cf. \S 2. Overview). 
 
\indent{\rm (iii)}\quad 
The problem of resolution of singularities for $(W,{\mathcal R},\emptyset)$ is solved by the global version of our algorithm in $\dim W = 3$. 
 
\smallskip 
 
\indent{\rm (3)}(\textbf{Embedded resolution of singularities of a surface 
$X \subset W$ with $\dim W$ arbitrary})\quad 
We can establish the global version of our algorithm for resolution 
of singularities of the triplet $(W,{\mathcal R},E)$ with $\dim W$ arbitrary, 
as long as it satisfies the condition 
$\tau(P) \geq \dim W - 2$ for all $P \in W$.  As an application, we can give an alternative proof for embedded 
resolution of a surface $X \subset W$ with $\dim W$ arbitrary, 
a theorem established by \cite{BV2} and \cite{CJS}.  We only describe the outline of our alternative proof below (while the details will be published elsewhere): 
 
\indent{\rm (i)}\quad 
We consider the Hilbert-Samuel function ${\rm HS}$ over $X$, 
and its maximum value $M = \max_{x \in X}\{{\rm HS}(x)\}$. 
 
\indent{\rm (ii)}\quad 
Construct a covering $X = \bigcup_{\lambda \in \Lambda}U_{\lambda}$, 
and a pair $({\mathcal I}_{\lambda},a_{\lambda})$ of an ideal 
${\mathcal I}_{\lambda}$ over $U_{\lambda}$ with level 
$a_{\lambda} \in {\mathbb Z}_{> 0}$ for each $\lambda \in \Lambda$ 
(in \'etale topology using the result due to Aroca (cf. \cite{EV}) 
or in Zariski topology using the result by \cite{BM}) satisfying 
the following property: 
 
$(\spadesuit)$ the singular locus of the pair coincides with 
the maximum locus of the Hilbert-Samuel function, i.e., 
$$\mathrm{Sing}({\mathcal I}_{\lambda},a_{\lambda}) 
= \{P \in U_{\lambda} \mid {\rm HS}(P) = M\},$$ 
and this relation persists through \emph{any} sequence of 
transformations (and smooth morphisms). 
 
\indent{\rm (iii)}\quad 
For each $\lambda \in \Lambda$, consider the 
idealistic filtration of i.f.g.type ${\mathcal R}_{\lambda}' 
= {\mathcal G}(({\mathcal I}_{\lambda},a_{\lambda}))$. 
We take its 
${\mathcal R}$adical-$\&$-${\mathcal D}$ifferential saturation 
${\mathcal R}_{\lambda} := {\mathcal R}{\mathcal D} 
({\mathcal R}_{\lambda}')$ (cf. \cite{K}). 
Then by the result of \cite{BraGV} we see that 
the ${\mathcal R}_{\lambda}$'s patch together, i.e., 
${\mathcal R}_{\lambda}|_{U_{\lambda} \cap U_{\mu}} 
= {\mathcal R}_{\mu}|_{U_{\mu} \cap U_{\lambda}}$, 
to give rise to the idealistic filtration ${\mathcal R}$ 
over $W$ such that ${\mathcal R}|_{U_{\lambda}} = {\mathcal R}_{\lambda}$ 
for all $\lambda \in \Lambda$, that $\mathrm{Sing}({\mathcal R}) = \{P \in W 
\mid {\rm HS}(P) = M\}$, and that this relation persists through any sequence of transformations (and smooth morphisms).  Note that the ${\mathcal R}$-$\&$-${\mathcal D}$ saturation gives the ``biggest'' idealistic filtration of i.f.g.type with the property $(\spadesuit)$. 
 
\indent{\rm (iv)}\quad 
Observe that, for this idealistic filtration ${\mathcal R}$, 
we have $\tau(P) \geq \dim W - 2$ 
for all $P \in W$ by the result in \cite{BraV1}\cite{BraV2}. 
 
\indent{\rm (v)}\quad 
Apply the global version of our algorithm to obtain resolution of 
singularities for $(W,{\mathcal R},E_M = \emptyset)$, which implies 
the strict decrease of (the maximum value of) the Hilbert-Samuel function. 
 
\indent{\rm (vi)}\quad 
Repeat the procedure.  (Note that in the middle of the repetition of 
the procedure, the boundary $E_M$, which is the union of all 
the exceptional divisors created so far, may not be empty.) 
 
\indent{\rm (vii)}\quad 
Since the value of the Hilbert-Samuel function can not strictly decrease 
infinitely many times, the procedure must come to an end after finitely 
many repetitions, providing embedded resolution of singularities for 
$X \subset W$. 
\end{remark} 
\end{subsection} 
\end{section} 
 
%%%%%%%%%%%%%%%%%%%%%%%%%%%%%%%%% 
% References 
%%%%%%%%%%%%%%%%%%%%%%%%%%%%%%%%% 

\end{document}